\newtheorem{theorem}{Theorem}
\newtheorem{lemma}[theorem]{Lemma}
\newtheorem{proposition}{Proposition}
\theoremstyle{definition}
\newtheorem*{remark}{Remark}
\newcommand{\eqdef}{\overset{\mbox{\tiny{def}}}{=}}
\newcommand{\R}{{\mathbb R}}
\newcommand{\NgE}{{K \ge \ksob}}
\newcommand{\ang}[1]{ \left< {#1} \right> }
\newcommand{\ext}[1]{ \underline{ {#1} } }
\newcommand{\ind}{ {\mathbf 1}}
\newcommand{\sph}{{\mathbb S}^{\dim-1}}
\newcommand{\nsm}{|}
\newcommand{\nl}{\left|}
\newcommand{\nr}{\right|}
\newcommand{\set}[2]{ \left\{ #1 \ \left| \ #2 \right. \right\} }
\newcommand{\ip}[2]{ \left< #1 , #2 \right>}
\newcommand{\threed}{{\mathbb R}^n}
\newcommand{\last}{{n+1}}
\renewcommand{\dim}{n}
\newcommand{\domain}{\mathbb{T}^\dim}
\newcommand{\teePLUSop}{T^{k,\ell}_{+}}
\newcommand{\teeMINUSop}{T^{k,\ell}_{-}}
\newcommand{\teeSTARop}{T^{k,\ell}_{*}}
\newcommand{\nPiece}{\mathcal{N}}
\newcommand{\kPiece}{\mathcal{K}}
\newcommand{\nullSpace}{N(L)}
\newcommand{\opGstar}{\Gamma_{*}}
\newcommand{\LopGstar}{\Gamma_{*}^\ell}
\newcommand{\TaylorP}{\zeta}
\newcommand{\CurveP}{\zeta}
\newcommand{\CurveEP}{\ext{\zeta}}
\newcommand{\starCurveP}{\zeta_*}
\newcommand{\starCurveEP}{\ext{\zeta_*}}
\newcommand{\Ctheta}{\vartheta}
\newcommand{\HARDvDER}{V}
\newcommand{\HARDxDER}{X}
\newcommand{\HARDspaceh}{H^{\HARDxDER;\HARDvDER}_{\ell}}
\newcommand{\hardNspace}{N^{s,\gamma}_{\ell,\HARDxDER;\HARDvDER}}
\newcommand{\nspace}{N^{s,\gamma}_{\ell,K}}
\newcommand{\spacen}{N^{s,\gamma}}
\newcommand{\spaceELLn}{N^{s,\gamma}_\ell}
\newcommand{\spaceh}{H^{K}_{\ell}}
\newcommand{\ksob}{K^*_n}
\newcommand{\Btheta}{\vartheta}
\newcommand{\iter}{m}
\newcommand{\spaceU}{H}
\newcommand{\macroCOE}{\lambda}
\numberwithin{equation}{section}
\numberwithin{theorem}{section}
\numberwithin{proposition}{section}
\begin{document}
\title[Global Solutions of the Boltzmann Equation without Cutoff]{Global Classical Solutions of the Boltzmann Equation without  Angular Cut-off}

\author[P. T. Gressman]{Philip T. Gressman}
\address{University of Pennsylvania, Department of Mathematics, David Rittenhouse Lab, 209 South 33rd Street, Philadelphia, PA 19104-6395, USA} 
\email{gressman at math.upenn.edu \& strain at math.upenn.edu}
\urladdr{http://www.math.upenn.edu/~gressman/ \& http://www.math.upenn.edu/~strain/}
\thanks{P.T.G. was partially supported by the NSF grant DMS-0850791.}

\author[R. M. Strain]{Robert M. Strain}
\thanks{R.M.S. was partially supported by the NSF grant DMS-0901463.}

\keywords{Kinetic Theory, Boltzmann equation, long-range interaction, non cut-off, soft potentials, hard potentials, fractional derivatives, anisotropy, Harmonic analysis. \\
\indent 2010 {\it Mathematics Subject Classification.}  35Q20, 35R11, 76P05, 82C40, 35H20, 35B65, 26A33}

\date{Revised version of (arXiv:0912.0888v1) and (arXiv:1002.3639v1): July 2010}

\begin{abstract}
This work proves the global stability of the Boltzmann equation (1872) with the physical collision kernels derived by Maxwell in 1866 for the full range of inverse-power intermolecular potentials, $r^{-(p-1)}$ with $p>2$,
for initial perturbations of the Maxwellian equilibrium states,
 as announced in \cite{gsNonCutA}.  
We  more generally cover collision kernels with parameters $s\in (0,1)$ and $\gamma$ satisfying 
$\gamma  > -n$ in arbitrary dimensions $\mathbb{T}^n \times \mathbb{R}^n$ with $n\ge 2$.  
Moreover, we prove rapid convergence as predicted by the celebrated Boltzmann $H$-theorem.
When $\gamma \ge -2s$, we have  exponential time decay to the Maxwellian equilibrium states.  When $\gamma <-2s$, our solutions decay polynomially fast in time with any rate.  
These results are completely constructive.  Additionally, we prove  sharp constructive upper and lower bounds for the linearized collision operator in terms of a geometric fractional Sobolev norm; we thus observe that a spectral gap exists only when $\gamma  \ge -2s$, as conjectured in  Mouhot-Strain \cite{MR2322149}.  It will be observed that this fundamental equation, derived by both Boltzmann  and Maxwell, grants a basic example where a range of geometric fractional derivatives occur in a physical model of the natural world.  Our methods provide a new  understanding of the grazing collisions in the Boltzmann theory. 
\end{abstract}
\maketitle

\thispagestyle{empty}

\tableofcontents  

\section{Introduction, main theorem, and historical remarks}

In 1872, Boltzmann was able to derive an equation which accurately models the dynamics of a dilute gas; it has since become a cornerstone of statistical physics \cite{MR1313028,MR1307620, MR1014927,MR0156656, MR1942465,MR1379589}.  
There are many useful mathematical theories of global solutions for the Boltzmann equation,  and we will start off by  mentioning a brief few.  In 1933, Carleman \cite{MR1555365} proved existence and uniqueness of the spatially homogeneous problem with radial initial data.  For  spatially dependent theories, it was Ukai \cite{MR0363332} in 1974  who proved the existence of global classical solutions with close-to-equilibrium initial data.  Ten years later,
 Illner-Shinbrot 
\cite{MR760333}  found unique global mild solutions with near vacuum data.
Then in 1989, the work of DiPerna-Lions \cite{MR1014927} established global renormalized weak solutions for initial data without a size restriction.  We also mention recent methods introduced in the linearized regime by Guo \cite{MR2000470} in 2003 and Liu-Yang-Yu \cite{MR2043729} in 2004.
All of these methods and their generalizations apply to hard sphere particles or soft particle interactions in which there is a non-physical cut-off of an inherently nonintegrable angular singularity.

When the physically relevant effects of these angular singularities are not cut-off, 
we recall the remarkable paper by Alexandre-Villani \cite{MR1857879} from 2002, 
which proves the existence of
 DiPerna-Lions renormalized weak solutions \cite{MR1014927}
with a non-negative defect measure.  It is illustrated therein that the mass conservation they prove would imply this defect measure was zero if the solutions were sufficiently strong.  At the moment this defect measure appears difficult to characterize \cite[Appendix]{MR1857879}.

This present work contributes to the understanding of global-in-time, close to Maxwellian equilibrium  classical solutions of the Boltzmann equation without angular cut-off, that is, for long-range interactions.  This problem has been the subject of intense investigations for some time now. 
We develop  a satisfying mathematical framework 
for these solutions  for all of the collision kernels derived from the intermolecular potentials 
 and more generally.  For the hard-sphere and cut-off collision kernels, such a framework has been well established for a long time 
\cite{MR2095473,MR882376,MR0363332,MR2013332,MR2043729,MR2000470}.  The hard-sphere kernel applies formally in the limit when $p\to\infty$.
A framework is also known for the Landau collision operator \cite{MR1946444}, which can be thought of as  the limiting model for $p=2$. 
The important case of the Boltzmann equation with the collision kernels derived by Maxwell is then the last case for the intermolecular potentials in which this framework has remained open.  
This article provides a solution to this longstanding, well-known problem.
We will discuss more of the  historical background in Section \ref{sec:reviewNON}.

The model which is the focus of this research is  the {\em Boltzmann equation}
  \begin{equation}
  \frac{\partial F}{\partial t} + v \cdot \nabla_x F = {\mathcal Q}(F,F),
  \label{BoltzFULL}
  \end{equation}
where the unknown $F(t,x,v)$ is a nonnegative function.  For each time $t\ge 0$, $F(t, \cdot, \cdot)$ represents the density  of particles in phase space, and is often 
called the empirical measure. The spatial coordinates  are $x\in\mathbb{T}^n$, and the velocities are $v\in\mathbb{R}^n$ with $n\ge 2$.
The  {\em Boltzmann collision operator} ${\mathcal Q}$ 
is a bilinear operator which acts only on the velocity variables $v$ and is local in $(t,x)$ as 
  \begin{equation*}
  {\mathcal Q} (G,F)(v) \eqdef 
  \int_{\mathbb{R}^n}  dv_* 
  \int_{\mathbb{S}^{n-1}}  d\sigma~ 
  B(v-v_*, \sigma) \, 
  \big[ G'_* F' - G_* F \big].
  \end{equation*} 
Here we are using the standard shorthand $F = F(v)$, $G_* = G(v_*)$, $F' = F(v')$, $G_*^{\prime} = G(v'_*)$. 
In this expression,  $v$, $v_*$ and $v'$, $v' _*$  are 
the velocities of a pair of particles before and after collision.  They are connected through the formulas
  \begin{equation}
  v' = \frac{v+v_*}{2} + \frac{|v-v_*|}{2} \sigma, \qquad
  v'_* = \frac{v+v_*}{2} - \frac{|v-v_*|}{2} \sigma,
  \qquad \sigma \in \mathbb{S}^{n-1}.
  \label{sigma}
  \end{equation}
This representation of ${\mathcal Q}$ results from making a choice for the parameterization of the set of solutions to the physical law of elastic collisions:
\begin{equation}
\begin{array}{rcl}
v+v_* &=& v^{\prime } + v^{\prime }_*,
\\
|v|^2+|v_*|^2 &=& |v^\prime|^2+|v_*^\prime|^2.
\end{array}
\notag
\end{equation}
This choice is not unique; we specifically utilize also Carleman-type representations.

 The {\em Boltzmann collision kernel} $B(v-v_*, \sigma)$ for a monatomic gas is, on physical grounds, a non-negative function which 
only depends on the {\em relative velocity} $|v-v_*|$ and on
the {\em deviation angle}  $\theta$ through 
$\cos \theta = \ip{ k }{ \sigma}$ where $k = (v-v_*)/|v-v_*|$ and $\langle \cdot, \cdot \rangle$ is the usual scalar product in $\threed$. 
Without loss of generality we may assume that  $B(v-v_*, \sigma)$
is supported on $\ip{ k }{ \sigma} \ge 0$, i.e. $0 \le \theta \le \frac{\pi}{2}$.
Otherwise we can  reduce to this situation 
with the following standard ``symmetrization'' \cite{MR1379589}: 
$$
\overline{B}(v-v_*, \sigma)
=
\left[ 
B(v-v_*, \sigma)
+
B(v-v_*, -\sigma)
\right]
{\bf 1}_{\ip{ k }{ \sigma} \ge 0}.
$$ 
Above and generally, ${\bf 1}_{A}$ is the usual indicator function of the set $A$.

\subsection*{The Collision Kernel}

Our assumptions are as follows:  
 \begin{itemize}
 \item 
We suppose that $B(v-v_*, \sigma)$ takes product form in its arguments as
\begin{equation}
B(v-v_*, \sigma) =\Phi( |v-v_*| ) \, b(\cos \theta).
\notag
\end{equation}
In general both $b$ and $\Phi$ are non-negative functions. 
 
  \item The angular function $t \mapsto b(t)$ is not locally integrable; for $c_b >0$ it satisfies
\begin{equation}
\frac{c_b}{\theta^{1+2s}} 
\le 
\sin^{\dim -2} \theta  ~ b(\cos \theta) 
\le 
\frac{1}{c_b\theta^{1+2s}},
\quad
s \in (0,1),
\quad
   \forall \, \theta \in \left(0,\frac{\pi}{2} \right].
   \label{kernelQ}
\end{equation}
 
  \item The kinetic factor $z \mapsto \Phi(|z|)$ satisfies for some $C_\Phi >0$
\begin{equation}
\Phi( |v-v_*| ) =  C_\Phi  |v-v_*|^\gamma , \quad \gamma  \ge -2s.
\label{kernelP}
\end{equation}
In the rest of this paper these will be called ``hard potentials.'' 

  \item Our results will also apply to the more singular situation
\begin{equation}
\Phi( |v-v_*| ) =  C_\Phi  |v-v_*|^\gamma , \quad  -2s > \gamma  > -n. 
\label{kernelPsing}
\end{equation}
These will be called ``soft potentials.''

 \end{itemize}

Our main physical motivation is derived from  particles interacting according to a spherical intermolecular repulsive potential of the form
$$
\phi(r)=r^{-(p-1)} , \quad p \in (2,+\infty).
$$
For these potentials, Maxwell \cite{Maxwell1867} in 1866 showed that 
the kernel $B$ can be computed.   In dimension $n=3$, $B$ satisfies the conditions above with 
$\gamma = (p-5)/(p-1)$ 
and $s = 1/(p-1)$; 
see for instance \cite{MR1313028,MR1307620,MR1942465}.
Thus the conditions in  \eqref{kernelQ}, \eqref{kernelP}, and \eqref{kernelPsing} include all of the potentials $p > 2$ in the physical dimension $n=3$.
Note further that the Boltzmann collision operator is not well defined for $p=2$, see \cite{MR1942465}.

We will study the linearization of \eqref{BoltzFULL} around the Maxwellian 
equilibrium state 
\begin{equation}
F(t,x,v) = \mu(v)+\sqrt{\mu(v)} f(t,x,v),
\label{maxLIN}
\end{equation}
where without loss of generality
$$
\mu(v) = (2\pi)^{-n/2}e^{-|v|^2/2}.
$$
We will also suppose without restriction that the mass, momentum, and energy conservation laws for the perturbation $f(t,x,v)$ hold for all $t\ge0$ as
\begin{equation}
\int_{\domain\times \threed} ~ dx ~ dv ~ \begin{pmatrix}
      1   \\      v  \\ |v|^2
\end{pmatrix}
~ \sqrt{\mu(v)} ~ f(t,x,v) 
=
0.
\label{conservation}
\end{equation}
This condition should be satisfied initially, and then will continue to be satisfied for a suitably strong solution.  Our main interest is in global classical solutions to the Boltzmann equation \eqref{BoltzFULL} which are perturbations of the Maxwellian equilibrium states \eqref{maxLIN} for the long-range collision kernels \eqref{kernelQ}, \eqref{kernelP} and \eqref{kernelPsing}.

Our solution to this problem rests heavily on our introduction into the Boltzmann theory of the 
following weighted geometric fractional  Sobolev space:
$$
\spacen
\eqdef 
\left\{ f \in \mathcal{S}'(\mathbb{R}^n): 
 \nsm f\nsm_{\spacen} <\infty
\right\},
$$
where we specify the anisotropic norm by
\begin{equation} 
\nsm f\nsm_{\spacen}^2 
\eqdef 
\nsm f\nsm_{L^2_{\gamma+2s}}^2 + \int_{\mathbb{R}^n} dv \int_{\mathbb{R}^n} dv' ~
(\ang{v}\ang{v'})^{\frac{\gamma+2s+1}{2}}
 \frac{(f' - f)^2}{d(v,v')^{n+2s}} 
\ind_{d(v,v') \leq 1}. \label{normdef} 
\end{equation}
This space includes the weighted $L^2_\ell$ space, for $\ell\in\mathbb{R}$, with norm given by
$$
\nsm f\nsm_{L^2_{\ell}}^2 
\eqdef
\int_{\mathbb{R}^n} dv ~ 
\ang{v}^{\ell}
~
|f(v)|^2.
$$
The weight is
$
\ang{v}
\eqdef \sqrt{1+|v|^2}.
$
The fractional differentiation effects are measured 
using the following anisotropic metric $d(v,v')$ on the ``lifted'' paraboloid:
$$
d(v,v') \eqdef \sqrt{ |v-v'|^2 + \frac{1}{4}\left( |v|^2 -  |v'|^2\right)^2}.
$$
The inclusion of the quadratic difference $|v|^2 - |v'|^2$ is essential; it is not a lower order term.
Heuristically, this metric encodes the anisotropic changes in the power of the weight, which are 
entangled with the non-local fractional differentiation effects.

The space $\spacen$ is essentially a weighted anisotropic Sobolev norm; this particular feature was conjectured in \cite{MR2322149}.  
We see precisely that if $\mathbb{R}^n$ is identified with a paraboloid in $\mathbb{R}^{n+1}$ by means of the mapping $v \mapsto (v, \frac{1}{2}|v|^2)$ and $\Delta_{P}$ is defined to be the Laplacian on the paraboloid induced by the Euclidean metric on $\mathbb{R}^{n+1}$ then
\[ 
\nsm f \nsm_{\spacen}^2 \approx \int_{\mathbb{R}^n} dv \ang{v}^{\gamma+2s} \left| (I- \Delta_{P})^{\frac{s}{2}} f (v) \right|^2. 
\]
The rest of our Sobolev spaces are defined in Section \ref{sec:FuncSp} just below.

We may now state our first main result as follows:

\begin{theorem}
\label{SG1mainGLOBAL}  
(Hard potentials)
Fix $\HARDxDER \ge \ksob$, the number of spatial derivatives, $0\le \HARDvDER \le \HARDxDER$ the number of velocity derivatives, and $\ell \ge0$.  
Suppose \eqref{kernelQ} and \eqref{kernelP}.
Choose initially $f_0(x,v) \in  \HARDspaceh(\domain \times \threed)$ in \eqref{maxLIN} which satisfies \eqref{conservation}.  
There is an $\eta_0>0$ such that if $\| f_0 \|_{\HARDspaceh} \le \eta_0$, then there exists a unique global strong solution to the Boltzmann equation \eqref{BoltzFULL}, in the form 
\eqref{maxLIN}, which satisfies
$$
f(t,x,v) \in 
L^\infty_t( [0,\infty); \HARDspaceh (\domain\times \threed ))
\cap
L^2_t( (0,\infty); \hardNspace  (\domain\times \threed )).
$$
Moreover, we have exponential decay to equilibrium.  
For some fixed $\lambda >0$,
$$
\| f(t) \|_{\HARDspaceh (\domain\times \threed )} \lesssim e^{-\lambda t} 
\| f_0 \|_{\HARDspaceh (\domain\times \threed )}.
$$
 We also have positivity, i.e. $F= \mu + \sqrt{\mu} f \ge 0$ if $F_0= \mu + \sqrt{\mu} f_0 \ge 0$.
\end{theorem}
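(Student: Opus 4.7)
The plan is to substitute the ansatz \eqref{maxLIN} into \eqref{BoltzFULL}, reducing the problem to solving the perturbation equation
\[
\partial_t f + v\cdot\nabla_x f + Lf = \Gamma(f,f),
\]
where $Lf \eqdef -\mu^{-1/2}[\mathcal{Q}(\mu,\sqrt{\mu}f) + \mathcal{Q}(\sqrt{\mu}f,\mu)]$ is the linearized collision operator, whose null space $\nullSpace$ is spanned by the collision invariants $\sqrt{\mu}, v_i \sqrt{\mu}, |v|^2 \sqrt{\mu}$, and $\Gamma(f,g) \eqdef \mu^{-1/2}\mathcal{Q}(\sqrt{\mu}f,\sqrt{\mu}g)$. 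I would then run an energy/continuity argument in $\HARDspaceh$, with dissipation measured in $\hardNspace$, for data of size $\eta_0$ sufficiently small.

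Two analytic pillars underlie the scheme. First, the sharp linear coercivity
\[
\langle Lf, f \rangle \gtrsim \nsm (I - P) f \nsm_{\spacen}^2,
\]
where $P$ is the orthogonal projection onto $\nullSpace$; upgrading this globally in $x$ by a macroscopic ellipticity argument, which uses the conservation laws \eqref{conservation} as a compatibility condition, recovers the missing dissipation of the hydrodynamic part $Pf$. Second, a trilinear estimate of the shape
\[
\Bigl|\bigl\langle \partial_\alpha^\beta \Gamma(g,h),\,\partial_\alpha^\beta f\bigr\rangle\Bigr|
\lesssim \bigl(\|g\|_{\HARDspaceh}\,\nsm h\nsm_{\hardNspace} + \nsm g\nsm_{\hardNspace}\,\|h\|_{\HARDspaceh}\bigr)\,\nsm f\nsm_{\hardNspace},
\]
valid for $|\alpha|\leq \HARDxDER$ and $|\beta|\leq \HARDvDER$, where the Sobolev threshold $\HARDxDER\geq\ksob$ is used to place one factor in $L^\infty_x$. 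This trilinear bound is the main obstacle: it must respect the anisotropic paraboloid metric $d(v,v')$ underlying $\spacen$ and be uniform in the full parameter range $s\in(0,1)$ and $\gamma>-n$. The hard part will be proving it; I plan to attack it via a Carleman-type dual representation of $\mathcal{Q}$, a dyadic decomposition of the angular singularity against scales of $d(v,v')$, and the paraboloid Littlewood-Paley structure suggested by the identification with $(I-\Delta_P)^{s/2}$ following \eqref{normdef}.

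Given these two ingredients, differentiating the perturbation equation by $\partial_\alpha^\beta$, testing against $\ang{v}^{2\ell}\partial_\alpha^\beta f$, and summing over multi-indices (with the weights on $|\beta|$ ordered so that the commutators produced by $v\cdot\nabla_x$ meeting $\partial^\beta_v$ can be absorbed by lower-order dissipation) yields
\[
\tfrac{d}{dt}\|f\|_{\HARDspaceh}^2 + c_0\,\nsm f\nsm_{\hardNspace}^2 \lesssim \|f\|_{\HARDspaceh}\,\nsm f\nsm_{\hardNspace}^2.
\]
A standard continuity argument absorbs the cubic right-hand side using smallness of $\eta_0$, giving global control. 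Local existence and uniqueness arise in parallel from a linearized iteration $\partial_t f^{\iter+1} + v\cdot\nabla_x f^{\iter+1} + L f^{\iter+1} = \Gamma(f^\iter, f^\iter)$ whose contraction property is again a direct consequence of the above trilinear estimate. In the hard regime $\gamma\geq -2s$ the weight $\ang{v}^{\gamma+2s}$ is bounded below by a positive constant, so $\nsm\cdot\nsm_{\hardNspace}$ dominates $\|\cdot\|_{\HARDspaceh}$ after the macroscopic closure; the inequality then upgrades to $\tfrac{d}{dt}\|f\|_{\HARDspaceh}^2 + \lambda\|f\|_{\HARDspaceh}^2 \leq 0$, which yields the stated exponential decay. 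Positivity $F\geq 0$ is propagated through a parallel nonnegative scheme $\partial_t F^{\iter+1} + v\cdot\nabla_x F^{\iter+1} + R[F^\iter]F^{\iter+1} = \mathcal{Q}^+(F^\iter,F^\iter)$ with nonnegative gain term and Duhamel-handled loss term, which preserves positivity iterate-by-iterate and passes to the limit.
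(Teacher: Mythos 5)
Your overall architecture matches the paper's: linearize to $\partial_t f + v\cdot\nabla_x f + Lf = \Gamma(f,f)$, prove the sharp coercivity $\langle Lf,f\rangle \gtrsim |\{\mathbf{I-P}\}f|_{N^{s,\gamma}}^2$, prove a weighted trilinear estimate with the paraboloid-adapted Littlewood--Paley machinery, close the hydrodynamic part by a macroscopic-equations argument, and run a Guo-style energy/continuity scheme. Two secondary deviations: first, your local iteration solves $\Gamma(f^{\iter},f^{\iter})$ as a pure source, whereas the paper solves $\left(\partial_t + v\cdot\nabla_x + \nPiece\right)f^{\iter+1} + \kPiece f^{\iter} = \Gamma(f^{\iter},f^{\iter+1})$, keeping the norm piece $\nPiece$ and the second slot of $\Gamma$ implicit in $f^{\iter+1}$ so that the trilinear estimate can be applied at the new iterate and the iteration closes uniformly in the $N^{s,\gamma}$ dissipation norm. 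Second, when you invoke ``macroscopic ellipticity'' you gloss over the interaction functional $\mathcal{I}(t)$: since the method deliberately avoids time derivatives, the estimates for $\partial_t \partial^\alpha a$, $\partial_t\partial^\alpha b$, $\partial_t\partial^\alpha c$ cannot be absorbed directly and must be traded for a total time derivative of $\mathcal{I}(t)$ using the local conservation laws \eqref{cl.0}--\eqref{cl.2}; this is the content of Theorem \ref{positive} and is not cosmetic.

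The genuine gap is your positivity argument. The scheme you propose, with
\[
\partial_t F^{\iter+1} + v\cdot\nabla_x F^{\iter+1} + R[F^{\iter}]F^{\iter+1} = \mathcal{Q}^+(F^{\iter},F^{\iter}),
\]
relies on splitting $\mathcal{Q}=\mathcal{Q}^+-\mathcal{Q}^-$ with $\mathcal{Q}^-(F,F)=R[F]F$, where $R[F]=\int_{\threed}\int_{\sph} B(v-v_*,\sigma)\,F_*\,d\sigma\,dv_*$. Under \eqref{kernelQ} the angular factor $b(\cos\theta)$ is non-integrable on $\sph$, so $R[F]$ is identically $+\infty$; the loss term has no independent meaning, the Duhamel factor $\exp(-\int R[F^{\iter}])$ vanishes, and the iteration is not well-defined. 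This is not a technical nuisance but precisely the obstruction that the non-cutoff structure poses. The paper instead replaces $B$ with a family of cutoff kernels $B_\epsilon$ for which the Grad-type iteration does apply, obtains nonnegative solutions $F^\epsilon\ge 0$, and then passes to the limit $\epsilon\downarrow 0$ by a compactness argument (following \cite{MR2679369}), using the uniqueness already proved to identify the limit. You would need to adopt some such approximation-and-compactness route; the gain/loss split cannot be used directly without a cutoff.
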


We will now make a few comments on Theorem \ref{SG1mainGLOBAL}.   Note that $\ksob = \lfloor \frac{n}{2} +1 \rfloor$, which is the smallest integer strictly greater than $\frac{n}{2}$, is the critical number of whole derivatives required to use the Sobolev embedding theorems, such as \eqref{sobolevE}, in $n$-dimensions.  
Now in dimension three, for example, in the above theorem we only need $\HARDxDER \ge 2$ derivatives to have a unique theory of global solutions.  This is a result of \eqref{sobolev}.  This low regularity theorem improves by a whole derivative the previously-known amount of regularity needed, even in the presence of angular cut-off.  
We note, furthermore, that aside from the exponential decay of solutions, all of our results for hard potentials such as Theorem \ref{SG1mainGLOBAL}  hold for a larger range of parameters $\gamma > \max\{-n, -\frac{n}{2} - 2s\}$.
However $\gamma  \ge -2s$ is needed for a spectral gap \eqref{sharpLinearEst}.
Thus we would only obtain rapid polynomial decay when $\gamma + 2s <0$ as in Theorem \ref{mainGLOBAL}.  We also have the following result for the soft-potentials \eqref{kernelPsing}:

\begin{theorem}
\label{mainGLOBAL}  
(Soft potentials)
Fix $K \geq 2 \ksob$, the total number of derivatives, and $\ell\ge 0$ the order of the velocity weight in our Sobolev spaces.
Suppose \eqref{kernelQ} and \eqref{kernelPsing}.
Choose initially $f_0(x,v) \in \spaceh(\mathbb{T}^n \times \mathbb{R}^n)$ in \eqref{maxLIN} which satisfies \eqref{conservation}.  
There is an $\eta_0>0$ such that if $\| f_0 \|_{\spaceh} \le \eta_0$, then there exists a unique global classical solution to the Boltzmann equation \eqref{BoltzFULL}, in the form 
\eqref{maxLIN}, which satisfies
$$
f(t,x,v) \in 
L^\infty_t \spaceh  ( [0,\infty)\times\mathbb{T}^n\times \mathbb{R}^n )
\cap
L^2_t \nspace( (0,\infty)\times\mathbb{T}^n\times \mathbb{R}^n ).
$$
Since $\gamma <- 2s$, if $\| f_0 \|_{H^K_{\ell+m}}$ is sufficiently small for $\ell,m \ge 0$ then
 we have 
$$
\| f(t) \|_{\spaceh(\mathbb{T}^n \times \mathbb{R}^n)} \le C_m (1+t)^{-m}
\| f_0 \|_{H^K_{\ell+m}(\mathbb{T}^n \times \mathbb{R}^n)}.
$$
 We also have positivity, i.e. $F= \mu + \sqrt{\mu} f \ge 0$ if $F_0= \mu + \sqrt{\mu} f_0 \ge 0$.
\end{theorem}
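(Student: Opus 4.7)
The plan is to pass to the perturbation equation obtained from \eqref{BoltzFULL} and \eqref{maxLIN}, namely
\[
\partial_t f + v\cdot\nabla_x f + Lf = \Gamma(f,f),
\]
where $L$ is the linearized collision operator and $\Gamma$ the bilinear remainder. First I would invoke the sharp coercive lower bound $\langle Lf, f\rangle_{L^2_v} \gtrsim \nsm (I-P)f\nsm_{\spacen}^2$ (announced in the abstract and presumed established earlier), with $P$ the $L^2_v$-projection onto the hydrodynamic null space $\mathrm{span}\{\sqrt{\mu},\, v_j\sqrt{\mu},\, |v|^2\sqrt{\mu}\}$. The conservation laws \eqref{conservation} keep $Pf$ in the zero-mean sector, so a macroscopic reconstruction in the spirit of Guo and Liu--Yang--Yu produces an auxiliary interaction functional $\mathcal{I}(f)$ whose time derivative satisfies $\dot{\mathcal{I}}(f)\gtrsim \|\nabla_x Pf\|_{H^{K-1}}^2 - C\,\nsm (I-P)f\nsm_{\spacen}^2 - (\text{nonlinear})$. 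This upgrades microscopic dissipation to full dissipation including the hydrodynamic modes.

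Next I would run weighted energy estimates for each mixed derivative $\partial^\alpha_\beta$ with $|\alpha|+|\beta|\le K$ against the weight $\ang{v}^{2\ell}$. Commuting $\partial^\alpha_\beta$ with the transport and with $L$, and applying Leibniz to $\Gamma$, produces lower-order terms that are absorbed by the top-order coercivity. The key input is the trilinear estimate
\[
\bigl|\langle \Gamma(g,f),\, h\rangle_{\spaceh}\bigr| \lesssim \|g\|_{\spaceh}\,\nsm f\nsm_{\nspace}\,\nsm h\nsm_{\nspace},
\]
which is exactly the weighted extension (to $K$ derivatives) of the trilinear estimate built into the space $\spacen$ via its Carleman-type decomposition. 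Combining this with the macroscopic estimate gives the a priori inequality
\[
\tfrac{d}{dt}\mathcal{E}(f) + c\,\nsm f\nsm_{\nspace}^2 \le C\sqrt{\mathcal{E}(f)}\,\nsm f\nsm_{\nspace}^2,
\]
where $\mathcal{E}(f)\sim \|f\|_{\spaceh}^2$. A local existence result (for instance by truncating the angular singularity in $b$ to a cut-off kernel, solving via a linearized iteration, and passing to the limit using the a priori bound), together with smallness of $\|f_0\|_{\spaceh}$ and a standard continuity/bootstrap argument, then upgrades the local solution to the claimed global solution in $L^\infty_t \spaceh \cap L^2_t \nspace$. Uniqueness follows by applying the same trilinear estimate to the difference of two solutions, and nonnegativity is inherited by passing to the limit in cut-off approximations whose positivity is classical.

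The main obstacle, distinguishing the soft-potential case from Theorem \ref{SG1mainGLOBAL}, is the absence of a spectral gap when $\gamma+2s<0$: the leading $L^2$ weight in $\nspace$ is then strictly weaker than in $\spaceh$, so $\nsm f\nsm_{\nspace}^2$ does not control $\mathcal{E}(f)$ and exponential decay is unavailable. To recover polynomial decay, I would propagate a higher-weight energy inequality in parallel, yielding a time-uniform bound $\|f(t)\|_{H^K_{\ell+m}} \lesssim \|f_0\|_{H^K_{\ell+m}}$, with the nonlinearity still absorbed by the original smallness assumption on $\|f_0\|_{\spaceh}$ rather than on the higher weight. A weighted interpolation of the form
\[
\|f\|_{\spaceh}^{1+\rho} \lesssim \nsm f\nsm_{\nspace}^{2}\,\|f\|_{H^K_{\ell+m}}^{\rho'},\qquad \rho=\rho(m,\gamma,s)>0,
\]
valid precisely because $\gamma+2s<0$, then converts the dissipation inequality into a nonlinear ODE $\dot y + c\,y^{1+\rho} \le C(\|f_0\|_{H^K_{\ell+m}})$ for $y(t)=\mathcal{E}(f(t))$. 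Its integration yields polynomial decay, and by enlarging the weight parameter in the hypothesis the rate can be made to match any prescribed $(1+t)^{-m}$, giving the last assertion of Theorem \ref{mainGLOBAL}.
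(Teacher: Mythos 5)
Your proposal follows the paper's proof of Theorem \ref{mainGLOBAL} (Section \ref{sec:deBEest}) closely: the sharp coercive lower bound is Theorem \ref{lowerN}; the interaction functional built from the macroscopic equations \eqref{c}--\eqref{adot} and the local conservation laws \eqref{cl.0}--\eqref{cl.2} is Theorem \ref{positive}; the weighted energy inequality \eqref{MAINeINEQ} is closed inductively on the weight $\ell$ and on the number of $\beta$-derivatives, using Lemma \ref{NonLinEstHIGH} and Lemma \ref{DerCoerIneq}; local existence, uniqueness, continuity and positivity are Theorem \ref{local}; and the polynomial rate comes from interpolating $\mathcal{E}_\ell$ between $\mathcal{E}_{\ell-1}$ (controlled by $\mathcal{D}_\ell$ since $\gamma+2s<0$) and the propagated higher-weight energy $\mathcal{E}_{\ell+m}$.

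The step that would fail as written is your decay ODE. You state $\dot y + c\,y^{1+\rho}\le C(\|f_0\|_{H^K_{\ell+m}})$ with an additive constant on the right; integrating that inequality only gives convergence to the equilibrium level $(C/c)^{1/(1+\rho)}$, not decay to zero. What the interpolation actually produces, after first closing the dissipation inequality $\dot{\mathcal{E}}_\ell + \delta\,\mathcal{D}_\ell\le 0$ and invoking the propagation $\mathcal{E}_{\ell+m}(t)\lesssim\mathcal{E}_{\ell+m}(0)$, is the zero-forcing inequality
\begin{equation*}
\frac{d}{dt}\mathcal{E}_\ell(t) + C_{\ell,m}\,\mathcal{E}_{\ell+m}^{-1/m}(0)\,\mathcal{E}_\ell^{(m+1)/m}(t) \le 0,
\end{equation*}
in which the higher-weight data appear in the coefficient, not as a forcing term; this integrates to $\mathcal{E}_\ell(t)\lesssim(1+t)^{-m}$. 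A second, smaller inaccuracy: propagating the bound on $\mathcal{E}_{\ell+m}(t)$ requires absorbing the nonlinear term in the weight-$(\ell+m)$ energy inequality, which needs smallness of $\|f_0\|_{H^K_{\ell+m}}$ itself, as the theorem states, not merely smallness of $\|f_0\|_{\spaceh}$.
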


For the derivatives needed above, in dimension $n=3$,  we have $2K_n^* = 4$.  This is a substantial reduction in comparison to the usual regularity conditions (eight derivatives are required, for instance, by Guo \cite{MR1946444,MR2013332} for the cut-off soft potential and the Landau equation), made possible by a trick appearing after Lemma \ref{NonLinEstA}.

\subsection{Function spaces}
\label{sec:FuncSp}
We will use $\langle \cdot, \cdot \rangle$ to denote the standard $L^2(\threed)$ inner product; because of the context this should not be confused with the scalar product on $\threed$.  The notation $(\cdot, \cdot )$ will refer to the corresponding $L^2(\domain \times \threed)$ inner product. 
These spaces will be sometimes called $L^2_v$ and $L^2_v L^2_x$, respectively.  
The notation $| \cdot |$ will refer to function space norms acting on $\R^n$ only.  The analogous norms on $\domain \times \threed$ will be denoted by $\| \cdot \|$, typically using the same subscript.
For example,
$$
\| h\|_{\spacen}^2
\eqdef
\left\|~ \nsm h\nsm_{\spacen} ~ \right\|_{L^2(\domain)}^2.
$$
The multi-indices $\alpha = (\alpha^1, \alpha^2, \ldots, \alpha^n)$ and $\beta = (\beta^1, \beta^2, \ldots, \beta^n)$ will be used to record spatial and velocity derivatives, respectively.  Specifically, 
$$
\partial^{\alpha}_{\beta}
\eqdef
\partial^{\alpha^1}_{x_1} 
\partial^{\alpha^2}_{x_2} \cdots
\partial^{\alpha^n}_{x_\dim}
\partial^{\beta^1}_{v_1} 
\partial^{\beta^2}_{v_2} \cdots 
\partial^{\beta^n}_{v_\dim}. 
$$
Similarly, the notation $\partial^\alpha$ will be used when $\beta = 0$, 
and likewise for $\partial_\beta$. 
If each component of $\alpha$ is not greater than that of $\alpha_1$,
we write $\alpha \le \alpha_1$.  Also $\alpha <\alpha_1$ means 
$\alpha \le \alpha_1$ and $|\alpha |<|\alpha_1|$, 
where
$|\alpha | = \alpha^1 + \alpha^2 + \cdots + \alpha^n$ as usual.
We also define the unified weight function
$$
w(v) 
\eqdef 
\left\{
\begin{array}{ccc}
\ang{v}, &\gamma + 2s \ge 0, & \text{``hard potentials'': \eqref{kernelP}}\\
\ang{v}^{-\gamma - 2s}, & \gamma + 2s < 0, & \text{``soft potentials'': \eqref{kernelPsing}.}\\
\end{array}
\right.
$$
This somewhat non-standard terminology distinguishes exactly when a spectral gap exists for the non cut-off linearized collision operator.
In both cases  the weight goes to infinity as $|v|$ goes to infinity.  The scaling chosen above will be convenient for the soft potentials in our analysis below.
For any $\ell \in  \mathbb{R}$, the space
$ H^K_\ell(\mathbb{R}^n) $ with
 $K \ge 0$ velocity derivatives is defined by
 \begin{gather*}
|h|^2_{H^K_\ell}
=
|h|^2_{H^K_{\ell} (\mathbb{R}^n)}
\eqdef 
\sum_{|\beta| \le K}
|w^{{\ell - |\beta|}}\partial_{\beta}h |^2_{L^2 (\mathbb{R}^n)}.
\end{gather*}
The norm notation 
$
|h|^2_{H^K_{\ell,\gamma+2s}}
\eqdef 
\sum_{|\beta| \le K}
|w^{{\ell - |\beta|}}\partial_{\beta}h |^2_{L^2_{\gamma+2s} (\mathbb{R}^n)}
$
will also find utility.  We also use the  space, 
$ \HARDspaceh=\HARDspaceh \left(\domain \times \threed \right)$ with
 $\HARDxDER \ge \HARDvDER$ derivatives, given by  
 \begin{gather*}
\|h\|^2_{\HARDspaceh}\eqdef 
\|h\|^2_{\HARDspaceh \left(\domain \times \threed \right)}
=
\sum_{|\beta | \le \HARDvDER} \sum_{|\alpha|  \le \HARDxDER - |\beta |} \| w^{\ell - |\beta|} \partial_\beta^\alpha h\|^2_{L^2 (\domain \times \threed)}.
\end{gather*}
Moreover, the space $\spaceh(\domain \times \threed)$ is given by
 \begin{gather*}
\|h\|^2_{\spaceh}
=
\|h\|^2_{\spaceh (\mathbb{T}^n \times \mathbb{R}^n)}
\eqdef 
\sum_{|\alpha | + |\beta| \le K}\|w^{\ell - |\beta|}\partial^{\alpha}_{\beta}h\|^2_{L^2 (\mathbb{T}^n \times \mathbb{R}^n)}.
\end{gather*}
In Section \ref{sec:deBEest}, the following unified notation will become useful
\begin{equation}
\label{unifiedHspace}
\|h\|^2_{\spaceU}
\eqdef
\left\{
\begin{array}{cc}
\|h\|^2_{\HARDspaceh}, & \text{for the hard potentials: } (\ref{kernelP})
\\
\|h\|_{\spaceh}, &  \text{for the soft potentials: }  (\ref{kernelPsing}).
\\
\end{array}
\right.
\end{equation}
We also consider the general weighted anisotropic derivative space as in \eqref{normdef} by 
 \begin{gather*}
\nsm h \nsm^2_{{\spaceELLn}}
\eqdef 
\nsm w^{\ell} h\nsm_{L^2_{\gamma+2s}}^2 + \int_{\mathbb{R}^n} dv ~ 
\ang{v}^{\gamma+2s+1} w^{2\ell}(v)
\int_{\mathbb{R}^n} dv' 
~
 \frac{(h' - h)^2}{d(v,v')^{n+2s}} 
\ind_{d(v,v') \leq 1}.
\end{gather*}
To this velocity space we associate the total spaces
$
\| h \|_{{\spaceELLn}}
\eqdef
\| \nsm h \nsm_{{\spaceELLn}} \|_{L^2(\mathbb{T}^n)}
$
and
$
\| h\|_{\spacen}^2.
$
We also  use the anisotropic space $\nspace (\mathbb{T}^n\times\mathbb{R}^n)$, given  by
 \begin{equation*}
\|h\|^2_{\nspace}
=
\|h\|^2_{\nspace(\mathbb{T}^n\times\mathbb{R}^n)}\eqdef \sum_{|\alpha | + |\beta| \le K}
\|\partial^{\alpha}_{\beta}h\|^2_{{N^{s,\gamma}_{\ell-|\beta|}}(\mathbb{T}^n\times\mathbb{R}^n)}.
\end{equation*}
Furthermore we will use a version of this norm only in the velocity variables
$$
\nsm h \nsm_{\nspace}^2
=
\nsm h \nsm_{\nspace(\threed)}^2
\eqdef 
\sum_{|\beta| \le K}
\nsm \partial_{\beta}h\nsm^2_{{N^{s,\gamma}_{\ell-|\beta|}}(\mathbb{R}^n)}.
$$
Moreover,
$
\hardNspace
=
\hardNspace  (\domain\times \threed )
$
is given by 
 \begin{equation*}
\|h\|^2_{\hardNspace}
=
\|h\|^2_{\hardNspace(\mathbb{T}^n\times\mathbb{R}^n)}\eqdef 
\sum_{|\beta | \le \HARDvDER} \sum_{|\alpha|  \le \HARDxDER - |\beta |}
\|\partial^{\alpha}_{\beta}h\|^2_{{N^{s,\gamma}_{\ell-|\beta|}}(\mathbb{T}^n\times\mathbb{R}^n)}.
\end{equation*}
We also define $B_C\subset \threed$ to be  the Euclidean ball of radius $C$ centered at the origin, then $L^2 (B_C)$ is the space $L^2$ on this ball and similarly for other spaces.  These are the spaces that we will use in our proofs of Theorem \ref{SG1mainGLOBAL} and Theorem \ref{mainGLOBAL}.

\subsection{Historical discussion} \label{sec:reviewNON}
 For early developments in the Boltzmann equation with long-range interactions, between 1952-1988,
 we mention  the work of
 Arkeryd, 
 Bobylev,
 Pao,
 Ukai,  
 and
 Wang Chang-Uhlenbeck-de Boer
 in 
 \cite{WCUh52,MR1128328,MR0636407,MR630119,MR679196,MR839310}.

Now Grad proposed \cite{MR0156656} in 1963 the angular cut-off which requires that $b(\cos \theta)$ be bounded.  Grad also pointed out that many cut-offs are possible.  
In particular,  the following less stringent $L^1(\sph)$ cut-off
 has become fashionable\footnote{
Note that the $L^1(\mathbb{S}^2)$ cut-off was already implicitly used in 1954 by Morgenstern \cite{MR0063956}.} 
$$
\int_{\sph}d\sigma~b(\ang{k, \sigma}) < \infty.
$$
These types of truncations have been widely accepted, and have now influenced several
decades of mathematical progress on the Boltzmann equation.   We refer the reader to a brief few breakthrough works of \cite{09conv,MR1379589,MR2043729,MR1313028,MR1307620,MR1014927,MR0156656,MR2259206,
MR2570766, MR2095473,MR0475532,MR1284432,MR882376,MR0363332,MR2000470,MR760333,MR2209761,MR2366140,MR2013332,MR2116276,MR1057534}; further references can be found in the review article \cite{MR1942465}.  Many of these works develop ideas and methods that are fundamental and important even without angular cut-off.  In particular the space-time estimates and general non-linear energy method developed by Guo \cite{MR1946444,MR2013332,MR2000470} and
 Strain-Guo \cite{MR2209761,MR2366140} is an important element in Section \ref{sec:deBEest} of our proof.

 These cut-off assumptions were originally believed to not change the essential nature of solutions to the equation. 
It has been argued by physicists, see \cite{MR1942465}, that the important properties of the Boltzmann equation are not particularly sensitive to the dependence of the collision kernel upon the deviation angle, $\theta$. There is on the other hand an extensive history of mathematical results which illustrates instead that solutions of the Boltzmann equation have a strong dependence upon the angular singularity.
 In particular, in the presence of these physical effects, the Boltzmann equation is well-known to experience regularizing effects.
 Results of this sort  go back to Lions \cite{MR1278244} and Desvillettes \cite{MR1324404}
 and have seen substantial developments \cite{MR1407542,
 MR1475459,MR1750040,MR1737547,MR1715411,MR2149928,MR2038147}.  
 Recently 
 Chen-Desvillettes-He \cite{MR2506070}
 and also
 Alexandre-Morimoto-Ukai-Xu-Yang \cite{MR2462585,MR2679369} have developed independent  machinery to study these general smoothing effects for kinetic equations.
 Contrast this with the case of an angular cut-off, where, as a result of works by Boudin-Desvillettes \cite{MR1798557} in 2000, and additional progress in \cite{MR2435186,MR2476678},
 we know that under
 the angular cut-off assumption small-data solutions
 can have the same
 Sobolev space regularity as the initial data.    These results illustrate that the Boltzmann equation with angular cut-off can be in some respects a very different model from the one without any angular cut-off.  
 Yet all of the inverse
 power-law potentials $p\in (2,\infty)$ dictate that
  the cross section  $B(v-v_*,\sigma)$ is non-integrable in the
 angular variable.

In 1998 Lions proved a functional inequality \cite{MR1649477} which bounds below the ``entropy dissipation'' by an isotropic Sobolev norm $H^\alpha_v$ up to lower order terms, for a certain range of $\alpha$.
Then in the work of Alexandre-Desvillettes-Villani-Wennberg
\cite{MR1765272} from 2000, this entropy dissipation smoothing estimate was obtained in the isotropic space $H^s_v(B_R)$ with the optimal exponent $s$.  This work further introduced elegant formulas, such as the cancellation lemma and  isotropic sub-elliptic coercivity estimates using the Fourier transform.
This was in several ways the starting point of the modern theory of grazing collisions.
Subsequent results of Desvillettes-Wennberg \cite{MR2038147} further demonstrated that solutions to the spatially homogeneous Boltzmann equation for regularized hard potentials enter the Schwartz space instantaneously.
And recently in
2009  Desvillettes-Mouhot \cite{MR2525118} proved the uniqueness of spatially homogeneous strong solutions  for the full range of angular singularities $s\in(0,1)$, and they have shown existence for moderate angular singularities $s\in(0,1/2)$. 
 We mention several further works which 
 developed and utilized the entropy production
 estimates for the
 collision operator
 (which grants a non-linear smoothing effect) and also the spatially homogeneous theories
 as in
 \cite{MR2149928,MR1650006,MR1484062,MR2398952,MR1750040}.
 Further references can be found in the surveys \cite{MR1942465,krmReview2009}.  In Section \ref{sec:ep} we discuss a new global anisotropic entropy production estimate with the stronger semi-norm from \eqref{normdef}.
 
 Lastly, for the most physically interesting and mathematically challenging case of the
 spatially inhomogeneous Boltzmann equation there are much fewer results.
  Here we have two results on local existence  
 \cite{MR1851391,MR2679369}, and a result \cite{MR1857879} on global existence of DiPerna-Lions renormalized weak solutions \cite{MR1014927}
with defect measure.   We also discuss some very recent work \cite{arXiv:1005.0447v2,arXiv:1007.0304v1,newNonCutAMUXY} related to our own in Section \ref{sec:comparison} after we state and explain our main estimates.

  Other  methods have been introduced to further study the Boltzmann collision operator without angular cut-off, using more involved methods from pseudodifferential operators and harmonic analysis.
In particular, some uncertainty principles in the framework of Fefferman
\cite{MR707957} 
were introduced in 2008 by  Alexandre-Morimoto-Ukai-Xu-Yang \cite{MR2462585}. 
These methods, as well as  \cite{MR2284553,MR2476686},
and the references therein, establish the hypoellipticity of the
Boltzmann operator.  They further develop methods for estimating the commutators  between the Boltzmann collision
operator and  some weighted pseudodifferential operators.  And they sharpen some of the isotropic coercivity and upper bound estimates for the Boltzmann collision operator.  

We also mention the  
linear isotropic coercivity estimates from \cite{MR2254617,MR2322149}.  In particular Mouhot-Strain \cite{MR2322149} proved the coercive lower bound for the linearized collision operator with the sharp weight, $\gamma+2s$, in the non-derivative part of our norm \eqref{normdef}.

Broadly speaking, the approaches outlined above use the Fourier transform to interpret the fractional differentiation effects in terms of isotropic Sobolev spaces.
Indeed for the Boltzmann collision operator, its essential behavior  has been widely conjectured  to be that of a fractional flat diffusion. 
 Precisely 
 $$
 F \mapsto \mathcal{Q}(g,F) \sim -(-\Delta_v)^s F + \mbox{l.o.t.}
 $$
 Here the function $g$ is thought of as a parameter.   Above ``l.o.t.'' indicates that the remaining terms will be lower order.
 The original mathematical intuition for this conjecture has been credited to Carlo Cercignani in 1969, now more than forty years ago (see for instance  Villani \cite[p.91]{MR1942465}).

By comparison, the Landau equation, derived in 1936, is maybe the closest analog to the Boltzmann collision operator for long-range interactions; however the Landau operator involves regular partial derivatives rather than
 fractional derivatives and for that reason may be somewhat more understandable at first.  Landau's equation is obtained in some sense as the limiting system when $p\to 2$ in the inverse power law potential, the Landau collision operator in three dimensions can be shown to satisfy \cite{MR1942465}:
$$
\mathcal{Q}_{\mathcal{L}}(F, F) = \sum_{i,j=1}^3 \bar{a}_{ij} \partial_{v_i}\partial_{v_j } F + 8\pi F^2,
\quad
\bar{a}_{ij}
=
\left( \frac{1}{|v|}\left[\delta_{ij} - \frac{v_i v_j}{|v|^2}  \right]  \right) * F.
$$
Let us briefly review a few results for the Landau equation.  For the spatially  homogeneous case with hard potentials (roughly, replace $1/|v|$ above with $|v|^{\gamma + 2}$ for $\gamma \ge 0$),  global existence of unique weak solutions and the instantaneous smoothing effect was shown for the first time by Desvillettes and Villani  \cite{MR1737547} for a large class of initial data in the year 2000.
Then Guo \cite{MR1946444} in 2002 established the existence of classical solutions for the spatially
dependent case with the physical Coulombian interactions ($p=2$) for smooth near Maxwellian initial data in a periodic box. 
Guo's solutions were recently shown to experience instantaneous regularization  in \cite{MR2506070}.  For further  results in these directions we refer to the references in \cite{MR2506070}.

Notice that in the Landau equation there is a metric of sorts in this case--in the $\bar{a}_{ij}$--which depends in an essential way on your unknown solution $F$.  Even in the simplest case when  your unknown is the steady state, $F = \mu(v)$, this $\bar{a}_{ij}$ weights more heavily angular derivatives \cite{MR1946444}.   Now the sharp anisotropic differentiation effects for the Dirichlet form of the linearized Landau collision operator have been studied, for example, by Guo \cite{MR1946444} in 2002, 
and Mouhot-Strain \cite{MR2322149} in 2007.   Anisotropic differentiation effects can also be observed in the Boltzmann theory using delicate calculations involving the Fourier transform; see, for example, \cite{MR0636407}, \cite{MR1763526}.    Other studies of the Fourier transform of the Boltzmann collision operator were given,  for example, in  
\cite{MR1763526,MR1128328,MR1765272,MR2052786,krmReview2009} and the references therein. 
But it has proved to be difficult to use the Fourier transform alone to prove sharp energy estimates.

For  this paper, the basic new understanding which enabled our progress was to identify that the fractional differentiation effects induced by the linearized Boltzmann collision operator are taking place on a paraboloid in $\R^{\last}$.  We prove that  the sharp linear behavior is in fact that of a fundamentally anisotropic fractional geometric Laplacian \eqref{sharpLinearEst}, the geometry being given by that of a ``lifted'' paraboloid in $\mathbb{R}^{n+1}$.  
 Our intuition for this behavior is derived from the original physics representations for the collision operator in terms of $\delta$-functions. We have also recently shown that the sharp diffusive behavior of the non-linear Boltzmann collision operator is also controlled by the diffusive semi-norm in \eqref{normdef}; see \cite{gsNonCutEst}.

Now using this new point of view during the course of the proof of our main Theorem \ref{mainGLOBAL}, we introduce a set of tools for the long-range interactions, which we believe have implications for a variety of future results both in the perturbative regime and perhaps beyond it \cite{gsNonCutEst}.  We do not use any of the major non cut-off techniques  described above, most of which are designed around the Fourier transform and estimates in terms of isotropic Sobolev spaces.  
Moreover, we do not study the Fourier transform of the collision operator at all.

From the standpoint of harmonic analysis, the estimates we make for the bilinear operator 
\eqref{gamma0} arising from our ansatz \eqref{maxLIN} fall well outside the scope of standard theorems.  The operator and its associated trilinear form may be expressed in terms of Fourier transforms as a trilinear paraproduct; such objects have been the subject of recent work of Muscalu, Pipher, Tao, and Thiele \cite{MR2320408} and are known to be very difficult to study in general.  Known results for such objects fail to apply in our case because of the loss of derivatives (meaning that two of the three functions $g$, $h$, and $f$ must belong to some Sobolev space with a positive degree of smoothness).  Moreover, routine modifications of known results (for example, composing with fractional integration to compensate for the loss of derivatives) also fail because of the presence of a fundamentally non-Euclidean geometry, namely, the geometry on the paraboloid.  This nontrivial geometry essentially renders any technique based on the Fourier transform difficult to use herein. Instead, we base our approach on the generalized Littlewood-Paley theory developed by Stein \cite{MR0252961}.  Rather than  directly using semigroup theory, however, we opt for a more geometric approach, as was taken, for example, by Klainerman and Rodnianski \cite{MR2221254}.  Since the underlying geometry we identify is explicit, we are able to make substantial simplifications over both of these earlier works by restricting attention to the particular case of interest.

\subsection{Possibilities for the future, and extensions}\label{possible}
We believe that our general methods and anisotropic point of view can be useful in making further progress on multiple fronts in the non cut-off theory.   Herein we list  some of those.

We are hopeful that the estimates we prove can play a part to resolve the existence question for the Vlasov-Maxwell-Boltzmann system without angular cut-off; notice at the moment the theory here is limited to the hard-sphere interactions \cite{MR2000470}.

For spatially homogeneous solutions, our results provide additional information for the high singularities $s\ge 1/2$ with singular kinetic factors \eqref{kernelPsing}, as in $p\in(2,3)$, in which otherwise there does not seem to be a global existence theory for strong solutions \cite{MR2476686,MR2525118}.
The methods and point of view in this paper may help to treat the high singularities  with large spatially homogeneous data.

Lastly, we think it would be important to work with the estimates herein and in 
\cite{MR1946444} to  justify rigorously the validity of Landau approximation near Maxwellian.  

\bigskip

In Section \ref{nrstat}, we linearize the Boltzmann equation 
\eqref{BoltzFULL} around the perturbation \eqref{maxLIN} and then explain the sharp space associated with the linearized collision operator.  We further define all the relevant notation and formulate and discuss the main velocity fractional derivative estimates in Section \ref{mainESTsec}.  Then we describe our resolution of a conjecture from \cite{MR2322149} in Section \ref{sec:conj}.  In Section \ref{sec:overview} we describe the several key new ideas which are used in our proof.  We will discuss some other recent related work \cite{arXiv:1005.0447v2,arXiv:1007.0304v1,newNonCutAMUXY} in Section \ref{sec:comparison}.  Then in Section \ref{sec:ep} we discuss the entropy production estimates, and finally in Section \ref{sec:outlineA}  we outline the rest of the article.

\section{Notation, reformulation, the main estimates, and our strategy}
\label{nrstat}

Throughout this paper, the notation $A \lesssim B$ will mean that a positive constant $C$ exists such that $A \leq C B$ holds uniformly over the range of parameters which are present in the inequality (and that the precise magnitude of the constant is irrelevant).  In particular, whenever either $A$ or $B$ involves a function space norm, it will be implicit that the constant is uniform over all elements of the relevant space unless explicitly stated otherwise.  The notation $B \gtrsim A$ is equivalent to $A \lesssim B$, and $A \approx B$ means that both $A \lesssim B$ and $B \lesssim A$.

The first thing to do in this section will be to reformulate the problem in terms of the equation \eqref{Boltz} for the perturbation \eqref{maxLIN}.

\subsection{Reformulation}
We linearize the Boltzmann equation \eqref{BoltzFULL} around the perturbation \eqref{maxLIN}.  This grants an equation for the perturbation $f(t,x,v)$ as
\begin{gather}
 \partial_t f + v\cdot \nabla_x f + L (f)
=
\Gamma (f,f),
\quad
f(0, x, v) = f_0(x,v),
\label{Boltz}
\end{gather}
where the {\it linearized Boltzmann operator} $L$ is given by
\begin{align*}
 L(g)
 \eqdef & 
- \mu^{-1/2}\mathcal{Q}(\mu ,\sqrt{\mu} g)- \mu^{-1/2}\mathcal{Q}(\sqrt{\mu} g,\mu) \\
  = &
  \int_{\threed}dv_{*}
  \int_{\sph} d\sigma~
  B(v-v_*,\sigma) \, 
   \left[g_{*} M + g M_{*}- g^{\prime}_{*} M^{\prime} - g^{\prime} M^{\prime}_{*}  \right] M_{*},
\end{align*}
and the bilinear operator $\Gamma$ is given by
\begin{gather}
\Gamma (g,h)
\eqdef
 \mu^{-1/2}\mathcal{Q}(\sqrt{\mu} g,\sqrt{\mu} h)
 =
 \int_{\threed} dv_* \int_{\sph} d \sigma  ~ B 
 ~M_* (g_*' h' - g_* h). 
\label{gamma0}
\end{gather}
In both definitions, we take
$$
M(v) \eqdef \sqrt{\mu(v)} = (2 \pi)^{-n/4} e^{- |v|^2/4}.
$$
When convenient, we will without loss of generality abuse notation and neglect the constant $(2 \pi)^{-n/4}$ in the definition of $M$.  Finally, we note that 
\begin{equation}
L(g) \eqdef - \Gamma(M,g) - \Gamma(g, M).
\label{LinGam} 
\end{equation}
This reformulation shows that it is fundamentally important to obtain favorable estimates for the bilinear operator $\Gamma$.

We split the main term of the linearized Boltzmann collision operator whilst preserving the cancellations as follows:
   \begin{gather*}
\Gamma(M,g)  =
 \int_{\threed}dv_{*}
  \int_{\sph} d\sigma~
  B (v-v_*,\sigma) \, 
   \left(g^{\prime}- g \right) M^{\prime}_{*}M_{*}
   -
 \tilde{\nu}(v)  ~ g(v),
  \end{gather*}
where
  $$
 \tilde{\nu}(v) 
   =
  \int_{\threed}dv_{*}
  \int_{\sph} d\sigma~
  B(v-v_*,\sigma) \, 
 ( M_{*} - M^{\prime}_{*} ) M_{*}.
 $$ 
 The first piece above contains a crucial Hilbert space structure.  
 This can be seen from the pre-post collisional change of variables  \cite{MR1942465} $(v, v_*,\sigma) \to  (v', v_*',k)$, as
   \begin{multline*}
 - \int_{\mathbb{R}^n}dv
  \int_{\mathbb{R}^n}dv_{*}
  \int_{\sph} d\sigma~
  B(v-v_*,\sigma) \, 
 (g^{\prime}-g) h M^{\prime}_{*}  M_{*}
 \\
     =
     -
     \frac{1}{2}
  \int_{\mathbb{R}^n}dv
  \int_{\mathbb{R}^n}dv_{*}
  \int_{\sph} d\sigma~
  B \,  
 (g^{\prime}-g) h M^{\prime}_{*}  M_{*}
 \\
 -
      \frac{1}{2}
  \int_{\mathbb{R}^n}dv
  \int_{\mathbb{R}^n}dv_{*}
  \int_{\sph} d\sigma~
  B \, 
 (g-g^{\prime}) h^{\prime} M_{*}  M^{\prime}_{*}
  \\
     =
     \frac{1}{2}
  \int_{\mathbb{R}^n}dv
  \int_{\mathbb{R}^n}dv_{*}
  \int_{\sph} d\sigma~
  B \, 
 (g^{\prime}-g) (h^{\prime}-h) M^{\prime}_{*}  M_{*}.
  \end{multline*}
For the weight, we will use Pao's splitting as
$$
 \tilde{\nu}(v)  =  \nu(v) + \nu_{\kPiece}(v),
$$
where under only   \eqref{kernelQ}, \eqref{kernelP}, and \eqref{kernelPsing} the following asymptotics are known:
\begin{equation}
 \nu (v)\approx
\ang{v}^{\gamma+2s},
\quad
\text{and}
\quad
\left|  \nu_{\kPiece}(v) \right|
\lesssim
\ang{v}^{\gamma}.
\notag
\end{equation}
These  estimates
were established by
 Pao in \cite[p.568 eq. (65), (66)]{MR0636407} by reducing to the known asymptotic behavior of confluent hypergeometric functions.  
They can also be established with the standard contemporary machinery.

We further decompose  $L= \nPiece  + \kPiece $.  Here $\nPiece$ is the ``norm part'' and $\kPiece$ will be seen as the ``compact part.''
The norm part is then written as
\begin{multline}
   \label{normpiece}
 \nPiece g
   \eqdef
   - \Gamma(M,g) - \nu_{\kPiece}(v) g
   \\
   =
  -\int_{\threed}dv_{*}
  \int_{\sph} d\sigma~
  B (v-v_*,\sigma) \, 
 (g^{\prime}-g) M^{\prime}_{*}  M_{*}
   + \nu(v) g(v).
\end{multline}
Then, with the previous calculations, this norm piece satisfies the following identity: 
\begin{gather*}
  \ang{\nPiece g,g} =  \frac{1}{2} \int_{\threed} dv \int_{\threed} dv_* \int_{\sph} d \sigma B  (g'-g)^2 M_*' M_* 
  + 
  \int_{\threed} dv ~ \nu(v) ~ |g(v)|^2. 
\end{gather*}
As a result, in the following we will use the anisotropic fractional semi-norm
 \begin{equation}
| g |_{B_\ell}^2 \eqdef 
\frac{1}{2} \int_{\mathbb{R}^n} dv~ w^{\ell}(v)\int_{\mathbb{R}^n} dv_*~ \int_{\sph} d \sigma~ B~ (g'-g)^2 M_*' M_*,
\quad 
\ell \in \mathbb{R}. 
\label{normexpr}
\end{equation}
We also sometimes write $| g |_{B_0}^2 = | g |_{B}^2$ to ease the notation.
For the second part of $\ang{\nPiece g,g}$ we recall the norm 
$
\nsm f\nsm_{L^2_{\gamma+2s}}
$
defined below equation \eqref{normdef}. 
These two quantities will define our designer norm, which is sharp for the linearized operator.
We also record here the definition of the ``compact piece'' $\kPiece$:
 \begin{equation}
  \kPiece g \eqdef  \nu_{\kPiece}(v) g 
- \Gamma(g, M)
=
  \nu_{\kPiece}(v) g 
-
 \int_{\threed} dv_* \int_{\sph} d \sigma B M_* (g_*' M' - g_* M).
 \label{compactpiece}
\end{equation}
This is our main splitting of the linearized operator.

\subsection{Main Estimates for the hard and soft potentials}\label{mainESTsec}
In this sub-section we will state most of the crucial long-range estimates to be used in our main results.
In the next sub-section, we discuss how these estimates in particular resolve a conjecture from Mouhot-Strain \cite{MR2322149}.

We will prove all of our estimates for functions in the Schwartz space, $\mathcal{S}(\threed)$, which is the well-known space of real valued $C^{\infty}(\threed)$ functions all of whose derivatives decay at infinity faster than the reciprocal of any polynomial.   
Note that the Schwartz functions are dense in the anisotropic spaces $\spacen$, $\hardNspace$, $\nspace$, etc, and the proof of this fact is easily reduced to the analogous one for Euclidean Sobolev spaces by means of the partition of unity as constructed, for example, in Section \ref{sec:funcN}.
Moreover, in all of our estimates, none of the constants that come up will depend on the 
regularity of the functions that we are estimating.  Thus using routine density arguments, our estimates will apply to any function in $\spacen$ or whatever the appropriate function space happens to be for a particular estimate.

All of the estimates below will hold for both the hard \eqref{kernelP} and the soft \eqref{kernelPsing} potentials unless otherwise stated.  Our essential trilinear estimate is the following:

\begin{theorem}
\label{TriLinEst}
(Main trilinear estimate)
We have the basic estimate
\begin{equation}
 |\ang{\Gamma(g,h),f}| \lesssim  \nsm g\nsm_{L^2} \nsm h\nsm_{\spacen} \nsm f\nsm_{\spacen}. 
\notag
\end{equation}
This holds in the case of the hard potentials from \eqref{kernelP}.
\end{theorem}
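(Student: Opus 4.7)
The plan is to exploit the asymmetry in the estimate --- $g$ only needs plain $L^2$ control, while $h$ and $f$ can each absorb one half-derivative via the anisotropic seminorm $\nsm \cdot \nsm_{\spacen}$ --- by first redistributing the cancellation in $\Gamma$ away from $g$. Concretely, I would apply the pre-post collisional change of variables $(v,v_*,\sigma) \mapsto (v',v_*',k)$, which preserves $B$ and has unit Jacobian, to the $g_*' h'$ contribution. This rewrites
\[
\ang{\Gamma(g,h),f} = \int dv \int dv_* \int d\sigma \, B \, g_* \, h \, (M_*' f' - M_* f),
\]
placing the difference onto the product $M_* f$, where there is flexibility to maneuver because $M_*$ is Schwartz and $f$ carries a seminorm.

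Next I would split $M_*' f' - M_* f = M_*(f' - f) + (M_*' - M_*) f'$ and handle the two contributions $I_1, I_2$ separately. For $I_2$ the difference lands on the Gaussian, and Taylor-expanding $M_*' - M_*$ to second order in $\sigma$, with the first-order term killed by the $\sigma \mapsto -\sigma$ symmetrization of $B$, shows that $\int d\sigma\, b(\cos\theta) (M_*' - M_*)$ converges despite the nonintegrable angular singularity of $b$. This reduces $I_2$ to an honest bilinear form with a bounded kernel, estimated by $\nsm g \nsm_{L^2} \nsm h \nsm_{L^2_{\gamma+2s}} \nsm f \nsm_{L^2_{\gamma+2s}}$, and the weighted $L^2$ norms are dominated by $\nsm \cdot \nsm_{\spacen}$.

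For $I_1$ the natural move is to convert the $(v_*,\sigma)$ integration into a Carleman-style integration over $v'$ together with a transverse hyperplane. This replaces the Boltzmann measure $B \, d\sigma \, dv_*$ by a kernel of order $|v-v'|^{-n-2s}$ times the appropriate kinetic factor and an integral over the orthogonal complement of $v-v'$. The resulting shape is precisely that of a fractional Laplacian compatible with the paraboloid metric $d(v,v')$. A Cauchy--Schwarz in $v, v'$ --- pulling the square root of the $(f'-f)^2/d(v,v')^{n+2s}$ weight onto $f$ --- produces the seminorm $\nsm f \nsm_{\spacen}$, while the leftover factor contains $g$ at the transverse variable and $h$ at the second endpoint; a second Cauchy--Schwarz in the transverse direction, so that $g$ is integrated in plain $L^2$ against a localized Gaussian-weighted kernel, then rearranges into $\nsm g \nsm_{L^2} \cdot \nsm h \nsm_{\spacen}$, with the Schwartz decay of $M_*'$ absorbing the polynomial weights in the transverse variable.

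The most delicate step, and what I expect to be the main obstacle, is the careful book-keeping in the Carleman conversion: one must verify that when $B(v-v_*,\sigma) M_*(v_*)$ is reparameterized over $(v',\text{transverse})$, the emergent kernel matches the paraboloid metric $d(v,v')^{-(n+2s)}$ and the anisotropic weights $(\ang{v}\ang{v'})^{(\gamma+2s+1)/2}$ appearing in \eqref{normdef} at the exact sharp threshold. This is precisely the geometric insight advertised in the introduction --- that the fractional differentiation effects take place on the lifted paraboloid in $\mathbb{R}^{n+1}$ --- and executing it requires tracking the Jacobian of the Carleman change of variables, the kinetic factor $|v-v_*|^\gamma$, and the Gaussian $M_*'$ evaluated at the post-collisional transverse velocity, then combining them to yield the correct anisotropic weight structure matching $\spacen$.
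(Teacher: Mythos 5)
Your split $M_*' f' - M_* f = M_*(f'-f) + (M_*'-M_*)f'$ is a reasonable starting point, and the treatment of the second piece $I_2$ by Taylor expansion of the Gaussian with the linear term killed by symmetrization of $b$ around the pole $k$ is in the spirit of what the paper does for the no-difference piece $\opGstar$ and the Gaussian part of its cancellation estimates. That part is sound.

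The gap is $I_1 = \int B\, g_* h M_* (f'-f)$. After the Carleman conversion, the Cauchy--Schwarz you propose in $(v,v')$ produces two factors against the \emph{same} kernel of order $d(v,v')^{-n-2s}$: one containing $(f'-f)^2$, which is $\nsm f\nsm_{\spacen}^2$ and finite, and one containing $h^2$ with no difference and hence no cancellation. That second factor diverges, because $\int_{\sph} b(\cos\theta)\,d\sigma = \infty$ (equivalently, $d(v,v')^{-n-2s}$ is not integrable near the diagonal). The blow-up is already present in the $(v,v')$ integration; no ``second Cauchy--Schwarz in the transverse direction'' can repair it, because $g$ and the Gaussian live on a transverse hyperplane, not where the singularity sits. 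The fundamental issue is conceptual, not book-keeping: you cannot place the entire cancellation on $f$ and afterward extract $\nsm h\nsm_{\spacen}$ from an $h$ that has absorbed no smoothness.

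The paper resolves exactly this by introducing two extra mechanisms you omit. First, both $h$ and $f$ are Littlewood--Paley decomposed ($h=\sum h_{j'}$, $f=\sum f_j$ in Section~\ref{sec:aniLP}) and the angular singularity is dyadically cut ($\chi_k$, Section~\ref{sec:dyadic}). Second, the paper uses \emph{two} representations of the trilinear form: the one you wrote, in which the difference falls on $f$ (used via $\teePLUSop - \teeMINUSop$ when $j \geq j'$), and the dual representation $\teePLUSop - \teeSTARop$ of \eqref{dualOPdef}--\eqref{defTKLcarl}, in which the difference falls on $h$ (used when $j' \geq j$). The cancellation is thus always placed on the function of finer scale, producing factors $2^{(2s-i)k}$ with $i\geq 1$ that sum geometrically over $k \geq \min(j,j')$. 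Without both the frequency decomposition and the dual representation, $I_1$ cannot be closed: the derivative needs to be able to land on $h$ as well as on $f$, depending on which of the two carries the higher frequency.
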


Theorem \ref{TriLinEst} already contains the essential idea for the rest of our trilinear estimates.   The following two trilinear estimates are the main ones we use below:

\begin{lemma}
\label{NonLinEstLOW}
(Trilinear estimate for the hard potentials)  
Suppose that $|\alpha | +|\beta|\le \HARDxDER$ and $|\beta| \le \HARDvDER$ with
$\HARDxDER \geq \ksob$ and $0\le \HARDvDER \le \HARDxDER$.
For any $\ell \ge 0$ we have 
$$
 \left| \left(  w^{2\ell-2|\beta|}\partial^\alpha_\beta \Gamma(g,h), \partial^\alpha_\beta f\right) \right| 
 \lesssim 
  \| g\|_{\HARDspaceh} \| h\|_{\hardNspace} \| \partial^{\alpha}_{\beta}f\|_{N^{s,\gamma}_{\ell - |\beta|}}. 
$$
This estimate will hold for the hard potentials \eqref{kernelP}.
\end{lemma}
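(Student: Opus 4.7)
The plan is to reduce Lemma \ref{NonLinEstLOW} to the basic trilinear estimate of Theorem \ref{TriLinEst} through a Leibniz-style expansion of the derivatives on $\Gamma(g,h)$, followed by careful use of H\"older's inequality in the spatial variable. The spatial derivatives pose no difficulty: because $\Gamma$ is bilinear and acts only in velocity, the usual Leibniz rule gives
\[
\partial^\alpha \Gamma(g, h) = \sum_{\alpha_1 + \alpha_2 = \alpha} \binom{\alpha}{\alpha_1}\, \Gamma(\partial^{\alpha_1} g, \partial^{\alpha_2} h).
\]
For the velocity derivatives, I would change variables via $u = v - v_*$ in the integral defining $\Gamma(g,h)(v)$, so that the $v$-dependence appears only through the Maxwellian $M(v-u)$ and through the arguments of $g$ and $h$. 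Applying $\partial_\beta$ then yields a finite sum
\[
\partial_\beta \Gamma(g, h) = \sum_{\beta_0 + \beta_1 + \beta_2 = \beta} C_{\beta_0, \beta_1, \beta_2}\, \Gamma_{\beta_0}(\partial_{\beta_1} g, \partial_{\beta_2} h),
\]
where $\Gamma_{\beta_0}$ differs from $\Gamma$ only by replacing the Gaussian weight $M_*$ with $p(v_*)\,M_*$ for some polynomial $p$ arising from $\partial^{\beta_0} M$. Since $M$ is Gaussian, each $\Gamma_{\beta_0}$ continues to obey the same bound as in Theorem \ref{TriLinEst}.

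Next, for each term
\[
I \eqdef \Big( w^{2\ell - 2|\beta|}\, \Gamma_{\beta_0}(\partial^{\alpha_1}_{\beta_1} g,\, \partial^{\alpha_2}_{\beta_2} h),\; \partial^\alpha_\beta f \Big),
\]
I would split the weight symmetrically as $w^{\ell - |\beta|}$ on each side of the $L^2(\domain \times \threed)$ pairing. Using $w \geq 1$ together with $|\beta_2| \leq |\beta|$, the factor $w^{\ell - |\beta|}$ on the $\Gamma$-side can be absorbed into the larger weight $w^{\ell - |\beta_2|}$ governing $\partial^{\alpha_2}_{\beta_2} h$, after which a weighted version of Theorem \ref{TriLinEst} yields the pointwise-in-$x$ estimate
\[
|I(x)| \lesssim \nsm \partial^{\alpha_1}_{\beta_1} g(x)\nsm_{L^2} \; \nsm \partial^{\alpha_2}_{\beta_2} h(x)\nsm_{N^{s,\gamma}_{\ell - |\beta_2|}} \; \nsm \partial^\alpha_\beta f(x)\nsm_{N^{s,\gamma}_{\ell - |\beta|}}.
\]
I would then integrate in $x \in \domain$ by H\"older's inequality: $\partial^\alpha_\beta f$ is placed in $L^2_x$, and among $\partial^{\alpha_1}_{\beta_1} g$ and $\partial^{\alpha_2}_{\beta_2} h$, whichever has the smaller derivative count goes into $L^\infty_x$ and the other into $L^2_x$. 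The Sobolev embedding $H^{\ksob}_x \hookrightarrow L^\infty_x$ converts the $L^\infty_x$ norm into $\ksob$ extra spatial derivatives, chargeable against the budget $X$.

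The principal obstacle is the derivative bookkeeping at this Sobolev step. The extra $\ksob$ derivatives applied to the lower-regularity factor must not push its total past $X$, which needs $\min(|\alpha_1|+|\beta_1|,\,|\alpha_2|+|\beta_2|) \leq X - \ksob$. A naive midpoint split of the constraint $|\alpha_1|+|\beta_1|+|\alpha_2|+|\beta_2| \leq X$ handles this easily whenever $X \geq 2\ksob$, but the lemma claims the sharper threshold $X \geq \ksob$---this is precisely the saving remarked upon after Theorem \ref{SG1mainGLOBAL}. Achieving it requires exploiting the asymmetry in Theorem \ref{TriLinEst}, where $g$ enters through an un-differentiated $L^2$-slot and may therefore be routed to $L^\infty_x L^2_v$ via spatial Sobolev alone (which never activates the velocity-derivative cap $V$), and noting that the ``bad'' regime where both $|\alpha_i|+|\beta_i|$ exceed $X-\ksob$ can only occur when $X < 2\ksob$, a range where both factors already carry strictly fewer than $\ksob$ derivatives and so can be embedded directly. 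Verifying this dichotomy, together with the weighted velocity trilinear bound, is the technical heart of the argument.
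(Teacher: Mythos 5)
Your outline follows the paper's route up to the last step: Leibniz expansion in both $x$ and $v$, then the weighted pointwise-in-$x$ trilinear bound (the paper's \eqref{generalUPPER}, applied with $\ell^+ = \ell$, $\ell^- = |\beta|$, $\ell' = |\beta-\beta_1|$ so that $g$ carries $w^{\ell - |\beta-\beta_1|}$ and $h$ carries the $N^{s,\gamma}_{\ell-|\beta_1|}$ norm), and finally a Sobolev/H\"older step in $x$. You correctly identify that the derivative bookkeeping at the $x$-integration stage is where the saving over the $2\ksob$ threshold must come from. However, the resolution you sketch does not close.

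Your proposed mechanism is to put one factor in $L^\infty_x$ and the other in $L^2_x$, with a dichotomy according to whether $\min(|\alpha_1|+|\beta_1|,|\alpha_2|+|\beta_2|)\le X-\ksob$. In the complementary case (both strictly above $X-\ksob$, forcing $X<2\ksob$ and both totals $<\ksob$), you say the factors ``can be embedded directly,'' but this is where the argument breaks: putting a factor already carrying $d_i=|\alpha_i|+|\beta_i|$ derivatives into $L^\infty_x$ via $H^{\ksob}_x\hookrightarrow L^\infty_x$ generates additional spatial derivatives up to order $\ksob$, and the resulting total $d_i+\ksob$ must still fit inside $\HARDspaceh$, i.e.\ $d_i+\ksob\le X$. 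That is exactly the condition $d_i\le X-\ksob$ that fails by hypothesis in this case. So the $L^\infty_x\times L^2_x$ route cannot handle $\ksob\le X<2\ksob$, and the asymmetry of the $L^2$-slot for $g$ does not rescue it.

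The paper closes the gap differently. Its Remark \ref{rem:sobolev} and the estimate \eqref{sobolev} use H\"older with an $L^{q_1}_x\times L^{q_2}_x\times L^2_x$ splitting together with the fractional Sobolev embeddings $H^{k_i}_x\hookrightarrow L^{q_i}_x$, $k_i/n+1/q_i\ge 1/2$. Choosing $k_i = K - (|\alpha_i|+|\beta_i|)$ (the spare spatial room available to each factor), the requirement is $k_1+k_2 > n/2$, i.e.\ $2K - (|\alpha|+|\beta|)\geq\ksob$, and since $|\alpha|+|\beta|\le K = X$ this is implied by $X\geq\ksob$ with no case split. This is the observation you should substitute for the $L^\infty_x$ dichotomy; with it your argument matches the paper's proof of Lemma \ref{NonLinEstLOW}, which is exactly \eqref{generalUPPER} followed by \eqref{sobolev}.
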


Note that what we actually prove below is that Theorem \ref{TriLinEst} and Lemma \ref{NonLinEstLOW} hold more generally whenever 
$\gamma + 2s > -\frac{\dim}{2}$ and $\gamma > -\dim$.  The most general trilinear estimate of this type that we prove is precisely given in \eqref{generalUPPER}.

\begin{lemma}
\label{NonLinEstHIGH}
(Trilinear estimate for the soft potentials)  
For any $|\alpha | + |\beta |\le K$, with $K \geq 2 \ksob$ and $\ell \ge 0$,  we have
the following  estimate for \eqref{gamma0}:
\begin{equation*}
 \left| \left(w^{2\ell-2|\beta|}\partial^{\alpha}_{\beta}\Gamma(g, h),\partial^{\alpha}_{\beta}f\right) \right| 
 \lesssim 
  \| g\|_{\spaceh} \| h\|_{\nspace} 
  \| \partial^{\alpha}_{\beta}f\|_{N^{s,\gamma}_{\ell - |\beta|}}.
\end{equation*}
This estimate will hold for the soft potentials \eqref{kernelPsing}.
\end{lemma}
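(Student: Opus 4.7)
The plan is to reduce this mixed spatial-plus-velocity derivative trilinear estimate to the pure-velocity pointwise-in-$x$ trilinear bound of Theorem \ref{TriLinEst} (extended to the full soft range $\gamma>-n$, as flagged in the remark after Lemma \ref{NonLinEstLOW}), and then to handle the $x$-integration by the Sobolev embedding $H^{\ksob}(\mathbb{T}^n)\hookrightarrow L^\infty(\mathbb{T}^n)$, enabled by the hypothesis $K\ge 2\ksob$.

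First I would expand $\partial^\alpha_\beta \Gamma(g,h)$ via Leibniz. Since $x$ appears only in $g$ and $h$, the $\alpha$-derivatives distribute directly across the two arguments. For the $\beta$-derivatives, because the bilinear operator \eqref{gamma0} is an integral in $v_*$ and $\sigma$ against the Gaussian $M_*$, performing the translation in which $v$ appears only in the $g,h$ arguments and in $M_*$ produces a finite sum
\[
\partial^\alpha_\beta \Gamma(g,h) = \sum_{\substack{\beta_0+\beta_1+\beta_2=\beta\\ \alpha_1+\alpha_2=\alpha}} C^{\alpha,\beta}_{\alpha_1,\beta_0,\beta_1,\beta_2}\, \Gamma_{\beta_0}\!\bigl(\partial^{\alpha_1}_{\beta_1}g,\,\partial^{\alpha_2}_{\beta_2}h\bigr),
\]
where each $\Gamma_{\beta_0}$ has the same algebraic structure as $\Gamma$ except that $M_*$ is replaced by a polynomial times $M_*$, still Schwartz in $v_*$. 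Theorem \ref{TriLinEst} and its proof therefore apply verbatim to each $\Gamma_{\beta_0}$.

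Second, for each summand I would invoke the pure-velocity trilinear bound in its weighted form, pointwise in $x\in\mathbb{T}^n$:
\[
\bigl|\bigl\langle w^{2\ell-2|\beta|}\,\Gamma_{\beta_0}(G,H),\,F\bigr\rangle_{L^2_v}\bigr| \lesssim \nsm G\nsm_{L^2}\,\nsm w^{\ell-|\beta_2|}H\nsm_{\spacen}\,\nsm w^{\ell-|\beta|}F\nsm_{\spacen},
\]
where $G = \partial^{\alpha_1}_{\beta_1}g(x,\cdot)$, $H = \partial^{\alpha_2}_{\beta_2}h(x,\cdot)$, and $F = \partial^\alpha_\beta f(x,\cdot)$. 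The weight $w^{2\ell-2|\beta|}$ is split as $w^{\ell-|\beta|}\cdot w^{\ell-|\beta|}$ between the $H$- and $F$-sides; on the $H$-side the bound $w^{\ell-|\beta|}\le w^{\ell-|\beta_2|}$ (which holds because $w=\ang{v}^{-\gamma-2s}\ge 1$ in the soft case and $|\beta_2|\le|\beta|$) matches the weight convention of the target norm $\nspace$.

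Third, since $|\alpha_1|+|\beta_1|+|\alpha_2|+|\beta_2|\le|\alpha|+|\beta|\le K$ and $K\ge 2\ksob$, at least one index pair satisfies $|\alpha_i|+|\beta_i|\le K-\ksob$. In that case the corresponding factor still admits $\ksob$ additional spatial derivatives within the $\spaceh$- or $\nspace$-budget, so the Sobolev embedding in $x$ yields either $\|\,\nsm G\nsm_{L^2_v}\|_{L^\infty_x}\lesssim \|g\|_{\spaceh}$ or $\|\,\nsm w^{\ell-|\beta_2|}H\nsm_{\spacen}\|_{L^\infty_x}\lesssim \|h\|_{\nspace}$; the remaining two factors are paired by Cauchy--Schwarz in $x$, producing the claimed inequality. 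This is precisely the regularity-saving trick that lowers the required derivative count from the classical value (e.g.\ $8$ for Guo's cutoff soft potentials in $n=3$) to $K\ge 2\ksob=4$.

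The main obstacle I anticipate is not any single step, but the careful bookkeeping of the weight powers $w^{\ell-|\beta_i|}$ across all terms of the Leibniz expansion, ensuring that every distribution of derivatives admits a weight redistribution compatible simultaneously with $\spaceh$, $\nspace$, and $N^{s,\gamma}_{\ell-|\beta|}$. In particular one must verify that the weighted generalization of Theorem \ref{TriLinEst} indeed absorbs the growing soft-potential weight $w^{\ell-|\beta|}=\ang{v}^{(\ell-|\beta|)(-\gamma-2s)}$ on the $F$-side within the $N^{s,\gamma}_{\ell-|\beta|}$-structure, without degrading the sharp constant in front; this is where the restriction $\gamma>-n$ (rather than any weaker lower bound on $\gamma$) is used decisively.
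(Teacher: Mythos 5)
There is a genuine gap: you take as your pure-velocity building block a ``weighted generalization of Theorem~\ref{TriLinEst} extended to the full soft range $\gamma>-n$,'' but no such extension exists. The remark following Lemma~\ref{NonLinEstLOW} only extends Theorem~\ref{TriLinEst} to $\gamma+2s>-\frac{n}{2}$ (together with $\gamma>-n$), and this is essential: the proof of that theorem runs through Propositions~\ref{prop1}, \ref{propSTAR}, \ref{referLATERprop}, \ref{opGstarEST}, each of which uses the finiteness of $\int_{\mathbb{R}^n}dv_*\,\sqrt{M_*}\,|v-v_*|^{2(\gamma+2s)}$, i.e.\ $2(\gamma+2s)>-n$. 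For genuine soft potentials this can fail (e.g.\ $n=3$, $s=1/2$, $\gamma=-2$ gives $\gamma+2s=-1<-3/2$). So a bound of the form $|\langle w^{2\ell-2|\beta|}\Gamma_{\beta_0}(G,H),F\rangle|\lesssim\nsm G\nsm_{L^2}\nsm H\nsm_{\spacen}\nsm F\nsm_{\spacen}$ — with only $L^2_v$ control on $G$ — simply does not hold in the soft range. The obstruction is not, as you suggest, a weight-bookkeeping issue absorbed by $\gamma>-n$; it is a local integrability failure of the singularity $|v-v_*|^\gamma$ when the argument can put at most an $L^2_v$ hypothesis on one of the three functions.

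The paper's proof therefore does not use Theorem~\ref{TriLinEst} at all here. It uses Lemma~\ref{NonLinEstA}, the dedicated soft-potential trilinear estimate established from the derivative estimates of Section~\ref{sec2:physicalDECrel}. In its two forms \eqref{nlineq1} and \eqref{nlineq}, the price of the stronger singularity is $\ksob$ extra \emph{velocity} derivatives landing on either $h$ (norm $N^{s,\gamma}_{\ell+\ell',\ksob}$) or $g$ (norm $H^{\ksob}_{\ell^+-\ell'}$), arising from the velocity Sobolev embedding \eqref{sobolevE} used to control the singular integral via an $L^\infty_v$ bound rather than Cauchy--Schwartz. The proof of Lemma~\ref{NonLinEstHIGH} then picks \eqref{nlineq1} versus \eqref{nlineq} according to whether $|\alpha_1|+|\beta_1|\le K-\ksob$ or $|\alpha-\alpha_1|+|\beta-\beta_1|\le K-\ksob$, so that the $\ksob$ extra velocity derivatives land on the less-loaded factor, and only then applies the spatial Sobolev embedding \eqref{sobolev}. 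Your derivative-budget logic (at least one index pair has $\le K-\ksob$ derivatives when $K\ge 2\ksob$, so apply $H^{\ksob}(\mathbb{T}^n)\hookrightarrow L^\infty(\mathbb{T}^n)$ there) captures the spatial half of this trick, but the velocity half — the indispensable $\ksob$ extra velocity derivatives appearing on the right-hand side of \eqref{nlineq1}/\eqref{nlineq} — is missing, and without it the underlying velocity estimate does not close.
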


The estimates in Lemmas \ref{NonLinEstLOW} and \ref{NonLinEstHIGH} improve substantially over previously known-estimates of this sort,  such as, for example, \cite{MR1946444,MR2013332,MR2000470,MR2095473} which hold in the cut-off regime.   This is because we are able to have only one term in the upper bounds, rather than two or more additional lower order terms.  The bounds above are consistent with estimates for a Laplacian-type smoothing operator.  Notice furthermore that this estimate
  does not require a velocity weight which goes to infinity at infinity.
This feature is made possible by our anisotropic Littlewood-Paley adapted to the paraboloid, 
which characterizes exactly the geometric fractional differentiation effects that are induced by the linearized Boltzmann collision operator. 
 The next important inequality that we establish is for the linear operator:
 
\begin{lemma}
\label{sharpLINEAR}
Consider the linearized Boltzmann operator $L = \nPiece + \kPiece$ where $\nPiece$ is defined in \eqref{normpiece} and $\kPiece$ is defined in \eqref{compactpiece}.
We have the uniform inequalities
\begin{align}
\left| \ang{w^{2\ell} \nPiece g, g} \right| & \lesssim |g|_{\spaceELLn}^2, \label{normupper} \\
\left| \ang{w^{2\ell} \kPiece g, g} \right| & \le \eta |w^\ell g|_{L^2_{\gamma + 2s}}^2 + C_\eta 
|g|_{L^2(B_{C_\eta})}^2, \label{compactupper}
\end{align}
where $\ell \in \mathbb{R}$, $\eta>0$ is any small number, and $C_\eta>0$.  
\end{lemma}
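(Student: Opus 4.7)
Both inequalities rest on the identities given in \eqref{LinGam} together with the splitting \eqref{normpiece}--\eqref{compactpiece}, namely
\[
\nPiece g = -\Gamma(M,g) - \nu_{\kPiece}(v)g, \qquad \kPiece g = \nu_{\kPiece}(v)g - \Gamma(g, M).
\]
The two estimates are however of quite different character: \eqref{normupper} is a sharp upper bound matching the coercive quadratic structure of $\nPiece$, while \eqref{compactupper} is a classical smallness-plus-compact decomposition reflecting the fact that $\kPiece$ is essentially a compact perturbation of $\nPiece$.

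For \eqref{normupper}, I would work directly from the quadratic-form identity
\[
\ang{w^{2\ell}\nPiece g, g} = |g|^2_{B_{2\ell}} + \int w^{2\ell}\nu\, g^2 \, dv + \frac{1}{2}\int B\,(w^{2\ell}(v) - w^{2\ell}(v'))\, g^2\, M_* M_*'\,dv\,dv_*\,d\sigma,
\]
obtained by combining the algebraic identity $-(g'-g)g = \tfrac12(g'-g)^2 + \tfrac12(g^2 - (g')^2)$ with the pre/post-collisional change of variables applied to the $(g')^2$ piece. The first summand $|g|^2_{B_{2\ell}}$ is controlled by the non-local part of $|g|^2_{\spaceELLn}$ via the sharp Boltzmann/geometric-Sobolev comparison that is one of the paper's central estimates (see \eqref{sharpLinearEst}). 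The second summand equals $|w^\ell g|^2_{L^2_{\gamma+2s}}$ up to constants, invoking the Pao asymptotic $\nu(v)\approx\ang{v}^{\gamma+2s}$, and is the first component of $|g|^2_{\spaceELLn}$. The weight-commutator term is handled by a first-order Taylor expansion $w^{2\ell}(v') - w^{2\ell}(v) = O(|v-v'|) = O(|v-v_*|\sin(\theta/2))$ along the collision geometry, whose angular part is integrable against $b$ by virtue of $\int_0^{\pi/2} b(\cos\theta)\,\theta^2\,\sin^{n-2}\theta\,d\theta<\infty$, and whose $v_*$-part is absorbed by the Gaussian decay of $M_*M_*'$. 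Alternatively, the same upper bound can be read off directly from a weighted variant of the trilinear estimate Theorem \ref{TriLinEst} applied to $\Gamma(M, g)$ using $|M|_{L^2}<\infty$.

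For \eqref{compactupper}, each piece of $\kPiece g$ is bounded separately by a classical splitting at $|v| = R$. Using $|\nu_{\kPiece}(v)|\lesssim \ang{v}^\gamma$ and the bound $\ang{v}^\gamma \le R^{-2s}\ang{v}^{\gamma+2s}$ valid on $\{|v|>R\}$, the multiplicative part obeys
\[
\bigl|\ang{w^{2\ell}\nu_{\kPiece} g,\, g}\bigr| \lesssim R^{-2s}|w^\ell g|^2_{L^2_{\gamma+2s}} + |g|^2_{L^2(B_R)},
\]
and choosing $R = R(\eta)$ large enough gives the $\eta$ contribution. For the bilinear part $-\Gamma(g, M)$, I would use the Carleman-type representation to express it as an integral operator $g\mapsto \int k(v,v')\,g(v')\,dv'$; the Maxwellians $M$, $M_*$, and $M'$ that appear together force $k(v,v')$ to decay like $e^{-c(|v|^2+|v'|^2)}$, so truncating the kernel to $\{|v|,|v'|\le R\}$ produces the same decomposition into an $\eta$-small tail (absorbed into $\eta|w^\ell g|^2_{L^2_{\gamma+2s}}$) and an $L^2(B_{R})$ piece.

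The principal technical input is the sharp comparison $|g|^2_{B_{2\ell}} \lesssim |g|^2_{\spaceELLn}$ underlying the norm upper bound; this is the geometric fractional-differentiation content of the paper, established through the anisotropic Littlewood--Paley machinery adapted to the lifted paraboloid. The main subsidiary obstacle is the weight-commutator term, where the asymmetry of $w^{2\ell}(v)$ under the collision involution must be reconciled with the non-integrable grazing singularity of $b$ by a careful first-order Taylor expansion; for the compact estimate, the analogous subtle point is the Gaussian kernel analysis of $\Gamma(g, M)$, which requires the Carleman change of variables.
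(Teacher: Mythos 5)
Your structural decomposition is sound, and your treatment of \eqref{compactupper} is essentially the paper's: the paper proves $|\ang{w^{2\ell}\kPiece g,g}|\lesssim |w^\ell g|^2_{L^2_{\gamma+2s-\delta}}$ with $\delta>0$ (Lemma~\ref{CompactEst}, via the Carleman representation and the Grad-type kernel estimate in Lemma~\ref{prop:Grad} plus Pao's bound on $\nu_{\kPiece}$), then splits at $|v|=R$. One caveat: the kernel of the gain piece of $\Gamma(g,M)$ does \emph{not} decay like $e^{-c(|v|^2+|v'|^2)}$ as you assert. The sharp decay, visible from \eqref{symmetricUPPER}, is Gaussian in $|v'-v_*|$ but only \emph{polynomial} (of arbitrary order $\rho+(n-1)$) in $\ang{v'+v_*}$; this is the classical Grad asymmetry. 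Your conclusion survives because polynomial gain is already enough for the split, but the stated intermediate claim is too strong.

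For \eqref{normupper} your primary route has a genuine circularity. The identity
\[
\ang{w^{2\ell}\nPiece g,g}=|g|^2_{B_{2\ell}}+\int w^{2\ell}\nu\,g^2\,dv+\tfrac12\int B\,(w^{2\ell}(v)-w^{2\ell}(v'))\,g^2\,M_*M_*'
\]
is correct (and the paper uses a variant of it in proving Lemma~\ref{estNORM3}). But the step ``$|g|^2_{B_{2\ell}}\lesssim |g|^2_{\spaceELLn}$, see \eqref{sharpLinearEst}'' is not available: \eqref{sharpLinearEst} is a \emph{consequence} of Lemma~\ref{sharpLINEAR} together with Theorem~\ref{lowerN}, and the paper's own proof of the equivalence $|g|^2_{B_\ell}+|w^\ell g|^2_{L^2_{\gamma+2s}}\approx|g|^2_{\spaceELLn}$ explicitly states that the upper bound direction ``follows from the estimates for $\ang{w^{2\ell}\Gamma(M,g),g}$ in \eqref{coerc1ineqNORM}.'' There is no independent pointwise kernel comparison that yields the upper bound; the paper even warns that the Carleman kernel $K(v,v')$ decays \emph{faster} than the $N^{s,\gamma}$ kernel in directions where $v,v'$ are collinear, so no simple comparison is available in either direction. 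The correct route is the one you mention only as an alternative: apply the weighted, $\phi$-parametrized trilinear estimate \eqref{coerc1ineqNORM} (a weighted version of Theorem~\ref{TriLinEst} with the first slot set to the Schwartz function $M$), which is proved via the anisotropic Littlewood--Paley machinery of Sections~\ref{physicalDECrel}--\ref{sec:aniLP}. That should be the proof, not the fallback. A secondary gap in your primary route: the commutator term needs the cancellation lemma (averaging over $\sigma$ makes the linear part of $v'-v$ project onto $k$ with a $\sin^2(\theta/2)$ factor) to upgrade the naive $O(\theta)$ from a first-order Taylor expansion to the $O(\theta^2)$ you invoke; you state the $\theta^2$ integrability without the averaging argument that produces it.
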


In these estimates there are several things to observe.  First of all there are no derivatives in the ``compact estimate'' from \eqref{compactupper}, which should be contrasted with the corresponding estimate in the Landau case \cite[Lemma 5]{MR1946444} in which the upper bound  requires the inclusion of derivatives. Further  \eqref{normupper} is a  simple consequence of the main estimate \eqref{nlineq}.  
This estimate tells us that the ``norm'' piece of the linear term, given by $\ang{\nPiece g,g}$, is bounded above by a uniform constant times $\nsm g\nsm_{N^{s,\gamma}}^2$.  This means that the coercive inequality  in the next lemma 
is essentially sharp.

\begin{lemma}
\label{estNORM3}
(Main coercive inequality)
For the sharp space defined in \eqref{normdef} with \eqref{normpiece}, \eqref{kernelQ}, \eqref{kernelP} and \eqref{kernelPsing} we have
  the uniform coercive lower bound estimate:
\begin{equation*}
\ang{ w^{2\ell} \nPiece g,g}
 \gtrsim
 \nsm g \nsm_{\spaceELLn}^2
 -
  C \nsm g \nsm_{L^2(B_C)}^2,
\quad
\exists C \ge 0.
\end{equation*}
This holds   for any $\ell \in\mathbb{R}$; 
if $\ell = 0$ we may take $C = 0$.
\end{lemma}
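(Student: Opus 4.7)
The plan is to prove the lemma in two stages. First I would treat the unweighted case $\ell=0$ and show that the no-loss estimate
\[
\ang{\nPiece g,g} \gtrsim \nsm g \nsm_{\spacen}^2
\]
holds without any subtracted term. Starting from the symmetric identity
\[
\ang{\nPiece g,g} = \frac{1}{2}\int_{\threed}\!dv\int_{\threed}\!dv_{*}\int_{\sph}d\sigma~ B\,(g'-g)^2 M_*'M_* + \int_{\threed} dv~ \nu(v)\,|g(v)|^2,
\]
Pao's asymptotics $\nu(v)\approx \ang{v}^{\gamma+2s}$ immediately give $\int \nu |g|^2 \approx \nsm g\nsm_{L^2_{\gamma+2s}}^2$, which matches the non-derivative part of $\nsm g\nsm_{\spacen}^2$.

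Next I would extract the anisotropic fractional derivative part from the quadratic term. The key step is to rewrite the collisional integral via a Carleman-type representation, parameterizing pairs $(v',v_*')$ in terms of $(v,v')$: after freezing $v$, the post-collisional velocity $v'$ sweeps the codimension-one surface inside $\threed\times\threed$ cut out by the energy identity $|v|^2+|v_*|^2=|v'|^2+|v_*'|^2$ with the momentum identity absorbed into $v_*'$, and this surface is precisely the lift of the paraboloid whose induced metric gives $d(v,v')$. Under this change of variables, $|v-v_*||v-v'|^{-1}d\sigma$ translates the angular singularity $b(\cos\theta)\sim \theta^{-1-2s}$ into the factor $d(v,v')^{-n-2s}$, the kinetic factor $|v-v_*|^\gamma$ combines with the Maxwellian $M_*'M_*$ to generate the weight $\ang{v}^{\gamma+2s+1}$ after Gaussian integration in the variable transverse to the paraboloid, and the Gaussian decay in that transverse variable produces (up to uniform constants) the cutoff $\mathbf{1}_{d(v,v')\le 1}$. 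The resulting lower bound matches the definition of $\nsm g\nsm_{\spacen}^2$ exactly, completing the case $\ell=0$ with $C=0$.

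For general $\ell\in\mathbb{R}$ I would substitute the test function $w^\ell g$ into the $\ell=0$ identity and track the discrepancy. Writing
\[
\ang{w^{2\ell}\nPiece g,g} = \ang{\nPiece(w^\ell g),w^\ell g} + \mathcal{E}(g),
\]
the error $\mathcal{E}(g)$ collects the commutator $\tfrac{1}{2}\iiint B\bigl[(g'-g)^2 w^{2\ell}(v) - ((w^\ell g)'-w^\ell g)^2\bigr]M_*'M_*$, which factors as a product of $(g'-g)$ and $(w^\ell(v')-w^\ell(v))g'$ type terms that are lower order by one fractional derivative because $w^\ell$ is smooth. By Cauchy--Schwarz with a small parameter $\eta$, half of the commutator is absorbed into the already-controlled semi-norm $\nsm w^\ell g\nsm_{\spacen}^2$ that arises from the $\ell=0$ result applied to $w^\ell g$, and the remainder gives a weighted $L^2_{\gamma+2s}$ contribution which, by localization to large $|v|$ versus a fixed ball, is either absorbed or swallowed into a term of the form $C\nsm g\nsm_{L^2(B_C)}^2$. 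Combining with the $\ell=0$ lower bound applied to $w^\ell g$ and noting that $\nsm w^\ell g\nsm_{\spacen}^2\approx \nsm g\nsm_{\spaceELLn}^2$ completes the proof.

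The main obstacle is the anisotropic entropy production step: identifying the $d(v,v')$-geometry intrinsic to the collision integral and performing the change of variables cleanly enough to obtain both the correct singularity exponent and the correct polynomial weight $\ang{v}^{\gamma+2s+1}$ without introducing uncontrollable remainders. Everything else (Pao's weight asymptotics, the commutator analysis for the weighted case) is comparatively routine once this geometric identification is in hand.
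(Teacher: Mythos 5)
There is a genuine gap in your treatment of the $\ell=0$ case. You assert that the Carleman representation and a direct pointwise lower bound of the resulting kernel ``matches the definition of $\nsm g \nsm_{\spacen}^2$ exactly, completing the case $\ell=0$ with $C=0$.'' This is exactly where the argument breaks. The pointwise estimate of the Carleman kernel $K(v,v')$ in \eqref{kernel} only furnishes a uniform lower bound of the form $|v-v'|^{-n-2s}\ang{v'}^{\gamma+2s+1}$ on the \emph{restricted} region $|v-v'|\le 1$ \emph{and} $||v|^2-|v'|^2|\le |v-v'|$; the second constraint is forced because one must control the projections of $v_*$ and $v_*'$ onto the $v-v'$ direction to extract Gaussian decay. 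Outside that region — i.e.\ for displacements $v-v'$ that are radial relative to $v$ at large $|v|$ — the kernel $K(v,v')$ actually decays \emph{exponentially} in $|v-v'|$, so the pointwise comparison to the full anisotropic kernel $(\ang{v}\ang{v'})^{(\gamma+2s+1)/2}d(v,v')^{-n-2s}\ind_{d(v,v')\le 1}$ fails uniformly. The paper notes this is an intrinsic obstruction, not a technical artifact.

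The ingredient you have omitted is the ``Fourier redistribution'' step (Proposition \ref{fourierdist} and the analysis that follows it): one introduces the restricted semi-norm $\nsm\cdot\nsm_{N_0}$ (with the additional cutoff $|u_\last|\le \epsilon|u|$ to a cone near the tangent plane of the paraboloid), shows by a Plancherel identity that restricting a difference kernel of the form $|u|^{-n-2s}\ind_{|u|\le 1}$ to a sub-cone changes the associated quadratic form only by a bounded multiple plus an $L^2$ remainder (equation \eqref{plancherelcheck}), and then patches these comparisons together over a smooth partition of unity on the lifted paraboloid. Only after this step does one obtain $\nsm f\nsm_{N_0}^2 + \nsm f\nsm_{L^2_{\gamma+2s}}^2 \gtrsim \nsm f\nsm_{\spacen}^2$, which combined with the pointwise bound $\ang{\nPiece f,f}\gtrsim\nsm f\nsm_{N_0}^2$ closes the $\ell=0$ case. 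Your sketch of the $\ell\ne 0$ commutator argument (Taylor expansion of $w^{2\ell}(v')-w^{2\ell}(v)$, Cauchy--Schwarz with a small parameter, splitting into large velocities vs.\ a compact ball) is in the spirit of what the paper does in Section \ref{ssc:fce}, but since it feeds back into the $\ell=0$ estimate applied to $w^\ell g$ (or, in the paper's version, to the comparison $|g|_{B_\ell}^2 + |w^\ell g|_{L^2_{\gamma+2s}}^2 \approx |g|_{\spaceELLn}^2$), the missing Fourier-redistribution step propagates into the weighted case as well.
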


The coercive inequality in Lemma \ref{estNORM3} and \eqref{normupper} taken together demonstrate that the ``norm piece'' \eqref{normpiece} is actually comparable to our designer norm $\spacen$, i.e.,
\[ \ang{ \nPiece g, g} \approx |g|_{\spacen}^2. \]
The upper bound \eqref{normupper} is important because it demonstrates that the anisotropic space $\spacen$ naturally arises in this near Maxwellian problem.

Lastly, we have two coercive interpolation inequalities for the linearized operator:

\begin{lemma}
\label{DerCoerIneq}
(Coercive interpolation inequalities)
For any multi-indices $\alpha,\beta$, any $\ell \ge 0$, and any small $\eta >0$ there is a positive constant $C_\eta$,  such that we have the following coercive lower bound for the linearized collision operator
\begin{equation}
\label{coerc1ineq}
\langle w^{2\ell-2|\beta|} \partial^{\alpha}_{\beta} Lg, \partial^{\alpha}_{\beta} g \rangle
\gtrsim
\nsm \partial^{\alpha}_{\beta}g\nsm_{N^{s,\gamma}_{\ell - |\beta|}}^2     
-
\eta
\sum_{|\beta_1| \le |\beta|}\nsm \partial^{\alpha}_{\beta_1} g\nsm_{N^{s,\gamma}_{\ell - |\beta_1|}}^2  
-
C_\eta     \nsm \partial^\alpha g\nsm_{L^2(B_{C_\eta})}^2.
\end{equation}
Furthermore, when no derivatives are present, for some $C>0$ we have 
\begin{equation}
\label{coerc2ineq}
\langle w^{2\ell}Lg,  g \rangle
\gtrsim
\nsm g\nsm_{\spaceELLn}^2     
-
C     \nsm  g\nsm_{L^2(B_C)}^2.
\end{equation}
\end{lemma}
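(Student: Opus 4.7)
The plan is to prove the two inequalities in sequence: first \eqref{coerc2ineq}, which is essentially a direct combination of the sharp estimates already in hand, and then \eqref{coerc1ineq} by induction on $|\beta|$ using a commutator decomposition of $\partial^\alpha_\beta L$.

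For \eqref{coerc2ineq}, I write $L = \nPiece + \kPiece$ and combine the main coercive lower bound from Lemma \ref{estNORM3},
\[
\ang{w^{2\ell} \nPiece g, g} \gtrsim \nsm g\nsm_{\spaceELLn}^2 - C \nsm g\nsm_{L^2(B_C)}^2,
\]
with the compact upper bound \eqref{compactupper} from Lemma \ref{sharpLINEAR},
\[
|\ang{w^{2\ell} \kPiece g, g}| \le \eta |w^\ell g|_{L^2_{\gamma+2s}}^2 + C_\eta |g|_{L^2(B_{C_\eta})}^2.
\]
Since $|w^\ell g|_{L^2_{\gamma+2s}}^2 \leq \nsm g \nsm_{\spaceELLn}^2$, choosing $\eta$ smaller than the implicit constant in the $\nPiece$ lower bound lets me absorb this term into the main positive contribution, leaving only a local $L^2$ error.

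For \eqref{coerc1ineq}, I argue by induction on $|\beta|$. Spatial derivatives $\partial^\alpha$ act trivially on the velocity-only operator $L$, so $\partial^\alpha_\beta L g = L \partial^\alpha_\beta g + [\partial_\beta, L] \partial^\alpha g$. On the main term, substituting $g \mapsto \partial^\alpha_\beta g$ and $\ell \mapsto \ell - |\beta|$ in \eqref{coerc2ineq} yields
\[
\ang{w^{2\ell-2|\beta|} L \partial^\alpha_\beta g, \partial^\alpha_\beta g} \gtrsim \nsm \partial^\alpha_\beta g \nsm_{N^{s,\gamma}_{\ell-|\beta|}}^2 - C |\partial^\alpha_\beta g|_{L^2(B_C)}^2.
\]
To handle the commutator I apply the Leibniz-type rule for $\Gamma$ to $L g = -\Gamma(M,g) - \Gamma(g,M)$, writing it as a sum of terms $\Gamma_*(\partial_{\beta_1} M, \partial_{\beta_2} g)$ (and symmetric) with $|\beta_1| + |\beta_2| \le |\beta|$, $|\beta_2| < |\beta|$; the Gaussian decay of $\partial_{\beta_1} M$ makes each such factor a legitimate input for the trilinear estimates \ref{NonLinEstLOW}/\ref{NonLinEstHIGH}. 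This yields
\[
|\ang{w^{2\ell-2|\beta|} [\partial_\beta, L] \partial^\alpha g, \partial^\alpha_\beta g}| \lesssim \sum_{|\beta_1|<|\beta|} \nsm \partial^\alpha_{\beta_1} g \nsm_{N^{s,\gamma}_{\ell-|\beta_1|}} \nsm \partial^\alpha_\beta g \nsm_{N^{s,\gamma}_{\ell-|\beta|}},
\]
and Young's inequality converts this into $\eta \sum_{|\beta_1| \le |\beta|} \nsm \partial^\alpha_{\beta_1} g \nsm_{N^{s,\gamma}_{\ell-|\beta_1|}}^2$ with an arbitrarily small $\eta$ prefactor.

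The remaining obstacle is the local error $|\partial^\alpha_\beta g|_{L^2(B_C)}^2$, since it carries $|\beta|$ velocity derivatives but the inequality we want to prove only retains derivatives of order $\le |\beta|$ in the anisotropic norm and only $\partial^\alpha g$ (no velocity derivatives) in the local term. I would handle this by a standard interpolation on compact sets: choosing a cutoff $\chi$ supported in a neighborhood of $B_C$ and noting that on such compact sets $d(v,v') \approx |v-v'|$, so $\nsm \cdot \nsm_{N^{s,\gamma}_{\ell'}}$ locally dominates an isotropic fractional norm, the standard inequality
\[
|\partial^\alpha_\beta g|_{L^2(B_C)}^2 \le \eta \sum_{|\beta_1| \le |\beta|} \nsm \partial^\alpha_{\beta_1} g \nsm_{N^{s,\gamma}_{\ell-|\beta_1|}}^2 + C_\eta |\partial^\alpha g|_{L^2(B_{C_\eta})}^2
\]
follows by interpolating $H^{|\beta|}(B_C)$ between $H^{|\beta|+s}(B_{C'})$ and $L^2(B_{C_\eta})$. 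The hardest step is this interpolation, specifically verifying that the anisotropic metric $d(v,v')$ is equivalent to $|v-v'|$ on bounded sets so that the anisotropic norm locally controls an $H^s$ seminorm; after this is in place, everything combines, the $\eta$-terms at level $|\beta_1| = |\beta|$ are absorbed on the left into the dominant $\nsm \partial^\alpha_\beta g \nsm^2$, and \eqref{coerc1ineq} follows.
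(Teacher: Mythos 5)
Your treatment of \eqref{coerc2ineq} and of the main term $\ang{w^{2\ell-2|\beta|} L(\partial^\alpha_\beta g), \partial^\alpha_\beta g}$ matches the paper, and your local compact interpolation to convert $|\partial^\alpha_\beta g|_{L^2(B_C)}$ into $\eta\nsm\partial^\alpha_\beta g\nsm_{N^{s,\gamma}_{\ell-|\beta|}}^2 + C_\eta|\partial^\alpha g|_{L^2(B_{C_\eta})}^2$ via the equivalence $d(v,v')\approx|v-v'|$ on bounded sets is exactly the paper's step. The Leibniz decomposition of $\partial_\beta L$ is also the same one the paper uses.

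There is, however, a genuine gap in the treatment of the commutator terms. You claim that from
\[
|\ang{w^{2\ell-2|\beta|} [\partial_\beta, L] \partial^\alpha g, \partial^\alpha_\beta g}| \lesssim \sum_{|\beta_1|<|\beta|} \nsm \partial^\alpha_{\beta_1} g \nsm_{N^{s,\gamma}_{\ell-|\beta_1|}} \nsm \partial^\alpha_\beta g \nsm_{N^{s,\gamma}_{\ell-|\beta|}},
\]
Young's inequality yields $\eta\sum_{|\beta_1|\le|\beta|}\nsm\partial^\alpha_{\beta_1}g\nsm_{N^{s,\gamma}_{\ell-|\beta_1|}}^2$ with arbitrarily small $\eta$ on the whole sum. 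That step is not correct: Young gives $\eta\nsm\partial^\alpha_\beta g\nsm^2 + C_\eta\nsm\partial^\alpha_{\beta_1}g\nsm^2$ with $C_\eta$ \emph{large}, and there is no way to make both coefficients simultaneously small from the product alone. This matters because \eqref{coerc1ineq} genuinely requires the small prefactor so that the lower-order errors can be absorbed inductively during the energy estimate. What rescues the argument --- and what the paper's proof actually does --- is that the pairing $\ang{w^{2\ell-2|\beta|}\Gamma_{\beta_2}(\partial_{\beta-\beta_1}M, \partial^\alpha_{\beta_1}g), \partial^\alpha_\beta g}$, after applying the velocity-only upper bounds of Proposition \ref{upperBds} (in particular \eqref{coerc1ineqNORM} and \eqref{coerc1ineqPREP}, not Lemmas \ref{NonLinEstLOW}/\ref{NonLinEstHIGH}, which concern the full $(\cdot,\cdot)$ pairing), leaves the $\beta_1$-factor measured in a \emph{weaker} weighted norm than $N^{s,\gamma}_{\ell-|\beta_1|}$: the weight that appears is at level $\ell-|\beta|$ while the derivative order is only $|\beta_1| < |\beta|$, so there is a gain of $|\beta|-|\beta_1|$ extra powers of decay. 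Exploiting this decay by splitting the velocity integration into $|v|>R$ (where the surplus weight $w^{-(|\beta|-|\beta_1|)}$ is small) and $|v|\le R$ (where the same compact-set interpolation you already wrote out gives an $\eta$-small coefficient) is what produces the small prefactor on the lower-order anisotropic norms. You need to carry out the same split-and-interpolate procedure on the commutator terms, not just on the local error from the principal term; your current write-up hands that work off to Young's inequality, which cannot do it.
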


This concludes our statements of the main estimates that will be used in  Section \ref{sec:deBEest} to establish our main Theorems \ref{SG1mainGLOBAL} and \ref{mainGLOBAL}.   Next, we deduce some consequences for the spectral properties of the linearized Boltzmann collision operator.

\subsection{Conjecture from Mouhot-Strain \cite{MR2322149}}\label{sec:conj}
We will now discuss  sharp constructive coercivity estimates of the linearized collision operator, $L$, away from its null space.  
More generally, from the $H$-theorem  $L$ is non-negative and for every fixed $(t,x)$ the null
space of $L$ is given by the $(n+2)$-dimensional space 
\begin{equation}
\nullSpace \eqdef {\rm span}\left\{
\sqrt{\mu},
v_1 \sqrt{\mu}, \ldots, v_n \sqrt{\mu}, 
|v|^2 \sqrt{\mu}\right\}.
 \label{null}
\end{equation}
We define the orthogonal projection from $L^2(\mathbb{R}^n)$ onto the null space $\nullSpace$ by ${\bf P}$. 
Further expand ${\bf P} g$ as a linear combination of the basis in \eqref{null}: 
\begin{equation}
{\bf P} g
\eqdef
 \left\{a^g(t,x)+\sum_{j=1}^{\dim} b_j^g(t,x)v_j+c^g(t,x)|v|^2\right\}\sqrt{\mu}.
\label{hydro}
\end{equation}
Recall for \eqref{LinGam} that $L\ge 0$ and $Lg = 0$ if and only if $g = {\bf P} g$; see e.g. \cite{MR1307620,MR1942465}.  
By combining the constructive upper bound estimates in Lemma \ref{sharpLINEAR} with the constructive coercivity estimate from Theorem \ref{lowerN} in Section \ref{sec:deBEest}, we obtain
\begin{equation}
\frac{1}{C} | \{{\bf I- P} \} g |_{N^{s,\gamma}}^2 \le \langle Lg, g \rangle \le C | \{{\bf I- P} \} g |_{N^{s,\gamma}}^2,
\label{sharpLinearEst}
\end{equation}
with a constant $C>0$ that can be tracked from the proof.
Thus a spectral gap exists if and only 
if $\gamma + 2 s \ge 0$, as conjectured in  \cite{MR2322149}.
 The main tools in our proof of these statements are the new constructive estimates from Lemma \ref{sharpLINEAR} and Lemma \ref{estNORM3} combined with the constructive but non-sharp coercive lower bound from \cite{MR2254617}  for the non-derivative part of the norm.
This may be of independent interest. 
 
Notice that for the linearized Landau collision operator, this statement already had been shown several years earlier in \cite{MR2254617,MR2322149, MR1946444} for any $\gamma \ge -3$ in dimension $\dim =3$ (and more generally).  For Landau, there is a spectral gap if and only if $\gamma +2\ge 0$; the Landau operator can be thought of as the limit case when $s=1$ and  regular (rather than fractional) derivatives are present.

\subsection{Overview of our proof}\label{sec:overview}
In this section we explain the several new ideas which contributed to the proofs in this work.
It has been known to the experts for some time that the sum total of the inequalities in Section \ref{mainESTsec} would be sufficient for global existence \cite{MR2000470,MR1946444}, 
although crucially the spaces in which these inequalities should be proved was unknown.
We have the inequality
\begin{align}
\ang{L g,g}  & \gtrsim |g|_{N^{s,\gamma}}^{2}  - \mbox{lower order terms}. \label{lowerbound}
\end{align}
This follows from Lemmas \ref{sharpLINEAR} and \ref{estNORM3}.
This coercive lower bound inequality is fundamental to global existence.
Since the the operators $\Gamma$ and $L$ are intimately connected, among other consequences, this means that if both of \eqref{lowerbound} and, for example, Theorem \ref{TriLinEst} are simultaneously true, then the Hilbert space $N^{s,\gamma}$ satisfying these inequalities is unique.   From this point of view the first major difficulty which we had to overcome was the identification of the appropriate Hilbert space.  

\subsection*{Identification of the space $\spacen$}

It turned out that the candidate Hilbert space $\spacen$ is a weighted, anisotropic fractional Sobolev space \eqref{normdef}  and \eqref{normpiece} which corresponds to fractional differentiation on the paraboloid in $\R^{\last}$.  Sharp comparisons of $\nsm \cdot \nsm_{\spacen}$ to the weighed isotropic Sobolev spaces are established by the inequalities
\begin{equation}
\nsm g \nsm_{L^2_{\gamma+2s}(\threed)}^2
+
\nsm g \nsm_{H^s_{\gamma}(\threed)}^2
\lesssim 
\nsm g\nsm_{\spacen}^2 
\lesssim \nsm g \nsm_{H^s_{\gamma+2s}(\threed)}^2.
\label{isocomp}
\end{equation}
Here 
$
H^s_{\ell}(\threed)
\eqdef
\{ g \in L^2_\ell (\threed) : \nsm f \nsm_{H^s_{\ell}(\threed)}^2 
\eqdef \int_{\threed} dv \ang{v}^\ell \left| (I - \Delta_v)^{s/2} g(v) \right|^2 <\infty
\}
$
is the standard isotropic fractional Sobolev space.
These inequalities are established using the partition of unity as constructed in Section \ref{sec:funcN}.  The anisotropy of $\spacen$ manifests itself in the fact that the spaces $H^s_{\gamma}$ and $H^s_{\gamma+2s}$ appearing in the upper and lower bounds are sharp but not equal to one another.   This illustrates that the standard isotropic Littlewood-Paley decompositions are insufficient to prove sharp anisotropic estimates.  To estimate this space, we find it convenient to use a geometric Littlewood-Paley-type decomposition, inspired by the work of Stein \cite{MR0252961}.  We do not, however, take a semigroup approach to the actual construction of our Littlewood-Paley projections as Stein did.  Instead, we use the embedding of the paraboloid in $\R^{\last}$ to our advantage.  If $d \mu$ is the Radon measure on $\R^{\last}$ corresponding to surface measure on the paraboloid, our approach is to take a renormalized version of the {$(\dim+1)$-dimensional}, {\it Euclidean} Littlewood-Paley decomposition of the {\it measure} $g d \mu$ as our anisotropic, $\dim$-dimensional, Littlewood-Paley-type decomposition for the function $g$.  Among other benefits, this approach automatically allows for a natural extension of the Littlewood-Paley projections $P_j g$ and $Q_j g$ (from Section \ref{sec:aniLP}) as smooth functions defined on $\R^{\last}$ in a neighborhood of the paraboloid.  This allows us to avoid a direct discussion of the induced metric on $\threed$ by phrasing our results in terms of the projections $P_j g$, $Q_j g$, and various Euclidean derivatives of these functions in $\R^{\last}$ instead of $\threed$.

\subsection*{The upper bound inequality}

The proof of the main trilinear estimates in Theorem \ref{TriLinEst}, and Lemmas \ref{NonLinEstLOW} and \ref{NonLinEstHIGH} are based on a dyadic decomposition of the singularity of the collision kernel $B$ in \eqref{kernelQ}, \eqref{kernelP} and \eqref{kernelPsing} as well as a Littlewood-Paley-type decomposition of the functions $h$ and $f$.  The end result is that one is led to consider a triple sum of the form
\[ 
\sum_{k=-\infty}^\infty \sum_{j' = 0}^\infty \sum_{j=0}^\infty \left| \ang{\Gamma_k(g,h_{j'}), f_{j}} \right|. 
\]
Here $\Gamma_k$ is the bilinear operator \eqref{gamma0} summed over a specific anisotropic dyadic decomposition of the singularity (see Section \ref{sec:dyadic}), and $h_{j'}$, $f_{j}$ are the functions $h$, $f$ expanded in terms of the anisotropic Littlewood-Paley decomposition described just above (and in Section \ref{sec:aniLP}).
Control over the sum rests on two important observations.
First, when considering terms for which $2^{-k}$ is large relative to $2^{-j'}$ and $2^{-j}$, a favorable estimate holds simply because the support of $B_k(v-v_*,\sigma)$ is compact and bounded away from the singularity at $\theta = 0$.  
Second, when either $2^{-j'}$ or $2^{-j}$ is large relative to $2^{-k}$, i.e., near the singularity, an improvement may be made by exploiting the inherent cancellation structure of $\Gamma_k$.  
The cost which must be paid in order to use this cancellation is that derivatives must fall on either $h_{j'}$ or $f_{j}$.  In this case, with the dual formulation (described next)  it is always possible to arrange for the derivatives to be placed on the function of our choice.  Placing the derivatives on the function of largest scale (that is, the function whose index is least) gives some extra decay that allows one to sum all the terms by comparison to a geometric series.   These types of decompositions, of course, have a long history and generally follow the long-established techniques of harmonic analysis. 
The crucial new feature distinguishing our approach is that we do not measure cancellations in the standard isotropic way; cancellations are measured instead by using the anisotropic metric on the ``lifted'' paraboloid in $\R^\last$.

It should be noted that our analysis allows us to essentially ignore the dependence of $\Gamma(g,h)$ on the function $g$; this is a great advantage, as it means that one may think of the trilinear form $\ang{\Gamma(g,h),f}$ as a family of bilinear forms in $h$ and $f$ parametrized by the function $g$.  This observation is essential, since the fully trilinear form falls well outside the scope of existing tools in harmonic analysis.

\subsection*{The dual formulation}
A key point of significant technical importance in the proof of the upper bound inequality is that we must be able to make estimates for $\ang{\Gamma(g,h),f}$ which exploit the intrinsic cancellations at the cost of placing derivatives on any one of the two functions $h$ or $f$ that we choose.  
As the presence of fractional derivatives rules out a traditional integration-by-parts, 
it is necessary to find two different, yet analogous, representations of the trilinear form $\ang{\Gamma(g,h),f}$ which clearly relate cancellation to smoothness of $h$ and $f$, respectively.  It turns out that placing derivatives on $f$ is fairly straightforward to do using existing representations for the bilinear operator $\Gamma$.  In particular, one may apply the pre-post change of variables 
to obtain the representation
\begin{align*}
\ang{\Gamma(g,h),f} = \int_{\threed} dv \int_{\threed} dv_* \int_{\sph} d \sigma ~B(v-v_*,\sigma) ~ g_* h ~ (M_*' f' - M_* f).
\end{align*}
Clearly, for each fixed $g$, there is an operator $T_g$ such that $\ang{\Gamma(g,h),f} = \ang{T_g f,h}$, and moreover, the formula above can be used to write down an explicit formula for $T_g$; see \eqref{3dualZ}. 
To place derivatives on $h$, on the other hand, it is necessary to find a new representation which involves only differences of $h'$ and $h$, i.e., no differences of $g$ or $f$.  To that end, 
we compute what we call the ``dual formulation,'' which amounts to writing down a formula for $T_g^*$.  These computations may be found in  Appendix \ref{secAPP:HSr}; the end result is that
$$
\ang{\Gamma(g,h),f} = \int_{\threed} dv   \int_{\threed} dv_* \int_{\sph} d \sigma   ~ B ~ g_* f' ~
 \left(M_*' h \vphantom{\int}   -   M_* h'  \right) + \opGstar.
$$
Here $\opGstar$ is a term which does not differentiate,  defined in \eqref{opGlabel}.
An interesting consequence of this formula is that the gain term ${\mathcal Q}^+$ is unchanged and only the loss term ${\mathcal Q}^-$ differs in these two formulas.  These two formulas also demonstrate the essentially straightforward dependence on $g$ which we use to apply traditionally bilinear methods to the trilinear form.

\subsection*{The coercive inequality}
The key to proving \eqref{lowerbound}, on the other hand, is to show the equivalence between 
\eqref{normdef} and the inner product $\ang{\nPiece f,f}$ from
\eqref{normpiece}.
We prove equivalent estimates in terms of the Littlewood-Paley projections.  This consists of two parts.  The first is rewriting \eqref{normexpr} with a Carleman representation as
\begin{equation}
| f |_B^2
=
\int_{\threed} dv \int_{\threed} dv' ~ K(v,v') ~ (f'-f)^2, \label{semiCARLEMAN}
\end{equation}
for an appropriate function $K(v,v')$, see \eqref{kernel}.   A simple pointwise estimation of this function $K$ demonstrates that
\[ K(v,v') \gtrsim (\ang{v} \ang{v'})^{\frac{\gamma+2s+1}{2}} (d(v,v'))^{-n - 2s}, \]
for a large set of pairs $(v,v')$, the exact description of which is slightly complicated.  The second part is to demonstrate that the set of pairs for which this inequality holds is large enough to conclude an integral version of this inequality, namely,
\[ 
\ang{\nPiece f,f} \gtrsim \int _{\threed} dv \int_{\threed} dv' ~ (\ang{v} \ang{v'})^{\frac{\gamma+2s+1}{2}} 
\frac{(f' - f)^2}{d(v,v')^{n+2s}}  {\mathbf 1}_{d(v,v') \leq 1}. 
\]

\subsection{Other recent results}  
\label{sec:comparison}
This present paper is a combination, simplification, and extension of two separate preprints originally posted on the arXiv as 
\cite{gsNonCut1,gsNonCut2}, the first covering the hard potential case \eqref{kernelP} and the second covering the case of soft potentials \eqref{kernelPsing}.   After \cite{gsNonCut1} had been posted, a related work \cite{newNonCutAMUXY} by Alexandre, Morimoto, Ukai, Xu, and Yang appeared.  This preprint \cite{newNonCutAMUXY} announces a proof, using different methods, of global existence and smoothness for  perturbations of the Maxwellian equilibrium states \eqref{maxLIN} in $\R^3_x$ for the  Maxwell molecules collision kernel (meaning that the kinetic factor in \eqref{kernelP} is constant) and moderate angular singularities (meaning that $0< s< 1/2$ in \eqref{kernelQ}); these assumptions  apply to the inverse power intermolecular potentials when $p=5$.
At the time of this writing there are now a series of papers from our collaboration and theirs: 
\cite{gsNonCut1}, \cite{newNonCutAMUXY}, \cite{gsNonCut2}, \cite{gsNonCutA}, \cite{arXiv:1005.0447v2}, \cite{arXiv:1007.0304v1}.

The papers \cite{arXiv:1005.0447v2,arXiv:1007.0304v1} are the first two in a series of
four which will be devoted to the whole-space $\R^3_x$ version of the problems
solved in our work herein 
on the torus $\domain_x$.  That is, they intend to prove the global existence, uniqueness, positivity, and convergence rates to equilibrium of classical solutions which are initially perturbations of the Maxwellian
solutions.   We remark that taking $x\in \mathbb{T}^n_x$ in our main Theorems \ref{SG1mainGLOBAL} and \ref{mainGLOBAL} is not a restriction.  In particular the case when $x\in \mathbb{R}^n_x$ replaces $x\in \mathbb{T}^n_x$ will follow directly from our new velocity estimates in the previous sections when combined with other known whole space cut-off methods.  
This would only require modifying Section \ref{sec:deBEest} of this paper, using instead the cut-off energy methods in the whole space such as, for example, \cite{MR2095473,MR2259206,MR1057534}.  Also we can prove the optimal convergence rates in the whole space for both the hard and the soft potentials as in \cite{sNonCutOp}; see additionally \cite{MR2209761,2010arXiv1006.3605D,arXiv:0912.1742}.   Indeed, we consider that the key contributions in this paper and also in \cite{arXiv:1005.0447v2,arXiv:1007.0304v1} are to introduce new methods on the velocity variables to study the anisotropic fractional diffusive nature of the linearized non cut-off Boltzmann collision operator.

At the time of this writing, the completed works in the series \cite{arXiv:1005.0447v2,arXiv:1007.0304v1} are
restricted to showing various estimates for the linearized collision operator \cite{arXiv:1005.0447v2} and
 global existence \cite{arXiv:1007.0304v1}  for a range of soft potentials.  
 As such, it is currently difficult to make a full comparison of the two different methods; however, there are already
several important points on which their approach differs from our own.  Additionally, since they work in $\R^3_x$,  it is hard to compare the two results outside of three dimensions.

A key difference between the analysis in this paper and that of \cite{arXiv:1005.0447v2,arXiv:1007.0304v1} is the representation for the sharp norm which is used.  Compare our geometric fractional norm \eqref{normdef} with the norm from \cite{arXiv:1005.0447v2,arXiv:1007.0304v1}:
$$
||| g |||^2 \eqdef 
\int_{\R^3} dv \int_{\R^3} dv_* \int_{\mathbb{S}^{2}} d\sigma ~
B(v-v_*,\sigma) ~ \left\{\mu_* ~ (g' - g)^2
+ g^2\left( \sqrt{\mu'} - \sqrt{\mu}\right)^2 \right\}.
$$
Notice that it is a consequence of our estimates in Section \ref{mainESTsec} and \eqref{sharpLinearEst}, that this norm is in fact equivalent to \eqref{normdef} (which was introduced in \cite{gsNonCut1}).  Moreover, by comparison to the linearized Boltzmann collision operator \eqref{LinGam}; this norm  $||| \cdot |||$ is quite simple, it is also basically the same as our representation for $\ang{\nPiece g, g}$ in e.g. \eqref{normpiece}.   The norms $||| \cdot |||$ and  \eqref{normpiece} both however do not provide much new information over the linearized collision operator \eqref{LinGam}  regarding the sharp geometric fractional diffusive nature of the of the Boltzmann collision operator.

One of the main motivating factors in our study of this problem was in fact to gain a detailed understanding of the Boltzmann equation as as a geometric fractional diffusive operator.  Our norm \eqref{normdef} provides this sharp information; it has lead to our proof of a conjecture from \cite{MR2322149}, see e.g. \eqref{sharpLinearEst}.  The geometric norm \eqref{normdef} has also lead to our recent sharp characterization of the geometric fractional diffusive nature of the fully non-linear Boltzmann collision operator, in \cite{gsNonCutEst}, and also its Entropy production estimates; see \cite{gsNonCutEst} and Section \ref{sec:ep}.  We furthermore believe that the new geometric information that is directly contained in our norm \eqref{normdef} will be quite useful to a wide variety of future work in the Boltzmann theory.  The introduction of this norm \eqref{normdef} and the establishment of sharp estimates like Theorem \ref{TriLinEst} may be of independent interest.

Another topic of comparison is the estimates.  A typical estimate of theirs \cite[Proposition 2.5]{arXiv:1005.0447v2} (in our notation) is of the form
\begin{equation}
\begin{split}
\Big|\ang{\Gamma(g,f),h}\Big| & 
\lesssim \Big\{\|f\|_{L^2_{s+\gamma/2}}|||g|||
+ \|g\|_{L^2_{s+\gamma/2}}|||f|||
\\
& \quad { + \min \Big ( \|f\|_{L^2}\|g\|_{L^2_{s+\gamma/2}} \,,\,\|f\|_{L^2_{s+\gamma/2}}\|g\|_{L^2}
\Big)}\Big\}|||h||| \, .
\end{split} \label{AMUXY}
\end{equation}
Here they allow $0<s<1$ and $\gamma>-3/2$.
A nice feature of this estimate is that it can be proved using the structure of their norm $||| \cdot |||$ in roughly eight pages, while the corresponding estimates used to establish Theorem \ref{TriLinEst} are scattered throughout roughly twenty pages of this present work. The estimate \eqref{AMUXY}, however, has the disadvantage
that it ``loses weights'' in the case of hard potentials (in the sense that there are terms on the right-hand side for which there is a growing velocity weight in every function space). 
Indeed, in order to transform this into an estimate which can be applied to the current machinery for proving global in time stability requires additional delicate and extensive commutator estimates to even out the weights.  By comparison,  our main estimate in Theorem \ref{TriLinEst} for the hard potentials \eqref{kernelP} only requires putting velocity weights in two of the three function spaces; moreover, it does not require differentiation of the 
parameter function $g$, nor does it require the addition of any lower-order terms on the right-hand side.
This is consistent with the heuristic understanding of the Boltzmann collision operator as a fractional geometric Laplacian in the energy space. 
Thus our estimates like Theorem \ref{TriLinEst} and others are substantially sharper than their analogues in \cite{newNonCutAMUXY,arXiv:1005.0447v2,arXiv:1007.0304v1}.

Regarding the main theorems, their global existence theorem for classical solutions and soft potentials, \cite[Theorem 1.3]{arXiv:1007.0304v1}, requires $N\ge 6$ derivatives and $\ell \ge N$ velocity weights; it also restricts to kinetic factors \eqref{kernelPsing} satisfying $\gamma > \max\left\{-3, -\frac{3}{2} -2s\right\}$ with $\gamma + 2s \le 0$ in three dimensions.  (In the upper bound estimate of, for example 
$
 \left| \left(  w^{2\ell-2|\beta|}\partial^\alpha_\beta \Gamma(g,h), \partial^\alpha_\beta f\right) \right| 
$
their methods use ${\ell-|\beta|}\ge 0$.)  By comparison, our main Theorems \ref{SG1mainGLOBAL} and \ref{mainGLOBAL} for global existence and uniqueness of classical solutions for the hard and soft potentials allows all $\gamma > -\dim$ in $\dim$ dimensions, any weight $\ell \ge 0$, and uses in three dimensions $K \ge 4$ derivatives (less derivatives, $\HARDxDER \ge 2$, are needed for the hard potentials).  Despite the additional length required to establish our first main estimate, there is an economy  of scale to our estimates as a whole because they are proved in a unified framework with modular reusable pieces (and additionally, for example, we do not require detailed commutator estimates), meaning that in the span of the present paper we accomplish existence, uniqueness, positivity, and decay in roughly the same space that it takes to contain the first two papers \cite{arXiv:1005.0447v2,arXiv:1007.0304v1} of their announced four-paper series.

The principal difference of philosophy between the works of \cite{arXiv:1005.0447v2,arXiv:1007.0304v1,newNonCutAMUXY} and this work appears to be the approach to the fundamental anisotropy.  In fact, the effort put forth to make direct estimates like \eqref{AMUXY} in \cite{arXiv:1005.0447v2,arXiv:1007.0304v1,newNonCutAMUXY} is limited 
in the sense that their approach ultimately relies on the comparison to the isotropic Sobolev spaces corresponding to \eqref{isocomp}.  In contrast, our methods crucially and explicitly rely on the sharp, non-isotropic structure of $\spacen$. To attack the issue of anisotropy directly and develop an explicit, geometric understanding of the problem, it was, of course, necessary to introduce several new tools to the Boltzmann theory.  At the price of a loss of weights in the main estimates and theorems, the works \cite{arXiv:1005.0447v2,arXiv:1007.0304v1,newNonCutAMUXY} have, up to this point, been able to proceed without these new tools, along essentially classical lines.  
We hope this comparison is useful to future readers of both articles.

The fourth paper in the series starting with \cite{arXiv:1005.0447v2,arXiv:1007.0304v1} is scheduled to prove also the $C^\infty_{t,x,v}$ smoothing effect for solutions.  We have not attempted this but it has recently been carried out for our solutions by Chen and He \cite{ChenHeSmoothing} using 
\cite{MR2506070}.\footnote{Note added in November 2010; We would like to mention that recently the completed series of papers starting with \cite{arXiv:1005.0447v2,arXiv:1007.0304v1} have been revised, posted, and combined into three papers. These new papers do not obtain the optimal time decay rates for the soft potentials in the whole space.}

\subsection{Entropy Production}\label{sec:ep}
Among Boltzmann's most important contributions to statistical physics was his celebrated $H$-theorem. 
We define the H-functional by
$$
{H}(t)\eqdef -\int_{\mathbb{T}^n}dx~ \int_{\R^n}dv~ F\log F.
$$
Then the Boltzmann $H$-theorem predicts that the entropy is increasing over time
$$
\frac{d{H}(t)}{dt} =  \int_{\mathbb{T}^n}dx~ D(F) \ge 0,
$$
which is a manifestation of the second law of thermodynamics.  Here the entropy production functional, which is non-negative, is defined by
\begin{multline}
\notag
D(F)\eqdef -\int_{\R^n} ~ dv ~ \mathcal{Q}(F,F)\log F
\\
=
\frac{1}{4}
\int_{\R^n}dv \int_{\R^n} dv_* \int_{\sph} d\sigma ~ B(v-v_*, \sigma) \left(F' F'_* - F F_* \right)\log \frac{F' F'_*}{F F_*}.
\end{multline}
Moreover, the entropy production functional is zero if and only if it is operating on a Maxwellian equilibrium.
These formulas formally demonstrate that Boltzmann's equation defines an irreversible dynamics and predicts convergence to Maxwellian in large time.  Of course these predictions are usually non-rigorous  because the regularity  required to perform the above formal calculations is unknown at the moment to be propagated by solutions of the Boltzmann equation in general.

In particular, even though there are many important breakthroughs in this direction, none, so far as we are aware, can be said to completely and rigorously justify the $H$-theorem for the inverse power-law intermolecular potentials with 
$p\in(2,\infty)$.
We refer to \cite{MR0158708,MR1014927,MR2116276,MR1942465,MR2209761,MR1857879,MR2366140}, and the references therein, in this regard, to mention only a brief few works.
Our results in this paper
prove rapid convergence to equilibrium for all of the inverse power collision kernels 
when one considers
 initial data which is close to some global Maxwellian. 
 This convergence is 
the essential prediction of Boltzmann's $H$-theorem.

Furthermore, many works study entropy production estimates in the non cut-off regime, as in 
for instance \cite{MR1649477,MR1765272,MR1715411,MR2052786}. 
These estimates have found widespread utility.  
Our results and anisotropic Sobolev space have the following implications in the fully non-linear context.  We have  the following new lower bound
$$
D(F) \gtrsim
\int_{\mathbb{R}^n} dv ~ \int_{\mathbb{R}^n} dv' ~ 
(\ang{v}\ang{v'})^{\frac{\gamma+2s+1}{2}}
~
 \frac{(\sqrt{F'} - \sqrt{F})^2}{d(v,v')^{n+2s}} 
\ind_{d(v,v') \leq 1} 
- \mbox{l.o.t.}
$$
Precisely, this estimate follows straightforwardly from
our new estimates in Section \ref{sec:mainCOER} when combined for instance with the decomposition \cite[(29)]{MR1715411}   in the particular case when the unknown functions in the term $A(v,v')$ from \cite[(29)]{MR1715411}
are bounded below by, for example, some positive, rapidly decaying function.  
Remarkably, this is the same semi-norm as in the linearized context, and it is a stronger anisotropic and non-local version of the local smoothing estimate from \cite{MR1765272} (stronger, that is, in terms of the weight power multiplied on the order of differentiation).   
This estimate was derived as a result of our effort to find explicit equivalence between the anisotropic norm coming out of the linearized collision operator and the anisotropic Sobolev space $N^{s,\gamma}$.
Furthermore, we have recently utilized the semi-norm part of \eqref{normdef} to prove using new methods this coercive estimate in full generality in \cite{gsNonCutEst} (assuming only, for example, the local conservation laws for the Boltzmann equation).

\subsection{Outline of the rest of the article}\label{sec:outlineA}

 The plan for the rest of the paper is as follows.  In Section \ref{physicalDECrel}, we will formulate the first major physical decomposition of the trilinear form associated with the non-linear collision operator \eqref{gamma0}.  With this we prove the main estimates on the size and support of the decomposed pieces.  We finish this section by formulating the main cancellation inequalities for the hard potentials \eqref{kernelP} using the metric on the paraboloid.
 
In Section \ref{sec2:physicalDECrel}, we prove for the soft potentials \eqref{kernelPsing} the analogous estimates on the size and support of the decomposition, as well as those exploiting cancellations.  We furthermore prove several ``compact estimates'' for 
\eqref{compactpiece}.

In Section \ref{sec:aniLP}, we develop the anisotropic Littlewood-Paley decomposition which is associated to the geometry of the paraboloid.  We further prove estimates connecting the Littlewood-Paley square functions with our norm \eqref{normdef}.

In Section \ref{sec:upTRI}, we prove the key estimates for the trilinear form in Theorem \ref{TriLinEst} and  Lemmas \ref{NonLinEstLOW} and \ref{NonLinEstHIGH}.  These estimates will rely heavily on all of the developments in the previous sections.  The ``compact estimate'' in Lemma \ref{CompactEst} will follow shortly from these developments, and also the sharp linear upper bounds from  Lemma \ref{sharpLINEAR}. 

Section \ref{sec:mainCOER} studies the main coercive inequality.  Here it is shown crucially that the main norm \eqref{normdef} is comparable to both our anisotropic Littlewood-Paley square function and also the space which is  generated by the linearized operator: $\ang{\nPiece g, g}$ below \eqref{normpiece}.  This involves several ideas, including  estimating a Carleman-type representation and what we call a ``Fourier redistribution'' argument.  We further develop useful functional analytic properties of $\spacen$.

Then we show that all of our new singular fractional anisotropic estimates on the velocity variables from the previous seven sections  can be included in the current cut-off theory in Section \ref{sec:deBEest}.
Specifically, we use the space-time  estimates and non-linear energy method that was introduced by Guo \cite{MR2000470,MR2013332,MR1946444}.
This works in particular because our new arguments for the velocity variables outlined above are, morally, fully decoupled from the arguments to handle the space-time aspects of the equation.
We further remark that our estimates above are, in general, flexible enough to adapt to other modern cut-off methods.

Lastly, Appendix \ref{secAPP:HSr} contains Carleman-type representations and a derivation of the ``dual formulation'' for the trilinear form \eqref{3dualZ} that is used in the main text.

\section{Physical decomposition and hard potential estimates: $\gamma \ge -2s$}
\label{physicalDECrel}

In this section we introduce the first major decomposition and prove several estimates which will play a central role in establishing the main inequality for the non-linear term $\Gamma$ from \eqref{gamma0} and the norm $| \cdot |_{N^{s,\gamma}}$.  This first decomposition is a decomposition of the singularity of the collision kernel.  For various reasons, it turns out to be useful to decompose $b ( \cos \theta)$ from \eqref{kernelQ} to regions where $\theta \approx 2^{-k} |v-v_*|^{-1}$, rather than a simpler dyadic decomposition not involving $|v-v_*|$.  The principal benefit of doing so is that this extra factor makes it easier to prove estimates on the space $L^2_{\gamma+2s}(\threed)$  because the weight $\Phi(|v-v_*|)$ from  \eqref{kernelP}  is already present in the kernel and the extra weight $|v-v_*|^{2s}$ falls out automatically from our decomposition.

The estimates to be proved fall into two main categories:  the first are various $L^2$- and weighted $L^2$-inequalities which follow directly from the size and support conditions on our decomposed pieces (such estimates are typically 
called ``trivial'' estimates).  The second type of estimate will assume some sort of smoothness and obtain better estimates than the ``trivial'' estimates by exploiting the cancellation structure of the non-linear term $\Gamma$ from \eqref{gamma0}.  

\subsection{Dyadic decomposition of the singularity}\label{sec:dyadic}
Let $\{ \chi_k \}_{k=-\infty}^\infty$ be a partition of unity on $(0,\infty)$ such that $\nsm \chi_k\nsm_{L^\infty} \leq 1$ and 
$\mbox{supp}\left( \chi_k \right)\subset [2^{-k-1},2^{-k}]$.  
For each $k$:
\[
B_k = B_k(v-v_*,\sigma) \eqdef \Phi(|v-v_*|) ~b \left( \left< \frac{v-v_*}{|v-v_*|}, \sigma \right> \right) \chi_k (|v - v'|). 
\]
Note that
\[ 
|v-v'|^2 = \frac{|v-v_*|^2}{2} \left( 1 - \left< \frac{v-v_*}{|v-v_*|}, \sigma \right>  \right)
= |v-v_*|^2 \sin^2 \frac{\theta}{2}.
\]
Hence, the condition $|v-v'| \approx 2^{-k}$ is equivalent to the condition that the angle between $\sigma$ and $\frac{v-v_*}{|v-v_*|}$ is comparable to $2^{-k} |v-v_*|^{-1}$.  
With this partition, we define
\begin{equation}
\begin{split}
\teePLUSop(g,h,f)  & \eqdef \int_{\mathbb{R}^n} dv \int_{\mathbb{R}^n} dv_* \int_{\sph} d \sigma  ~B_k(v-v_*, \sigma) ~ g_* h  M_\beta(v_*') ~ f'   w^{2\ell}(v'),  \\ 
\teeMINUSop(g,h,f)  & \eqdef \int_{\mathbb{R}^n} dv \int_{\mathbb{R}^n} dv_* \int_{\sph} d \sigma ~B_k(v-v_*, \sigma) ~ g_* h  M_\beta(v_*) ~ f  w^{2\ell}(v). 
\end{split}
\label{defTKL}
\end{equation}
We use the notation
$
M_\beta (v)
=
\partial_{\beta} M = p_\beta(v) M(v)$ where $p_\beta(v)$ is the appropriate polynomial of degree $|\beta|$.  It suffices to use the inequality $|\partial_\beta M| \lesssim M^{1/2}$.

It turns out that we will also need to express the collision operator \eqref{gamma0} using its ``dual formulation.''  With the variant of Carleman's representation coming from Proposition \ref{carlemanA} and the notation 
$
M_*' = M(v+v_* - v'),
$
we record here the following alternative representation when $\beta = 0$ (with cancellations on $h$) 
\begin{multline}
\ang{ w^{2\ell} \Gamma(g,h),  f }
\eqdef
\int_{\threed} dv'   \int_{\threed} dv_* \int_{E_{v_*}^{v'}} d \pi_{v}  
~\tilde{B}  g_*  f'  w^{2\ell}(v')  ~ \left( M_*' h - M_* h' \right)
\\
+
\LopGstar (g,f,h),
\label{dualOPdef}
\end{multline}
using \eqref{3dualZ},
where the kernel $\tilde{B}$ is given by
\begin{equation}
\tilde{B}
\eqdef
2^{n-1}
\frac{B\left(v-v_*, \frac{2v' - v- v_*}{|2v' - v- v_*|}\right) }{  |v'-v_*| ~ |v-v_*|^{n-2}}
\label{kernelTILDE}
\end{equation}
and the operator $\LopGstar=\LopGstar(g,h,f)$ above  does not differentiate at all:
\begin{equation}
\LopGstar
 \eqdef 
 \int_{\threed} dv'  f'  h'  ~ w^{2\ell}(v')   \int_{\threed} dv_*  ~  g_*  (\partial_\beta M_* ) \int_{E_{v_*}^{v'}} d \pi_{v} ~  \tilde{B} 
\left(1 -  \frac{  |v'-v_*|^{n+\gamma} }{ |v-v_*|^{n+\gamma}} \right).
 \label{opGlabel}
\end{equation}
Note that in \eqref{dualOPdef} we use $\LopGstar$ with $\beta =0$, however below it will be useful to consider arbitrary multi-indicies $\beta$.
We also use the notation $\Gamma_{*}^0 = \opGstar$.  With these developments, 
we record here the following alternative representation for $\teePLUSop$ as well 
as the definition of a third trilinear operator $\teeSTARop$ (based on the calculation \eqref{3dualZ} with, recall, $v_*' = v+v_* - v'$):
\begin{equation}
\begin{split}
\teePLUSop(g,h,f)  & 
=  \int_{\threed} dv' ~ w^{2\ell}(v')  \int_{\threed} dv_* \int_{E_{v_*}^{v'}} d \pi_{v} ~ 
\tilde{B}_k ~  g_*  f' M_\beta(v_*') h, 
\\ 
\teeSTARop(g,h,f) & 
\eqdef
 \int_{\threed} dv' ~ w^{2\ell}(v')  \int_{\threed} dv_* \int_{E_{v_*}^{v'}} d \pi_{v} ~ 
\tilde{B}_k ~ g_* f'   M_\beta(v_*) h',
\end{split}
\label{defTKLcarl}
\end{equation}
where we use the notation 
$$
\tilde{B}_k
\eqdef
2^{n-1}
\frac{B\left(v-v_*, \frac{2v' - v- v_*}{|2v' - v- v_*|}\right) }{  |v'-v_*| ~ |v-v_*|^{n-2}} \chi_k (|v - v'|).
$$
In these integrals above $d\pi_{v}$ is Lebesgue measure on the $(\dim - 1)$-dimensional plane $E_{v_*}^{v'}$ passing through $v'$ with normal $v' - v_*$, and $v$ is the variable of integration.

Now recall the collision operator \eqref{gamma0} and the collisional variables \eqref{sigma}.  With the change of variables $u = v_* - v$, and $u^\pm = (u \pm |u| \sigma)/2$, we may write \eqref{gamma0} as
\begin{gather*}
\Gamma (g,h)
 =
 \int_{\mathbb{R}^n} du \int_{\sph} d \sigma ~ B(u, \sigma)  M(u+v)  \left\{g(v+u^+) h(v+u^-) - g(v+u) h(v)\right\}. 
\end{gather*}
Differentiating this formula and  applying the inverse coordinate change allows us to express derivatives of the bilinear collision operator $\Gamma$ as
\begin{gather}
\label{DerivEstG}
\partial^{\alpha}_{\beta}\Gamma (g,h)
 =
 \sum_{\beta_1 + \beta_2  = \beta} 
\sum_{ \alpha_1 \le \alpha} ~ C^{\beta, \beta_1, \beta_2}_{\alpha, \alpha_1} ~ 
\Gamma_{\beta_2} (\partial^{\alpha - \alpha_1}_{\beta - \beta_1}g,\partial^{\alpha_1}_{\beta_1}h).
\end{gather}
Here $C^{\beta, \beta_1, \beta_2}_{\alpha, \alpha_1}$ is a non-negative constant which is derived from the Leibniz rule.
Also, $\Gamma_{\beta}$ is the bilinear operator with derivatives on the Maxwellian $M$ given by
$$
\Gamma_{\beta} (g,h)
=
 \int_{\mathbb{R}^n} dv_* \int_{\sph} d \sigma ~ B(v-v_*,\sigma) ~ M_\beta (v_*) ~ (g_*' h' - g_* h).
$$
Since an expression analogous to \eqref{dualOPdef} holds for $\Gamma_{\beta}$, we can partition these using the dyadic decomposition of the singularity above.
Then for $f, g, h \in \mathcal{S}(\mathbb{R}^n)$, the pre-post collisional change of variables, the dual representation, and the previous calculations guarantee that 
\begin{align*}
 \left< w^{2\ell} \Gamma_\beta (g,h), f \right> & = 
\sum_{k=-\infty}^\infty  \left\{ \teePLUSop(g,h,f) - \teeMINUSop(g,h,f) \right\}
\\
& = \LopGstar (g,h,f)
+
\sum_{k=-\infty}^\infty  \left\{ \teePLUSop(g,h,f) - \teeSTARop(g,h,f) \right\} . 
\end{align*}
These will be the general quantities that we estimate in the following two sections.
The first step is to estimate each of  $\teePLUSop$, $\teeMINUSop$, and $\teeSTARop$ using only the known constraints on the size and support of $B_k$.

\subsection{``Trivial'' analysis of the decomposed pieces}

We will now prove several size and support estimates for the decomposed pieces of the Boltzmann collision operator.  The reader should note that all estimates proved here (and in Section \ref{sec:cancelHARD}) hold under the assumptions that $\gamma > -n$ and $\gamma + 2s > -\frac{n}{2}$.

We begin with the following:

\begin{proposition}\label{prop1}
For any integer $k$, $\ell \in \R$ and $m\ge 0$, we have the uniform estimate:
\begin{equation}
 \left| \teeMINUSop(g,h,f) \right|   \lesssim 2^{2sk} \nsm g\nsm_{L^2_{-m}} \nsm w^{\ell} h\nsm_{L^2_{\gamma+2s}}
 \nsm w^{\ell} f\nsm_{L^2_{\gamma+2s}}.
 \label{tminusgh} 
\end{equation}
\end{proposition}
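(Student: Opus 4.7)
The plan is to reduce the estimate to a pure size/support analysis in three sequential integrations: first over $\sigma$, then over $v_*$, and finally over $v$, absorbing the gain $2^{2sk}$ in the very first step.

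First I would integrate out the angular variable. Using spherical coordinates with $\theta$ measured from $(v-v_*)/|v-v_*|$, the support condition $\chi_k(|v-v'|) \neq 0$ forces $|v-v_*|\sin(\theta/2) \in [2^{-k-1},2^{-k}]$, which in particular requires $|v-v_*| \gtrsim 2^{-k}$ (otherwise the integrand vanishes identically). Combined with the upper bound $\sin^{\dim-2}\theta \cdot b(\cos\theta) \lesssim \theta^{-1-2s}$ from \eqref{kernelQ} and a change of variable $u = |v-v_*|\sin(\theta/2)$, a direct computation gives
\[
\int_{\sph} d\sigma \, B_k(v-v_*,\sigma) \lesssim 2^{2sk} |v-v_*|^{\gamma+2s}.
\]
This is the only place where the dyadic gain $2^{2sk}$ enters, and it sets the overall scale on the right-hand side.

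Next I would handle the $v_*$ integral. Since $|M_\beta(v_*)| \lesssim M^{1/2}(v_*)$ provides super-polynomial decay, Cauchy-Schwarz in $v_*$ yields
\[
\int dv_* |g(v_*)| |M_\beta(v_*)| |v-v_*|^{\gamma+2s} \leq \nsm g \nsm_{L^2_{-m}} \left( \int dv_* |v-v_*|^{2(\gamma+2s)} M(v_*) \ang{v_*}^{2m+c} \, dv_* \right)^{1/2}.
\]
The inner integral is finite precisely because $\gamma+2s > -\dim/2$ makes $|v-v_*|^{2(\gamma+2s)}$ locally integrable, and the Maxwellian absorbs any polynomial weight in $v_*$. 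Splitting into a local contribution near $v_* = v$ (bounded by a constant) and a tail contribution (where $|v-v_*| \approx \ang{v}$ because of Maxwellian localization), one concludes
\[
\int dv_* |g(v_*)| |M_\beta(v_*)| |v-v_*|^{\gamma+2s} \lesssim \nsm g \nsm_{L^2_{-m}} \ang{v}^{\gamma+2s}.
\]

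Finally I would combine these bounds and apply Cauchy-Schwarz in $v$ to pair $h$ and $f$ with the weight $\ang{v}^{\gamma+2s} w^{2\ell}(v)$ split evenly between them. This yields exactly the claimed bound. The only delicate point—the main obstacle in some sense—is ensuring the $v_*$-integral remains finite when $\gamma+2s$ is negative; this is precisely why the hypothesis $\gamma+2s > -\dim/2$ (implicitly present in the running assumption announced before Proposition \ref{prop1}) is essential. The Maxwellian factor $M_\beta(v_*)$ makes all weights in the $v_*$-variable harmless, so no subtlety arises there.
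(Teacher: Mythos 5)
Your proposal is correct and follows essentially the same route as the paper: integrate out $\sigma$ via the bound $\int_{\sph}B_k\,d\sigma\lesssim 2^{2sk}|v-v_*|^{\gamma+2s}$, then rely on the key estimate $\int dv_*\,\sqrt{M_*}\,|v-v_*|^{2(\gamma+2s)}\lesssim\langle v\rangle^{2(\gamma+2s)}$ (valid under $\gamma+2s>-n/2$) together with Cauchy--Schwarz. The only cosmetic difference is that you apply Cauchy--Schwarz sequentially (first in $v_*$, then in $v$) whereas the paper performs a single Cauchy--Schwarz jointly in $(v,v_*)$; the two are equivalent rearrangements of the same argument.
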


\begin{proof}
Given the size estimates for $b(\cos \theta)$ in \eqref{kernelQ} and the support of $\chi_k$, clearly
\begin{equation}
\int_{\sph} d \sigma ~ B_k \lesssim
\Phi(|v-v_*|)
\int_{2^{-k-1} |v - v_*|^{-1}}^{2^{-k} |v - v_*|^{-1}}  d \theta ~ \theta^{-1-2s} 
\lesssim 2^{2sk} |v-v_*|^{\gamma+2s}.
\label{bjEST}
\end{equation}
Thus
\[ 
\left| \teeMINUSop(g,h,f) \right| \lesssim 2^{2sk} \int_{\threed} dv \int_{\threed} dv_* \sqrt{M_*} |v-v_*|^{\gamma+2s} |g_*| |h f| 
~ w^{2\ell}(v). 
\]
With Cauchy-Schwartz this is 
\begin{multline}
\left| \teeMINUSop(g,h,f) \right| \lesssim 2^{2sk}
\left( \int_{\threed} dv \int_{\threed} dv_* \sqrt{M_*}  \ang{v}^{\gamma+2s} |g_*|^2 |h|^2 w^{2\ell}(v) \right)^{1/2}
\\
\times 
\left( \int_{\threed} dv ~  w^{2\ell}(v)  |f|^2 \ang{v}^{-\gamma-2s} \int_{\threed} dv_* ~ \sqrt{M_*}  |v-v_*|^{2(\gamma+2s)}  \right)^{1/2}
\\
\lesssim 2^{2sk} \nsm g\nsm_{L^2_{-m}} \nsm w^{\ell} h\nsm_{L^2_{\gamma+2s}}
 \nsm w^{\ell} f\nsm_{L^2_{\gamma+2s}},
\label{cauchySCHminus}
\end{multline}
where we have used the inequality $\int_{\threed} dv_* ~ \sqrt{M_*}  |v-v_*|^{2(\gamma+2s)} \lesssim \ang{v}^{2(\gamma+2s)}$, which holds whenever $2(\gamma+2s) > -n$. This completes the proof of \eqref{tminusgh}.
\end{proof}

\begin{proposition}\label{propSTAR}
For any integer $k$, $\ell\in\R$ and $m\ge 0$, 
the inequality is uniform: 
\begin{equation}
 \left| \teeSTARop(g,h,f) \right| 
  \lesssim 
  2^{2sk}  \nsm g\nsm_{L^2_{-m}} \nsm w^{\ell} h\nsm_{L^2_{\gamma+2s}}
 \nsm w^{\ell} f\nsm_{L^2_{\gamma+2s}}.
  \label{tstargh} 
\end{equation}
\end{proposition}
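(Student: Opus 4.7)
The plan is to exploit the observation that in the Carleman representation \eqref{defTKLcarl} of $\teeSTARop$, the integrand $g_*\,f'\,M_\beta(v_*)\,h'$ depends only on $v_*$ and $v'$, not on the Carleman integration variable $v\in E_{v_*}^{v'}$.  Thus we factor
$$
\teeSTARop(g,h,f)=\int_{\threed}dv'\,w^{2\ell}(v')\,f(v')\,h(v')\int_{\threed}dv_*\,g(v_*)\,M_\beta(v_*)\,K_k(v_*,v'),
$$
where $K_k(v_*,v')\eqdef\int_{E_{v_*}^{v'}}d\pi_v\,\tilde B_k$. Once a sharp pointwise bound on $K_k$ is in hand, the remainder of the argument runs in close parallel to Proposition \ref{prop1}.

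The main step, and the principal obstacle, is the pointwise estimate of $K_k$. Orthogonality of $v-v'$ and $v'-v_*$ on $E_{v_*}^{v'}$ gives $|v-v_*|^2=|v'-v_*|^2+|v-v'|^2$, while a direct computation from $\sigma=(2v'-v-v_*)/|2v'-v-v_*|$ yields $|2v'-v-v_*|=|v-v_*|$ and $\sin(\theta/2)=|v-v'|/|v-v_*|$. Because of the symmetrization placing $\theta\in(0,\pi/2]$, the kernel is supported on $|v-v'|\le|v'-v_*|$, whence $|v-v_*|\approx|v'-v_*|$ throughout the support. Using \eqref{kernelQ} in the form $b(\cos\theta)\lesssim\sin^{-(n-1)-2s}(\theta/2)$ together with $\Phi(|v-v_*|)=C_\Phi|v-v_*|^\gamma$, the singular Carleman Jacobian $1/(|v'-v_*||v-v_*|^{n-2})$ combines with the angular factor on the support of $\chi_k$ to give
$$
\tilde B_k\lesssim\frac{|v-v_*|^{\gamma+1+2s}}{|v'-v_*|}\cdot\frac{\chi_k(|v-v'|)}{|v-v'|^{n-1+2s}}\lesssim|v'-v_*|^{\gamma+2s}\,\frac{\chi_k(|v-v'|)}{|v-v'|^{n-1+2s}}.
$$
Passing to $(n-1)$-dimensional polar coordinates $v-v'=r\omega$ on $E_{v_*}^{v'}$, the area factor $r^{n-2}$ combines with $r^{-(n-1)-2s}$ to leave the integrable tail $r^{-1-2s}$, and hence
$$
K_k(v_*,v')\lesssim|v'-v_*|^{\gamma+2s}\int_{2^{-k-1}}^{2^{-k}}r^{-1-2s}\,dr\lesssim 2^{2sk}\,|v'-v_*|^{\gamma+2s}.
$$

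Substituting back and using $|M_\beta|\lesssim M^{1/2}$, it remains to control
$$
2^{2sk}\int_{\threed}dv'\,w^{2\ell}(v')\,|f'\,h'|\int_{\threed}dv_*\,|g_*|\,M^{1/2}(v_*)\,|v'-v_*|^{\gamma+2s}.
$$
A Cauchy-Schwarz in $v_*$, together with $\int dv_*\,M(v_*)\ang{v_*}^{2m}|v'-v_*|^{2(\gamma+2s)}\lesssim\ang{v'}^{2(\gamma+2s)}$ (valid because $2(\gamma+2s)>-n$, exactly as in Proposition \ref{prop1}), bounds the inner integral by $\nsm g\nsm_{L^2_{-m}}\ang{v'}^{\gamma+2s}$. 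A second Cauchy-Schwarz in $v'$, splitting $w^{2\ell}(v')\ang{v'}^{\gamma+2s}$ symmetrically between $f'$ and $h'$, then yields the bound claimed in \eqref{tstargh}.
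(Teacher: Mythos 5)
Your argument is correct and follows essentially the same path as the paper's own proof: the key step in both is the pointwise kernel bound $\int_{E_{v_*}^{v'}}d\pi_v\,\tilde B_k\lesssim 2^{2sk}|v'-v_*|^{\gamma+2s}$ (the paper's \eqref{bjCARLest}, obtained from the support condition $|v-v'|\le|v'-v_*|$, the comparability $|v-v_*|\approx|v'-v_*|$, and the angular decay of $b$), after which Cauchy--Schwarz in $v_*$ and then in $v'$ reproduces the argument of Proposition \ref{prop1}. Your factoring out the $v$-integral as an explicit kernel $K_k(v_*,v')$ is a presentational rearrangement rather than a substantively different method.
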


\begin{proof}
As in Proposition \ref{prop1}, the key to these inequalities is the symmetry between $h$ and $f$ coupled with Cauchy-Schwartz.  The difference is that, this time, the Carleman representation will be used and the main integrals will be over $v_*$ and $v'$.  In this case, the quantity of interest is
\[ 
\int_{E_{v_*}^{v'}}  d \pi_{v} ~ b \left( \frac{|v'-v_*|^2 - |v - v'|^2}{|v' - v_*|^2 + |v - v'|^2} \right) \frac{ \chi_k(|v-v'|) }{|v'-v_*| |v-v_*|^{n-2}}. 
\]
The support condition yields $|v-v'| \approx 2^{-k}$.  Moreover, since $b(\cos \theta)$ vanishes for $\theta\in [\pi/2,\pi]$, we have
$|v' - v_*| \ge |v' - v|$. 
Consequently, the condition \eqref{kernelQ} gives 
\begin{equation}
b \left( \frac{|v'-v_*|^2 - |v - v'|^2}{|v' - v_*|^2 + |v - v'|^2} \right)
\lesssim
\left(\frac{ |v - v'|^2}{|v' - v_*|^2} \right)^{-\frac{n-1}{2}-s}.
\notag
\end{equation}
Since $|v-v_*|^2=|v' - v_*|^2 + |v - v'|^2$ on $E_{v_*}^{v'}$, the integral is bounded by a uniform constant times the following quantity
\[ 
\int_{E_{v_*}^{v'}}  d \pi_{v} ~\frac{|v'-v_*|^{n-1+2s}}{|v-v'|^{n-1+2s}} |v'-v_*|^{-n+1} \chi_k(|v-v'|) \lesssim 2^{2sk} |v'-v_*|^{2s}. 
\]
Note that the condition $|v' - v_*| \geq |v - v'|$ implies $|v - v_*| \approx |v' - v_*|$.  With these estimates it follows that
\begin{equation}  \label{bjCARLest}
\int_{E_{v_*}^{v'}} d \pi_{v}  ~\tilde{B}_k  \lesssim  2^{2sk} |v'-v_*|^{\gamma+2s}.
\end{equation}
As a result,
\begin{equation}
\left| \teeSTARop(g,h,f) \right| \lesssim 2^{2sk} \int_{\mathbb{R}^n} dv' \int_{\mathbb{R}^n} dv_*  ~ \sqrt{M_*}  ~ |v' - v_*|^{\gamma+2s} |g_* h' f'| ~ w^{2\ell}(v'). 
\label{AAeqREF}
\end{equation}
Now the relevant estimate can be established as in Proposition \ref{prop1} and \eqref{cauchySCHminus}.
\end{proof}

We will now estimate the operator $\teePLUSop$, which is more difficult and more technical because it contains the post-collisional velocities \eqref{sigma}.
A key problem here is to be able to distribute negative decaying velocity weights among the functions $g$, $h$, and $f$.  In the previous propositions this distribution could be accomplished more easily. 
Because the weight is in the $v$ variable and the unknown functions are of the variables $v'$ and $v'_*$ we have to work harder to distribute the negative weights.  Our methods in the proof of Proposition \ref{referLATERprop} enable us to only obtain one term in the upper bound of \eqref{tplussmallK}; this is a substantial improvement over, e.g., \cite{MR2013332,MR1946444,MR2000470}.

\begin{proposition}
\label{referLATERprop}
Fix an integer $k$ and $\ell^+, \ell^- \ge 0$,
with $\ell = \ell^+ - \ell^-$.  For any  $0\le \ell' \le \ell^-$,
with $\ell + \ell' = \ell^+ - (\ell^- - \ell')$,
  we   have the uniform estimate:
\begin{equation} 
\begin{split}
\left|  \teePLUSop(g,h,f)  \right| \lesssim   & ~
2^{2sk} 
 \nsm  w^{\ell^+ - \ell'} g\nsm_{L^2} 
\nsm w^{\ell + \ell'} h\nsm_{L^2_{\gamma + 2s}} 
\nsm w^{\ell} f\nsm_{L^2_{\gamma + 2s}}.
\end{split}
 \label{tplussmallK} 
\end{equation}
This estimate holds, in fact, whenever $\gamma + 2s > -\frac{n}{2}$.
\end{proposition}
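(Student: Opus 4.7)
The plan is to adapt the Cauchy--Schwarz strategy of Propositions \ref{prop1} and \ref{propSTAR}, but with careful attention to distributing the weight $w^{2\ell}(v')$ among all three unknown functions, since $v'$ depends on the integration variables and is algebraically entangled with the Gaussian velocity $v_*'$. Using $|M_\beta(v_*')| \lesssim M^{1/2}(v_*')$ and applying Cauchy--Schwarz in the measure $B_k\,dv\,dv_*\,d\sigma$ with balancing weight $\phi^2 = w^{2\ell}(v')\,e^{|v_*'|^2/C}$ for a sufficiently large constant $C$, I would split $|\teePLUSop(g,h,f)|^2 \le I_1 \cdot I_2$, where $I_1$ contains $|g_*|^2|h|^2$ together with $M(v_*')e^{|v_*'|^2/C} \lesssim e^{-c|v_*'|^2}$ and the weight $w^{2\ell}(v')$, while $I_2$ contains $|f'|^2 w^{2\ell}(v')\,e^{-|v_*'|^2/C}$.

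For $I_2$, the plan is to apply the pre-post collisional change of variables $(v,v_*,\sigma) \to (v',v_*',k)$. This involution has unit Jacobian and leaves $B_k$ invariant, since the kernel depends only on the symmetric quantities $|v-v_*|=|v'-v_*'|$, the cosine $\cos\theta = \langle (v-v_*)/|v-v_*|, (v'-v_*')/|v'-v_*'|\rangle$, and $|v-v'|$. Under the change, $|f(v')|^2 w^{2\ell}(v')e^{-|v_*'|^2/C}$ becomes $|f(v)|^2 w^{2\ell}(v)e^{-|v_*|^2/C}$; integrating $\sigma$ via \eqref{bjEST} and then $v_*$ against the Gaussian-weighted $|v-v_*|^{\gamma+2s}$ (integrable for $\gamma+2s>-n$, yielding $\langle v\rangle^{\gamma+2s}$) gives $I_2 \lesssim 2^{2sk}|w^\ell f|^2_{L^2_{\gamma+2s}}$.

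For $I_1$, the next step is to distribute $w^{2\ell}(v')$ between weights at $v$ and $v_*$, absorbing any surplus into $e^{-c|v_*'|^2}$. When $\ell\ge 0$, energy conservation gives $\langle v'\rangle\le\langle v\rangle\langle v_*\rangle$, hence $w^{2\ell}(v')\le w^{2(\ell+\ell')}(v) w^{2(\ell^+-\ell')}(v_*)$, because $\ell+\ell'\ge\ell$ and $\ell^+-\ell'\ge\ell^+-\ell^-=\ell$ (both inequalities enlarge non-negative exponents). When $\ell<0$, the dual estimate $\langle v'\rangle\ge\langle v\rangle\langle v_*'\rangle^{-1}$ (a consequence of $|v|^2\le|v'|^2+|v_*'|^2$) allows an analogous distribution after absorbing a polynomial factor of $\langle v_*'\rangle$ into the Gaussian. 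Then \eqref{bjEST} produces a factor $|v-v_*|^{\gamma+2s}$, which is split between $h$ and $g$ using a Peetre-type inequality together with the Gaussian integration in $v_*$. This last integration is finite precisely when $\gamma+2s>-n/2$, which forces the stated hypothesis on $\gamma + 2s$. The result is $I_1\lesssim 2^{2sk}|w^{\ell^+-\ell'} g|^2_{L^2}|w^{\ell+\ell'}h|^2_{L^2_{\gamma+2s}}$, and combining with the bound on $I_2$ via Cauchy--Schwarz gives \eqref{tplussmallK}.

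The main obstacle is the distribution of $w^{2\ell}(v')$: the case $\ell<0$ (relevant for soft potentials with high velocity moments) requires the dual energy-conservation inequality rather than the direct one, and the flexibility parameter $\ell'\in[0,\ell^-]$ is essential to attain the full family of weight distributions in the statement. The threshold $\gamma+2s>-n/2$ is exactly what makes the quadratic integral $\int dv_*\,e^{-c|v_*|^2}|v-v_*|^{2(\gamma+2s)}\lesssim\langle v\rangle^{2(\gamma+2s)}$ converge in the Cauchy--Schwarz step, and is therefore the sharp condition for this method.
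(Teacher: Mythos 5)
Your overall skeleton — dominate $M_\beta(v_*')$ by a Gaussian, Cauchy--Schwarz in the measure $B_k\,dv\,dv_*\,d\sigma$, apply the pre-post change of variables to the $f$-factor, and distribute $w^{2\ell}(v')$ using the collisional conservation laws — is the right shape and essentially tracks the paper. But there is a genuine gap in the choice of balancing weight, and it is fatal to the argument as written.

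After your Cauchy--Schwarz, $I_1$ still carries the full measure $B_k\,dv\,dv_*\,d\sigma$, so $\int d\sigma\,B_k \lesssim 2^{2sk}|v-v_*|^{\gamma+2s}$ leaves the factor $|v-v_*|^{\gamma+2s}$ sitting in the integrand attached to $|g_*|^2|h|^2$. When $\gamma+2s < 0$ this is singular on the diagonal $v=v_*$, and there is no Peetre-type inequality that bounds a diagonal singularity by a product of weights $\langle v\rangle^a\langle v_*\rangle^b$ — Peetre controls growth at infinity, not blow-up at $v=v_*$. Moreover, on the region $|v'|^2 \geq \tfrac12(|v|^2+|v_*|^2)$, the factor $e^{-c|v_*'|^2}$ you put into $I_1$ is $\approx 1$, so there is no Gaussian decay in $v_*$ to integrate against; the only thing left to pair with $|g_*|^2$ and the singular kernel is an $L^2$ norm of $g$, and that cannot absorb an unbounded kernel. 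In short, $I_1$ does not close. The $\gamma+2s>-\tfrac{n}{2}$ threshold that you invoke arises from a Gaussian integral of the \emph{doubled} power $|v-v_*|^{2(\gamma+2s)}$; but in your setup that doubled power never appears anywhere — your $I_2$ only carries a single power (hence your claim that $I_2$ needs only $\gamma+2s>-n$), and your $I_1$ has $|g_*|^2$ rather than a Gaussian multiplying the kernel. So the mechanism you name for $\gamma+2s>-\tfrac{n}{2}$ in $I_1$ has no computation behind it.

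The missing idea is that the Cauchy--Schwarz must be \emph{weighted} so as to strip the $\Phi$-singularity entirely off the $g,h$ factor and double it in the $f$ factor: the balance must include a factor $|v-v_*|^{\pm(\gamma+2s)/2}$ (and, to get the velocity weights of $h$ and $f$ to come out as $L^2_{\gamma+2s}$ rather than $L^2_{2(\gamma+2s)}$ and $L^2_{0}$, also a factor $\langle v'\rangle^{\pm(\gamma+2s)/2}$). That is precisely the split $B_k/|v-v_*|^{\gamma+2s}$ versus $B_k|v-v_*|^{\gamma+2s}$ in display \eqref{tpe}: the $g,h$ factor then has $\int d\sigma\,B_k/|v-v_*|^{\gamma+2s}\lesssim 2^{2sk}$ with no remaining singularity, so the $v_*$ integral of $|g_*|^2$ is exactly $|g|_{L^2}^2$ with no convergence hypothesis, while the $f$ factor acquires $|v-v_*|^{2(\gamma+2s)}$, integrated against the Gaussian $\sqrt{M_*}$, which is where $\gamma+2s>-\tfrac{n}{2}$ genuinely enters. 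Your purely Gaussian balancing weight $\phi^2 = w^{2\ell}(v')e^{|v_*'|^2/C}$ does not accomplish this separation. A secondary point: for $\gamma+2s\geq 0$ the paper swaps the roles and uses the Carleman representation on the $h$ factor (display \eqref{tpe2}) because the sign flip changes which of $\langle v\rangle^{\gamma+2s}\lessgtr\langle v'\rangle^{\gamma+2s}$ is available; your proposal does not account for this case distinction either.
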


\begin{proof}
We plan to estimate $\teePLUSop(g,h,f)$ from above as
\begin{equation}
\left| \teePLUSop(g,h,f)  \right| \lesssim \int_{\mathbb{R}^n} dv \int_{\mathbb{R}^n} dv_* \int_{\sph} d \sigma  ~B_k ~ w^{2\ell}(v') ~  \left| g_* h \right|     \left| f' \right| \sqrt{M_*'}.
\label{tBabove}
\end{equation}
Consider first the situation when $\gamma + 2s \leq 0$:
By Cauchy Schwartz, the right-hand side is bounded above by a uniform constant times the product
\begin{equation} 
\begin{split}
\left(  \int_{\mathbb{R}^n} dv \int_{\mathbb{R}^n} dv_* \int_{\sph} d \sigma ~ 
\frac{B_k(v-v_*, \sigma)}{|v-v_*|^{\gamma+2s} } ~ |g_*|^2 |h|^2 \ang{v'}^{\gamma+2s} w^{2\ell}(v')
\sqrt{M_*'}   \right)^{\frac 12}
\\
\times \left( \int_{\mathbb{R}^n} dv \int_{\mathbb{R}^n} dv_* \int_{\sph} d \sigma ~ 
\frac{B_k(v-v_*, \sigma)}{|v-v_*|^{-\gamma-2s} } \ang{v'}^{-\gamma-2s} w^{2\ell}(v') |f'|^2 \sqrt{M_*'}  \right)^{\frac 12}.
\end{split} \label{tpe}
\end{equation}
After a pre-post change of variables, the second factor equals $2^{sk}$ times a term identical to the corresponding factor of \eqref{cauchySCHminus} by virtue of \eqref{bjEST}; in particular, it is bounded above by a uniform constant times $2^{sk} |w^\ell f|_{L^2_{\gamma+2s}}$.  Note that the inequality
\[ 
\int_{\threed} dv_* \sqrt{M_*} |v-v_*|^{2(\gamma+2s)} \lesssim \ang{v}^{2(\gamma+2s)}, 
\]
we implicitly use here is the only place in this proposition where there are any constraints on $\gamma$ and $s$ (since the singularity has been ``removed'' in the first factor of \eqref{tpe} by Cauchy-Schwartz).

The first factor, on the other hand, requires more cleverness.  First of all, note that, if $|v'|^2 \leq \frac{1}{2} (|v|^2 + |v_*|^2)$, then from the collisional conservation laws $M_*' \leq \sqrt{M M_*}$; consequently on this region of $v'$, one has $\ang{v'}^{\gamma+2s} w^{2\ell}(v') \sqrt{M_*'} \lesssim \ang{v}^{-m} \ang{v_*}^{-m}$ for any fixed, positive $m$.
On the region $|v'|^2 \geq \frac{1}{2} (|v|^2 + |v_*|^2)$, it follows from the collisional geometry that $|v'|^2 \approx |v|^2 + |v_*|^2$.  For any $\ell \in \R$, we have
\[ 
w^{2 \ell}(v')  \approx w^{2 \ell}(|v| + |v_*|) \lesssim w^{2(\ell^+ - \ell')}(v_*) w^{2(\ell + \ell')}(v),
 \]
since $2 \ell \leq 2(\ell^+ - \ell')$, $2 \ell \leq 2(\ell + \ell')$, and $2(\ell^+ - \ell') + 2(\ell + \ell') \geq 2 \ell$.  Similarly, since $\gamma + 2s \leq 0$, we have
$
\ang{v'}^{\gamma+2s} \lesssim \ang{v}^{\gamma+2s}.
$
Thus
\[\ang{v'}^{\gamma + 2s} w^{2\ell}(v') \sqrt{M_*'} \lesssim w^{2(\ell^+ - \ell')}(v_*) w^{2(\ell + \ell')}(v) \ang{v}^{\gamma+2s}. \]
Substituting this into the first factor of \eqref{tpe} and using \eqref{bjEST} as before clearly establishes \eqref{tplussmallK}.  

Now consider the case when $\gamma + 2s \geq 0$.  This time, instead of \eqref{tpe}, we estimate $\left| \teePLUSop(g,h,f)  \right|$ from above by
\begin{equation} 
\begin{split}
\left(  \int_{\mathbb{R}^n} \! \! \! dv \int_{\mathbb{R}^n} \! \! \! dv_* \int_{\sph} \!\! \! d \sigma ~ 
\frac{B_k(v-v_*, \sigma)}{|v'-v_*|^{\gamma+2s}} ~ |g_*|^2 |f'|^2  \ang{v}^{\gamma+2s} \! \frac{w^{4\ell}(v')}{w^{2 (\ell + \ell')}(v)}
\sqrt{M_*'}   \right)^{\frac{1}{2}}
\\
\times \left( \int_{\mathbb{R}^n} \! \! \! dv \int_{\mathbb{R}^n} \! \! \! dv_* \int_{\sph} \! \! \! d \sigma ~ 
 \frac{B_k(v-v_*, \sigma)}{|v'-v_*|^{-\gamma-2s}} \ang{v}^{-\gamma-2s} w^{2(\ell + \ell')}(v) |h|^2 \sqrt{M_*'}  \right)^{\frac{1}{2}}.
\end{split} \label{tpe2}
\end{equation}
As in the previous case, the second factor is readily estimated.  This time, after a pre-post change of variables, the Carleman representation in Proposition \ref{carlemanA} is used along with \eqref{bjCARLest} for the exponent $2( \gamma + 2s)$ to conclude that the second factor is at most a fixed constant times 
$2^{sk} \nsm w^{\ell + \ell'} h\nsm_{L^2_{\gamma + 2s}}$.

Regarding the first factor, just as in the previous case, we have the bound
$w^{2 \ell} (v') \sqrt{M_*'} \lesssim w^{2(\ell^+-\ell')}(v_*) w^{2 (\ell + \ell')}(v)$.
The region without rapid decay in all variables is $|v'|^2 \approx |v|^2 + |v_*|^2$.  Thus, if $\gamma + 2s \geq 0$, we have $\ang{v}^{\gamma+2s} \lesssim \ang{v'}^{\gamma+2s}$ (which will also be true when $|v-v'| \leq 1$).  Thus, the same estimate \eqref{bjCARLest} establishes that the first term is bounded uniformly above by $2^{sk} \nsm  w^{\ell^+ - \ell'} g\nsm_{L^2} \nsm w^{\ell} f\nsm_{L^2_{\gamma + 2s}}$.
\end{proof}

\begin{proposition}\label{opGstarEST}
We have the following uniform estimate for \eqref{opGlabel} when $\gamma > -\frac{n}{2}$:
\begin{equation}
\left| \LopGstar(g,h,f)   \right|  \lesssim
      \nsm g\nsm_{L^2_{-m}} 
  \nsm w^\ell h\nsm_{L^2_{\gamma}} 
 \nsm w^\ell f\nsm_{L^2_{\gamma}}.
\label{opGstarINEQ1}
\end{equation}
If $\gamma + 2s > -\frac{n}{2}$, then we have the alternate uniform inequality for some $\delta > 0$
\begin{equation}
\left| \LopGstar(g,h,f)   \right|  \lesssim
      \nsm g M^\delta \nsm_{L^2} 
  \nsm  h M^\delta \nsm_{H^{s+\epsilon}} 
 \nsm  f M^\delta \nsm_{H^{s-\epsilon}} 
 + 
 \nsm g \nsm_{L^2_{-m}} 
  \nsm w^\ell h\nsm_{L^2_{\gamma}} 
 \nsm w^\ell f\nsm_{L^2_{\gamma}}.
\label{opGstarINEQ2}
\end{equation}
These inequalities hold for all $\ell\in\mathbb{R}$, $m\ge 0$ and $0 \leq \epsilon \leq \min \{s,1-s\}$.
\end{proposition}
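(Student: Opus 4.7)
The plan is to reduce everything to a pointwise bound on the inner kernel integral, namely
\[
\int_{E_{v_*}^{v'}} d\pi_{v} ~ \tilde{B} \left|1 - \frac{|v'-v_*|^{n+\gamma}}{|v-v_*|^{n+\gamma}}\right| \lesssim |v'-v_*|^{\gamma},
\]
and then distribute the resulting factor of $|v'-v_*|^\gamma$ among the three functions by Cauchy-Schwartz for \eqref{opGstarINEQ1} and by a Hardy-Littlewood-Sobolev plus Sobolev embedding argument for \eqref{opGstarINEQ2}. The cancellation factor is essential: $\tilde{B}$ alone is not integrable in $v$ on $E_{v_*}^{v'}$ because of the non-integrable angular singularity of $b$ at $\theta = 0$, which manifests as an $|v-v'|^{-n+1-2s}$ blow-up.

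For the kernel bound I will use two geometric facts. First, the symmetrization assumption $\ang{k, \sigma} \ge 0$ combined with the Carleman geometry $|v-v_*|^2 = |v'-v_*|^2 + |v-v'|^2$ on $E_{v_*}^{v'}$ restricts the integration to the region $|v-v'| \leq |v'-v_*|$, since $\sin^2(\theta/2) = |v-v'|^2 / |v-v_*|^2 \leq 1/2$. Second, setting $t := |v-v'|^2/|v'-v_*|^2 \in [0,1]$, a Taylor expansion yields $|1 - (1+t)^{-(n+\gamma)/2}| \lesssim t$. Multiplying this improvement against the pointwise bound $\tilde{B} \lesssim |v'-v_*|^{\gamma+2s}/|v-v'|^{n-1+2s}$ (valid on the region above, in the spirit of the computation leading to \eqref{bjCARLest}) gives an integrand $\lesssim |v'-v_*|^{\gamma+2s-2}/|v-v'|^{n-3+2s}$; integrating over the $(n-1)$-dimensional plane $E_{v_*}^{v'}$ for $|v-v'| \leq |v'-v_*|$ yields $|v'-v_*|^{\gamma+2s-2} \cdot |v'-v_*|^{2-2s} = |v'-v_*|^\gamma$, with the radial integral converging because $s<1$.

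With the kernel bound in hand and using $|\partial_\beta M_*| \lesssim M_*^{1/2}$, the estimate \eqref{opGstarINEQ1} follows by Cauchy-Schwartz exactly as in Proposition \ref{prop1}; the only moment required is $\int M_* \ang{v_*}^{2m} |v'-v_*|^{2\gamma} dv_* \lesssim \ang{v'}^{2\gamma}$, which converges precisely under the hypothesis $\gamma > -n/2$. For \eqref{opGstarINEQ2}, I split into $|v'-v_*|\ge 1$ and $|v'-v_*|\le 1$. On the far piece we may assume $\gamma < 0$, since if $\gamma \ge 0$ then $\gamma > -n/2$ and the stronger \eqref{opGstarINEQ1} already applies; hence $|v'-v_*|^\gamma \leq 1$, and a direct Cauchy-Schwartz produces the second term on the right of \eqref{opGstarINEQ2}.

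The near piece $|v'-v_*| \leq 1$ is the main obstacle. Here $\ang{v'} \approx \ang{v_*}$, so I can split the Maxwellian as $M_*^{1/2} \lesssim M_*^{\delta} M(v')^\delta$ for some small $\delta > 0$ and absorb the polynomial weight $w^{2\ell}(v')$ into $M(v')^{\delta/2}$. Setting $\tilde{g} := g M^\delta$, $\tilde{h} := h M^{\delta/4}$, $\tilde{f} := f M^{\delta/4}$ reduces the estimate to controlling
\[
\iint_{|v'-v_*| \le 1} |\tilde{h}(v')\tilde{f}(v')\tilde{g}(v_*)| ~ |v'-v_*|^\gamma ~ dv' dv_*,
\]
which I will handle with Hölder's inequality, the Hardy-Littlewood-Sobolev inequality for the truncated kernel $|\cdot|^\gamma \ind_{|\cdot|\le 1}$ convolved against $\tilde{g} \in L^2$, and Sobolev embeddings $H^{a_i} \hookrightarrow L^{q_i}$ applied to $\tilde{h}$ and $\tilde{f}$. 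The exponents are forced to satisfy $a_1 + a_2 = -n/2 - \gamma$, and the hypothesis $\gamma + 2s > -n/2$ is precisely what ensures this quantity is $\le 2s$, permitting the choice $a_1 = s+\epsilon$, $a_2 = s - \epsilon$ appearing in the statement.
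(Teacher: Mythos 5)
Your proposal is correct and follows essentially the same route as the paper: the cancellation factor $|1-A|\lesssim |v-v'|^2/|v'-v_*|^2$ combined with the size bound on $\tilde B$ yields the kernel estimate $\int_{E_{v_*}^{v'}}\tilde B|1-A|\,d\pi_v\lesssim|v'-v_*|^\gamma$, after which \eqref{opGstarINEQ1} follows from Cauchy--Schwartz (requiring $\gamma>-n/2$) and \eqref{opGstarINEQ2} from the far/near split with Hardy--Littlewood--Sobolev plus Sobolev embedding on the near part. The only cosmetic difference is that you carry out the radial integral on the hyperplane directly (exploiting $s<1$), whereas the paper first decomposes the kernel dyadically in $|v-v'|$ and then sums the geometric series over $k$; the two computations are identical in content. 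One small imprecision worth noting: the H\"older/HLS exponent constraint reads $a_1+a_2>-\tfrac{n}{2}-\gamma$, not an equality, and the hypothesis $\gamma+2s>-\tfrac{n}{2}$ is what lets the admissible pair $a_1=s+\epsilon$, $a_2=s-\epsilon$ (so $a_1+a_2=2s$) satisfy that strict inequality.
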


\begin{proof}  The key quantity to estimate is the integral on $E_{v_*}^{v'}$
of
$\tilde{B}_k 
\left( 1- A \right),
$
where
$$
A \eqdef
\frac{ \Phi(v'-v_*) |v'-v_*|^{n} }{\Phi(v-v_*) |v-v_*|^{n}} =
 \left( \frac{|v' - v_*|^2}{|v-v' |^2+|v' - v_*|^2}\right)^{\frac{n+\gamma}{2}}.
$$
Now for any fixed $\alpha > 0$, one has
$
\left| c^\alpha - 1 \right|
\lesssim
\left| c - 1 \right|
$
uniformly for $0\le c \le 1$; thus
\begin{align*}
\int_{E_{v_*}^{v'}} d \pi_{v} &  ~\tilde{B}_k 
\left|  A    -   1 \right|
\lesssim
\int_{E_{v_*}^{v'}} d \pi_{v}  ~\tilde{B}_k ~
\frac{|v-v' |^2}{|v-v' |^2+|v' - v_*|^2} \lesssim
\int_{E_{v_*}^{v'}} d \pi_{v}  ~\tilde{B}_k ~
\frac{|v-v' |^2}{|v' - v_*|^2} 
\\ 
&
\lesssim
2^{(2s-2) k} |v' - v_*|^{\gamma+2s-2},
\end{align*}
where we use the inequality $|v - v'| \leq |v' - v_*|$ as well as \eqref{bjCARLest}.
We now sum over $k$, noting that, for fixed distance $|v' - v_*|$, the terms for which $2^k |v' - v_*| \leq \frac{1}{4}$ will vanish identically since $2^k |v - v'| \geq \frac{1}{2}$ and $|v' - v_*| \ge | v' - v|$. 
Thus
\begin{multline*} 
\int_{E_{v_*}^{v'}}  d \pi_v ~ \tilde{B} ~ |A-1| 
\lesssim 
\sum_{k:~2^k |v' - v_*| > 1} 2^{(2s-2) k} |v' - v_*|^{\gamma+2s-2} 
 \lesssim   |v' - v_*|^{\gamma}. 
 \end{multline*} 
Now we complete the estimate  
\begin{equation}
 |\LopGstar(g,h,f)| \lesssim \int_{\R^n} dv' \int_{\R^n} dv_* |f' h'| w^{2 \ell}(v') |g_*| \sqrt{M_*} |v' - v_*|^{\gamma} \label{hls0}
\end{equation}
with Cauchy-Schwartz as in \eqref{cauchySCHminus}, which follows as usual when $\gamma > - \frac{n}{2}$.  Moreover, if the integral on the right-hand side of \eqref{hls0} is restricted to the region 
$|v' - v_*| \ge 1$, the condition $\gamma > - \frac{n}{2}$ is unnecessary since the singularity is avoided.  Thus, we will be able to establish \eqref{opGstarINEQ2} immediately after the next proposition, taking $b_1 = s + \epsilon$, $b_2 = s- \epsilon$ and $0 \leq \epsilon \leq \min\{s,1-s\}$.
\end{proof}

\begin{proposition}
Let $\rho > -n$ and $0 \leq b_1,b_2 \leq \frac{n}{2}$ satisfy $\rho + b_1 + b_2 > - \frac{n}{2}$, and let
\[ {\mathrm{HLS}}(g,h,f) \eqdef \int_{\R^n} dv_* \int_{\R^n}  dv |g_*| \sqrt{M_*} |v-v_*|^{\rho} w^{2 \ell}(v) |h||f| {\mathbf 1}_{|v-v_*| \leq 1}. \]
Then there is some $\delta > 0$ such that
\begin{equation} \mathrm{HLS}(g,h,f) \lesssim 
  |g M^\delta|_{L^2}|h M^{\delta}|_{H^{b_1}}|f M^{\delta}|_{H^{b_2}}. \label{hlsineq}
\end{equation}
\end{proposition}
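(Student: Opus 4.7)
The plan is to prove \eqref{hlsineq} as a localized Hardy--Littlewood--Sobolev-type estimate, after first exploiting the cutoff $\mathbf{1}_{|v-v_*|\le 1}$ to transfer the factors $\sqrt{M_*}$ and $w^{2\ell}(v)$ onto all three functions. For the Maxwellian redistribution, I split $\sqrt{M_*}=M^\delta(v_*)\,M^{1/2-\delta}(v_*)$; on the support of the indicator we have $|v_*|^2\ge \tfrac12 |v|^2 - C$, so $M^{1/2-\delta}(v_*)$ enjoys Gaussian decay in $|v|$ which dominates the polynomial weight $w^{2\ell}(v)$. Hence, for $\delta>0$ small enough depending on $\ell$,
\[
\sqrt{M_*}\,w^{2\ell}(v)\,\mathbf{1}_{|v-v_*|\le 1}\;\lesssim\; M^{\delta}(v_*)\,M^{\delta}(v)\,M^{\delta}(v),
\]
and writing $G\eqdef |gM^\delta|$, $H\eqdef |hM^\delta|$, $F\eqdef |fM^\delta|$ it suffices to establish
\[
\int_{\threed}\!\!dv_*\int_{\threed}\!\!dv\; G(v_*)\,|v-v_*|^{\rho}\,H(v)F(v)\,\mathbf{1}_{|v-v_*|\le 1}\;\lesssim\; |G|_{L^2}\,|H|_{H^{b_1}}\,|F|_{H^{b_2}}.
\]

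Next I would recognize the inner $v$-integral as $(K*(HF))(v_*)$ with convolution kernel $K(u)\eqdef |u|^{\rho}\mathbf{1}_{|u|\le 1}$, and apply Cauchy--Schwarz in $v_*$ to peel off $|G|_{L^2}$, leaving $\|K*(HF)\|_{L^2}$ to control. I then invoke Young's convolution inequality
\[
\|K*u\|_{L^2}\;\le\;\|K\|_{L^r}\,\|u\|_{L^p}, \qquad \tfrac{1}{r}+\tfrac{1}{p}=\tfrac{3}{2},
\]
together with Hölder $\|HF\|_{L^p}\le \|H\|_{L^{p_1}}\|F\|_{L^{p_2}}$ and the Sobolev embeddings $H^{b_j}\hookrightarrow L^{p_j}$ with $\tfrac{1}{p_j}=\tfrac{1}{2}-\tfrac{b_j}{n}$, which forces $\tfrac{1}{p_1}+\tfrac{1}{p_2}=1-\tfrac{b_1+b_2}{n}=\tfrac{1}{p}$ as required. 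The kernel lies in $L^r$ precisely when $r\rho+n>0$; combined with $\tfrac{1}{r}=\tfrac{3}{2}-\tfrac{1}{p}=\tfrac{1}{2}+\tfrac{b_1+b_2}{n}$, this condition unravels to
\[
\rho + b_1 + b_2 > -\tfrac{n}{2},
\]
which is exactly the hypothesis. (When $\rho\ge 0$ the kernel is bounded and $K\in L^r$ for every $r\ge 1$, so the constraint is automatic.) The scaling identity here is the heart of the matter, and the fact that the hypothesis is \emph{strict} leaves a margin of room, which is what allows us to absorb the $M^\delta$ factors with a positive $\delta$.

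The main technical obstacle is the endpoint case $b_j=n/2$, where the Sobolev embedding degenerates ($p_j=\infty$). I would circumvent this by replacing $b_j$ by $b_j-\varepsilon$ for a sufficiently small $\varepsilon>0$: since the hypothesis $\rho+b_1+b_2>-\tfrac{n}{2}$ is strict, the perturbed triple $(\rho,b_1-\varepsilon,b_2-\varepsilon)$ still satisfies the admissibility conditions, while now $p_j<\infty$ and the Sobolev embedding is classical. Once this is handled, interpolation between the non-endpoint estimates recovers the full range $0\le b_j\le n/2$. All remaining steps are routine applications of Young, Hölder, and Sobolev embedding.
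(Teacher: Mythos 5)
Your overall strategy is sound but takes a genuinely different route from the paper: you view the $v$-integral as a convolution against the compactly supported kernel $K(u) = |u|^\rho {\mathbf 1}_{|u|\le 1}$, peel off $g$ by Cauchy--Schwarz in $v_*$, and close with Young, H\"older, and Sobolev; the paper instead peels off $g$ by recognizing the $v_*$-integral as the fractional integral $I_{n+\rho}(|g|M^\delta)$, applies H\"older to separate all three factors at once, and closes with the Hardy--Littlewood--Sobolev inequality. Both arrive at the same exponent constraint, and your use of the truncation lets you work with a genuine $L^r$ kernel rather than the weak-type kernel implicit in HLS.

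There is, however, a gap in your exponent bookkeeping. By committing to the critical Sobolev indices $\frac{1}{p_j} = \frac{1}{2} - \frac{b_j}{n}$ you are forced into $\frac{1}{r} = \frac{1}{2} + \frac{b_1+b_2}{n}$, and whenever $b_1 + b_2 > \frac{n}{2}$ (a case the proposition explicitly permits, and which is reached in its application with $b_1+b_2 = 2s$ in low dimensions with strong singularities) this gives $r < 1$, outside the admissible range of Young's inequality. The repair is to give up equality: $H^{b_j}$ embeds into $L^{p_j}$ for every $\frac{1}{p_j} \in [\frac{1}{2}-\frac{b_j}{n}, \frac{1}{2}]$, so you should choose the $p_j$ subcritically so that both $r \ge 1$ and $K \in L^r$ hold; a short check shows such a choice exists precisely under $\rho > -n$ and $\rho + b_1 + b_2 > -\frac{n}{2}$. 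The paper avoids this pitfall automatically by keeping strict inequalities in all three exponents $q_1,q_2,q_3$. Your $\varepsilon$-perturbation of $b_j$ is the right instinct, but as written it only targets the $p_j = \infty$ degeneracy at $b_j = n/2$; moreover, what legitimizes passing to smaller $b_j$ is simply monotonicity of the Sobolev norm on the right-hand side, not interpolation.
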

\begin{proof}
We begin by observing that, for all positive $\delta$ sufficiently small, we have $w^{2 \ell}(v) {\mathbf 1}_{|v-v_*| \leq 1}\sqrt{M_*} \lesssim M_*^{\delta} M^{2 \delta}$.  Thus
\begin{align} 
{\mathrm{HLS}}(g,h,f) \lesssim
\int_{\R^n} dv_* \int_{\R^n} dv |g_*| M_*^\delta |v-v_*|^{\rho} M^{2 \delta} |h||f|. \label{hls1}
\end{align}
Notice that the inequality \eqref{hlsineq} follows exactly as in \eqref{cauchySCHminus} if $\rho > - \frac{n}{2}$ for $b_1 = b_2 = 0$.  We therefore assume that $\rho \leq - \frac{n}{2}$.
The right-hand side of \eqref{hls1} is equal to
\[\int_{\R^n} dv \ I_{n+\rho}(|g|M^\delta)  M^{2 \delta} |h||f|,\]
where $I_{n+\rho}$ 
is the classical fractional integral operator of order $n+\rho$.  Thus, by the Hardy-Littlewood-Sobolev inequality (see, e.g., Stein \cite{MR0290095}, p. 119) applied to $g$ as well as the usual $L^2$-Sobolev embedding theorem applied to $h$ and $f$, we have:
\[
 | h M^{\delta}|_{L^{q_1}} \lesssim |h M^{\delta}|_{H^{b_1}},  \ \ | f M^{\delta}|_{L^{q_2}} \lesssim |f M^{\delta}|_{H^{b_2}}, \ \ |I_{\alpha}(|g|M^\delta)|_{L^{q_3}} \lesssim |g M^\delta|_{L^2},
 \]
where the exponents $q_i$ satisfy $\frac{1}{q_i} > \frac{1}{2} - \frac{b_i}{n}$ for $i=1,2$ and $\frac{1}{q_3} > \frac{1}{2} - \frac{n+\rho}{n}$.  Note that it is necessary in each case that $\frac{b_i}{n} \leq \frac{1}{2}$ as well as $\frac{n+\rho}{n} \leq \frac{1}{2}$.  By H\"{o}lder's inequality, we will have the second term bounded above by $|g M^\delta|_{L^2}|h M^{\delta}|_{H^{b_1}}|f M^{\delta}|_{H^{b_2}}$ as long as we may find exponents $q_i$ such that $\frac{1}{q_1} + \frac{1}{q_2} + \frac{1}{q_3} \leq 1$, which will be possible exactly when $\frac{\rho + b_1 + b_2}{n} > -\frac{1}{2}$.
\end{proof}

\subsection{Cancellations with hard potentials: $\gamma \ge -2s$}\label{sec:cancelHARD}
In this section, we seek to establish estimates for the differences $\teePLUSop  - \teeMINUSop$ and $\teePLUSop - \teeSTARop$. We wish the estimates to have good dependence on $k$ (in particular, we would like the norm to be a negative power of $2^k$), but this improved norm will be paid for by assuming differentiability of one of the functions $h$ or $f$.  The key obstacle to overcome in making these estimates is that the magnitude of the gradients of $h$ and $f$ must be measured in some anisotropic way; this is a point of fundamental importance, as the scaling is imposed upon us by the structure of the ``norm piece'' $\ang{\nPiece f,f}$.

The scaling dictated by the problem is that of the paraboloid: namely, that the function $f(v)$ should be thought of as the restriction of some function $F$ of $\last$ variables to the paraboloid $(v, \frac{1}{2} |v|^2)$.  Consequently, the correct metric to use in measuring the length of vectors in $\threed$ will be the metric on the paraboloid in $\R^{\last}$ induced by the $(\last)$-dimensional Euclidean metric.  To simplify the calculations, we will work directly with the function $F$ rather than $f$ and take its $(\last)$-dimensional derivatives in the usual Euclidean metric.  This will be sufficient for our purposes since our Littlewood-Paley-type decomposition will give us a natural way to extend the projections $Q_j f$ into 
$\last$ dimensions while preserving the relevant differentiability properties of the $\dim$-dimensional restriction to the paraboloid.

To begin, it is necessary to find a suitable formula relating differences of $F$ at nearby points on the paraboloid to the various derivatives of $F$ as a function of $n+1$ variables.  To this end, fix any two $v,v' \in \mathbb{R}^n$, and consider $\CurveP : [0,1] \rightarrow \mathbb{R}^n$ and $\CurveEP : [0,1] \rightarrow \mathbb{R}^{n+1}$ given by
\[ 
\CurveP(\Ctheta) \eqdef \Ctheta v' + (1-\Ctheta) v,
\quad 
\mbox{ and } 
\quad 
\CurveEP(\Ctheta) \eqdef  \left(\Ctheta v' + (1-\Ctheta)v, \frac{1}{2} \left| \Ctheta v' + (1-\Ctheta) v \right|^2 \right). 
\]
Now $\CurveEP$ lies on the paraboloid 
$\set{(v_1,\ldots,v_{n+1}) \in \mathbb{R}^{n+1}}{ v_{n+1} = \frac{1}{2} (v_1^2 + \cdots + v_n^2)}$, and $\CurveP(0) = v$ and $\CurveP(1) = v'$.  
Also consider the starred analogs defined by
\[ 
\starCurveP(\Ctheta) \eqdef \Ctheta v'_* + (1-\Ctheta) v_*,
\quad 
\mbox{ and } 
\quad 
\starCurveEP(\Ctheta) \eqdef  \left(\Ctheta v'_* + (1-\Ctheta)v_*, \frac{1}{2} \left| \Ctheta v'_* + (1-\Ctheta) v_* \right|^2 \right). 
\]
Elementary calculations, and \eqref{sigma}, show that $\CurveP(\Ctheta) + \starCurveP(\Ctheta) = v + v_*$ and 
\begin{align*}
\frac{d \CurveEP}{d \Ctheta} = \left( v' - v, \ang{\CurveP(\Ctheta), v' - v} \right) 
\quad 
& \mbox{ and }
\quad 
 \frac{d^2 \CurveEP}{d \Ctheta^2} = (0, |v'-v|^2), \\
\frac{d \starCurveEP}{d \Ctheta} = - \left( v' - v, \ang{\starCurveP(\Ctheta), v' - v} \right)
\quad 
& \mbox{ and }
\quad 
 \frac{d^2 \starCurveEP}{d \Ctheta^2} = (0, |v'-v|^2).
 \end{align*}
Now we use the standard trick of writing the difference of $F$ at two different points in terms of an integral of a derivative (in this case the integral is along the path $\CurveP$):
\begin{align} 
F\left(v',\frac{|v'|^2}{2} \right) - F\left(v, \frac{|v|^2}{2} \right) & = \int_0^1 d \Ctheta ~ \frac{d}{d \Ctheta} F(\CurveEP(\Ctheta)) \nonumber \\
& = \int_0^1 d \Ctheta  \left( \frac{d \CurveEP}{d \Ctheta} \cdot  (\tilde{\nabla} F) (\CurveEP(\Ctheta)) \right), 
\label{paraboladiff}
\end{align}
where the dot product on the right-hand side is the usual Euclidean inner-product on $\mathbb{R}^{n+1}$ and $\tilde{\nabla}$ is the $(n+1)$-dimensional gradient of $F$. 
For convenience we define
\[ 
|\tilde{\nabla}|^i F(v_1,\ldots,v_{n+1}) \eqdef 
\max_{0\le j \leq i}\sup_{|\xi| \leq 1} \left| \left(\xi \cdot \tilde{\nabla} \right)^j F(v_1,\ldots,v_{n+1}) \right|, 
\quad
  i=0,1,2,
\]
where $\xi \in \mathbb{R}^{n+1}$ and $|\xi|$ is the usual Euclidean length. In particular, note that we have defined $|\tilde{\nabla}|^0 F = |F|$.

If $v$ and $v'$ are related by the collision geometry \eqref{sigma}, then $\ang{v-v',v'-v_*} = 0$, which yields that 
\begin{align*}
\ang{\CurveP(\Ctheta),v'-v} & = \ang{v_*,v'-v} - (1-\Ctheta) |v-v'|^2, \\
\ang{\starCurveP(\Ctheta),v'-v} & = \ang{v_*,v'-v} - \Ctheta |v-v'|^2.
\end{align*}
Thus, whenever $|v-v'| \leq 1$, which holds near the singularity, we have
\begin{equation}
\left| \frac{d \starCurveEP}{d \Ctheta} \right|
+
\left| \frac{d \CurveEP}{d \Ctheta} \right| \lesssim |v-v'| \ang{v_*}. \label{derivEST}
\end{equation}
 Throughout this section we suppose that $|v-v'| \leq 1$ since this is the situation where our cancellation inequalities will be used.
 In particular, we have the following inequality for differences related by the collisional geometry:
\begin{align}
 \left| F\left(v',\frac{|v'|^2}{2}\right) - F\left(v, \frac{|v|^2}{2}\right)\right| 
 &
  \lesssim 
  \ang{v_*} |v-v'|  \int_0^1  d \Ctheta ~ |\tilde{\nabla}| F (\CurveEP(\Ctheta)). 
 \label{paradiff1} 
\end{align}
Furthermore,
by subtracting the linear term from both sides of \eqref{paraboladiff} and using the integration trick iteratively on the integrand of the integral on the right-hand side of \eqref{paraboladiff}, we obtain
\begin{multline}
\left| F\left(v',\frac{|v'|^2}{2} \right) -  F\left(v, \frac{|v|^2}{2}\right) -  \frac{d \CurveEP}{d \Ctheta}(0) \cdot \tilde{\nabla} F(v) \right| 
\\
 \lesssim
\ang{v_*}^2 |v-v'|^2 \int_0^1 d \Ctheta  ~ | \tilde{\nabla}|^2 F (\CurveEP(\Ctheta)). 
\label{paradiff2}
\end{multline}
We note that, by symmetry, the same result holds when the roles of $v$ and $v'$ are reversed (which only changes the curve $\CurveEP$ by reversing the parametrization: $\CurveEP(\Ctheta)$ becomes $\CurveEP(1-\Ctheta)$).  
It is also trivially true that the corresponding starred version of \eqref{paradiff2} holds as well.
We will use these two basic cancellation inequalities to prove the  cancellation estimates for the trilinear form in the following propositions.

\begin{proposition}
Suppose $f$ is a Schwartz function on $\threed$ given by the restriction of some Schwartz function $F$ 
on $\R^{\last}$ to the paraboloid $(v, \frac{1}{2} |v|^2)$.  For each $i$, let $|\tilde{\nabla}|^i f$ be the restriction of $|\tilde{\nabla}|^i F$ to the same paraboloid.  Then, for any $k \geq 0$, 
\begin{align}
 \left| (\teePLUSop- \teeMINUSop)(g,h,f) \right| \ & \lesssim 2^{(2s-i) k} \nsm g\nsm_{L^2_{-m}} \nsm w^{\ell} h\nsm_{L^2_{\gamma+2s}} \! \! \nl w^\ell  |\tilde{\nabla}|^i f \nr_{L^2_{\gamma+2s}}. \label{cancelf}
\end{align}
Here $m\ge 0$; when $s\in (0, 1/2)$ in \eqref{kernelQ} then $i=1$, otherwise 
$i = 2$.
\end{proposition}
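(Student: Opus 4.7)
The goal is to exploit the cancellation between $\teePLUSop$ and $\teeMINUSop$ as $|v-v'|$ shrinks. Writing the difference as a single integral and expanding the bracket,
\[
\teePLUSop(g,h,f)-\teeMINUSop(g,h,f)=\int_{\threed}\!\!dv\!\int_{\threed}\!\!dv_*\!\int_{\sph}\!\!d\sigma\, B_k g_* h \Big\{ M_\beta(v_*')w^{2\ell}(v')[f'-f]+f\bigl[M_\beta(v_*')w^{2\ell}(v')-M_\beta(v_*)w^{2\ell}(v)\bigr]\Big\}.
\]
The second bracket, call it $A$, does not involve a difference of $f$. Because $M_\beta$ is Gaussian and $w^{2\ell}$ is smooth, a first-order Taylor expansion immediately gives $|M_\beta(v_*')w^{2\ell}(v')-M_\beta(v_*)w^{2\ell}(v)|\lesssim \ang{v_*} |v-v'|\sqrt{M_*}$. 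Since $|v-v'|\approx 2^{-k}$ on the support of $\chi_k$, this term contributes $2^{(2s-1)k}$ times the norms on the right-hand side after the Cauchy--Schwarz reasoning of Proposition \ref{prop1}; it is strictly better than what is claimed, so $A$ is never dominant. The main effort is thus to control the first bracket, call it $B$, which contains the pure difference $f'-f$.

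For \emph{the case $s\in(0,1/2)$}, I apply \eqref{paradiff1} with the lifted function $F$ to obtain $|f'-f|\lesssim \ang{v_*}|v-v'|\int_0^1 d\Ctheta\,|\tilde{\nabla}|F(\CurveEP(\Ctheta))$. Inserting this into $B$ and using $|v-v'|\approx 2^{-k}$ extracts precisely the improvement from $2^{2sk}$ to $2^{(2s-1)k}$ over the trivial estimate. The remaining integral is bounded by Cauchy--Schwarz exactly in the style of Proposition \ref{prop1}: the weight $\ang{v_*}$ is absorbed into $\sqrt{M_*}$; the $\vartheta$-integral is treated by Minkowski and a change of variable that converts the evaluation on $\CurveEP(\Ctheta)$ into a norm of $|\tilde{\nabla}|F$ restricted to the paraboloid (equivalently $|\tilde{\nabla}|f$), which does not worsen the $L^2_{\gamma+2s}$ norm because the Jacobian is bounded on $|v-v'|\le 1$. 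This yields \eqref{cancelf} with $i=1$.

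For \emph{the case $s\in[1/2,1)$}, I instead use \eqref{paradiff2}. Its second-order remainder is bounded by $\ang{v_*}^2|v-v'|^2\int_0^1 d\Ctheta\,|\tilde{\nabla}|^2F(\CurveEP(\Ctheta))$, which immediately supplies the factor $2^{(2s-2)k}$ and is treated by Cauchy--Schwarz as above. The remaining task is to control the linear Taylor term $\frac{d\CurveEP}{d\Ctheta}(0)\cdot \tilde{\nabla}F(v)$. I further decompose the multiplier as $M_\beta(v_*')w^{2\ell}(v')=M_\beta(v_*)w^{2\ell}(v)+[M_\beta(v_*')w^{2\ell}(v')-M_\beta(v_*)w^{2\ell}(v)]$. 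The difference piece carries an extra $|v-v'|$, so combined with the linear-in-$|v-v'|$ Taylor term it behaves exactly like the quadratic remainder. The $\sigma$-independent piece $M_\beta(v_*)w^{2\ell}(v)$ can be pulled through the $\sigma$-integral; writing $\sigma=\cos\theta\,k+\sin\theta\,\omega$ with $k=(v-v_*)/|v-v_*|$ and $\omega\in k^\perp\cap \mathbb{S}^{n-2}$, one has $v'-v=(|v-v_*|/2)(\sigma-k)$, whose component along $\omega$ integrates to zero by rotational symmetry of $d\sigma$. What remains is the $k$-component, equal to $-|v-v_*|\sin^2(\theta/2)\,k$, i.e.\ an extra $\sin^2(\theta/2)\approx(2^{-k}/|v-v_*|)^2$. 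Multiplied by the usual $\int d\sigma B_k\lesssim 2^{2sk}|v-v_*|^{\gamma+2s}$ (cf.\ \eqref{bjEST}), this reproduces $2^{(2s-2)k}|v-v_*|^{\gamma+2s}$; the extra $|v-v_*|^{-2}$ is harmless inside the Cauchy--Schwarz because $\gamma+2s-2>-n$ in the range $\gamma>-n,\gamma+2s>-n/2$, and the factor $\ang{v_*}^2$ from \eqref{derivEST} is absorbed into $\sqrt{M_*}$. The same argument treats the $(n+1)$-th component $\langle v,v'-v\rangle$.

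The principal obstacle is the first-order cancellation when $s\ge 1/2$: one must simultaneously (i) arrange a $\sigma$-independent multiplier so the spherical symmetry argument applies, and (ii) certify that evaluating the lifted $F$ on the non-trivial curve $\CurveEP(\Ctheta)$ produces a quantity controlled by $f$ and its paraboloid derivatives. Item (i) is resolved by the add-subtract step above, and item (ii) is handled by the Littlewood--Paley extension of Section \ref{sec:aniLP}, which supplies a canonical smooth extension of $f$ to $\R^{\last}$ near the paraboloid with the correct Euclidean derivative bounds, so that the bound on $|\tilde{\nabla}|^i F$ genuinely translates into a bound on $|\tilde{\nabla}|^i f$ appearing on the right-hand side of \eqref{cancelf}.
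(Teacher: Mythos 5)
Your $A/B$ split is a legitimate coarsening of the paper's five-term decomposition \eqref{differenceA}, and the machinery you invoke for $B$ — paraboloid Taylor expansion via \eqref{paradiff1}/\eqref{paradiff2}, symmetry of $d\sigma$ about the pole $k$ to upgrade the linear term from $O(|v-v'|)$ to $O(|v-v'|^2/|v-v_*|)$, and the change of variables $u=\CurveP(\Ctheta)$ with Jacobian \eqref{jacobianCH} — is the right toolkit. But there is a genuine gap in your treatment of $A$ when $s\in[1/2,1)$.

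You estimate $A=f\cdot\bigl[M_\beta(v_*')w^{2\ell}(v')-M_\beta(v_*)w^{2\ell}(v)\bigr]$ by a first-order Taylor bound, giving a gain of $|v-v'|\approx 2^{-k}$ and hence a rate $2^{(2s-1)k}$, and you dismiss it as ``strictly better than what is claimed.'' That is false for $s\geq\tfrac12$: there the target exponent is $i=2$ and the required rate is $2^{(2s-2)k}$, and for $k>0$ one has $2^{(2s-1)k}>2^{(2s-2)k}$, so your bound on $A$ is \emph{worse}, not better. The rate $2^{(2s-1)k}$ does not even decay in $k$ when $s\geq\tfrac12$, so the subsequent sum over $k\geq j+1$ in Section \ref{sec:upTRI} diverges. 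To repair this you must second-order Taylor expand $A$ as well: write
\[
M_\beta(v_*')w^{2\ell}(v')-M_\beta(v_*)w^{2\ell}(v)
= \frac{d\starCurveEP}{d\Ctheta}(0)\cdot\tilde{\nabla}M_\beta(v_*)\,w^{2\ell}(v)
+\bigl(\text{quadratic remainder}\bigr),
\]
and observe that the linear Taylor piece here has the $\sigma$-independent prefactor $f(v)\,w^{2\ell}(v)$, so the same spherical-symmetry argument you apply to $B$ gives the extra factor $|v-v'|/|v-v_*|$ and hence $2^{(2s-2)k}$. This is exactly what the paper's terms $T^{\mathbb{I}}$ and $T^{\mathbb{III}}$ accomplish; the point of the five-term splitting \eqref{differenceA} (as opposed to your two-term $A+B$) is precisely to arrange that every linear-Taylor piece appears with a $\sigma$-independent multiplier and every remaining piece carries two vanishing factors, so no add-subtract has to be redone midstream.

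A secondary (non-fatal) point: your closing claim that ``item (ii) is handled by the Littlewood--Paley extension of Section \ref{sec:aniLP}'' misidentifies the mechanism. In this proposition the extension $F$ is part of the hypothesis, and the conversion of the curve integral $\int_0^1 d\Ctheta\,|\tilde{\nabla}|^iF(\CurveEP(\Ctheta))$ into an $L^2_{\gamma+2s}$ norm of $|\tilde{\nabla}|^if$ on the paraboloid is accomplished by the explicit change of variables $u=\CurveP(\Ctheta)$ and the Jacobian estimate \eqref{jacobianCH}, together with the observation that the moved pole $k$ remains comparable to $\tilde k=(u-v_*)/|u-v_*|$; the Littlewood--Paley extension of Section \ref{sec:aniLP} enters only later, when the proposition is applied to the projections $Q_jf$.
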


\begin{proof}
We write out the relevant difference from \eqref{defTKL} into several terms
\begin{multline}
\label{differenceA}
M_\beta(v_*') w^{2\ell}(v')  f' -  M_\beta(v_*) w^{2\ell}(v) f 
=
w^{2\ell}(v) ~ f
~ \left( \frac{d \starCurveEP}{d \Ctheta}(0) \cdot \tilde{\nabla} M_\beta(v_*)  \right)
\\
+
\left( w^{2\ell}(v') ~ f' - w^{2\ell}(v) ~  f   \right)
~ \left( \frac{d \starCurveEP}{d \Ctheta}(0) \cdot \tilde{\nabla} M_\beta(v_*)  \right)
\\
+
w^{2\ell}(v') ~ f' ~ 
 \left( M_\beta(v_*')  - M_\beta(v_*) -  \frac{d \starCurveEP}{d \Ctheta}(0) \cdot \tilde{\nabla} M_\beta(v_*)    \right)
 \\
+
M_\beta(v_*) 
\left( w^{2\ell}(v') ~ f' - w^{2\ell}(v) ~  f  -  \frac{d \CurveEP}{d \Ctheta}(0) \cdot \tilde{\nabla} (w^{2\ell}  f )(v) \right)
 \\
 +
M_\beta(v_*) ~ \left(  \frac{d \CurveEP}{d \Ctheta}(0) \cdot \tilde{\nabla} (w^{2\ell}  f )(v)   \right)
 = \mathbb{I} + \mathbb{II}
 +
  \mathbb{III} + \mathbb{IV}+ \mathbb{V}.
\end{multline}
We split 
$
(\teePLUSop- \teeMINUSop)(g,h,f)
=
T^{\mathbb{I}}+ T^{\mathbb{II}}
+
T^{\mathbb{III}}+ T^{\mathbb{IV}}+ T^{\mathbb{V}},
$
where $T^{\mathbb{I}}$ corresponds to the first term in the splitting above, etc. Suppose initially that $s\in [1/2, 1)$.

We begin by considering the first and last terms, that is
\begin{equation}
\begin{split}
T^{\mathbb{I}}
=
\int_{\mathbb{R}^n} dv \int_{\mathbb{R}^n} dv_* \int_{\sph} d \sigma ~B_k ~ g_* h 
~ \left( \frac{d \starCurveEP}{d \Ctheta}(0) \cdot \tilde{\nabla} M_\beta(v_*)  \right)
~ w^{2\ell}(v) ~ f,
\\
T^{\mathbb{V}}
=
\int_{\mathbb{R}^n} dv \int_{\mathbb{R}^n} dv_* \int_{\sph} d \sigma ~B_k ~ g_* h 
~ M_\beta(v_*) ~ \left(  \frac{d \CurveEP}{d \Ctheta}(0) \cdot \tilde{\nabla} (w^{2\ell}  f )(v)   \right).
\end{split}
\label{firstLAST}
\end{equation}
We may estimate both of these terms in exactly the same way.  

First we define the extension.  If $f$ extends to $\mathbb{R}^{n+1}$, then 
\begin{equation}
(w^{2\ell} f) (v_1,\ldots,v_{n+1}) = w^{2\ell}(v) f(v_1,\ldots,v_{n+1}).
\label{extensionF}
\end{equation}
Here we think of $w^{2\ell}(v)$ as being constant in the $(n+1)$-st coordinate.
With this extension it follows that for $i=0,1,2$ we have
\begin{equation}
\label{nablaD}
|\tilde{\nabla}|^i (w^{2\ell}  f) 
\lesssim 
 w^{2\ell} |\tilde{\nabla}|^i f.
\end{equation}
We will use this basic estimate several times below.

For $T^{\mathbb{V}}$, notice that $\frac{d \CurveEP}{d \Ctheta}(0)$
is linear in $v'-v$ and has no other dependence on $v'$.  Thus the symmetry of $B_k$ with respect to $\sigma$ around the direction $\frac{v-v_*}{|v-v_*|}$ forces all components of $v'-v$ to vanish except the component in the symmetry direction.
Thus, one may replace $v-v'$ with $\frac{v-v_*}{|v-v_*|} \ang{v-v', \frac{v-v_*}{|v-v_*|}}$ in the expression for 
$\frac{d \CurveEP}{d \Ctheta}(0)$.  Since $\ang{v-v',v'-v_*} = 0$, the vector further reduces to $\frac{v-v_*}{|v-v_*|}\frac{|v-v'|^2}{|v-v_*|}$.  Hence
$$
\left| \frac{v-v_*}{|v-v_*|}\frac{|v-v'|^2}{|v-v_*|} \right| \leq 2^{-2 k} |v-v_*|^{-1}. 
$$
The last coordinate direction of $\frac{d \CurveEP}{d \Ctheta}(0)$ is given by $\ang{v,v'-v}$ which
 reduces to 
\begin{equation}
\left| \ang{v, \frac{v-v_*}{|v-v_*|}\frac{|v-v'|^2}{|v-v_*|}} \right| \lesssim 
 2^{-2k} \left( 1 + |v-v_*|^{-1} \right) \ang{v_*}. 
\label{symmetry1}
\end{equation}
These bounds allow us to employ similar methods to those employed in the previous section.  
To be precise, we must control the following integral
\begin{equation} 
2^{-2k} \int_{\mathbb{R}^n} dv \int_{\mathbb{R}^n} dv_* \int_{S^2} d \sigma ~  B_k  M_*^{1-\epsilon}  w^{2\ell}(v)~ |g_*| |h| ( |\tilde{\nabla}| f)
\left( 1 + |v-v_*|^{-1} \right).
\label{T5bound}
\end{equation}
Here we absorb any powers of $\ang{v_*}$  by $M_*^{-\epsilon}$ for any small $\epsilon >0$.
If $\gamma+2s -1 > - \frac{n}{2}$ or if $|v-v_*| \geq 1$, the estimate 
\eqref{cancelf}
for $T^{\mathbb{V}}$ then follows exactly as in 
\eqref{bjEST} 
with 
 \eqref{cauchySCHminus},
the only difference being the extra factor $|v-v_*|^{-1}$ giving the weight $w^\ell(v) \ang{v}^{\gamma+2s-1}$ on both $h$ and $|\tilde \nabla| f$ (away from the singularity, one need not fear destroying the local integrability of any of the singularities in 
\eqref{cauchySCHminus}).  When $|v-v_*| \leq 1$, 
the inequality \eqref{hlsineq} implies that the relevant portion of \eqref{T5bound} is bounded above by $2^{(2s-2)k} |g|_{L^2_{-m}} |h|_{L^2_{-m}} ||\tilde \nabla| f M^{\delta}|_{H^1}$.  Furthermore, $||\tilde \nabla| f M^{\delta}|_{H^1} \lesssim ||\tilde \nabla|^2 f|_{L^2_{-m}}$ since the isotropic derivatives from the $H^1$ 
space differ from the anisotropic derivatives $|\tilde \nabla|$ by at most a power of the velocity (which is controlled by $M^\delta$).
Thus $T^{\mathbb{V}}$ is controlled by the right-hand side \eqref{cancelf} with $i=2$ provided that $\gamma + 2s > -\frac{n}{2}$.

The estimation of $T^{\mathbb{I}}$ can be handled in exactly the same way because $\frac{d \starCurveEP}{d \Ctheta}(0)$ also depends linearly on $v-v'$ and has no other $v'$ dependence.  
Here we have exactly the same estimates for $\frac{d \starCurveEP}{d \Ctheta}(0)$ as just previously obtained for $\frac{d \CurveEP}{d \Ctheta}(0)$ by symmetry.  Then 
\eqref{cancelf} for $T^{\mathbb{I}}$ follows again exactly as for $T^{\mathbb V}$. 
 As in \eqref{nablaD}, we use
 \begin{equation}
\label{asin39}
|\tilde{\nabla}|^i M_\beta
\lesssim 
\sqrt{M},
\quad
i = 0,1,2,
\end{equation}
where the extension is defined as
$
M_\beta(v) = (2 \pi)^{-n/2}~p_\beta (v_1, \ldots, v_n) ~ e^{-v_{n+1}/2}.
$

We now turn to the estimation of the term $T^{\mathbb{III}}$, which can be written as 
\begin{multline}
T^{\mathbb{III}}
=
\int_{\mathbb{R}^n} dv \int_{\mathbb{R}^n} dv_* \int_{\sph} d \sigma ~B_k(v-v_*, \sigma) ~ g_* h
~ w^{2\ell}(v') ~ f' 
\\
\times
 \left( M_\beta(v_*')  - M_\beta(v_*) -  \frac{d \starCurveEP}{d \Ctheta}(0) \cdot \tilde{\nabla} M_\beta(v_*)    \right). 
\label{thirdTERM}
\end{multline}
We apply the starred analog of \eqref{paradiff2} to obtain
\begin{equation}
\left| T^{\mathbb{III}}  \right|
\lesssim
2^{-2k}
\int_{\mathbb{R}^n} dv \int_{\mathbb{R}^n} dv_* \int_{\sph} d \sigma ~ B_k  \left| g_* h \right|
 w^{2\ell}(v')  \left| f'  \right|
(M_* M'_*)^{\epsilon},
 \label{starred35}
\end{equation}
where we have just used the estimate
$
\sqrt{M(\starCurveEP(\Ctheta))} \le (M_* M_*')^{\epsilon},
$
(valid for all sufficiently small $\epsilon > 0$) which follows directly from $\ang{v_*} \lesssim \ang{\starCurveP(\Ctheta)}\lesssim \ang{v_*}$
since
$
v_* = \starCurveP(\Ctheta) + \Ctheta(v_*-v_*')
$
and $|v_* - v_*'| \le 1$.
At this point, the proof proceeds exactly as in Proposition \ref{referLATERprop} using the Cauchy-Schwartz estimate analogous to \eqref{tpe}, choosing $\ell' = 0$; note that the only difference is the presence of an additional $M_*^{\epsilon}$ which will give rapid decay of the weight applied to $g$.

It remains only to prove \eqref{cancelf} for $T^{\mathbb{II}}$ and $T^{\mathbb{IV}}$, both of which involve differences in $w^{2\ell} f$.  We use the difference estimates
\eqref{paradiff1} for $T^{\mathbb{II}}$
and
\eqref{paradiff2} for $T^{\mathbb{IV}}$ to obtain that
 for any fixed $\epsilon > 0$ both  $|T^{\mathbb{II}}|$ and $|T^{\mathbb{IV}}|$
 are controlled by
\begin{equation}
   2^{-2k} \int_0^1 d \Ctheta \int_{\mathbb{R}^n} dv \int_{\mathbb{R}^n} dv_* \int_{\sph} d \sigma ~ B_k ~ M_*^{1-\epsilon} ~ 
  w^{2\ell}(v)~ |g_* h| 
  ~ |\tilde{\nabla}|^2 f (\CurveP(\Ctheta) ).
  \label{controlT24}
\end{equation}
 The loss of $\epsilon$ comes from the factor $\ang{v_*}$ in \eqref{paradiff1} and  \eqref{paradiff2}
 in addition to the bound \eqref{asin39} and the estimate \eqref{derivEST}.  These also account for the $2^{-2k}$. Note, though, that the factor $2^{-2k}$ comes directly from \eqref{paradiff2}, but \eqref{paradiff1} only furnishes a factor of $2^{-k}$. In this case, there is an additional factor of $2^{-k}$ available in the estimate for $T^{\mathbb{II}}$ arising exactly from the derivative estimate \eqref{derivEST}.
Finally, note that
$\ang{v} \approx \ang{\CurveP(\Ctheta)}$ (which accounts for the replacement of $w^{2 \ell}(\CurveP(\Ctheta))$ by $w^{2 \ell}(v)$).

With that last estimate above and an application of Cauchy-Schwartz exactly as was done in \eqref{tpe}, it suffices to show
\begin{multline}
 \int_0^1 d \Ctheta \int_{\mathbb{R}^n} dv \int_{\mathbb{R}^n} dv_* \int_{\sph} d \sigma  
 ~  \frac{B_k ~  M_*^{1-\epsilon} ~ w^{2\ell}(\CurveP(\Ctheta))}{ |v-v_*|^{-\gamma-2s} \ang{\CurveP(\Ctheta)}^{\gamma+2s}  } ~  
 \left|  |\tilde{\nabla}|^2 f(\CurveP(\Ctheta))\right|^2  
 \\
 \lesssim 
 2^{2sk} \nsm w^\ell  |\tilde{\nabla}|^2 f \nsm_{L^2_{\gamma+2s}}^2. 
 \label{covterm}
\end{multline}
This uniform bound follows from the  change of variables $u = \CurveP(\Ctheta) = \Ctheta v' + (1-\Ctheta)v$, which is a transformation from  $v$ to $u$.  In view of the collisional variables \eqref{sigma}, we see  (with $\delta_{ij}$  the usual Kronecker delta) that 
$$
\frac{d u_i}{ dv_j} = (1-\Ctheta) \delta_{ij} + \Ctheta \frac{d v'_i}{ dv_j}
 = \left(1-\frac{\Ctheta}{2}\right)  \delta_{ij} + \frac{\Ctheta}{2} k_{j} \sigma_{i},
$$
with the unit vector $k = (v-v_*)/|v-v_*|$.  Thus the Jacobian is 
\begin{equation}
\left| \frac{d u_i}{ dv_j}  \right| 
= 
\left(1-\frac{\Ctheta}{2}\right)^2\left\{
\left(1-\frac{\Ctheta}{2}\right) + 
\frac{\Ctheta}{2} \ang{k, \sigma}
\right\}.
\label{jacobianCH}
\end{equation}
Since  $b(\ang{k, \sigma}) = 0$ when $\ang{k ,\sigma} \leq 0$ from \eqref{kernelQ}, and  $\Ctheta \in [0,1]$, 
it follows that the Jacobian is bounded from below on the support of the integral \eqref{covterm}.   But after this change of variable the old pole $k=(v-v_*)/|v-v_*|$ moves with the angle $\sigma$.  However  when one takes $\tilde k = (u-v_*)/|u-v_*|$, 
then
$
1- \ang{ k,\sigma } \approx  1 - \left< \right. \! \tilde{k}, \sigma \! \left. \right>, 
$
meaning that the angle to the pole is comparable to the angle to $\tilde{k}$ (which does not vary with $\sigma$).
Thus the estimate analogous to \eqref{bjEST} will continue to hold after the change of variables, which is used in the usual manner to give precisely the estimate in \eqref{covterm}.

It remains to prove \eqref{cancelf} for $s\in (0, 1/2)$.  
This estimate is exactly the same as the above except that the cancellation terms
$
\frac{d \CurveEP}{d \Ctheta}(0) \cdot \tilde{\nabla} (w^{2\ell}  f )(v)
$
and
$
\frac{d \starCurveEP}{d \Ctheta}(0) \cdot \tilde{\nabla} M_\beta(v_*)
$
can be removed from each of the expressions ${\mathbb I}$ through ${\mathbb V}$ in the splitting \eqref{differenceA} (leaving only the corresponding parts of $T^{\mathbb{III}}$ and $T^{\mathbb{IV}}$).  In this case we may use \eqref{paradiff1} instead of \eqref{paradiff2} which allows us to take $i =1$.
\end{proof}

This completes our proof of the cancellations for the $\sigma$ representation as in \eqref{defTKL}.  In the following, we estimate the cancellations on $h$ instead of putting them on $f$, and for this we use the Carleman representation as in \eqref{defTKLcarl}.

\begin{proposition}
As in the previous proposition, suppose $h$ is a Schwartz function on $\threed$ which is given by the restriction of some Schwartz function in $\R^{\last}$ to the paraboloid $(v, \frac{1}{2} |v|^2)$ and define $|\tilde{\nabla}|^i h$ analogously. For any $k \geq 0$, we have
\begin{align}
| ( \teePLUSop   - \teeSTARop)(g,h,f)|  & \lesssim 2^{(2s-i)k} \nsm g\nsm_{L^2_{-m}} 
\nl  w^\ell |\tilde{\nabla}|^i h \nr_{L^2_{\gamma+2s}}
\nsm w^\ell f\nsm_{L^2_{\gamma+2s}}.  
\label{cancelh}
\end{align}
Again $m\ge 0$; when $s\in (0, 1/2)$ in \eqref{kernelQ} then $i=1$ and for $s \in [1/2, 1)$  $i = 2$.
\end{proposition}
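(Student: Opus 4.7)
The plan is to mirror the proof of the preceding proposition, using the Carleman-type representation \eqref{defTKLcarl} for both $\teePLUSop$ and $\teeSTARop$ so that the integration variable is $v$ (ranging over the plane $E_{v_*}^{v'}$) while $v'$ and $v_*$ are held fixed. The relevant quantity to be controlled is the difference $M_\beta(v_*') h(v) - M_\beta(v_*) h(v')$, weighted by $\tilde{B}_k g_* f' w^{2\ell}(v')$. The role formerly played by the $\sigma$-symmetry of $B_k$ around the axis $(v-v_*)/|v-v_*|$ will now be played by the rotational symmetry of $\tilde{B}_k$ within the plane $E_{v_*}^{v'}$: since $\tilde{B}_k$ depends on $v$ only through $|v-v'|$ (noting $|v-v_*|^2 = |v-v'|^2+|v'-v_*|^2$ on the plane by the Pythagorean relation), any function linear in $v-v'$ with coefficients constant in $v$ integrates to zero against $d\pi_v \tilde{B}_k$.

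First I would split the difference into five pieces analogous to \eqref{differenceA}: a first-order term in $M$ of the form $h' \cdot \tfrac{d\starCurveEP}{d\Ctheta}(0)\cdot \tilde\nabla M_\beta(v_*)$; a cross term $(h-h') \cdot \tfrac{d\starCurveEP}{d\Ctheta}(0)\cdot \tilde\nabla M_\beta(v_*)$; a Taylor-remainder term $h \cdot R_1$ (where $R_1 = M_\beta(v_*') - M_\beta(v_*) - \tfrac{d\starCurveEP}{d\Ctheta}(0)\cdot \tilde\nabla M_\beta(v_*)$); a first-order term in $h$ of the form $-M_\beta(v_*) \cdot \tfrac{d\CurveEP}{d\Ctheta}(1)\cdot \tilde\nabla h(v')$; and a Taylor-remainder term $-M_\beta(v_*) \cdot R_2$, where $h$ is now expanded around $\Ctheta=1$ so that the linear coefficient $\tilde\nabla h(v')$ is $v$-independent. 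Since $\tfrac{d\starCurveEP}{d\Ctheta}(0) = -(v'-v, \ang{v_*, v'-v})$ and $\tfrac{d\CurveEP}{d\Ctheta}(1) = (v'-v, \ang{v', v'-v})$ are both linear in $v-v'$ with $v$-independent coefficients, and both $h'$ and $M_\beta(v_*)$ are constant in $v$, the two first-order pieces vanish identically after integration over the plane against $\tilde{B}_k$.

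For the remaining three pieces, I would apply the bound $|R_1| \lesssim \ang{v_*}^2 |v-v'|^i (M_* M_*')^\epsilon$ (for $s\ge 1/2$, with the analogous first-order bound via \eqref{paradiff1} for $s<1/2$), the bound $|h-h'| \lesssim \ang{v_*}|v-v'|\int_0^1 d\Ctheta\,|\tilde\nabla h|(\CurveEP(\Ctheta))$, and the analog of \eqref{paradiff2} for $R_2$. Each piece thus carries an extra factor $|v-v'|^i \lesssim 2^{-ik}$. The resulting integrals are estimated exactly as in Proposition \ref{referLATERprop}: apply Cauchy-Schwartz in the Carleman form (as in \eqref{tpe2}), splitting the kernel $\tilde{B}_k$ as $\tilde{B}_k / |v'-v_*|^{\gamma+2s} \times |v'-v_*|^{\gamma+2s}$, then use \eqref{bjCARLest} to bound $\int d\pi_v \tilde{B}_k \lesssim 2^{2sk}|v'-v_*|^{\gamma+2s}$. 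Weight redistribution is handled by the same decomposition based on whether $|v'|^2$ is comparable to $|v|^2+|v_*|^2$ (the pre-post collisional relation $|v|^2+|v_*|^2 = |v'|^2+|v_*'|^2$ remains valid).

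The main obstacle will be the term in which a second-order derivative $|\tilde\nabla|^i h(\CurveEP(\Ctheta))$ appears under the $\int_0^1 d\Ctheta$. Here, after Minkowski's inequality moves the $L^2$ norm inside the $\Ctheta$-integral, I need to change variable $v \mapsto u = \CurveP(\Ctheta) = \Ctheta v' + (1-\Ctheta)v$ on the plane $E_{v_*}^{v'}$. Unlike the $\sigma$-representation case where the Jacobian \eqref{jacobianCH} is uniformly bounded below thanks to the restriction $\ang{k,\sigma}\ge 0$, here the transformation is a pure scaling around $v'$ with Jacobian $(1-\Ctheta)^{n-1}$, which degenerates at $\Ctheta=1$. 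I would handle this by splitting the $\Ctheta$-integral at $\Ctheta = 1/2$: on $[0,1/2]$ the Jacobian is bounded below and the argument runs as before, while on $[1/2,1]$ I exploit the fact that $\CurveEP(\Ctheta)$ is near $v'$ and instead bound $|\tilde\nabla|^i h(\CurveEP(\Ctheta))$ by the maximal function of $|\tilde\nabla|^i h$ at $v'$ (or equivalently, reverse the path parametrization and repeat the change of variable). With this caveat resolved, the remaining estimates follow exactly as in Proposition \ref{referLATERprop}.
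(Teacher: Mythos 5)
Your decomposition \eqref{differenceB} and the vanishing argument for the two first-order pieces match the paper's proof exactly; what differs is the handling of the change of variables $v\mapsto \CurveP(\Ctheta)$ in the terms $T_*^{\mathbb{II}}, T_*^{\mathbb{IV}}$, and it is precisely there that your proposal has a gap. You correctly spot that, in the Carleman picture, the map $v\mapsto u=\Ctheta v'+(1-\Ctheta)v$ restricted to the $(n-1)$-plane $E_{v_*}^{v'}$ is a dilation about $v'$ with Jacobian $(1-\Ctheta)^{n-1}$, degenerate at $\Ctheta=1$. But your two proposed fixes do not close it. Reversing the parametrization does nothing: the degeneracy occurs because the whole hyperplane collapses to the fixed endpoint $v'$, so either orientation of the path has a degenerate endpoint. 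And the maximal-function bound is written as a pointwise inequality $|\tilde\nabla|^{i}h(\CurveEP(\Ctheta))\lesssim \mathcal{M}(|\tilde\nabla|^{i}h)(v')$, which is false -- the maximal function controls averages, not point values. One can in fact salvage the idea: since $\tilde{B}_k$ is radially symmetric about $v'$ on the plane and the integration over $\{|v-v'|\approx 2^{-k}\}$ pushes forward under the dilation to an integration over $\{|u-v'|\approx (1-\Ctheta)2^{-k}\}$, the factor $(1-\Ctheta)^{-(n-1)}$ is exactly cancelled by the shrinking annulus and what survives is a genuine average, controllable by a maximal function -- but that maximal function lives on the $(n-1)$-dimensional plane $E_{v_*}^{v'}$, which varies with $v_*$, and you would still have to bound its $L^2(dv'\,dv_*)$ norm back to $\nsm w^\ell|\tilde\nabla|^i h\nsm_{L^2_{\gamma+2s}}$, which is not addressed in the proposal.

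The paper avoids the issue completely: after arriving at the bound \eqref{singleTERMest}, it \emph{changes back to the $\sigma$-representation} before performing the change of variables $v\mapsto\CurveP(\Ctheta)$. There the map is a full $n$-dimensional change of variables at fixed $(v_*,\sigma)$, whose Jacobian \eqref{jacobianCH} is bounded below uniformly in $\Ctheta\in[0,1]$ because $b$ is supported on $\ang{k,\sigma}\ge 0$; the estimate then runs exactly as in \eqref{covterm}. Moreover, after the change of variables the old pole $k=(v-v_*)/|v-v_*|$ is shown to remain at a comparable angular distance from $\tilde k=(u-v_*)/|u-v_*|$, so \eqref{bjEST} still applies, and the argument is just the one from Proposition \ref{referLATERprop}. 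I recommend replacing the maximal-function patch with this one step -- revert to the $\sigma$-representation before the $\Ctheta$-change of variables -- which makes the degeneracy disappear and keeps the remainder of your argument intact.
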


\begin{proof}
This proof follows the pattern that is now established.  The new feature in \eqref{cancelh} is that, from \eqref{defTKLcarl}, 
the pointwise differences to examine are
\begin{multline}
\label{differenceB}
M_\beta(v_*') h 
-  M_\beta(v_*) h' 
=
 \left( \frac{d \starCurveEP}{d \Ctheta}(0) \cdot \tilde{\nabla} M_\beta(v_*)    \right)
 h' 
\\
+
 \left( \frac{d \starCurveEP}{d \Ctheta}(0) \cdot \tilde{\nabla} M_\beta(v_*)    \right)
 \left( h 
 -   
 h' 
 \right)
\\
+
 \left( M_\beta(v_*')  - M_\beta(v_*) -  \frac{d \starCurveEP}{d \Ctheta}(0) \cdot \tilde{\nabla} M_\beta(v_*)    \right)
 h 
 \\
 +
 M_\beta(v_*) 
 \left( h 
 -   h' 
 -  \frac{d \CurveEP}{d \Ctheta}(1) \cdot \left(\tilde{\nabla} h \right) (\CurveEP(1))
 \right)
 \\
  +
 M_\beta(v_*) 
 \left(  \frac{d \CurveEP}{d \Ctheta}(1) \cdot \left(\tilde{\nabla} h \right)(\CurveEP(1))
 \right)
 = \mathbb{I} + \mathbb{II}
 +
  \mathbb{III} + \mathbb{IV}+ \mathbb{V}.
\end{multline}
We again 
split 
$
(\teePLUSop- \teeSTARop)(g,h,f)
=
T^{\mathbb{I}}_*+ T^{\mathbb{II}}_*
+
T^{\mathbb{III}}_*+ T^{\mathbb{IV}}_* + T^{\mathbb{V}}_*,
$
where $T^{\mathbb{I}}_*$ corresponds to the first term in the splitting above, etc.
For the last term $T^{\mathbb{V}}_*$, we have by symmetry that
\begin{equation}
T^{\mathbb{V}}_*
\eqdef
 \int_{\mathbb{R}^n} dv' ~ w^{2\ell}(v') ~
 \int_{\mathbb{R}^n} dv_*   \int_{E_{v_*}^{v'}} d \pi_v   ~ 
 \tilde{B}_k ~
 M_\beta(v_*)  g_* f' ~
  \frac{d \CurveEP}{d \Ctheta}(1) \cdot \tilde{\nabla} h' = 0.
  \label{zeroCORRECTOR}
\end{equation}
In this integral as $v$ varies on circles of constant distance to $v'$, the entire integrand is constant except for $\frac{d \CurveEP}{d \Ctheta}(1)$.  If we write $\frac{d \CurveEP}{d \Ctheta}(1)$ as a sum of two vectors, one lying in the span of the first $n$ directions and the second pointing in the last direction, it follows that we may replace the former vector by its projection onto the direction determined by $v' - v_*$.  But since the original vector points in the direction $v-v'$, the projection vanishes.  Since the last direction of $\frac{d \CurveEP}{d \Ctheta}(1)$  is exactly $\ang{v',v'-v}$, the corresponding integral of this over $v$ also vanishes by symmetry.  
Similarly, the first term $T^{\mathbb{I}}_*$ also vanishes,
\begin{gather*}
T^{\mathbb{I}}_*
\eqdef
 \int_{\mathbb{R}^n} dv'  w^{2\ell}(v')  \int_{\mathbb{R}^n} dv_*   \int_{E_{v_*}^{v'}} d \pi_v    
 \tilde{B}_k  M_\beta(v_*)  g_* f'  h' 
    \left( \frac{d \starCurveEP}{d \Ctheta}(0) \cdot \tilde{\nabla} M_\beta(v_*)    \right)
   = 0.
\end{gather*}
Here the explanation is the same as in the previous case.

The remaining terms incorporate cancellations.  
In terms of $\mathbb{II}$ and $\mathbb{IV}$, our operator  from \eqref{defTKLcarl} takes the form
\begin{equation}
T^{\mathbb{II}}_*
+
T^{\mathbb{IV}}_* = \int_{\mathbb{R}^n} dv' ~ w^{2\ell}(v') ~\int_{\mathbb{R}^n} dv_* \int_{E_{v_*}^{v'}} d \pi_v  
~ \tilde{B}_k ~ g_*  f'   ~\left( \mathbb{II}~ + \mathbb{IV} \right). 
\label{form24}
\end{equation}
Applying \eqref{paradiff1}, \eqref{derivEST}, and \eqref{paradiff2} gives the estimate:
 \begin{equation}
\label{24est}
| \mathbb{II} | + | \mathbb{IV} |  \lesssim   
|v-v'|^2
 M_*^{1/2} \int_0^1 d \Ctheta ~  |\tilde{\nabla}|^2 h(\CurveEP(\Ctheta)).  
\end{equation}
Again,  $|v-v'| \lesssim 2^{-k}$. 
With all of that we may  estimate the terms $\left| T^{\mathbb{II}}_* \right|$
and
$\left| T^{\mathbb{IV}}_* \right|$
 above by the following single term:
\begin{equation}
2^{-2k}
\int_0^1 d \Ctheta \int_{\mathbb{R}^n} dv' \int_{\mathbb{R}^n} dv_* \int_{E_{v_*}^{v'}}  d \pi_{v}   
~ \tilde{B}_k ~
  w^{2\ell}(v') ~
  M_*^{1/2} |g_* f'| ~ |\tilde{\nabla}|^2 h(\CurveP(\Ctheta)).
  \label{singleTERMest}
\end{equation}
 The estimates required now for the term above are completely analogous to a corresponding estimate from Proposition \ref{referLATERprop}.   First we change back to the $\sigma$-representation.
At that point, we can use the corresponding Cauchy-Schwartz argument \eqref{tpe2} (and, since $\ang{v} \approx \ang{v'}$ when $k\ge 0$, the assumption $\gamma+2s \geq 0$ is not even necessary in this case for the estimation to proceed).  Using the same change of variables employed for \eqref{covterm} (with Jacobian given by \eqref{jacobianCH}), the estimate proceeds along the usual lines.

Regarding the estimate for $T^{\mathbb{III}}_* $, we notice that the estimate of \eqref{paradiff2}
holds if $v, v'$ are replaced by $v_*, v_*'$ and we replace $\CurveEP$ with $\starCurveEP$.
Using \eqref{paradiff2} in this case, we have
\begin{equation}
\left| T^{\mathbb{III}}_* \right|
\lesssim
2^{-2k}\int_0^1 d \theta \int_{\mathbb{R}^n} dv' \int_{\mathbb{R}^n} dv_* \int_{E_{v_*}^{v'}}  d \pi_{v} 
~  \tilde{B}_k ~   w^{2\ell}(v') ~ M_*^{1/2} |g_* f'| ~ | h |.
\label{T3est}
\end{equation}
Now this estimate can be handled as in the previous case for $T^{\mathbb{II}}_* $. This case is easier because there is no $\CurveP(\theta)$ here, which required the use of the change-of-variables used in \eqref{covterm} as well.

Notice that the estimates above hold for any $s\in (0, 1)$, but we obtain $i=2$ in each case 
in \eqref{cancelh}.
Yet recall that the terms $T^{\mathbb{I}}_*$ and $T^{\mathbb{V}}_*$ vanish by symmetry.  
Thus the estimate with $i=1$ when $s\in (0, 1/2)$ can be proved following the same procedure as above using \eqref{paradiff1} instead of \eqref{paradiff2} in each case after removing the cancellation terms 
$
\frac{d \starCurveEP}{d \Ctheta}(0) \cdot \tilde{\nabla} M_\beta(v_*),   
$
and
$
\frac{d \CurveEP}{d \Ctheta}(1) \cdot \left(\tilde{\nabla} h \right) (\CurveEP(1))
$
from this analysis.
\end{proof}

\section{Derivative estimates for soft potentials: $-2s> \gamma > -\dim$}
\label{sec2:physicalDECrel}

The estimates from Section \ref{physicalDECrel} apply under, for example, the hard potential hypothesis \eqref{kernelP} (or, more generally, $\gamma +2s > -\frac{n}{2}$ combined with $\gamma > -\dim$).  
In this section, we want to prove estimates under more general assumptions, including for the very singular soft potentials \eqref{kernelPsing}.  To do this, we use derivatives in the upper bounds.  Note that all the estimates in this section will apply under both \eqref{kernelP} and \eqref{kernelPsing}.  In Section \ref{sec:SSE} we estimate each of  $\teePLUSop$, $\teeMINUSop$, and $\teeSTARop$ using only the constraints on the size and support of $B_k$.  Then in Section \ref{sec:SSChigh} we estimate the collision operator by exploiting the cancellation properties of $\Gamma_\beta$.
Finally,  we prove the ``compact estimates'' in Section \ref{sec:SSCE}; these are used to prove the constructive lower bound for the linearized collision operator in Theorem \ref{lowerN}.

\subsection{``Trivial'' estimates of the decomposed pieces with derivatives}
\label{sec:SSE}
We will now prove several size and support estimates for the decomposed pieces of the Boltzmann collision operator.
It will be useful to let $\phi(v)$ denote an arbitrary smooth function which satisfies for some positive constants $C_{\phi}$ and $c$ that
\begin{equation}
\left| 
\phi(v)
\right|
\le C_\phi e^{- c |v|^2}.
\label{rapidDECAYfcn}
\end{equation}
We use generic functions satisfying  \eqref{rapidDECAYfcn} often in what follows.  

\begin{proposition}\label{prop11}
For any integer $k$, any $m \ge 0$ and $\ell \in \mathbb{R}$, we have
\begin{gather}
 \left| \teeMINUSop(g,h,f) \right|   \lesssim 2^{2sk} \nsm g\nsm_{H^{\ksob}_{-m}} \nsm w^\ell h\nsm_{L^2_{\gamma + 2s}} 
 \nsm w^\ell f\nsm_{L^2_{\gamma + 2s}},
 \label{tminusg} 
 \\
 \left| \teeMINUSop(g,h,f) \right|   \lesssim 2^{2sk} 
 \nsm g\nsm_{L^2_{-m}}  
| h |_{H^{\ksob}_{\ell,\gamma + 2s}} 
| w^\ell f |_{L^2_{\gamma + 2s}}.
 \label{tminush}
\end{gather}
Furthermore, for $\phi$ 
defined as in \eqref{rapidDECAYfcn}, we have
\begin{align}
 \left| \teeMINUSop(g,\phi,f) \right| 
 +
  \left| \teeMINUSop(g,f,\phi) \right| 
 & 
 \lesssim 
  C_\phi ~ 
 2^{2sk} ~ 
 \nsm g\nsm_{L^2_{-m}}  \nsm f\nsm_{L^2_{-m}}.
 \label{tminushRAP}
\end{align}
These estimates hold uniformly.  
\end{proposition}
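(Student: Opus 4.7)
The plan is to start from the bound \eqref{bjEST}, which reduces all three inequalities to controlling the integral
\[
|\teeMINUSop(g,h,f)| \lesssim 2^{2sk} \int_{\threed}dv \int_{\threed}dv_* ~ \sqrt{M_*}~|v-v_*|^{\gamma+2s}~|g_*|~|hf|~w^{2\ell}(v).
\]
The essential new difficulty compared to Proposition \ref{prop1} is that under \eqref{kernelPsing} the exponent $2(\gamma+2s)$ may fall below $-n$, so the Cauchy--Schwartz step \eqref{cauchySCHminus} fails because $\int dv_* \sqrt{M_*}|v-v_*|^{2(\gamma+2s)}$ diverges.  My strategy will be to restore local integrability by trading $L^2$ control for $L^\infty$-type control on one factor via the Sobolev embedding $H^{\ksob}(\threed)\hookrightarrow L^\infty(\threed)$, valid precisely because $\ksob > n/2$.

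For \eqref{tminusg} I will first establish the kernel bound
\[
J(v) \eqdef \int_{\threed}dv_* \sqrt{M_*}\,|v-v_*|^{\gamma+2s}\,|g_*| \lesssim \ang{v}^{\gamma+2s}\,\nsm g\nsm_{H^{\ksob}_{-m}},
\]
by splitting $J = J_1 + J_2$ according to whether $|v-v_*|\le 1$ or $|v-v_*|>1$.  On $J_1$, writing $M_*^{1/2}=M_*^{1/8}M_*^{3/8}$ and applying Sobolev embedding to the Schwartz-like function $gM^{1/8}$ yields the pointwise bound $|g_*|M_*^{1/8} \lesssim \nsm g\nsm_{H^{\ksob}_{-m}}$, since the Gaussian decay of derivatives of $M^{1/8}$ absorbs all polynomial weight mismatches via Leibniz.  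The residual $M_*^{3/8}$ gives rapid decay in $v$ on this region, and the local integral $\int_{|v-v_*|\le 1}|v-v_*|^{\gamma+2s}\,dv_*$ is finite precisely because $\gamma+2s > -n$ (from $\gamma > -n$ and $s > 0$).  On $J_2$, the kernel is either bounded (soft case) or dominated by $\ang{v}^{\gamma+2s}+\ang{v_*}^{\gamma+2s}$ (hard case), and $\sqrt{M_*}$ absorbs any $v_*$-growth.  A single additional Cauchy--Schwartz in $v$ then redistributes $\ang{v}^{\gamma+2s}$ between $w^\ell h$ and $w^\ell f$ and delivers \eqref{tminusg}.

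For \eqref{tminush} I will mirror the argument with $h$ in the role of $g$: integrate first in $v_*$ for fixed $v$ and then invoke Sobolev embedding on $w^\ell h$ in a ball $B_1(v_*)$, using the Leibniz rule together with the polynomial bounds $|\partial^\beta w^\ell|\lesssim w^{\ell-|\beta|}$ to dominate $\nsm w^\ell h\nsm_{H^{\ksob}(B_1(v_*))}$ by a polynomial weight in $v_*$ times $|h|_{H^{\ksob}_{\ell,\gamma+2s}}$; this polynomial weight is then absorbed by $\sqrt{M_*}$ when paired with $g_*$, leaving only $\nsm g\nsm_{L^2_{-m}}$ on the right-hand side.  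For \eqref{tminushRAP}, the pointwise Gaussian decay $|\phi(v)|\lesssim C_\phi e^{-c|v|^2}$ absorbs every weight --- including the factor $w^{2\ell}(v)$ and any polynomial growth coming from $|v-v_*|^{\gamma+2s}$ away from the diagonal --- reducing the estimate to a direct Cauchy--Schwartz that requires no Sobolev input.

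The main technical obstacle will be the weight bookkeeping in the Sobolev step for \eqref{tminush}, specifically verifying the uniform inequality $\nsm w^\ell h\nsm_{L^\infty(B_1(v_*))}\lesssim \ang{v_*}^{C}|h|_{H^{\ksob}_{\ell,\gamma+2s}}$ for some fixed constant $C$ so that the polynomial growth can be absorbed by $\sqrt{M_*}$; once this and the analogous statement for $gM^{1/8}$ are in hand, the remainder reduces to repeated Cauchy--Schwartz together with the local integrability of $|v-v_*|^{\gamma+2s}$ in the spirit of Proposition \ref{prop1}.
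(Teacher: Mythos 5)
Your overall strategy mirrors the paper's: split the $v_*$-integration at $|v-v_*|=1$, use the Sobolev embedding $H^{\ksob}\hookrightarrow L^\infty$ on whichever function carries the Sobolev norm in the conclusion, and close with Cauchy--Schwartz. The near-field part $J_1$, the local Sobolev bookkeeping you sketch for \eqref{tminush}, and the rapid-decay reduction for \eqref{tminushRAP} are all sound.

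There is, however, a genuine gap in your treatment of the far-field piece $J_2$ for the soft potentials. You claim that on $J_2$ "the kernel is bounded (soft case), and $\sqrt{M_*}$ absorbs any $v_*$-growth." For $\gamma+2s<0$, the estimate $|v-v_*|^{\gamma+2s}\le 1$ on $\{|v-v_*|>1\}$ only gives $J_2(v)\lesssim\nsm g\nsm_{H^{\ksob}_{-m}}$ with a constant independent of $v$. But your final Cauchy--Schwartz step explicitly "redistributes $\ang{v}^{\gamma+2s}$ between $w^\ell h$ and $w^\ell f$," which requires the strictly stronger bound $J(v)\lesssim\ang{v}^{\gamma+2s}\nsm g\nsm_{H^{\ksob}_{-m}}$. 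Since $\ang{v}^{\gamma+2s}\to 0$ for soft potentials, a constant bound on $J_2$ would end with $\nsm w^\ell h\nsm_{L^2}\nsm w^\ell f\nsm_{L^2}$ on the right, and $\nsm w^\ell h\nsm_{L^2_{\gamma+2s}}$ does \emph{not} dominate $\nsm w^\ell h\nsm_{L^2}$ when $\gamma+2s<0$. The fix is the observation the paper relies on implicitly in its Cauchy--Schwartz argument for $|v-v_*|\ge 1$: the factor $\sqrt{M_*}$ does not merely absorb growth, it \emph{localizes} $v_*$ near the origin. Splitting $J_2$ at $|v_*|=|v|/2$, on $\{|v_*|\le |v|/2\}$ one has $|v-v_*|\gtrsim\ang{v}$ so the negative-exponent kernel already satisfies $|v-v_*|^{\gamma+2s}\lesssim\ang{v}^{\gamma+2s}$; on $\{|v_*|>|v|/2\}$ the Gaussian yields $e^{-c|v|^2}\lesssim\ang{v}^{\gamma+2s}$. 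With this repair $J(v)\lesssim\ang{v}^{\gamma+2s}\nsm g\nsm_{H^{\ksob}_{-m}}$ holds on all of $\threed$ and the rest of your argument goes through.
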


We record here the following Sobolev embedding theorem, with $\ksob = \lfloor \frac{n}{2} +1 \rfloor$,
\begin{equation}
|w^{-m-\ksob} f|_{L^\infty}  \lesssim |f|_{H^{\ksob}_{-m}},
\label{sobolevE}
\end{equation}
which holds for any $m\in \mathbb{R}$.
We will also use the following immediate implications
\begin{equation}
|w^{-m} f|_{L^\infty} + |w^\ell f|_{L^2_{\gamma+2s}} \lesssim |f|_{H^{\ksob}_{\ell,\gamma+2s}},
\quad
|w^{-m} f|_{L^\infty} + |w^\ell f|_{L^2} \lesssim |f|_{H^{\ksob}_{\ell}},
\notag
\end{equation}
where $m$ is sufficiently large, depending on $\ell\in\mathbb{R}$, $\gamma+2s$, and $n\ge 2$.  
Furthermore, we are using the abbreviated notation $L^\infty = L^\infty(\mathbb{R}^n)$.

\begin{proof}
Restricting to the region where $|v-v_*| \geq 1$, the same argument in Proposition \ref{prop1} leading to \eqref{cauchySCHminus} is still valid.  Thus, to establish \eqref{tminusg} and \eqref{tminush}, it suffices to restrict attention to the region where $|v-v_*| \leq 1$.  In this case, $\sqrt{M_*} w^{2 \ell}(v) \lesssim (M_* M)^{\delta}$ for some $\delta>0$.  Thus, after applying \eqref{bjEST}, it suffices to assume that 
\[ 
\left| \teeMINUSop(g,h,f) \right| \lesssim 2^{2sk} \int_{\mathbb{R}^n} dv \int_{\mathbb{R}^n} dv_* ~  |v-v_*|^{\gamma+2s} |g_* h f|~ (M_* M)^{\delta}.
 \]
 To obtain 
 \eqref{tminusg}, we
first take the $L^\infty$ norm of $g_* M_*^{\delta/2}$ and use \eqref{sobolevE}, then apply the elementary inequality
 $
\int_{\mathbb{R}^n} dv_* ~ M_*^{\delta/2} ~  |v-v_*|^{\gamma+2s}
\lesssim
\langle v \rangle^{\gamma + 2s}
$, and use Cauchy-Schwartz putting $h$ in one term, $f$ in the other, and the square root of the weight $ M^\delta \ang{v}^{\gamma+2s}$ in both.  To obtain \eqref{tminush}, on the other hand, we may take the $L^\infty$ norm of $|h| \ang{v}^{-m-\ksob}$, use \eqref{sobolevE}, and apply Cauchy-Schwartz again.

For \eqref{tminushRAP} notice that if either of the second two functions in $\teeMINUSop$  has rapid decay such as \eqref{rapidDECAYfcn}, then we have rapid decay in both $v$ and $v_*$ simultaneously.  Thus we can use Cauchy-Schwartz, putting $g$ in one term, $f$ in the other term, and spreading the rapid decay across both terms.  
\end{proof}

\begin{proposition}\label{starPROP}
For all $\ell\in\mathbb{R}$, $m\ge 0$ and integers $k$, we have
\begin{gather}
  \left| \teeSTARop(g,h,f) \right|   \lesssim 
  2^{2sk}    \nsm g\nsm_{H^{\ksob}_{-m}} 
  \nsm w^\ell h\nsm_{L^2_{\gamma + 2s}} 
 \nsm w^\ell f\nsm_{L^2_{\gamma + 2s}},
 \label{tstarg} 
 \\
  \left| \teeSTARop(g,h,f) \right|   \lesssim 2^{2sk} 
 \nsm g\nsm_{L^2_{-m}} 
| w^\ell h |_{H^{\ksob}_{\ell,\gamma + 2s}} 
| w^\ell f |_{L^2_{\gamma + 2s}}.
 \label{tstarh}
\end{gather}
These inequalities are uniform.  Moreover,
\begin{align}
 \left| \teeSTARop(g,\phi,f) \right| 
 +
  \left| \teeSTARop(g,f,\phi) \right| 
 &  \lesssim 2^{2sk} 
 \nsm g\nsm_{L^2_{-m}}  \nsm f\nsm_{L^2_{-m}},
 \label{tstarhRAP}
\end{align}
where as usual $\phi$ is defined as in \eqref{rapidDECAYfcn}.  
\end{proposition}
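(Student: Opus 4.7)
The plan is to mimic the strategy used for Proposition \ref{prop11}, but applied to $\teeSTARop$ through its Carleman-style representation \eqref{defTKLcarl}. The starting point is the pointwise kernel bound \eqref{bjCARLest}, which (as already derived in the proof of Proposition \ref{propSTAR}) yields
\[
\left| \teeSTARop(g,h,f) \right| \lesssim 2^{2sk} \int_{\mathbb{R}^n} dv' \int_{\mathbb{R}^n} dv_*  ~ \sqrt{M_*} ~ |v'-v_*|^{\gamma+2s} ~ |g_*| |h'| |f'| ~ w^{2\ell}(v').
\]
I then split the $v_*$-integral according to whether $|v'-v_*| \geq 1$ or $|v'-v_*| \leq 1$, and treat the two regions by separate methods.

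On the non-singular piece $|v'-v_*| \geq 1$, the Cauchy-Schwartz argument of \eqref{cauchySCHminus} from Proposition \ref{propSTAR} goes through verbatim, since the only constraint on $\gamma+2s$ there came from demanding $\int dv_* \sqrt{M_*} |v'-v_*|^{2(\gamma+2s)} \lesssim \ang{v'}^{2(\gamma+2s)}$, and this is automatic once we restrict to $|v'-v_*|\ge 1$: the singular factor is bounded by $1$, and the Gaussian decay of $\sqrt{M_*}$ together with the usual large-$v'$ analysis furnishes the weight $\ang{v'}^{2(\gamma+2s)}$. The outcome is a bound of the form $2^{2sk}\nsm g\nsm_{L^2_{-m}}\nsm w^\ell h\nsm_{L^2_{\gamma+2s}}\nsm w^\ell f\nsm_{L^2_{\gamma+2s}}$, which is stronger than any of \eqref{tstarg}, \eqref{tstarh}, or \eqref{tstarhRAP}.

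For the singular piece $|v'-v_*| \leq 1$, the elementary observation $\ang{v'}\approx\ang{v_*}$ yields $\sqrt{M_*}\,w^{2\ell}(v') \lesssim (M_* M')^{\delta}$ for some $\delta>0$, so the integrand decays rapidly in both $v'$ and $v_*$. To prove \eqref{tstarg}, I place $g_* M_*^{\delta/2}$ in $L^\infty_{v_*}$ via Sobolev embedding \eqref{sobolevE}, use the local integrability $\int_{|v'-v_*|\le 1} dv_* M_*^{\delta/2} |v'-v_*|^{\gamma+2s}\lesssim 1$ (valid because $\gamma>-n$), and close with Cauchy-Schwartz in $v'$, splitting the remaining Gaussian factor $(M')^{\delta/2}$ evenly between $h'$ and $f'$. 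The estimate \eqref{tstarh} is symmetric: instead, I apply Sobolev to $h'(M')^{\delta/2}$ in $L^\infty_{v'}$, leaving an $L^2_{v_*}\times L^2_{v'}$ bound on $g_*$ and $f'$ which is controlled by Cauchy-Schwartz. Finally, \eqref{tstarhRAP} is easier still, since the rapid decay of $\phi$ directly provides an $L^\infty$ bound (with constant $C_\phi$) on whichever function it replaces, without any appeal to Sobolev embedding.

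The main technical point is the singular region $|v'-v_*|\le 1$: unlike Proposition \ref{propSTAR}, where the hypothesis $\gamma+2s>-n/2$ let Cauchy-Schwartz alone absorb the singularity, here $\gamma+2s$ may be arbitrarily negative. The mechanism that rescues the estimate is the pointwise bound $\sqrt{M_*}w^{2\ell}(v')\lesssim(M_*M')^\delta$ on the diagonal strip, which, combined with Sobolev embedding on exactly one of the functions, reduces the remaining singular integral to $\int_{|v'-v_*|\le 1} dv_* M_*^{\delta/2}|v'-v_*|^{\gamma+2s}$, finite whenever $\gamma>-n$; this is precisely the range allowed by \eqref{kernelPsing}.
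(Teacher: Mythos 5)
Your proposal is correct and follows exactly the approach the paper intends: the paper's proof of this proposition is the one-line remark that, given \eqref{bjCARLest} and \eqref{AAeqREF}, "the relevant estimates can be established as in Proposition \ref{prop11}," and you have simply spelled out that adaptation (the split into $|v'-v_*|\ge 1$ handled by the Cauchy-Schwartz of \eqref{cauchySCHminus}, and $|v'-v_*|\le 1$ handled by the bound $\sqrt{M_*}\,w^{2\ell}(v')\lesssim(M_*M')^{\delta}$ together with Sobolev embedding \eqref{sobolevE} applied to $g$ for \eqref{tstarg} or $h$ for \eqref{tstarh}, and the rapid-decay $L^\infty$ bound for \eqref{tstarhRAP}). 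One cosmetic note: the local integrability you invoke on the singular region actually requires $\gamma+2s>-n$ (the integrand is $|v'-v_*|^{\gamma+2s}$), which is of course implied by the stated hypothesis $\gamma>-n$ since $s>0$.
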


\begin{proof}
Recall \eqref{bjCARLest}.  With that we obtain \eqref{AAeqREF}.  Now the relevant estimates can be established as in Proposition \ref{prop11}.
\end{proof}

\begin{proposition}
\label{referLATERprop2}
Fix an integer $k$ and $\ell^+, \ell^- \ge 0$,
with $\ell = \ell^+ - \ell^-$.  For any  $0\le \ell' \le \ell^-$,
with $\ell + \ell' = \ell^+ - (\ell^- - \ell')$,
  we   have the uniform estimates 
\begin{equation} 
\begin{split}
\left|  \teePLUSop(g,h,f)  \right| \lesssim   & ~
2^{2sk} 
 \nsm  g\nsm_{H^{\ksob}_{{\ell^+} - \ell'}} 
\nsm w^{\ell + \ell'} h\nsm_{L^2_{\gamma + 2s}} 
\nsm w^{\ell} f\nsm_{L^2_{\gamma + 2s}}.
\end{split}
 \label{tplussmallN}
\end{equation}
We also have a similar 
estimate with the roles of $g$ and $h$ reversed
\begin{equation} 
\begin{split}
\left|  \teePLUSop(g,h,f)  \right| \lesssim   & ~
2^{2sk} 
\nsm w^{\ell^+ - \ell'} g\nsm_{L^2} 
 \nsm  h\nsm_{H^{\ksob}_{\ell + \ell', \gamma + 2s} }
\nsm w^{\ell} f\nsm_{L^2_{\gamma + 2s}}.
\end{split}
 \label{tplussmall2N}
\end{equation}
\end{proposition}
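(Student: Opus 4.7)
The plan is to adapt Proposition~\ref{referLATERprop} to the stronger singularity by replacing Cauchy--Schwartz with Sobolev embedding on the singular diagonal, in direct analogy with the way Proposition~\ref{prop11} extends Proposition~\ref{prop1}. I would begin from the pointwise majorant
\[
|\teePLUSop(g,h,f)| \lesssim \int dv\,dv_*\,d\sigma~B_k~w^{2\ell}(v')|g_* h f'|\sqrt{M_*'},
\]
and split the integration into the regions $|v-v_*|\ge 1$ and $|v-v_*|\le 1$.

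On $|v-v_*|\ge 1$ the singularity of $\Phi$ is avoided, so the Cauchy--Schwartz arguments from \eqref{tpe} and \eqref{tpe2} of Proposition~\ref{referLATERprop} apply unchanged, producing the purely $L^2$-based bound
\[
2^{2sk}\,\nsm w^{\ell^+-\ell'}g\nsm_{L^2}\,\nsm w^{\ell+\ell'}h\nsm_{L^2_{\gamma+2s}}\,\nsm w^\ell f\nsm_{L^2_{\gamma+2s}},
\]
which is stronger than both \eqref{tplussmallN} and \eqref{tplussmall2N} in view of the trivial embedding $L^2 \hookrightarrow H^{\ksob}$.

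On the singular region $|v-v_*|\le 1$, where necessarily $k\ge 0$ since $|v-v'|\le|v-v_*|$, collisional conservation combined with $|v-v'|\lesssim 2^{-k}\le 1$ forces $v,v_*,v',v_*'$ to lie at mutually bounded distance; the weights $w^{2\ell}(v'), w^{2(\ell^+-\ell')}(v_*), w^{2(\ell+\ell')}(v)$ are therefore all comparable, and $\sqrt{M_*'}$ is dominated by $M_*^{\delta}$ for some small $\delta>0$ at the price of polynomial factors that can be absorbed into further Gaussians. To establish \eqref{tplussmallN}, I would apply the Sobolev embedding \eqref{sobolevE} to pull $g$ out of the integral in $L^\infty$: pointwise, $|g(v_*)|\lesssim \ang{v_*}^{\ksob-\ell^++\ell'}|g|_{H^{\ksob}_{\ell^+-\ell'}}$, and the polynomial growth in $v_*$ is absorbed into $M_*^{\delta/2}$. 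What remains is a bilinear form in $(h,f')$ that I would control by the Cauchy--Schwartz split from \eqref{tpe}: after integrating the angular variable via \eqref{bjEST}, the factor $|v-v_*|^{\gamma+2s}$ is only required to be locally integrable, which holds whenever $\gamma+2s>-n$ (and hence under both \eqref{kernelP} and \eqref{kernelPsing} since $s>0$). For \eqref{tplussmall2N} I would reverse the roles of $g$ and $h$, using \eqref{sobolevE} to pull $h$ out in $L^\infty$ through the inequality $|w^{\ell+\ell'-\ksob}(v)\ang{v}^{(\gamma+2s)/2}h(v)|_{L^\infty}\lesssim |h|_{H^{\ksob}_{\ell+\ell',\gamma+2s}}$, and then estimating the remaining bilinear form in $(g,f')$ in the same fashion.

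The main obstacle I anticipate is the loss of $L^2$-integrability of $|v-v_*|^{2(\gamma+2s)}$ when $\gamma+2s\le -\tfrac{n}{2}$, which is precisely what prevents the direct Cauchy--Schwartz estimate of Proposition~\ref{referLATERprop} from extending to soft potentials and is overcome by upgrading one function to $H^{\ksob}$ via Sobolev embedding. The attendant weight accounting --- making sure that all polynomial factors produced by \eqref{sobolevE} can be absorbed into the available Gaussians $M_*^{\delta}$ --- works because on $\{|v-v_*|\le 1,\,|v-v'|\le 1\}$ the four collisional velocities are mutually comparable, allowing essentially free redistribution of weights among $v,v_*,v',v_*'$.
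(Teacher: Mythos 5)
Your proposal is correct and follows essentially the same approach as the paper: split into $|v-v_*|\ge 1$ (where the direct Cauchy--Schwartz of Proposition~\ref{referLATERprop} applies) and $|v-v_*|\le 1$ (where all collisional velocities are comparable, $w^{2\ell}(v')\sqrt{M_*'}\lesssim (M_*M)^\delta$, and the integrability constraint relaxes from $\gamma+2s>-\frac{n}{2}$ to $\gamma+2s>-n$ by upgrading one function to $H^{\ksob}$ via Sobolev embedding). The only difference is cosmetic: you pull $g$ (resp.\ $h$) out in $L^\infty$ before Cauchy--Schwartz whereas the paper applies Cauchy--Schwartz first (keeping $B_k$ whole in both factors rather than splitting $|v-v_*|^{\pm(\gamma+2s)}$) and then embeds $gM^\delta$ in the resulting factor; both orderings yield the same estimate.
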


\begin{proof}
We plan to estimate $\teePLUSop(g,h,f)$ in \eqref{defTKL} from above as in \eqref{tBabove}.
On the region $|v-v_*| \geq 1$, the singularity of $|v-v_*|^{\gamma+2s}$ is avoided; consequently performing the same Cauchy-Schwartz argument leading to \eqref{tpe} establishes the estimate analogous \eqref{tplussmallK} on this region.  
This leaves only the region $|v-v_*| \leq 1$, where $w^{2 \ell}(v') \sqrt{M_*'} \lesssim (M_* M)^{\delta}$ for some $\delta > 0$, so the estimate is reduced to the point where we may assume
$$
\left| \teePLUSop(g,h,f)  \right| \lesssim \int_{\mathbb{R}^n} dv \int_{\mathbb{R}^n} dv_* \int_{\sph} d \sigma  ~B_k(v-v_*, \sigma) ~  \left| g_* h \right| \left| f' \right|(M_* M M' M_*')^{\delta/2}.
$$
In this case, the correct application of Cauchy-Schwartz gives the upper bound
\begin{multline*}
\left(  \int_{\mathbb{R}^n} dv \int_{\mathbb{R}^n} dv_* \int_{\sph} d \sigma  ~B_k(v-v_*, \sigma) ~  \left| g_* h \right|^2 (M_* M)^{2\delta} \right) \\ 
\times 
\left( \int_{\mathbb{R}^n} dv \int_{\mathbb{R}^n} dv_* \int_{\sph} d \sigma  ~B_k(v-v_*, \sigma) ~  \left| f' \right|^2 (M' M_*')^{2\delta} \right).
\end{multline*}
Now the second factor is clearly controlled by $2^{sk} |f|_{L^2_{-m}}$ for any $m$.  In the first term, estimating $g M^{\delta}$ by the norm $H^{\ksob}_{-m}$ gives that the entire factor is controlled by $2^{sk} |g|_{H^{\ksob}_{-m}}|h|_{L^2_{-m}}$, establishing \eqref{tplussmallN}.  As for \eqref{tplussmall2N}, it is achieved in the same manner by estimating $h M^{\delta}$ in terms of $|h|_{H^{\ksob}_{-m}}$.
\end{proof}

\begin{proposition}\label{opGstarESTderiv}
We have the following uniform estimate for \eqref{opGlabel}:
\begin{gather}
\left| \LopGstar (g,h,f)   \right|  \lesssim
      \nsm g\nsm_{H^{\ksob}_{-m}} 
  \nsm w^\ell h\nsm_{L^2_{\gamma}} 
 \nsm w^\ell f\nsm_{L^2_{\gamma}},
 \label{tstarCf} 
 \\
  \left| \LopGstar(g,h,f) \right|   \lesssim  
 \nsm g\nsm_{L^2_{-m}} 
|  h |_{H^{\ksob}_{\ell,\gamma}} | w^\ell f |_{L^2_{\gamma}}.
 \label{tstarCh}
\end{gather}
These inequalities hold for all $\ell\in\mathbb{R}$, $m\ge 0$, and $\gamma > -\dim$.
\end{proposition}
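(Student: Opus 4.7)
My plan is to adapt the template of the proof of Proposition \ref{opGstarEST}: first reduce matters to the same pointwise kernel estimate, then handle the more singular range $-n < \gamma \leq -n/2$ by spending $\ksob$ derivatives on either $g$ (for \eqref{tstarCf}) or $h$ (for \eqref{tstarCh}). The reduction is immediate: the calculation in the proof of Proposition \ref{opGstarEST} gives
\[
\int_{E_{v_*}^{v'}} d \pi_v \, \tilde{B} \left| 1 - \frac{|v'-v_*|^{n+\gamma}}{|v-v_*|^{n+\gamma}} \right| \lesssim |v'-v_*|^{\gamma},
\]
and therefore
\[
|\LopGstar(g,h,f)| \lesssim \int_{\R^n} dv' \int_{\R^n} dv_* \, |f' h'| \, w^{2\ell}(v') \, |g_*| \sqrt{M_*} \, |v'-v_*|^{\gamma}.
\]
I then split the integral into $\{|v'-v_*| \geq 1\}$ and $\{|v'-v_*| \leq 1\}$ and treat each region separately.

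On the non-singular region $|v'-v_*| \geq 1$, Peetre's inequality $|v'-v_*|^{\gamma} \lesssim \ang{v'}^{\gamma} \ang{v_*}^{|\gamma|}$ lets us absorb the $v_*$-growth into $\sqrt{M_*}$. For \eqref{tstarCf}, I pull out $|g_* M_*^{\delta/2}|_{L^\infty} \lesssim |g|_{H^{\ksob}_{-m}}$ by \eqref{sobolevE}, perform the rapidly convergent $v_*$-integral, and apply Cauchy-Schwartz on $v'$ with the weight $\ang{v'}^{\gamma} w^{2\ell}(v')$ split between $h'$ and $f'$. For \eqref{tstarCh} I instead apply a weighted Sobolev embedding of the form $|h(v) w^{\ell-\ksob}(v) \ang{v}^{\gamma/2}|_{L^\infty} \lesssim |h|_{H^{\ksob}_{\ell,\gamma}}$, then use Cauchy-Schwartz placing $g$ in $L^2_{-m}$ and $f$ in $L^2_{\gamma}$ with weight $w^{\ell}$. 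On the local region $|v'-v_*| \leq 1$, the comparison $\ang{v_*} \approx \ang{v'}$ yields $\sqrt{M_*} w^{2\ell}(v') \lesssim (M_* M')^{\delta}$ for some small $\delta > 0$, absorbing all polynomial weights into exponential decay in both $v_*$ and $v'$. For \eqref{tstarCf}, after taking $L^\infty$ of $g M^{\delta/2}$, the remaining $v_*$-integral $\int dv_* \, M_*^{\delta/2} |v'-v_*|^{\gamma} \mathbf{1}_{|v'-v_*|\leq 1} \lesssim e^{-c\ang{v'}^2}$ is finite precisely because $|u|^{\gamma}$ is locally integrable for $\gamma > -n$; Cauchy-Schwartz on $v'$ finishes. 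For \eqref{tstarCh}, I apply the Hardy-Littlewood-Sobolev inequality \eqref{hlsineq} with $\rho = \gamma$, $b_2 = 0$, and $b_1$ chosen in the interval $(-n/2 - \gamma, \, n/2)$, which is non-empty exactly when $\gamma > -n$, and then dominate $|hM^\delta|_{H^{b_1}} \lesssim |hM^\delta|_{H^{\ksob}} \lesssim |h|_{H^{\ksob}_{\ell,\gamma}}$ and $|fM^\delta|_{L^2} \lesssim |w^\ell f|_{L^2_\gamma}$, using that $M^\delta$ absorbs any polynomial weights.

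The main obstacle is the singular range $\gamma \in (-n, -n/2]$, where the direct Cauchy-Schwartz method of Proposition \ref{opGstarEST} fails because $\int dv_* \, M_* |v'-v_*|^{2\gamma}$ is no longer locally integrable; this is the sole reason derivatives must be introduced into the bound. The essential insight is that \eqref{hlsineq} allows one to trade $\ksob \geq \lceil n/2 \rceil$ velocity derivatives for the exact gain needed to compensate for the enhanced singularity of $|v'-v_*|^{\gamma}$, with the parameter restriction $\gamma + b_1 + b_2 > -n/2$ reducing to the optimal hypothesis $\gamma > -n$. I do not expect any serious subtlety beyond verifying the weighted Sobolev embedding and the domination $|hM^\delta|_{H^{b_1}} \lesssim |h|_{H^{\ksob}_{\ell,\gamma}}$, both of which follow from routine Leibniz and weight-chasing arguments since any polynomial weight is dominated by a fractional power of $M$.
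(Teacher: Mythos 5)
Your proposal is correct, and for \eqref{tstarCf} it matches the paper's route exactly: reduce to \eqref{hls0}, split at $|v'-v_*|=1$, put $g M_*^{\delta/2}$ into $L^\infty$ via Sobolev embedding, integrate $v_*$, and finish with a weighted Cauchy--Schwartz in $v'$. For \eqref{tstarCh}, however, you diverge from the paper on the local region: you reach for the Hardy--Littlewood--Sobolev lemma \eqref{hlsineq} with $\rho=\gamma$, $b_2=0$, and $b_1 \in (\max\{0,-n/2-\gamma\},\,n/2]$, whereas the paper's one-line proof simply says ``the methods of Proposition \ref{prop11} apply directly,'' meaning the analogue of \eqref{tminush}: take the $L^\infty$ norm of $|h'|\langle v'\rangle^{-m-\ksob}$ via \eqref{sobolevE} and then do \emph{plain} Cauchy--Schwartz on $g$ and $f$. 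That simpler route does work here, because once $h$ has been removed, the remaining double integral $\int dv'\,dv_*\, |g_*||f'|(M_*M')^{\delta'}|v'-v_*|^\gamma$ Cauchy--Schwartzes into two factors each involving only a \emph{single} power of $|v'-v_*|^\gamma$ against a Gaussian in $v'$, so $\int dv'\, M'^{\delta'}|v'-v_*|^\gamma \lesssim 1$ needs only $\gamma>-n$ rather than $2\gamma>-n$. In other words, your diagnosis that the Cauchy--Schwartz of Proposition \ref{opGstarEST} fails once $\gamma\le -n/2$ is exactly right, but the fix is simply to Sobolev-embed one function before Cauchy--Schwartzing the other two; HLS is a valid but heavier alternative. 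Your HLS route buys nothing extra here (the parameter range is identical), costs one more lemma, and forces a slightly more delicate verification of the exponent constraints, so the paper's version is to be preferred, but there is no gap in your argument.
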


\begin{proof}  
Given the definition of $\LopGstar$ in \eqref{opGlabel}, Proposition \ref{opGstarEST} establishes in \eqref{hls0} that
\[ \left| \LopGstar(g,h,f) \right|   \lesssim \int_{\R^n} dv_* \int_{\R^n} dv' |g_*| |f'| |h'| w^{2 \ell}(v') |v' - v_*|^{\gamma} \sqrt{M_*}. \]
The methods of Proposition \ref{prop11} apply directly here and yield Proposition \ref{opGstarESTderiv}.
\end{proof}

This concludes our  size and support estimates.  In the next sub-section we will prove estimates which incorporate the essential cancellation properties of the Boltzmann collision operator in the appropriate geometric framework.

\subsection{Cancellations with soft potentials: $-2s > \gamma > -n$}
\label{sec:SSChigh}
We recall the notation from Section \ref{sec:cancelHARD}.  
The estimates in this section apply under either \eqref{kernelP} or \eqref{kernelPsing}.

\begin{proposition}
\label{cancelFprop}
Suppose $f$ is a Schwartz function on $\mathbb{R}^n$ given by the restriction of some Schwartz function $F$ 
on $\mathbb{R}^{n+1}$ to the paraboloid $(v, \frac{1}{2} |v|^2)$.  Let $|\tilde{\nabla}|^i f$ 
be the restriction of $|\tilde{\nabla}|^i F$ to the same paraboloid $(i=1,2)$.  Then, for any $k \geq 0$,
\begin{gather}
\label{cancelf21}
 \left| (\teePLUSop- \teeMINUSop)(g,h,f) \right|   
 \lesssim 2^{(2s-i)k} 
\nsm g\nsm_{L^2_{-m}}
\nsm  h\nsm_{H^{\ksob}_{\ell,\gamma+2s}}
\nl w^\ell |\tilde{\nabla}|^i f \nr_{L^2_{\gamma+2s}},
\\
 \left| (\teePLUSop- \teeMINUSop)(g,h,f) \right|   
 \lesssim 2^{(2s-i)k} 
\nsm  g\nsm_{H^{\ksob}_{-m}}
  \nsm w^\ell h\nsm_{L^2_{\gamma+2s}}
 \nl w^\ell |\tilde{\nabla}|^i f \nr_{L^2_{\gamma+2s}}. 
\label{cancelf2}
\end{gather}
Each of these inequalities hold for any $m \ge 0$ and any $\ell \in \mathbb{R}$.    Here when $s\in (0, 1/2)$ in \eqref{kernelQ} then $i=1$ and when $s \in [1/2, 1)$  we have $i = 2$.
\end{proposition}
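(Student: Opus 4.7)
The plan is to follow the same five-term splitting \eqref{differenceA} employed in the hard-potential case, writing $M_\beta(v_*') w^{2\ell}(v') f' - M_\beta(v_*) w^{2\ell}(v) f = \mathbb{I} + \mathbb{II} + \mathbb{III} + \mathbb{IV} + \mathbb{V}$ and decomposing $(\teePLUSop - \teeMINUSop)(g,h,f) = T^{\mathbb{I}} + T^{\mathbb{II}} + T^{\mathbb{III}} + T^{\mathbb{IV}} + T^{\mathbb{V}}$ accordingly. On the non-singular region $|v - v_*| \ge 1$, the arguments from Section \ref{sec:cancelHARD} carry over verbatim since the kinetic factor $|v-v_*|^{\gamma+2s}$ is bounded. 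Only the region $|v-v_*| \le 1$ requires new treatment, since for the soft potentials \eqref{kernelPsing} the bound $\int_{\threed} \sqrt{M_*}\, |v-v_*|^{2(\gamma+2s)} dv_* \lesssim \ang{v}^{2(\gamma+2s)}$ used throughout Section \ref{physicalDECrel} is no longer available.

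On $|v-v_*|\le 1$, I would exploit the redistribution $\sqrt{M_*}\, w^{2\ell}(v)\lesssim (M_*M)^\delta$ (and the corresponding primed version) for sufficiently small $\delta>0$, together with the cancellation estimates \eqref{paradiff1}, \eqref{paradiff2}, and the size bound \eqref{bjEST}. For the three higher-cancellation terms $T^{\mathbb{II}}$, $T^{\mathbb{III}}$, $T^{\mathbb{IV}}$, apply \eqref{paradiff1} (when $s \in (0,1/2)$) or \eqref{paradiff2} (when $s \in [1/2,1)$) to gain the factor $2^{-ik}$, after which each piece reduces to a trilinear integral handled by the trivial size estimates from Section \ref{sec:SSE}. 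Specifically, after the change of variables $u = \CurveP(\Ctheta)$ with Jacobian \eqref{jacobianCH} (as used in \eqref{covterm}), the resulting form is controlled by either \eqref{tminusg} or \eqref{tplussmallN} (yielding \eqref{cancelf2}) or alternatively by \eqref{tminush} or \eqref{tplussmall2N} (yielding \eqref{cancelf21}); the choice corresponds to whether the Sobolev embedding \eqref{sobolevE} is used to place $g$ or $h$ in $L^\infty$.

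The two remaining terms $T^{\mathbb{I}}$ and $T^{\mathbb{V}}$ carry only linear cancellations $\frac{d \starCurveEP}{d\Ctheta}(0) \cdot \tilde\nabla M_\beta(v_*)$ and $\frac{d \CurveEP}{d\Ctheta}(0) \cdot \tilde\nabla(w^{2\ell}f)(v)$. For these I would invoke the $\sigma$-symmetry around $(v-v_*)/|v-v_*|$ exactly as leading to \eqref{symmetry1}: after integration in $\sigma$, components of these vectors orthogonal to $v-v_*$ average to zero, leaving a factor of size at most $2^{-2k}(1+|v-v_*|^{-1})\ang{v_*}$. The extra $|v-v_*|^{-1}$ factor is benign on $|v-v_*|\ge 1$, while on $|v-v_*|\le 1$ it produces an integrand of the form $2^{2sk-2k}|v-v_*|^{\gamma+2s-1}(M_*M)^\delta |g_*||h||f|$, which is controlled by the Hardy-Littlewood-Sobolev inequality \eqref{hlsineq} exactly as in the proof of Proposition \ref{opGstarEST} (here $\gamma+2s-1 > -n$ is automatic, and the necessary Sobolev regularity on $g$ or $h$ compensates for the lost integrability). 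The harmless $\ang{v_*}$ factor is absorbed into $M_*^{\delta/2}$.

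The main obstacle will be ensuring that the $H^{\ksob}$ regularity lands on the function of our choice ($g$ for \eqref{cancelf2}, $h$ for \eqref{cancelf21}) uniformly across all five terms; this requires orchestrating the Cauchy-Schwartz splitting in each $T^{\mathbb{j}}$ so as to isolate exactly one of $g$ or $h$ in $L^\infty$ via \eqref{sobolevE}, while the other function together with $|\tilde\nabla|^i f$ remain in the appropriate weighted $L^2_{\gamma+2s}$ spaces. Once this bookkeeping is in place both inequalities follow by summing the five contributions.
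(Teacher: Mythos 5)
Your decomposition into the five terms $T^{\mathbb{I}},\dots,T^{\mathbb{V}}$, the use of \eqref{paradiff1}--\eqref{paradiff2}, the $\sigma$-symmetry argument for the linear-cancellation terms, and the change of variables $u=\CurveP(\Ctheta)$ with Jacobian \eqref{jacobianCH} are all the correct ingredients and match the paper's route. The middle three terms are handled essentially as you describe (though a small inaccuracy: the change of variables $u=\CurveP(\Ctheta)$ is only needed for $T^{\mathbb{II}}$ and $T^{\mathbb{IV}}$, which have differences on $w^{2\ell}f$; the term $T^{\mathbb{III}}$ carries differences on $M_\beta$ via the starred path $\starCurveEP$, produces the bound \eqref{starred35}, and is estimated directly as in \eqref{tplussmallN}, \eqref{tplussmall2N} without any change of variables).

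The genuine gap is in your treatment of $T^{\mathbb{I}}$ and $T^{\mathbb{V}}$ on the region $|v-v_*|\leq 1$. You propose invoking the Hardy-Littlewood-Sobolev inequality \eqref{hlsineq} ``exactly as in the proof of Proposition \ref{opGstarEST}.'' But \eqref{hlsineq} always places $g$ in $L^2$ (since the fractional integral $I_{n+\rho}$ is applied to $g$) and distributes Sobolev smoothness across both $h$ and $f$, producing bounds of the form $|gM^\delta|_{L^2}|hM^\delta|_{H^{b_1}}|fM^\delta|_{H^{b_2}}$. This form does not match either target: \eqref{cancelf2} needs $|g|_{H^{\ksob}_{-m}}$ (Sobolev smoothness on $g$, which HLS cannot produce), and \eqref{cancelf21} needs $f$ in a pure weighted $L^2$ space with $|\tilde\nabla|^i$ already applied, not in a fractional Sobolev space $H^{b_2}$. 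Moreover the HLS constraint $\rho+b_1+b_2>-\tfrac{n}{2}$ with $b_1,b_2\leq n/2$ is delicate near $\gamma\approx -n$. The paper instead uses the observation — which you correctly note — that $\gamma+2s-1>-n$ holds automatically when $s\geq 1/2$ (the only regime in which $T^{\mathbb{I}},T^{\mathbb{V}}$ appear, since for $s<1/2$ one drops the linear corrector and takes $i=1$). This means the inner integral $\int_{|v-v_*|\leq 1}|v-v_*|^{\gamma+2s-1}M_*^\delta\,dv_*$ converges uniformly, and therefore the simple route of Proposition \ref{prop11} applies directly: put exactly one of $g_*M_*^{\delta/2}$ or $h\ang{v}^{-m-\ksob}$ in $L^\infty$ via \eqref{sobolevE}, then Cauchy-Schwartz in $v,v_*$. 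This produces precisely the structure of \eqref{cancelf2} and \eqref{cancelf21} respectively, with no need for HLS. You should replace the HLS step with this Sobolev-embedding argument.
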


\begin{proof}
This estimate will follow the proof of \eqref{cancelf}.
We expand difference into several terms as in \eqref{differenceA}.  We split 
$
(\teePLUSop- \teeMINUSop)(g,h,f)
=
T^{\mathbb{I}}+ T^{\mathbb{II}}
+
T^{\mathbb{III}}+ T^{\mathbb{IV}}+ T^{\mathbb{V}},
$
where $T^{\mathbb{I}}$ corresponds to the first term in the splitting above, etc. Suppose initially that $s\in [1/2, 1)$.
We begin by considering the first and last terms.   
Just as before, a symmetry argument establishes the bound \eqref{T5bound} for $|T^{\mathbb I}| + |T^{\mathbb V}|$ (where in the case of $T^{\mathbb I}$ we use the estimate \eqref{derivEST} to get a full factor $2^{-2k}$).  To establish \eqref{cancelf21} and \eqref{cancelf2} for these terms, one uses Sobolev embedding exactly as was done for \eqref{tminush} and \eqref{tminusg}, respectively.  

We now turn to the estimation of the term $T^{\mathbb{III}}$, which can be written as \eqref{thirdTERM}.
With the starred analog of \eqref{paradiff2} we obtain the estimate \eqref{starred35}.
At this point, \eqref{starred35} is estimated exactly as in the proof of \eqref{tplussmallN} and \eqref{tplussmall2N}. 

It remains to only prove \eqref{cancelf21}
and
\eqref{cancelf2} for $T^{\mathbb{II}}$ and $T^{\mathbb{IV}}$, both of which involve differences in $w^{2\ell} f$.  We use the difference estimates
\eqref{paradiff1} for $T^{\mathbb{II}}$
and
\eqref{paradiff2} for $T^{\mathbb{IV}}$ to obtain that
  both  $|T^{\mathbb{II}}|$ and $|T^{\mathbb{IV}}|$
 are controlled by \eqref{controlT24} (for some fixed $\epsilon > 0$).  With \eqref{controlT24} and Cauchy-Schwartz, it suffices to use the estimate corresponding to 
 \eqref{tpe} (omitting the extra factors $|v-v_*|^{\pm(\gamma+2s)} \ang{v'}^{\mp (\gamma+2s)}$); after a pre-post change of variables, it suffices to show
\begin{multline}
 \left( \int_0^1 d \Ctheta \int_{\mathbb{R}^n} dv \int_{\mathbb{R}^n} dv_* \int_{\sph} d \sigma  B_k M_*^{1-\epsilon} 
 w^{2\ell}(\CurveP(\Ctheta))
 \left|  |\tilde{\nabla}|^2 f(\CurveP(\Ctheta))\right|^2 \right)^\frac{1}{2}  
 \\
 \lesssim 
 2^{sk} \nsm w^\ell  |\tilde{\nabla}|^2 f \nsm_{L^2_{\gamma+2s}}. 
 \label{covterm2}
\end{multline}
This uniform bound follows from the  change of variables $u = \Ctheta v' + (1-\Ctheta)v$, which is a transformation from  $v$ to $u$ with uniformly positive Jacobian \eqref{jacobianCH}.  See the discussion surrounding \eqref{covterm} for the full details.

To prove \eqref{cancelf21} and \eqref{cancelf2} for $s\in (0, 1/2)$, we use
 exactly the same estimates as above except that the terms
$
\frac{d \CurveEP}{d \Ctheta}(0) \cdot \tilde{\nabla} (w^{2\ell}  f )(v)
$
and
$
\frac{d \starCurveEP}{d \Ctheta}(0) \cdot \tilde{\nabla} M_\beta(v_*)
$
are unnecessary and we can use \eqref{paradiff1} instead of \eqref{paradiff2} which allows us to take $i =1$.
\end{proof}


\begin{proposition}
\label{cancelHprop}
As in the prior proposition, suppose $h$ is a Schwartz function on $\mathbb{R}^n$ which is given by the restriction of some Schwartz function in $\mathbb{R}^{n+1}$ to the paraboloid $(v, \frac{1}{2} |v|^2)$ and define $|\tilde{\nabla}|^i h$ analogously for $i=1,2$. 
We have 
\begin{gather}
\label{cancelh2g1}
 \left| (\teePLUSop- \teeSTARop)(g,h,f) \right| 
   \lesssim 2^{(2s-i)k} 
    \nsm g\nsm_{L^2_{-m}} 
\nl  |\tilde{\nabla}|^i h \nr_{H^{\ksob}_{\ell,\gamma+2s}} 
  \nsm w^\ell f\nsm_{L^2_{\gamma+2s}},
\\
 \left| (\teePLUSop- \teeSTARop)(g,h,f) \right| 
   \lesssim 2^{(2s-i)k} 
\nsm  g\nsm_{H^{\ksob}_{-m}}
  \nl w^\ell |\tilde{\nabla}|^i h \nr_{L^2_{\gamma+2s}}
  \nsm w^\ell f\nsm_{L^2_{\gamma+2s}}.
  \label{cancelh2g2}
\end{gather}
The above inequalities hold uniformly in $k \geq 0$,  $m\ge 0$, and $\ell \in \mathbb{R}$.  
Again when $s\in (0, 1/2)$ in \eqref{kernelQ} then $i=1$ and when $s \in [1/2, 1)$  we have $i = 2$.
\end{proposition}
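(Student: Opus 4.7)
The plan is to follow the template set by the proof of \eqref{cancelh} in the hard-potential section, adapting the final size/support estimates to the more singular soft-potential regime by introducing Sobolev embedding in the style of Propositions \ref{prop11}, \ref{starPROP}, and \ref{referLATERprop2}. Concretely, I would first assume $s\in [1/2,1)$ and target $i=2$; the case $s\in(0,1/2)$ with $i=1$ follows at the end by omitting the linear cancellation terms and using \eqref{paradiff1} in place of \eqref{paradiff2}, exactly as was done at the conclusion of the hard-potential argument.

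The starting point is the pointwise decomposition \eqref{differenceB} of $M_\beta(v_*')h - M_\beta(v_*)h'$ into the five pieces $\mathbb{I},\ldots,\mathbb{V}$, which induces the corresponding splitting $(\teePLUSop-\teeSTARop)(g,h,f) = T^{\mathbb{I}}_* + T^{\mathbb{II}}_* + T^{\mathbb{III}}_* + T^{\mathbb{IV}}_* + T^{\mathbb{V}}_*$ under the Carleman representation \eqref{defTKLcarl}. The symmetry arguments used in \eqref{zeroCORRECTOR} (and its analogue for $T^{\mathbb I}_*$) depend only on the fact that $\frac{d\CurveEP}{d\Ctheta}(1)$ and $\frac{d\starCurveEP}{d\Ctheta}(0)$ are linear in $v-v'$ while the remaining integrand is invariant under the $\sigma$-rotation around $v'-v_*$; none of this uses the sign of $\gamma+2s$, so $T^{\mathbb{I}}_* = T^{\mathbb{V}}_* = 0$ holds identically in the soft-potential regime as well.

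For $T^{\mathbb{II}}_*$ and $T^{\mathbb{IV}}_*$, combining \eqref{paradiff1}, \eqref{derivEST} and \eqref{paradiff2} yields the pointwise bound \eqref{24est}, which upon insertion into \eqref{form24} produces the single term \eqref{singleTERMest}. At this point I would convert back to the $\sigma$-representation and apply the same Cauchy-Schwartz splitting used to prove \eqref{tplussmallN} and \eqref{tplussmall2N}; the region $|v-v_*|\geq 1$ is handled exactly as before (the singularity of $|v-v_*|^{\gamma+2s}$ is avoided), while on $|v-v_*|\leq 1$ one uses $w^{2\ell}(v')\sqrt{M_*'}\lesssim (M_*MM'M_*')^\delta$ to pass to a symmetric Cauchy-Schwartz and then Sobolev embedding \eqref{sobolevE} on whichever of $g$ or $\tilde\nabla^2 h$ one wishes to put in $H^{\ksob}$. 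The change of variables $u=\Ctheta v' +(1-\Ctheta)v$ with Jacobian \eqref{jacobianCH}, verbatim from \eqref{covterm2}, absorbs the $\Ctheta$-integral. The term $T^{\mathbb{III}}_*$ is strictly easier: the starred analogue of \eqref{paradiff2} yields the bound \eqref{T3est}, which is of the same shape as $\teeSTARop$ itself with an extra $2^{-2k}$, and hence is controlled by \eqref{tstarg}--\eqref{tstarh} applied to $(g,|\tilde\nabla|^2 h, f)$ or $(g,h,f)$ with the Sobolev norm placed on the appropriate factor.

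The main obstacle, compared with the hard-potential analogue \eqref{cancelh}, is that the Cauchy-Schwartz argument of Proposition \ref{referLATERprop} no longer closes by itself for $\gamma+2s<-n/2$, since the weight $|v-v_*|^{\gamma+2s}$ is then non-locally-integrable in $v_*$ against a Maxwellian. The resolution—already prefigured in Propositions \ref{prop11}--\ref{referLATERprop2}—is to restrict to $|v-v_*|\le 1$, exploit that the simultaneous Gaussian factor $(M_*MM'M_*')^\delta$ dominates all velocity weights there, and then take an $L^\infty$-norm of the smoother factor using \eqref{sobolevE}. The choice of which function to put in $L^\infty$ (either $g$ or $\tilde\nabla^i h$) is precisely what distinguishes \eqref{cancelh2g1} from \eqref{cancelh2g2}, and both assignments are available at each occurrence because of the symmetry built into the Cauchy-Schwartz split. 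Everything else—the vanishing of $T^{\mathbb I}_*$ and $T^{\mathbb V}_*$, the change of variables on $\CurveP(\Ctheta)$, and the reduction from $s\in[1/2,1)$ to $s\in(0,1/2)$—transplants without modification from the proof of \eqref{cancelh}.
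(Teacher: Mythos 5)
Your proposal follows essentially the same route as the paper's proof: the decomposition \eqref{differenceB}, the vanishing of $T^{\mathbb I}_*$ and $T^{\mathbb V}_*$ by symmetry as in \eqref{zeroCORRECTOR}, the reduction of $T^{\mathbb{II}}_*$, $T^{\mathbb{IV}}_*$ to \eqref{singleTERMest} and then to the size/support estimates \eqref{tplussmallN}--\eqref{tplussmall2N} via the $\sigma$-representation and the change of variables with Jacobian \eqref{jacobianCH}, the treatment of $T^{\mathbb{III}}_*$ via \eqref{T3est}, and finally the passage to $i=1$ for $s<1/2$. The only minor imprecision is attributing the $T^{\mathbb{III}}_*$ estimate to the $\teeSTARop$ inequalities \eqref{tstarg}--\eqref{tstarh} rather than the $\teePLUSop$ inequalities \eqref{tplussmallN}--\eqref{tplussmall2N} that the paper invokes (the Carleman form \eqref{T3est} has $h$ rather than $h'$), but both families of estimates close the argument so this does not affect correctness.
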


\begin{proof}
We recall the splitting \eqref{differenceB} used to establish \eqref{cancelh} (and we begin with the case $s \geq \frac{1}{2}$).  
For the last term and the first term, we have $T^{\mathbb{V}}_* = T^{\mathbb{I}}_* =0$ by symmetry.  The explanation for both of these is the same as in \eqref{zeroCORRECTOR}.
In terms of $\mathbb{II}$ and $\mathbb{IV}$, the operators take the form \eqref{form24} and are uniformly bounded above by a constant times \eqref{singleTERMest}.  By following the same procedures used for \eqref{tplussmallN} and \eqref{tplussmall2N}, we arrive at the desired inequality.  In the process, we change to the $\sigma$-representation and use the same change-of-variables from \eqref{covterm} to establish the desired estimates.  Regarding $T^{\mathbb{III}}_* $ by \eqref{T3est}, the estimates follow in even closer analogy to \eqref{tplussmallN} and \eqref{tplussmall2N} since
there is no $\CurveP(\theta)$ here, which required the change of variables.

Finally, note that the situation when $s \leq \frac{1}{2}$ follows with $i=1$ as quickly as usual once the gradient terms are removed from the splitting \eqref{differenceB}.
\end{proof}

This concludes our cancellation estimates for the differences involving three arbitrary smooth functions.  We will also need cancellation estimates when we have a more specific smooth function satisfying
the following estimate
\begin{equation}
\label{derivESTa}
|\tilde{\nabla}|^2 \phi \le C_\phi e^{-c|v|^2},
\quad
C_\phi \ge 0, \quad 
c> 0.
\end{equation}
Above $\phi$ is any  Schwartz function on $\mathbb{R}^n$ which is given by the restriction of some Schwartz function in $\mathbb{R}^{n+1}$ to the paraboloid $(v, \frac{1}{2} |v|^2)$ and $|\tilde{\nabla}|^2 \phi$ is defined analogously as usual. 
With this in mind, we have the next estimates:

\begin{proposition}  
\label{compactCANCELe}
As in the prior propositions, 
with \eqref{derivESTa},
for any $k \geq 0$ we have
\begin{gather}
\notag
 \left| (\teePLUSop- \teeMINUSop )(g,h,\phi) \right| 
 +
 \left| (\teePLUSop- \teeSTARop)(g,\phi,h) \right| 
   \lesssim 
   C_\phi ~ 2^{(2s-2)k} 
    \nsm g\nsm_{L^2_{-m}}     \nsm h\nsm_{L^2_{-m}} .
\end{gather}
The above inequalities hold uniformly for any $m\ge 0$ and $\ell \in \mathbb{R}$.
\end{proposition}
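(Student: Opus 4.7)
The plan is to derive both inequalities as direct specializations of Propositions \ref{cancelFprop} and \ref{cancelHprop}, exploiting two features: (i) because $\phi$ is Schwartz with rapidly decaying derivatives, we may always take $i=2$ in the cancellation estimates regardless of whether $s < 1/2$ or $s \geq 1/2$, which produces precisely the claimed rate $2^{(2s-2)k}$; and (ii) because $|\tilde{\nabla}|^2 \phi$ is pointwise bounded by $C_\phi e^{-c|v|^2}$, all the polynomial velocity weights on the right-hand sides of the earlier cancellation estimates can be absorbed, leaving only the unweighted $L^2_{-m}$ norms on $g$ and $h$.

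For the first piece, I would decompose the integrand of $(\teePLUSop - \teeMINUSop)(g,h,\phi)$ via the five-term splitting \eqref{differenceA} with $f = \phi$, and handle the pieces exactly as in Proposition \ref{cancelFprop}. The symmetry reduction for $T^{\mathbb{I}}$ and $T^{\mathbb{V}}$ gives the pointwise bound \eqref{T5bound}; substituting $f = \phi$ replaces $|\tilde{\nabla}| f$ by $C_\phi e^{-c|v|^2}$, and together with the factor $M_*^{1-\epsilon}$ this yields joint Gaussian decay in $(v,v_*)$. A Cauchy--Schwartz split putting $g_*$ in one factor and $h$ in the other, with a fraction of the Gaussian distributed to each, produces the bound $C_\phi 2^{(2s-2)k} |g|_{L^2_{-m}} |h|_{L^2_{-m}}$. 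Term $T^{\mathbb{III}}$ is handled by \eqref{starred35}: the factor $|\phi'|$ supplies the needed decay at $v'$, and the pre-post change of variables transfers it to $(v,v_*)$ in the usual way. Terms $T^{\mathbb{II}}$ and $T^{\mathbb{IV}}$ are bounded by \eqref{controlT24} with $|\tilde{\nabla}|^2 f$ replaced by $|\tilde{\nabla}|^2 \phi$, whose Gaussian decay, combined with the change of variable $u = \CurveP(\Ctheta)$ (whose Jacobian is bounded below by \eqref{jacobianCH}), again closes everything to $L^2_{-m} \times L^2_{-m}$.

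For the second piece, I would apply the splitting \eqref{differenceB} to $(\teePLUSop - \teeSTARop)(g,\phi,h)$ with $h$ replaced by $\phi$, paralleling Proposition \ref{cancelHprop}. The boundary terms $T^{\mathbb{I}}_*$ and $T^{\mathbb{V}}_*$ vanish identically by the same integration-over-the-sphere symmetry argument as in \eqref{zeroCORRECTOR}. For $T^{\mathbb{II}}_*$ and $T^{\mathbb{IV}}_*$ I invoke \eqref{24est} with $h = \phi$, reducing to \eqref{singleTERMest} but with $|\tilde{\nabla}|^2 \phi(\CurveP(\Ctheta))$ in place of $|\tilde{\nabla}|^2 h$. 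The joint Gaussian decay supplied by $|\tilde{\nabla}|^2 \phi$ and $M_*^{1/2}$, after switching to the $\sigma$-representation and performing the $u = \CurveP(\Ctheta)$ change of variables, delivers the desired bound. For $T^{\mathbb{III}}_*$ I use \eqref{T3est} with $|h|$ replaced by $|\phi|$; the rapid decay of $\phi$ itself (a consequence of \eqref{derivESTa} together with $\phi$ being Schwartz) handles this term even more easily.

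The main technical point to monitor is the careful distribution of the Gaussian decay across the two remaining factors $g$ and $h$: since these now carry only the negligible weight $\ang{v}^{-m}$, every estimate must verify that the product $M_*^{1-\epsilon} \, |\tilde{\nabla}|^2 \phi$ (or its starred analog) provides enough joint decay in $(v, v_*)$ so that, after the Cauchy--Schwartz split, each factor is integrable without any growing weights surviving. This is routine in each of the five subcases but requires attention to ensure nothing is lost at infinity; apart from this bookkeeping, everything is a direct transcription of the arguments already developed for Propositions \ref{cancelFprop} and \ref{cancelHprop}.
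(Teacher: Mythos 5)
Your argument is essentially the paper's own proof, filled in with the details the authors chose to omit. The paper simply remarks that the proposition ``follows exactly the proofs of the previous two Propositions'' with the single observation that \eqref{derivESTa} produces Gaussian decay simultaneously in $v$ and $v_*$, and then states ``we omit repeating these details again.'' You have reconstructed exactly what is being waved at: run both five-term splittings \eqref{differenceA} and \eqref{differenceB} with the rapidly-decaying $\phi$ in the slot that carries cancellation, note that $T^{\mathbb{I}}_*,T^{\mathbb{V}}_*$ vanish by the $\sigma$-symmetry of \eqref{zeroCORRECTOR}, and observe that the extra Gaussian from $|\tilde\nabla|^2\phi$ (together with $M_*^{1/2}$ or $M_*^{1-\epsilon}$) absorbs every velocity weight and eliminates the need for Sobolev embedding on $g$ or $h$, so only $L^2_{-m}$ survives on each of those factors. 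Your remark that one may always take $i=2$ is precisely the right justification for the uniform rate $2^{(2s-2)k}$: the restriction $i=1$ for $s<1/2$ in Propositions \ref{cancelFprop}--\ref{cancelHprop} was merely a convenience for general $f$, and for a Schwartz $\phi$ with $|\tilde\nabla|^2\phi$ under pointwise Gaussian control the second-order Taylor remainder \eqref{paradiff2} is always available.

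One delicate point, which you inherit from the paper rather than introduce, deserves a flag. Including the correction terms $T^{\mathbb{I}},T^{\mathbb{V}}$ so as to reach $i=2$ means invoking the $\sigma$-symmetry reduction of \eqref{symmetry1}, which trades the factor $|v-v'|$ for $|v-v'|^2|v-v_*|^{-1}$ and thus leaves a kernel $\sim|v-v_*|^{\gamma+2s-1}$ after the $\sigma$-integral. The joint Gaussian decay localizes $v$ and $v_*$, but does not remove the \emph{local} singularity at $v=v_*$, and local integrability there asks for $\gamma+2s-1>-n$. For $s\geq 1/2$ this is automatic from $\gamma>-n$; but for $s<1/2$ together with $\gamma+2s\leq 1-n$ (a corner of the soft-potential range for small $n$) the dyadic support constraint $|v-v_*|\gtrsim 2^{-k}$ yields only $2^{-(1+n+\gamma)k}$, which decays, but slower than the stated $2^{(2s-2)k}$. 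Since the proposition is only ever summed against $\sum_j 2^{-2j}\sum_k\min\{\cdot,2^{2sk}\}$ in the proof of Proposition \ref{upperBds}, any uniform geometric decay in $k\geq 0$ suffices, so this does not affect the downstream use; but if you want the exact exponent $2s-2$ uniformly in $(s,\gamma)$ you should either restrict to $\gamma+2s>1-n$ or record the weaker but still summable rate in the remaining corner.
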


Notice that the proof of this Proposition \ref{compactCANCELe} follows exactly the proofs of the previous two Propositions \ref{cancelFprop} and \ref{cancelHprop}.  The main difference now is that when going through the last two proofs above, as a result of \eqref{derivESTa}, we will always in every estimate have strong exponential decay in both variables $v$ and $v_*$.  This strong decay allows us to easily obtain Proposition \ref{compactCANCELe}
using exactly the techniques developed in this section and the last one.  We omit repeating these details again.

\subsection{Compact estimates}  
\label{sec:SSCE}
In this sub-section we prove several useful estimates for the ``compact part'' of the linearized collision operator \eqref{compactpiece}.  Here we use the integer index $j$ instead of  $k$ to contrast with  the kernel $\kappa$ below.
Our first step is to notice that
the Carleman representation \eqref{defTKLcarl}
 of $T^{j,\ell}_{+}(g,M_{\beta_1},f)$ grants
$$
T^{j,\ell}_{+}(g,M_{\beta_1},f) = 
\int_{\mathbb{R}^n} dv' ~ w^{2\ell}(v')~ f(v') ~ 
\int_{\mathbb{R}^n} dv_*  ~ g(v_*) ~ \kappa_j^{\gamma+2s}(v',v_*),
$$
where for any multi-indices $\beta$ and $\beta_1$ we have the kernel
$$
\kappa_j^{\gamma+2s}(v',v_*)
\eqdef
\int_{E_{v_*}^{v'}} d \pi_{v}  ~
\tilde{B}_j ~   M_{\beta}(v_*') M_{\beta_1}(v). 
$$
Recall the domain 
  $
  E^{v'}_{v_*} = \left\{ v\in \threed : \ang{v_*-v', v - v'} =0 \right\},
  $
  and that $d\pi_{v} $ denotes the Lebesgue measure on this hyperplane.  More generally,  
  suppose that 
  $$
\kappa_{j}^{\psi, \phi}(v',v_*)
\eqdef
\int_{E_{v_*}^{v'}} d \pi_{v}  ~
\tilde{B}_j
~   \psi(v_*') \phi(v),
$$
where 
$\phi$ satisfies 
$\left| \phi(v) \right| \le C_\phi e^{-c |v|^2}$, and similarly
$\left| \psi(v) \right| \le C_\psi e^{-c |v|^2}$
 for any positive constants
$C_\phi$, $C_\psi$, and $c$ as in \eqref{rapidDECAYfcn}.
Then we have the main compact estimate:

 \begin{lemma}\label{prop:Grad}
For any $\ell \in \R$, $\rho \ge 0$,  $\kappa_{j}^{\psi, \phi}$ satisfies the uniform in $j \le 0$ estimate
 $$
  \forall \, v_* \in \R^n, \quad \int_{\R^n} dv'  ~ \left| \kappa_{j}^{\psi, \phi}(v',v_*) \right| \, 
  w^\ell(v')
   \lesssim
       C_\psi C_\phi ~  2^{2sj}\ang{v_*}^{-\rho-(n-1)}
         w^\ell(v_*).
 $$
Furthermore, 
the same estimate holds if the variables are reversed
 $$
  \forall \, v' \in \R^n, \quad \int_{\R^n} dv_*  ~ \left| \kappa_{j}^{\psi, \phi}(v',v_*) \right| \, w^\ell(v_*)
   \lesssim
       C_\psi C_\phi ~  2^{2sj}\ang{v'}^{-\rho-(n-1)}
       w^\ell(v').
 $$
 \end{lemma}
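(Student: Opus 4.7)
I plan to combine the size bound \eqref{bjCARLest} with the Gaussian decay of $\psi$ and $\phi$, using two structural ingredients: the identity $v - v_*' = v' - v_*$ (since $v_*' = v + v_* - v'$), which gives $|v - v_*'| = |v'-v_*|$, and the symmetrization built into $\tilde{B}_j$, which enforces $|v'-v_*| \ge |v - v'| \ge 2^{-j-1}$ on its support. By the triangle inequality $|v - v_*'| \le |v| + |v_*'|$, symmetrization forces $|v| + |v_*'| \ge 2^{-j-1}$, hence $|v|^2 + |v_*'|^2 \ge 2^{-2j-4}$ on the support of $\tilde{B}_j$.

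Using also the elementary bound $|v|^2 + |v_*'|^2 \ge \tfrac{1}{2}|v - v_*'|^2 = \tfrac{1}{2}|v'-v_*|^2$ and splitting the exponent $c(|v|^2 + |v_*'|^2)$ into two halves, I obtain
\[
|\psi(v_*')\phi(v)| \le C_\psi C_\phi \, e^{-c_1 |v'-v_*|^2} \, e^{-c_2 \cdot 2^{-2j}}.
\]
Combined with \eqref{bjCARLest}, this yields the pointwise bound
\[
|\kappa_j^{\psi,\phi}(v', v_*)| \lesssim C_\psi C_\phi \cdot 2^{2sj}\, |v'-v_*|^{\gamma+2s}\, e^{-c_1|v'-v_*|^2}\, e^{-c_2 \cdot 2^{-2j}}.
\]
To promote $e^{-c_2 \cdot 2^{-2j}}$ to the required $\langle v_*\rangle^{-\rho-(n-1)}$, I case-split on $|v_*|$. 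If $|v_*| \ge 2 \cdot 2^{-j}$, the identity $|v_*'-v_*| = |v-v'| \le 2^{-j}$ gives $|v_*'| \ge |v_*|/2$, which upon reserving a fraction of the Gaussian on $\psi$ yields an additional factor $e^{-c_3|v_*|^2}$. If $|v_*| < 2 \cdot 2^{-j}$, then $2^{-2j} \ge |v_*|^2/4$, so $e^{-c_2 \cdot 2^{-2j}} \le e^{-c_2 |v_*|^2/4}$. Either way, a uniform factor $e^{-c_4|v_*|^2}$ appears in the pointwise estimate.

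Integrating over $v'$ with the change of variables $u = v' - v_*$ (well-defined since $\gamma + 2s > -n$ makes $|u|^{\gamma+2s}$ locally integrable), the Gaussian $e^{-c_1|u|^2}$ localizes the weight integral near $u = 0$, where $w^\ell(u+v_*) \approx w^\ell(v_*)$, while the tails contribute $\lesssim \langle v_*\rangle^{|L|}$ for an appropriate exponent $L$; these polynomial corrections, together with the algebraic factor $\langle v_*\rangle^{\rho+n-1}$ needed for the claim, are all absorbed by $e^{-c_4|v_*|^2}$ (which beats any polynomial in $\langle v_*\rangle$). The hard part will be the geometric argument in the first step: verifying that the symmetrization constraint $|v'-v_*| \ge |v-v'|$, combined with the identity $|v-v_*'| = |v'-v_*|$, indeed forces $|v|+|v_*'|$ to be large enough to produce the factor $e^{-c_2 \cdot 2^{-2j}}$ uniformly on the support of $\tilde{B}_j$. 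The second estimate follows by the same strategy, with the symmetric case split performed in $|v'|$ rather than $|v_*|$: the identity $|v - v_*'| = |v' - v_*|$ is symmetric in the pairs $(v', v_*)$ and $(v, v_*')$, and the symmetrization constraint applies analogously.
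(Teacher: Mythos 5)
Your proposal is correct, and it takes a genuinely different route from the paper's. The paper's proof proceeds by keeping the exact ``completing the square'' identity $|v|^2+|v_*'|^2 = 2|v+\tfrac{v_*-v'}{2}|^2 + \tfrac12|v'-v_*|^2$, substituting $v = v'+z$ with $z\perp v'-v_*$, and evaluating the resulting hyperplane integral of $|z|^{-(n-1)-\rho}\chi_j(|z|)$ against the $z$-dependent Gaussian $e^{-2c|z+w_1|^2}$; the $\ang{w_1}^{-\rho-(n-1)}$ decay (with $w_1,w_2$ the orthogonal decomposition of $\tfrac{v'+v_*}{2}$) is read off from that integral, leading to the single symmetric pointwise bound \eqref{symmetricUPPER}, which is then integrated in either variable. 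You instead discard the $v$-dependence of the Gaussian entirely, using only the two coarser lower bounds $|v|^2+|v_*'|^2 \ge \tfrac12|v'-v_*|^2$ and $|v|^2+|v_*'|^2 \gtrsim 2^{-2j}$ (both uniform over the support), pull the resulting constant out of the hyperplane integral, and apply \eqref{bjCARLest} as a black box to what remains; the missing decay in the slow variable is then produced by a dichotomy (if $|v_*|\gtrsim 2^{-j}$, then $|v_*'-v_*| = |v-v'| \lesssim 2^{-j}$ forces $|v_*'|\gtrsim|v_*|$ and $\psi$ supplies the decay; otherwise $2^{-j}\gtrsim|v_*|$ already). The step you flagged as hard is actually fine: $|v|+|v_*'| \ge |v-v_*'| = |v'-v_*| \ge |v-v'| \ge 2^{-j-1}$ on the support, giving $|v|^2 + |v_*'|^2 \ge 2^{-2j-3}$. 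What each approach buys: yours is more modular (it reuses the already-established $L^1$ bound on the hyperplane integral instead of re-deriving a sharper version of it) and even produces Gaussian rather than polynomial decay in $\ang{v_*}$ (resp. $\ang{v'}$); the paper's pays for the detailed hyperplane computation with a single pointwise bound symmetric in $(v',v_*)$, so both integral estimates follow from one inequality, whereas your method requires a separate (though symmetric) case split in each variable.
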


The crucial gain of the weight $-(n-1)$ has been known in the cut-off regime; see Grad \cite{MR0156656}, and also \cite{MR2322149}.  Here, in the regime $j \leq 0$, we further gain an arbitrarily large exponent without angular cut-off using our decomposition of the singularity.

\begin{proof}[Proof of Lemma~\ref{prop:Grad}]
The first upper bound follows easily from the second after noticing that the roles of $v'$ and $v_*$ can generally be reversed in the upper bound for 
$ \left| \kappa_{j}^{\psi, \phi}(v',v_*) \right|$ given below in \eqref{symmetricUPPER}.
We begin by observing that
\[ |v|^2 + |v_*'|^2 = \frac{| v + v_*'|^2}{2} + \frac{|v-v_*'|^2}{2} = 2 \left| v + \frac{v_* - v'}{2} \right|^2 + \frac{|v'-v_*|^2}{2}. \]
This uses that $v_*' = v + v_* - v'$.  Furthermore
$$
\left| \kappa_{j}^{\psi, \phi}(v',v_*) \right|
\lesssim C_\psi C_\phi 
\int_{E_{v_*}^{v'}} d\pi_{v} 
\tilde B_j ~ e^{-2c \left| v + \frac{v_* - v'}{2} \right|^2 - c\frac{|v'-v_*|^2}{2}}.
$$
Recall the estimate $\tilde B_j \lesssim |v-v'|^{-(n-1)-2s} |v' - v_*|^{\gamma+2s} \chi_j(|v-v'|)$.  Now for any  $\rho \ge 0$, we have $1 \lesssim |v'-v_*|^{\rho} |v - v'|^{-\rho}$ since $|v - v'|\ge 1$ when $j\le 0$ and $|v - v'| \leq |v' - v_*|$ on the support of $b(\cos\theta)$.  These conclusions were deduced in the proof of Proposition \ref{propSTAR}.  Thus it follows that 
  $
  \tilde B_j \lesssim 2^{2sj} |v-v'|^{-(n-1)-\rho} |v' - v_*|^{\gamma + 2s +\rho} \chi_j(|v-v'|)
  $ 
  on the support of the integral, and we quickly arrive at the
 upper bound
\[ C_\psi C_\phi 2^{2sj} |v'-v_*|^{\gamma+2s + \rho} e^{- c\frac{|v'-v_*|^2}{2}}
\int_{E_{v_*}^{v'}} d\pi_{v} 
 \frac{\chi_j(|v-v'|)}{|v-v'|^{\rho + n-1}} e^{-2c \left| v + \frac{v_* - v'}{2} \right|^2} \]
for $\left| \kappa_{j}^{\psi, \phi}(v',v_*) \right|$ (modulo the usual constants).  Letting $v = v' + z$ for $z \in E^0_{v'-v_*}$, 
where
$
E_{v'-v_*}^{0}\eqdef \{z\in\threed: \ang{v'-v_*,z}=0\}
$
we have that the integrand must equal
\[ \int_{E^0_{v'-v_*}} d\pi_{z} 
 \frac{\chi_j(|z|)}{|z|^{\rho + n-1}} e^{-2c \left| z + w_1 \right|^2 - 2 c |w_2|^2}
 \lesssim
 \ang{w_1}^{-\rho-(n-1)} e^{-2c |w_2|^2}
  \]
for some vectors $w_1, w_2$ such that $\ang{w_1,w_2} = 0$, $w_1 \in E^0_{v'-v_*}$, and $w_1 + w_2 = \frac{v'+v_*}{2}$. Here we use $j \leq 0$.
Collecting these estimates gives that
\[ \left| \kappa_{j}^{\psi, \phi}(v',v_*) \right| \lesssim C_\psi C_\phi 2^{2sj} |v'-v_*|^{\gamma+2s+\rho} \ang{w_1}^{-\rho-(n-1)} e^{-2c |w_2|^2} e^{- c\frac{|v'-v_*|^2}{2}}.\]
We furthermore have the inequality  $\ang{w_1}^{-\rho-(n-1)} e^{-2c |w_2|^2} \lesssim \ang{ v' + v_*}^{-\rho -(n-1)}$ for any fixed $\rho$, as well as the fact that $|v' -v_*|^{\gamma+2s + \rho} \lesssim \exp( \frac{c}{4} |v' - v_*|^{2})$ since $j\le0$.  Thus we may arrive at the final conclusion
\begin{equation}
 \left| \kappa_{j}^{\psi, \phi}(v',v_*) \right| \lesssim 2^{2sj} \ang{v' + v_*}^{- \rho - (n-1)} e^{ - \frac{c}{4} |v' - v_*|^2}. \label{symmetricUPPER}
\end{equation}
From here, we conclude that
$$
\int_{\threed} dv_*  \  w^{\ell}(v_*) \ang{v' + v_*}^{- \rho - (n-1)} e^{ - \frac{c}{4} |v' - v_*|^2}  \ \lesssim w^{\ell}(v') \ang{v'}^{-\rho - (n-1)},
$$
which finishes the lemma.
\end{proof}

With the compact estimate for the kernel from Lemma \ref{prop:Grad} in hand, we can automatically prove the main estimate for the compact term \eqref{tminushRAPplus} below.  Using similar methods, but without Lemma \ref{prop:Grad}, we will also prove \eqref{tminushRAPplus2} and \eqref{posUPPer3norm}.  

 \begin{proposition}\label{prop:GradRap}
 Consider $\phi$ satisfying \eqref{rapidDECAYfcn}.  We have the uniform estimates
\begin{equation}
 \left| \teePLUSop(g,\phi,f) \right| 
 \lesssim 
  C_\phi ~ 
 2^{2sk} ~ 
 \nsm w^\ell g\nsm_{L^2_{\gamma+2s-(n-1)}}  \nsm w^\ell f\nsm_{L^2_{\gamma+2s-(n-1)}}.
 \label{tminushRAPplus}
\end{equation}
These hold for any $k \leq 0$, $\ell \in \mathbb{R}$.  
Additionally for any $m\ge 0$ and any $k$ we obtain
\begin{align}
  \left| \teePLUSop(g,f,\phi) \right| 
 \lesssim 
  C_\phi ~ 
 2^{2sk} ~ 
 \nsm g\nsm_{L^2_{-m}}  \nsm f\nsm_{L^2_{-m}}.
 \label{tminushRAPplus2}
\end{align}
Furthermore 
\begin{equation} 
\left|  \teePLUSop (\phi,h,f)\right| 
\lesssim  
C_\phi ~   2^{2sk} 
 \nsm w^{\ell} h\nsm_{L^2_{\gamma + 2s}}   \nsm w^{\ell} f\nsm_{L^2_{\gamma + 2s}}.
 \label{posUPPer3norm}
\end{equation}
Each of these estimates hold for all parameters
 in \eqref{kernelQ}, \eqref{kernelP}, and \eqref{kernelPsing}. 
 \end{proposition}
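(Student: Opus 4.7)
\textbf{Proposal for the proof of Proposition \ref{prop:GradRap}.}

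The three estimates are handled by two distinct routes depending on which slot of $\teePLUSop$ carries the rapidly decaying factor. The plan is to use the Carleman representation from \eqref{defTKLcarl} for \eqref{tminushRAPplus} (since there $\phi$ sits in the middle slot, exactly where $M_\beta(v_*')$ sits after dualization), and to use the $\sigma$-representation from \eqref{defTKL} for \eqref{tminushRAPplus2} and \eqref{posUPPer3norm} (since in those cases the rapid decay is directly usable before any change of representation).

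For \eqref{tminushRAPplus}, I would apply \eqref{defTKLcarl} to write
\[
\teePLUSop(g,\phi,f) = \int_{\threed} dv' ~ w^{2\ell}(v') f(v') \int_{\threed} dv_* ~ g(v_*) ~ \kappa_k^{M_\beta,\phi}(v',v_*),
\]
with $\kappa_k^{M_\beta,\phi}$ as defined just above Lemma \ref{prop:Grad}. Since both $M_\beta$ and $\phi$ satisfy \eqref{rapidDECAYfcn}, and we are in the regime $k \leq 0$, Lemma \ref{prop:Grad} is directly applicable and yields, for any prescribed $\rho \ge 0$,
\[
\int_{\threed} dv' ~ |\kappa_k^{M_\beta,\phi}(v',v_*)| ~ w^{2\ell}(v') \ang{v'}^{-(\gamma+2s-(n-1))} \lesssim 2^{2sk} ~ w^{2\ell}(v_*) \ang{v_*}^{-\rho-(n-1)},
\]
and the symmetric bound with the roles of $v_*$ and $v'$ exchanged. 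Choosing $\rho$ large enough so that $-\rho-(n-1) \le \gamma+2s-(n-1)$, a Schur-type Cauchy--Schwartz argument — weighting the two marginals so that the surplus decay is absorbed into the target weight $\ang{\cdot}^{\gamma+2s-(n-1)}$ — then factorizes the bilinear form in $g$ and $f$ and produces precisely \eqref{tminushRAPplus}.

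For \eqref{tminushRAPplus2}, which must hold for all $k$, I would keep the $\sigma$-representation and observe that the collisional conservation law $|v|^2 + |v_*|^2 = |v'|^2 + |v_*'|^2$ makes the product $M_\beta(v_*') \phi(v') w^{2\ell}(v')$ decay rapidly as a function of $(v,v_*)$: one has $M_\beta(v_*') \phi(v') w^{2\ell}(v') \lesssim C_\phi (M_* M)^{\delta'}$ for some $\delta' > 0$. Applying Cauchy--Schwartz with the square root of this rapid decay in each factor, together with the uniform angular integral bound \eqref{bjEST}, yields the estimate immediately. For \eqref{posUPPer3norm}, the rapid decay of $\phi(v_*)$ renders the argument parallel to Proposition \ref{referLATERprop} but strictly easier: the loss of weights involved there disappears, because $\phi(v_*)$ absorbs any polynomial factor of $\ang{v_*}$. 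Applying Cauchy--Schwartz to \eqref{defTKL} with $\phi$'s decay distributed equally between the two factors, performing a pre-post change of variables on the side carrying $f'$, and invoking \eqref{bjEST} twice gives the claim.

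The main obstacle is the first estimate \eqref{tminushRAPplus}: one must track the weight $w^\ell$ carefully through the Schur decomposition and exploit the fact that the kernel bound in Lemma \ref{prop:Grad} transfers the weight from $v_*$ to $v'$ (and vice versa) with an arbitrarily large polynomial surplus. Without both the weight-transfer property and the arbitrary-$\rho$ gain — both of which are special to the $k \le 0$ regime combined with our dyadic singularity decomposition — one could not land on the symmetric weight $\gamma+2s-(n-1)$ on both sides; the estimates \eqref{tminushRAPplus2} and \eqref{posUPPer3norm} are essentially bookkeeping by comparison.
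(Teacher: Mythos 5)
Your route is essentially the paper's own: for \eqref{tminushRAPplus} you dualize via \eqref{defTKLcarl} to expose the kernel $\kappa_{k}^{\psi,\phi}$, invoke Lemma \ref{prop:Grad} for its marginals, and close with a Schur-type Cauchy--Schwartz; for \eqref{tminushRAPplus2} you exploit the conservation law to obtain joint rapid decay in $(v,v_*)$ and finish by Cauchy--Schwartz with \eqref{bjEST}; and for \eqref{posUPPer3norm} you observe that the decay of $\phi(v_*)$ together with $M_\beta(v_*')$ gives the needed symmetry under the pre-post change of variables, so that Cauchy--Schwartz with \eqref{bjEST} applied on both sides closes the estimate. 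This matches the paper's proof in structure and in all the essential ingredients.

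One small imprecision is worth flagging in your intermediate display for \eqref{tminushRAPplus}. You insert the factor $\ang{v'}^{-(\gamma+2s-(n-1))}$ on the left but do not carry the corresponding transferred factor $\ang{v_*}^{-(\gamma+2s-(n-1))}$ to the right; as written the inequality is not a direct consequence of Lemma \ref{prop:Grad} (indeed, in the soft-potential range the dropped factor is $\geq 1$). Since $\rho$ in Lemma \ref{prop:Grad} is arbitrary, this is repaired by absorbing the missing factor into a redefined $\rho$ satisfying $\rho \geq (n-1)-\gamma-2s$, and the ensuing argument is sound. The paper sidesteps the issue by the cleaner split $w^{2\ell}(v')\,|\kappa| = \bigl(w^{2\ell}(v')\,|\kappa|\bigr)^{1/2}\bigl(w^{2\ell}(v')\,|\kappa|\bigr)^{1/2}$ and then applying the two marginal bounds of Lemma \ref{prop:Grad} directly, with $\rho$ chosen only at the end to satisfy $\rho \geq -(\gamma+2s)$; you may prefer that formulation to avoid the bookkeeping.
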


\begin{proof}
For \eqref{tminushRAPplus}, using the formula in \eqref{defTKLcarl} and Cauchy-Schwartz we have directly
\begin{multline*}
 \left| \teePLUSop(g,\phi,f) \right| 
 \lesssim
 \int_{\mathbb{R}^n} dv' ~ w^{2\ell}(v')~ \left| f(v') \right| ~ 
\int_{\mathbb{R}^n} dv_*  ~ \left| g(v_*) \right| ~ \left| \kappa_{k}^{\psi, \phi}(v',v_*) \right|
\\
 \lesssim
  \left(
 \int_{\mathbb{R}^n} dv' ~ w^{2\ell}(v')~ |f(v')|^2 ~ 
\int_{\mathbb{R}^n} dv_*  ~ \left|  \kappa_{k}^{\psi, \phi}(v',v_*) \right|
\right)^{1/2}
\\
\times
 \left(\int_{\mathbb{R}^n} dv_* ~ |g(v_*)|^2 ~ 
\int_{\mathbb{R}^n} dv'  ~ w^{2\ell}(v')
~ \left|  \kappa_{k}^{\psi, \phi}(v',v_*) \right|
\right)^{1/2}.
\end{multline*}
Above we consider the kernel $\kappa_{k}^{\psi, \phi}(v',v_*)$ with $\psi = M_{\beta}$ and $\phi$ as in \eqref{rapidDECAYfcn}.  Now we observe that 
Lemma \ref{prop:Grad} immediately implies \eqref{tminushRAPplus}.

To prove
\eqref{tminushRAPplus2}, notice that the bound \eqref{rapidDECAYfcn} implies there is strong exponential decay in both variables $v$ and $v_*$.  The estimate \eqref{tminushRAPplus2} then follows from the Schur test for integral operators or simply using Cauchy-Schwartz as above.

The proof of \eqref{posUPPer3norm} follows from a similar application of Cauchy-Schwartz.    The main difference is that this time we have strong exponential decay in both $v'_*$ and $v_*$ which displays the appropriate symmetry under the needed pre-post collisional change of variable.
\end{proof}

This completes our basic compact estimates.  In the next section, we develop the geometric Littlewood-Paley theory adapted to the paraboloid in $n+1$ dimensions.

\section{The $n$-dimensional anisotropic Littlewood-Paley decomposition}
\label{sec:aniLP}

In this section, we develop the anisotropic Littlewood-Paley decomposition which is adapted to the paraboloid geometry.   It allows us to make sharp estimates of the linearized Boltzmann operator and to {\it explicitly} characterize the geometry that underlies it.  We also note that this geometry is a feature of the Boltzmann collision operator itself, and not an artifact of the linearization process \cite{gsNonCutEst}.  
Rather than work directly on the paraboloid, though, it turns out to be somewhat simpler to think of the Littlewood-Paley decomposition we use as being a $(n+1)$-dimensional Euclidean decomposition restricted to the paraboloid. 
This trick will allow us to use the estimates from the previous section without any additional explicit consideration of the deeper geometric aspects of our anisotropic construction (which contrasts with the approach of Klainerman and Rodnianski \cite{MR2221254}).
Throughout this section, we will use the variables $v$ and $v^\prime$ to refer to independent points in $\R^n$, meaning that we will not assume in this section that they are related by the collisional geometry.  The reason we choose to use these variable names is that they give a hint about where the Littlewood-Paley projections will be later applied in situations which do involve the collisional geometry explicitly.

\subsection{Definitions and comparison to the anisotropic norm}

Let us first consider the lifting of vectors $v \in \threed$ to the paraboloid in $\R^\last$.  Specifically, for $v \in \threed$, let $\ext{v} \eqdef (v,\frac{1}{2} |v|^2) \in \R^\last$, and consider the mappings $\tau_v : \threed \rightarrow \threed$  and $\ext{\tau}_v : \threed \rightarrow \R^\last$ given by
\[ \tau_v u \eqdef u - (1- \ang{v}^{-1}) \ang{v,u} |v|^{-2} v,
\quad \mbox{ and } 
\quad \ext{\tau}_v u \eqdef (\tau_v u,  \ang{v}^{-1} \ang{v,u}). \]
These mappings should be thought of as sending the hyperplane $v_\last = 0$ 
to the hyperplane tangent to the paraboloid $(v, \frac{1}{2} |v|^2)$ at the point $\ext{v}$.  
It is routine to check that $\ang{v, \tau_v u} = \ang{v}^{-1} \ang{v,u}$ and $|\tau_v u|^2 = |u|^2 - \ang{v}^{-2} \ang{v,u}^2$, which implies
$$
|\ext{\tau}_v u|^2
=
|\tau_v u|^2 + \ang{v, \tau_v u}^2 = |u|^2,
$$ 
meaning that $\ext{\tau}_v$ is an isometry from one hyperplane to the other. 
Moreover, it is not a hard calculation to check that
\[ \ext{v + \tau_v u} = \ext{v} + \ext{\tau}_v u + \frac{1}{2} |u|^2 e_\last, \]
where  $e_\last \eqdef (0,\ldots,0,1)$.  Next, fix any $C^\infty$ function $\varphi$ supported on the unit ball of $\R^\last$ and consider the generalized Littlewood-Paley projections given by
\begin{align}
P_j f(v) & \eqdef \int_{\threed} dv' 2^{\dim j} \varphi(2^j(\ext{v} - \ext{v'})) \ang{v'} f(v') \label{lppj} \\
Q_j f(v) & \eqdef P_j f(v) - P_{j-1} f(v), \qquad j \geq 1 \nonumber \\
 & = \int_{\threed} dv' 2^{\dim j} \psi(2^j(\ext{v} - \ext{v'})) \ang{v'} f(v'), \label{lpqj}
\end{align}
where $\psi(w) \eqdef \varphi(w) - 2^{-\dim} \varphi(w/2)$. 
Here $P_j$ corresponds to the usual projection onto frequencies at most $2^{j}$ and $Q_j$ corresponds to the usual projection onto frequencies comparable to $2^{j}$ (recall that the frequency $2^{j}$ corresponds to the scale $2^{-j}$ in physical space).  We also define $Q_0 \eqdef P_0$ to simplify notation.  In order for these projections to be generally useful, the function $\varphi$ must be chosen to satisfy various cancellation conditions which we will discuss later.   For now, we note that, as long as the integral of $\varphi$ over any $\dim$-dimensional hyperplane through the origin equals $1$ (which will be the case for any suitably normalized radial function), we have that
$P_j f(v) \rightarrow f(v)$ as $j \rightarrow \infty$ for all sufficiently smooth $f$ and that
\begin{equation} \left( \int_{\threed} dv \left| P_j f(v) \right|^p \ang{v}^\rho \right)^{\frac{1}{p}}  
\lesssim 
\left( \int_{\threed} dv \left| f(v)\right|^p \ang{v}^{\rho} \right)^{\frac{1}{p}}, \label{boundedlp}
\end{equation}
uniformly in $j \geq 0$ for any fixed $\rho \in \mathbb{R}$ and any $p \in [1,\infty)$ (with suitable variants of this inequality also holding for $p = \infty$ as well as for the operators $Q_j$).  The standard Calder\'{o}n-Zygmund theory also guarantees that $P_j f \rightarrow f$ with convergence in norm (since the metric $d(v,v')$ is the restriction of a Euclidean metric, the paraboloid is easily checked to be a space of homogeneous type when equipped with this metric).

The principal reason for defining our Littlewood-Paley projections in this way is that the particular choice of paraboloid geometry allows us to control the associated square functions by our anisotropic norm.  This informal idea is made precise in the following proposition.
\begin{proposition}
Suppose that $|Q_j (1)(v)| \lesssim 2^{-2j}$ \label{compareprop}
holds uniformly for all $v \in \threed$ and all $j \geq 0$. Then for any $s \in (0,1)$ and any real $\rho$, the following inequality holds:
\begin{equation}
\begin{split}
 \sum_{j=0}^\infty 2^{2sj} & \int_{\threed} dv ~ |Q_j f(v)|^2 \ang{v}^\rho \lesssim \\
& |f|^2_{L^2_\rho} + \int_{\threed} dv \int_{\threed} dv' 
\left( \ang{v}\ang{v'} \right)^{\frac{\rho+1}{2}}
\frac{(f(v) - f(v'))^2}{d(v,v')^{\dim+2s}} {\mathbf 1}_{d(v,v') \leq 1} . 
\end{split}
\label{compareton}
\end{equation}
This is true uniformly for all smooth  $f$.
\end{proposition}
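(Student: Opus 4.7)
The plan is to separate the part of $Q_j f(v)$ where cancellation occurs from a harmless low-frequency remainder. Writing
\[ Q_j f(v) = \bigl( Q_j f(v) - f(v) Q_j(1)(v) \bigr) + f(v) Q_j(1)(v), \]
and using $(a+b)^2 \le 2a^2 + 2b^2$, the square function splits into two pieces. For the second piece, the hypothesis $|Q_j(1)(v)| \lesssim 2^{-2j}$ gives
\[ \sum_{j \ge 0} 2^{2sj} \int_{\threed} dv~ |f(v)|^2 |Q_j(1)(v)|^2 \ang{v}^\rho \lesssim \sum_{j\ge 0} 2^{(2s-4)j} |f|_{L^2_\rho}^2 \lesssim |f|_{L^2_\rho}^2, \]
since $s<1<2$. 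So the second term on the right-hand side of \eqref{compareton} is accounted for.

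For the first piece, I apply Cauchy--Schwartz in the $v'$-integral, distributing the measure $2^{\dim j}|\psi(2^j(\ext{v}-\ext{v'}))|\,dv'$ evenly:
\[ |Q_j f(v) - f(v) Q_j(1)(v)|^2 \lesssim I_j(v) \int_{\threed} dv'~ 2^{\dim j} |\psi(2^j(\ext{v}-\ext{v'}))| \ang{v'}^2 (f(v)-f(v'))^2, \]
where $I_j(v) \eqdef \int_{\threed} dv'~ 2^{\dim j}|\psi(2^j(\ext{v}-\ext{v'}))|$. The decisive geometric fact, which is exactly what forces the weight $\ang{v}^{(\rho+1)/2}$ on the right-hand side of \eqref{compareton}, is the volume estimate $I_j(v) \lesssim \ang{v}^{-1}$ uniformly in $j\ge 0$. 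Indeed, decomposing $u=v'-v$ into a component parallel to $v$ of length $u_\|$ and an orthogonal component $u_\perp$, a routine Taylor expansion using $|v'|^2 - |v|^2 = 2 v \cdot u + |u|^2$ yields $d(v,v')^2 \approx |u_\perp|^2 + \ang{v}^2 u_\|^2$ for $|u|\le 1$, so the $d$-ball of radius $r\le 1$ about $v$ has Lebesgue measure $\approx r^{\dim}/\ang{v}$; since $\psi(2^j(\ext{v}-\ext{v'}))$ is supported where $d(v,v')\lesssim 2^{-j}$, the claim follows.

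Inserting this bound, multiplying by $\ang{v}^\rho 2^{2sj}$, integrating in $v$, and exchanging the order of summation and integration reduces the problem to estimating
\[ \int_{\threed}\int_{\threed} dv\,dv'~ \ang{v}^{\rho-1} \ang{v'}^2 (f(v)-f(v'))^2 \sum_{j\ge 0} 2^{(\dim+2s)j} |\psi(2^j(\ext{v}-\ext{v'}))|. \]
On the support of $\psi(2^j(\ext{v}-\ext{v'}))$ both $|v-v'|$ and $\tfrac{1}{2}||v|^2-|v'|^2|$ are at most $2^{-j}\le 1$, which forces $\ang{v}\approx\ang{v'}$ and symmetrizes the weight as $\ang{v}^{\rho-1}\ang{v'}^2 \approx (\ang{v}\ang{v'})^{(\rho+1)/2}$. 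Finally, since $\psi$ is supported in the unit ball of $\R^{\last}$, only indices with $2^{-j}\gtrsim d(v,v')$ contribute, and summing the geometric series gives
\[ \sum_{j\ge 0,\ 2^{-j}\gtrsim d(v,v')} 2^{(\dim+2s)j} \lesssim d(v,v')^{-\dim-2s}\, \ind_{d(v,v')\le 1}, \]
which produces exactly the anisotropic kernel on the right-hand side of \eqref{compareton}. The main obstacle in this plan is the volume bound $I_j(v)\lesssim\ang{v}^{-1}$: this is where the non-Euclidean geometry of the lifted paraboloid genuinely enters and determines the precise form of the weight appearing in $\spacen$; everything else is Cauchy--Schwartz and summation of a geometric series.
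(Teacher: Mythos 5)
Your proof is correct, and it reaches the conclusion by a genuinely different algebraic route than the paper's. The paper ``completes the square'': for each $j\ge 1$ it writes an exact identity expressing $\int |Q_jf(z)|^2\ang{z}^\rho\,dz$ as a $Q_j(1)$-term minus one half of the triple integral of $(f(v)-f(v'))^2\psi_j(\ext{z}-\ext{v})\psi_j(\ext{z}-\ext{v'})\ang{v}\ang{v'}\ang{z}^\rho$, then bounds each of the two pieces; you instead split off $f\cdot Q_j(1)$ and hit the remainder with Cauchy--Schwartz against the measure $2^{\dim j}|\psi_j|\,dv'$, after which the decisive input is the volume bound $I_j(v)\lesssim\ang{v}^{-1}$. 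That bound is the same geometric fact the paper gets from the change of variables $v'\mapsto v+2^{-j}\tau_v u$ (Jacobian $\ang{v}^{-1}$); your derivation via $d(v,v')^2\approx|u_\perp|^2+\ang{v}^2u_\|^2$ for $|u|\le1$ is an equivalent and correct way to see it, and the support-to-geometric-series step then matches. Two things each route buys: Cauchy--Schwartz gives you $|Q_j(1)|^2$ rather than $|Q_j(1)|$, so your argument would already close with the weaker hypothesis $|Q_j(1)|\lesssim 2^{-(s+\varepsilon)j}$ for some $\varepsilon>0$; the paper's identity, being an equality rather than an inequality, is the one reused verbatim just after this proposition for the weighted and commutator variants $Q_j^k(\omega_k\,\cdot)$, which is why they phrase it that way. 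Two minor tightenings to note in yours: with the paper's construction $\psi$ is supported in a ball of radius up to $2$, not $1$, so your geometric series naturally produces $\ind_{d(v,v')\le C}$ for some $C>1$ rather than $\ind_{d(v,v')\le 1}$; the annulus $1<d(v,v')\le C$ is harmlessly absorbed into $|f|^2_{L^2_\rho}$ via $(f-f')^2\le 2f^2+2(f')^2$ (as the paper does in a nearby computation), or one simply observes that for $j\ge1$ the support condition already gives $d(v,v')\le1$. Also, for $j=0$ the kernel is $\varphi$ rather than $\psi$ (since $Q_0=P_0$), but the identical estimates apply, and $Q_0(1)=1$ is consistent with the hypothesis.
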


In addition to the inequality \eqref{compareton}, it will also be necessary to establish a similar inequality when the $Q_j$'s are replaced by anisotropic derivatives $2^{-kj} \tilde \nabla Q_j$.  For the soft potentials, it is also necessary to consider commutators  of the geometric Littlewood-Paley projections and the {\it isotropic} velocity derivatives $\partial_{\beta}$.  These additional estimates are necessary because of the higher derivatives present in the norm
$\nspace$, which are ultimately necessary because the singularity of the kinetic factor is strong enough that many of the $L^2$ estimates for the hard potential case must be replaced with $L^\infty$ estimates.  These $L^\infty$ estimates, in turn, are related back to $L^2$ based spaces via Sobolev embedding.

\subsection{Littlewood-Paley commutator estimates}
For convenience, let us abbreviate 
$2^{\dim j} \varphi(2^j w) \eqdef \varphi_j(w)$ and likewise for $\psi_j$.  We seek at this point to relate the corresponding Littlewood-Paley square function to the norms 
$\spacen$ and $\nspace$ in arbitrary dimensions $n\ge 2$.  
Let us first consider the commutators of $Q_j$ with {\it isotropic} derivatives.
The commutators themselves, $ [ \frac{\partial}{\partial v_i}, Q_j ] f(v)
 \eqdef  \frac{\partial}{\partial v_i} (Q_jf ) (v)-  Q_j ( \frac{\partial}{\partial v_i}  f)(v)$, are easy to calculate:
\begin{align*}
 \left[ \frac{\partial}{\partial v_i}, Q_j \right] f(v) = & \int_{\threed} dv' 2^j \left( {\tilde \nabla}_i \psi + v_i {\tilde \nabla}_\last \psi \right)_j(\ext{v} - \ext{v'}) \ang{v'} f(v') \\
& - \int_{\threed} dv' \psi_j (\ext{v} - \ext{v'}) \ang{v'} \frac{\partial f}{\partial v_i'} (v').
\end{align*}
Above ${\tilde \nabla}_i$ is the $i$-th component of ${\tilde \nabla}$.
After an integration by parts, 
\begin{align*}
 \left[ \frac{\partial}{\partial v_i}, Q_j \right] f(v) 
 = & \int_{\threed} dv' 2^j \left({\tilde \nabla}_i \psi + v_i {\tilde \nabla}_\last \psi \right)_j(\ext{v} - \ext{v'}) \ang{v'} f(v') \\
& + \int_{\threed} d v' \frac{\partial}{\partial v_i'} \left[ \psi_j (\ext{v} - \ext{v'}) \ang{v'} \right] f(v') \\
= & \int_{\threed} dv' 2^j (v_i - v_i') \left( {\tilde \nabla}_\last \psi \right)_j(\ext{v} - \ext{v'}) \ang{v'} f(v') \\
& +  \int_{\threed} d v' \psi_j(\ext{v} - \ext{v'}) v_i' \ang{v'}^{-1} f(v').
\end{align*}
In particular, this commutator may be written as
\[ \left[ \frac{\partial}{\partial v_i}, Q_j \right] f(v) = \tilde Q_j f(v) + Q_j \tilde f(v), \]
where $\tilde Q_j$ is given by replacing $\psi(w)$ with $w_i {\tilde \nabla}_\last \psi(w)$ in the integral \eqref{lpqj} and $\tilde f(v') \eqdef v'_i \ang{v'}^{-2} f(v')$.  The end result is that, after taking $\beta$ derivatives and studying these commutators, we may always  write $\partial_{\beta} 2^{-|\alpha|j} {\tilde \nabla}^{\alpha} Q_j$ as a finite sum
$$ 
\partial_{\beta} 2^{-|\alpha|j} {\tilde \nabla}^{\alpha} Q_j f(v) 
= 
\sum_{|\beta_1| \leq |\beta|}
\sum_{k} c_{k,\beta_1}^{\alpha,\beta} Q^{k}_j ( \omega_k \partial_{\beta_1} f)(v), 
$$
where $c_{k,\beta_1}^{\alpha,\beta}\in \R$, each $Q^{k}_j$ has a form the same as \eqref{lpqj} for some $\psi^k$, the derivatives $\beta_1$ satisfy 
$|\beta_1| \leq |\beta|$, and the weights $\omega_k(v')$ are either identically one or are polynomials times powers of $\ang{v'}$ which together tend to zero at infinity.  Above ${\tilde \nabla}$ represents the $(n+1)$-dimensional restriction of the gradient of functions defined on a neighborhood of the paraboloid $(v, \frac{1}{2}|v|^2)$.

We are able to compare these weighted anisotropic Littlewood-Paley projections to the anisotropic norm using the same method which we will use to prove  Proposition \ref{compareprop},  that is,
 by completing the square.  We bound above the expression
\[ 
\int_{\threed} \! dv \int_{\threed} \! dv' \int_{\threed} dz \ \psi^k_j(\ext{v} - \ext{z}) \psi^{k}_j(\ext{v'} - \ext{z}) (f(v) - f(v'))^2 \ang{z}^{\rho} \ang{v} \ang{v'} \omega_k(v) \omega_k(v'), 
\]
by integrating over $z$ and comparing this to the semi-norm piece of $\nspace$, then we show that this term is also equal to
\[ - 2 \int_{\threed} dv ~ |Q_j^{k} (\omega_k f)(v)|^2 \ang{v}^{\rho} + 2 \int_{\threed} dv ~
Q_j^{k}(\omega_k)(v) Q_j^{k}(\omega_k f^2)(v) \ang{v}^{\rho},
 \]
(in both expressions, we suppressed the $\partial_{\beta_1}$ acting on $f$).  As long as one has the following uniform inequality
 for all $j \geq 0$ and all $v$,
 \begin{equation}
 |Q_j^{k}(\omega_k)(v)| \leq 2^{-2j},
 \label{uniformQineq}
\end{equation}
 these estimates may be multiplied by $2^{2sj}$ 
 and summed over $j$ to obtain
\begin{multline*}
 \sum_{j=0}^\infty 2^{2sj} \int_{\threed}  dv ~ |Q_j^{k} (\omega_k f)(v)|^2 \ang{v}^{\rho} \lesssim 
  |f|_{L^2_{\rho}}^2
 \\
 + \int_{\threed} dv \int_{\threed} dv' \frac{(f'-f)^2}{d(v,v')^{\dim+2s}} \left(\ang{v}\ang{v'} \right)^{\frac{\rho+1}{2}} {\mathbf 1}_{d(v,v') \leq 1},
 \quad 
 s \in (0,1).
\end{multline*}
The derivation of this inequality is similar to that of Proposition \ref{compareprop} (which has the advantage of simpler notation), so we give the reader of its proof now:
\begin{proof}[Proof of Proposition \ref{compareprop}]
For any $j \geq 1$, one has the equality
\begin{align*}
 \frac{1}{2} \int_{\threed}  \! \! \! dv & \! \int_{\threed}  \! \! \! dv' \! \int_{\threed} \! \! \! dz ~  (f(v) - f(v'))^2 \psi_j(\ext{z} - \ext{v}) \psi_j(\ext{z} - \ext{v'}) \ang{v} \ang{v'} \ang{z}^\rho  \\ 
 & =   - \int_{\threed} \! \! dv  ~
 ([Q_j f] (v))^2 \ang{v}^\rho + \int_{\threed}  \! \! \! dv \!  \int_{\threed} \! \! \! dz   (f(v))^2 \psi_j(\ext{z} - \ext{v}) Q_j(1)(z) \ang{v} \ang{z}^\rho,
\end{align*}
simply by expanding the square $(f(v) - f(v'))^2$ and exploiting the symmetry of the integral in $v$ and $v'$ (note also that the corresponding statement holds true for $Q_0$ when $\psi$ is replaced by $\varphi$).
By our assumption on the projections $Q_j$, namely $|Q_j(1)(z)| \lesssim 2^{-2j}$, we may control the second term on the right-hand side by
\begin{align*}
2^{-2j} \int_{\threed} dv ~ (f(v))^2 \ang{v}^{\rho}.
\end{align*}
This  bound follows from the change of variables 
$z \mapsto v + 2^{-j} \tau_v u$, which 
is a change the variable from $z$ to $u$ and has Jacobian $\ang{v}^{-1}$, so that
\begin{align*}
 \int_{\threed} \! \! dz |\psi_j(\ext{z} - \ext{v})| \ang{z}^{\rho} & =
\int_{\threed} \! \!du ~ |\psi( \ext{\tau}_v u + 2^{-j-1} |u|^2 e_\last)| \frac{\ang{v + 2^{-j} \tau_v u}^\rho}{\ang{v}}  \lesssim \ang{v}^{\rho-1}.
\end{align*}
We have used that $\ang{v} \approx \ang{z}$ on the support of $|\psi_j(\ext{z} - \ext{v})|$ since $j$ is non-negative.
Moreover, the same change of variables can be used to show that
\[ 
\int_{\threed} dz ~ |\psi_j(\ext{z} - \ext{v})| |\psi_j(\ext{z} - \ext{v'})| \ang{z}^\rho \lesssim 2^{\dim j} (\ang{v} \ang{v'})^{\frac{\rho+1}{2}}. 
\]
Here we use the inequality $|\psi_j(\ext{z} - \ext{v'})| \lesssim 2^{\dim j}$ and observe that $\ang{v} \approx \ang{z} \approx \ang{v'}$ on the support of the original integral.
Moreover, the triangle inequality guarantees that the integral is only nonzero when $d(v,v') \leq 2^{-j+1}$, so that we have
\[ 
2^{2sj} \int_{\threed} dz |\psi_j(\ext{z} - \ext{v})| |\psi_j(\ext{z} - \ext{v'})| \ang{z}^\rho 
\lesssim 
(\ang{v} \ang{v'})^{\frac{\rho+1}{2}} 
~ 2^{2sj} 2^{nj} {\mathbf 1}_{d(v,v') \leq 2^{-j+1}}. 
\]
These estimates may  be summed over $j \geq 1$ (when $v \ne v'$) because the sum terminates after some index $j_0$ with $2^{-j_0} < d(v,v') \leq 2^{-j_0+1}$, 
 yielding \eqref{compareton}, since 
\[ 
\sum_{j=1}^\infty 2^{2sj} 2^{nj} {\mathbf 1}_{d(v,v') \leq 2^{-j+1}} 
=
\sum_{j=1}^{j_0} 2^{2sj} 2^{nj} {\mathbf 1}_{d(v,v') \leq 2^{-j+1}} 
\lesssim
d(v,v')^{-\dim-2s} {\mathbf 1}_{d(v,v') \leq 1}. 
\]
The last inequality follows from $2^{(2s+n){j_0}} \lesssim d(v,v')^{-\dim-2s}$.  
 The remaining term $j=0$ is already bounded above by $|f|_{L^2_{\rho}}^2$.
\end{proof}

Thus, following the proof of Proposition \ref{compareprop},
subject only to the establishment of the 
 decay condition \eqref{uniformQineq} similar to 
 Proposition \ref{compareprop}, it follows that
\begin{equation}
 \sum_{j=0}^\infty 2^{2(s-|\alpha|)j} \int_{\threed} dv ~ |{\tilde \nabla}^\alpha Q_j f (v)|^2 
 \ang{v}^{\gamma + 2s} w^{2\ell}(v)
\lesssim  |f|_{\spaceELLn}^2.
\label{lpsobolev00}
\end{equation}
We also have 
\begin{equation}
 \sum_{|\beta| \leq K} \sum_{j=0}^\infty 2^{2(s-|\alpha|)j} \int_{\threed} dv ~ |\partial_{\beta} {\tilde \nabla}^\alpha Q_j f (v)|^2 
 \ang{v}^{\gamma + 2s} w^{2\ell - 2 |\beta|}(v)
\lesssim  |f|_{\nspace}^2.
\label{lpsobolev0}
\end{equation}
These will hold for any multi-index $\alpha$ of derivatives  on $\R^\last$ and any fixed $K$ and $\ell \in \R$.  
The catch, of course, is that the functions $\psi^k$ become increasingly difficult to control when either $|\alpha|$ or $K$ becomes large.  This means that the uniform estimate \eqref{uniformQineq} is increasingly difficult to obtain.  In the next section, we will establish the desired inequality contingent on the following cancellation condition; that $\psi^k$ satisfies
\begin{equation*}
 \int_{\threed} du ~ p(u) ({\tilde \nabla}^\alpha \psi^k)(\ext{\tau}_v u) = 0,
\end{equation*}
for all polynomials $p$ of degree $1$
 and all multi-indices $|\alpha| \leq 1$.  Since $\psi^k$ is itself related to the original $\psi$ by taking a sequence of ${\tilde \nabla}$-derivatives and multiplying by a polynomial, it is sufficient to choose the original $\varphi$  so that $\psi$ satisfies
\begin{equation}
 \int_{\threed} du ~ p(u) ({\tilde \nabla}^\alpha \psi)(\ext{\tau}_v u) = 0, \label{lpcancel}
\end{equation}
for all polynomials of degree at most $M$ and all $|\alpha| \leq M$, where $M$ is any fixed but arbitrary natural number.

\subsection{Selection of $\varphi$ and inequalities for smooth functions}

Let $D$ be the dilation on functions in $\R^\last$ given by $D \varphi(w) \eqdef \varphi \left(\frac{w}{2} \right)$. 
We choose a radial function $\varphi_0 \in C^\infty(\R^\last)$  which is supported on the ball $|w| \leq R$, with $R>0$, and satisfies
\begin{equation} 
\int_{\threed} du \ \varphi_0(\ext{\tau}_v u) = 1, \qquad \forall v \in \threed. 
\label{lpnormalize}
\end{equation}
Since $\varphi_0$ is radial this equality will be true for all $v$ if it is true for any single $v$.  If $p$ is any homogeneous polynomial on $\threed$, a simple scaling argument shows that
\[ 
\int_{\threed} du ~ p(u) ({\tilde \nabla}^\alpha \circ D \varphi_0) (\ext{\tau}_v u) = 2^{\dim + \deg p - |\alpha|} \int_{\threed} du ~ p(u) ({\tilde \nabla}^\alpha \varphi_0)( \ext{\tau}_v u). 
\]
Next, fix some large integer $M$ and consider 
the $C^\infty$ function $\varphi$ which is supported on the ball of radius $2^{2M} R$ and is given by 
\[ \varphi \eqdef \left( \prod_{|k| \leq M, k \neq 0} \frac{I - 2^{-\dim + k} D}{1-2^k} \right) \varphi_0.  \]
By induction on $M$ and the scaling argument just mentioned, it follows that $\varphi$ satisfies exactly the same normalization condition as $\varphi_0$, namely \eqref{lpnormalize}.  Assuming that $M$ is fixed, the radius $R$ may be chosen so that $2^{2M} R \leq 1$.  For this fixed $M$, we take the corresponding function $\varphi$ to be the basic building block of our Littlewood-Paley projections, i.e., we use this $\varphi$ in the definition \eqref{lppj}, and use $\psi \eqdef \varphi - 2^{-\dim} D \varphi$ in \eqref{lpqj}. By our particular choice of $\varphi$, this $\psi$ satisfies the cancellation conditions \eqref{lpcancel} for all polynomials $p$ of degree at most $M$ and all multi-indices $\alpha$ of order at most $M$.
Consequently, we have the following integral estimates:

\begin{lemma}
Choose a large integer $M$.  Suppose that \eqref{lpcancel} holds for all polynomials of degree at most $M$ and all multi-indices $\alpha$ of order at most $M$.  Fix any multi-index $\alpha$ with $\last$ components.  If $f$ is a smooth function on $\R^\dim$,
and $g$ is some non-negative function satisfying for all $v \in \threed$ that
\[ 
\sup_{d(v,v') \leq 1} \sum_{|\beta| \leq k+1} | (\partial_{\beta} f) (v')| \leq C_f ~ g(v), 
\quad C_f >0.
\]
Here $k$ is some integer satisfying $-1 \leq k \leq \frac{1}{2}(M - |\alpha|)$.  Then the  inequality
\begin{equation}
 \left| 2^{-|\alpha| j} {\tilde \nabla}^{\alpha} Q_j f(v) \right| \lesssim 2^{-(k+1)j} C_f ~ g(v),\label{lpsmooth}
\end{equation}
holds uniformly in $f$, $v$, and $j \geq 0$.
\end{lemma}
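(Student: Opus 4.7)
The plan is to combine two nested Taylor expansions — one in physical space for $f$, the other in the scale parameter $t := 2^{-j}$ for the remaining kernel — and to exploit the cancellation conditions \eqref{lpcancel} at each order. First I would extend $Q_j f$ to a function on $\R^{\last}$ by replacing $\ext{v}$ with an arbitrary $V \in \R^{\last}$ in \eqref{lpqj}, so that $\tilde{\nabla}^\alpha$ simply transfers onto $\psi$, and then change variables $v' = v + 2^{-j} \tau_v u$ (Jacobian $2^{-\dim j} \ang{v}^{-1}$). Using $\ext{v + \tau_v u} = \ext{v} + \ext{\tau}_v u + \tfrac{1}{2}|u|^2 e_\last$ one arrives at
\[
2^{-|\alpha| j}\tilde{\nabla}^\alpha Q_j f(v) = \int_{\threed} du\, (\tilde{\nabla}^\alpha \psi)\!\left(-\ext{\tau}_v u - \tfrac{t}{2}|u|^2 e_\last\right) \frac{\ang{v + t \tau_v u}}{\ang{v}}\, f(v + t \tau_v u).
\]
Because $\psi$ is supported on $|w| \leq 2^{2M} R \leq 1$, the integrand vanishes unless $|u| \leq C_M$, and then a direct computation of $|v - (v + t \tau_v u)|$ and $\bigl||v|^2 - |v + t\tau_v u|^2\bigr|$ gives $d(v, v + t \tau_v u) \lesssim t \leq 1$, so the hypothesis on $f$ applies throughout the support.

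Next I would Taylor expand $f$ in physical space to order $k$,
\[
f(v + t \tau_v u) = \sum_{|\beta| \leq k} \frac{t^{|\beta|}(\tau_v u)^\beta}{\beta!} (\partial^\beta f)(v) + R_f(v; t u),
\]
with remainder controlled by $|R_f| \lesssim t^{k+1}|u|^{k+1} \sup_{|\beta'| = k+1,\, d(v,v')\leq 1}|\partial^{\beta'} f(v')| \lesssim 2^{-(k+1)j} C_f g(v)$ using the hypothesis. The contribution of $R_f$ to the integral is $\lesssim 2^{-(k+1)j} C_f g(v)$ since the remaining factors are uniformly bounded on the compact support in $u$. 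The case $k=-1$ is now complete; otherwise it remains, for each $|\beta| \leq k$, to bound $t^{|\beta|} I_\beta(t)$, where
\[
I_\beta(t) := \int_{\threed} du\, (\tilde{\nabla}^\alpha \psi)\!\left(-\ext{\tau}_v u - \tfrac{t}{2}|u|^2 e_\last\right) \frac{\ang{v + t \tau_v u}}{\ang{v}}\, (\tau_v u)^\beta.
\]

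The core step is a Taylor expansion of $I_\beta(t)$ about $t=0$ to order $k - |\beta|$. Evaluating at $t = 0$, the cancellation condition \eqref{lpcancel} with the polynomial $p(u) = (\tau_v u)^\beta$ (of degree $|\beta| \leq M$) forces $I_\beta(0) = 0$. Each further $t$-derivative either lands in the $\tfrac{t}{2}|u|^2 e_\last$ slot of $\psi$, producing one additional $(\last)$-th derivative on $\psi$ together with a factor $|u|^2$, or lands on the smooth factor $\ang{v + t \tau_v u}/\ang{v}$, producing a bounded polynomial in $u$ with smooth $v$-dependent coefficients. Hence the $i$-th Taylor coefficient at $t = 0$ is a finite sum of integrals of the form $\int_{\threed} du\, p_{i,\beta}(u)\, (\tilde{\nabla}^{\alpha_i} \psi)(-\ext{\tau}_v u)$ with $|\alpha_i| \leq |\alpha| + i$ and $\deg p_{i,\beta} \leq 2i + |\beta|$. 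For all $0 \leq i \leq k - |\beta|$, the hypothesis $k \leq \tfrac{1}{2}(M - |\alpha|)$ delivers both $|\alpha_i| \leq |\alpha| + k \leq M$ and $\deg p_{i,\beta} \leq 2k - |\beta| \leq 2k \leq M$, so \eqref{lpcancel} forces every such coefficient to vanish. The integral form of the Taylor remainder then yields $|I_\beta(t)| \lesssim t^{k-|\beta|+1}$ uniformly in $v$ (by crude $L^\infty$ bounds on finitely many derivatives of $\psi$ and on the smooth $\ang{\cdot}$-factor over the compact support), and multiplication by the $t^{|\beta|}$ prefactor gives exactly the claimed $2^{-(k+1)j} C_f g(v)$.

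The main technical obstacle is the bookkeeping of this second Taylor expansion: one must simultaneously track the total derivative order appearing on $\psi$ and the polynomial degree in $u$ produced by iterated $t$-differentiation, and verify that both stay within the budget enforced by \eqref{lpcancel}. The balance $k \leq \tfrac{1}{2}(M - |\alpha|)$ in the statement is precisely the condition that keeps $|\alpha| + i \leq M$ and $2i + |\beta| \leq M$ simultaneously at the highest relevant order $i = k - |\beta|$.
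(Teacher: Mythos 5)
Your proof is correct and follows essentially the same strategy as the paper: perform the change of variables $v' = v + 2^{-j}\tau_v u$, expand in the scale parameter $t = 2^{-j}$, and kill the low-order coefficients by the cancellation conditions \eqref{lpcancel}, leaving only the $(k+1)$-st remainder. The one presentational difference is that you factor the expansion into two nested Taylor steps (first $f$ in physical space, then the kernel integral $I_\beta(t)$ about $t=0$), whereas the paper expands the full integrand $\psi(\ext{\tau}_v u + \tfrac{\epsilon}{2}|u|^2 e_\last)\,\omega(v + \epsilon\tau_v u)$ at once and lets Leibniz distribute the $\epsilon$-derivatives; in both cases one obtains, for the $i$-th coefficient, derivatives on $\psi$ of order at most $|\alpha|+i$ and polynomials of degree at most $2i + |\beta|$ (your bookkeeping) or $k+i$ (the paper's, with the $\omega$-degrees folded in), and either accounting is comfortably covered by the hypothesis $k \leq \tfrac{1}{2}(M-|\alpha|)$. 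Your version also gets the sign in the argument of $\psi$ after the change of variables right (it should be $-\ext{\tau}_v u - \tfrac{t}{2}|u|^2 e_\last$, since the kernel in \eqref{lpqj} is $\psi_j(\ext{v}-\ext{v'})$), where the paper's displayed formula carries a harmless typo, and it handles general $\alpha$ directly rather than first treating $\alpha=0$ and then substituting $\tilde\nabla^\alpha\psi$. These are cosmetic differences; the underlying argument is the same.
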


\begin{proof}
We first consider the case $\alpha = 0$.
We study the integral $I_j(v)$ given by
\begin{align*} 
I_j(v) & \eqdef \int_{\threed} dv' \psi_j(\ext{v} - \ext{v'}) \omega(v') \\
 & = \ang{v}^{-1} \int_{\threed} du \ \psi( \ext{\tau}_v u + 2^{-j-1} |u|^2 e_ \last) \omega(v + 2^{-j} \tau_v u),
\end{align*}
where the equality follows after the change $v' \mapsto v + 2^{-j} \ext{\tau}_v u$.  If we expand the integrand in powers of $2^{-j}$ by means of Taylor's theorem, we have  an asymptotic expansion of this integral, with the coefficient of $2^{-kj}$ equaling
\[ 
\ang{v}^{-1} \frac{1}{k!}\int_{\threed} du \left(  \left. \frac{d^k}{d \epsilon^k} \right|_{\epsilon=0} \psi( \ext{\tau}_v u + \frac{\epsilon}{2} |u|^2 e_ \last) \omega(v + \epsilon \tau_v u) \right). 
\]
This can subsequently be expanded as a sum of terms, each of which is an integral of some derivative of $\psi$ times a polynomial in $u$.  The order of differentiation is at most $k$, and the degree of the polynomial is at most $2k$.  Consequently, if $2k \leq M$, the $k$-th term in the asymptotic series will vanish identically.  Using the integral form of the remainder in Taylor's theorem, it follows that $I_j(v)$ is exactly equal to 
 \[ \ang{v}^{-1} \frac{1}{k!} \int_{\threed} du \int_0^1 d \Btheta ~ (1- \Btheta)^{k} \frac{d^{k+1}}{d \Btheta^{k+1}} \left[ \psi( \ext{\tau}_v u + 2^{-j-1} \Btheta |u|^2 e_ \last) \omega(v + 2^{-j} \Btheta \tau_v u) \right], \]
for any $k \leq \frac{M}{2}$.  
From here, it is elementary to see that
\[ 
\left| I_j(v) \right| \lesssim 2^{-(k+1)j} \ang{v}^{-1} \sup_{d(v,v')  \leq 2^{-j} \leq 1} \sum_{|\beta| \leq k+1}  | (\partial_{\beta} \omega )(v')|, 
\]
uniformly for all $v$ and all $j \geq 0$.  Setting $\omega(v') = \ang{v'} f(v')$ establishes the result for $\alpha = 0$.  When $\alpha \neq 0$, the effect of including an additional $2^{-|\alpha| j} {\tilde \nabla}^\alpha$ is to replace $\psi$ with ${\tilde \nabla}^\alpha \psi$ in the definition of $I_j$.  The proof follows exactly as before, where now one only has \eqref{lpcancel} for derivatives up through order $M - |\alpha|$.
\end{proof}

This lemma 
may now be directly applied to obtain
 the uniform bounds \eqref{uniformQineq} on $|Q_j^k (\omega_l)(v)|$ as  required for Proposition \ref{compareprop} and \eqref{lpsobolev00}, \eqref{lpsobolev0} to be true.

\section{Upper bounds for the trilinear form}
\label{sec:upTRI}

In this section, we establish Theorem \ref{TriLinEst} for the nonlinear term $\Gamma$ as well as 
Lemmas \ref{NonLinEstLOW}, \ref{NonLinEstHIGH}, and \ref{sharpLINEAR}.

\subsection{The main upper bound for hard potentials: $\gamma \ge -2s$}
We begin with the proof of Theorem \ref{TriLinEst}; we more generally prove the estimate with weights in \eqref{generalUPPER}.
We'll write 
$$
f = P_0 f + \sum_{j=1}^\infty Q_j f \eqdef \sum_{j=0}^\infty f_j,
$$ 
and likewise for $h$, then expand the trilinear form: 
\begin{equation}
  \ang{w^{2\ell} \Gamma(g,h),f}  
= 
\sum_{l=1}^\infty \sum_{j=0}^\infty   \ang{w^{2\ell} \Gamma(g,h_{j+l}),f_j} + 
\sum_{l=0}^\infty \sum_{j=0}^\infty \ang{w^{2\ell} \Gamma(g,h_j),f_{j+l}}. 
\label{mainexpand}
\end{equation}
Consider the sum over $j$ of the terms $\ang{w^{2\ell} \Gamma(g,h_{j+l}),f_j}$ for fixed $l$.  
We expand $\Gamma$ 
 by introducing the cutoff around the singularity of $b$ in terms of $\teePLUSop$ and $\teeMINUSop$:
\begin{align}
 \sum_{j=0}^\infty \ang{ w^{2\ell} \Gamma(g,h_{j+l}),f_j} 
 & = 
 \sum_{k=-\infty}^\infty  \sum_{j=0}^\infty 
\left\{ \teePLUSop(g,h_{j+l},f_j) - \teeMINUSop(g,h_{j+l},f_j) \right\} \nonumber 
\\
& = 
\sum_{j=0}^\infty \sum_{k=-\infty}^j \left\{ \teePLUSop(g,h_{j+l},f_j) - \teeMINUSop(g,h_{j+l},f_j) \right\} \label{nearsing}
\\
& \hspace{30pt} 
+ 
\sum_{j=0}^\infty \sum_{k=j+1}^\infty \left\{ \teePLUSop(g,h_{j+l},f_j) - \teeMINUSop(g,h_{j+l},f_j) \right\}. \label{cancelsing}
\end{align}
Throughout the manipulation, the order of summation may be rearranged with impunity since the estimates we employ below will imply that the sum is absolutely convergent when $g$, $h$, $f$ are all Schwartz functions.   Regarding the terms \eqref{nearsing},  
the inequalities \eqref{tminusgh} and \eqref{tplussmallK} guarantee that
\begin{multline*}
\sum_{j=0}^\infty \sum_{k=-\infty}^{j}   \left| (\teePLUSop -\teeMINUSop) (g,h_{j+l},f_j) \right| 
\\
\lesssim \sum_{j=0}^\infty 2^{2sj} \nsm w^{\ell^+ - \ell'} g\nsm_{L^2} 
\nsm w^{\ell + \ell'}  h_{j+l}\nsm_{L^2_{\gamma+2s}} \nsm w^{\ell} f_j\nsm_{L^2_{\gamma+2s}} 
\\
 \lesssim 2^{-sl} \nsm w^{\ell^+ - \ell'} g\nsm_{L^2} \left| \sum_{j=0}^\infty 2^{2s(j+l)} \nsm w^{\ell + \ell'} h_{j+l}\nsm ^2_{L^2_{\gamma+2s}} \right|^{\frac{1}{2}} \left| \sum_{j=0}^\infty 2^{2sj} \nsm w^{\ell} f_j\nsm ^2_{L^2_{\gamma+2s}} \right|^{\frac{1}{2}}
 \\
 \lesssim 2^{-2sl} \nsm w^{\ell^+ - \ell'} g\nsm_{L^2} \nsm  h\nsm_{N^{s,\gamma}_{\ell + \ell'}} 
\nsm f\nsm_{\spaceELLn},
\end{multline*}
where for the first inequality, we have used the trivial facts that $2^{2sk} = 2^{2sj} 2^{2s(k-j)}$ and 
$\sum_{k=-\infty}^{j} 2^{2s(k-j)}\lesssim 1$, the second follows by Cauchy-Schwartz on the sum over $j$, and the third by \eqref{lpsobolev00}. 
This estimate may clearly also be summed over $l \geq 0$.  Also here $\ell = \ell^+ - \ell^-$ with $\ell^\pm \ge 0$ and $0 \le \ell' \le \ell ^-$ as in Proposition \ref{referLATERprop}.

A completely analogous argument may be used to expand $\Gamma$ for the terms in \eqref{mainexpand} of the form $\ang{w^{2\ell}\Gamma(g,h_j),f_{j+l}}$ in terms of $\teePLUSop - \teeSTARop$ from \eqref{defTKLcarl}:
\begin{align}
\sum_{j=0}^\infty \ang{w^{2\ell} \Gamma(g,h_j),f_{j+l}} & = 
\sum_{j=0}^\infty \LopGstar(g,h_j,f_{j+l})
+
\sum_{j=0}^\infty \sum_{k=-\infty}^\infty (\teePLUSop - \teeSTARop)(g,h_j,f_{j+l}) \nonumber \\
& = 
\sum_{j=0}^\infty \LopGstar(g,h_j,f_{j+l})
\label{sum1} 
+ \sum_{j=0}^\infty \sum_{k=-\infty}^j (\teePLUSop - T_*^k)(g,h_j,f_{j+l}) 
\\
& \hspace{30pt} + \sum_{j=0}^\infty \sum_{k=j+1}^\infty (\teePLUSop - T_*^k)(g,h_j,f_{j+l}) \label{sum2}.
\end{align}
In this case the estimates \eqref{tminusgh} and \eqref{tplussmallK} are used to handle the second sum in \eqref{sum1} just as the corresponding terms \eqref{nearsing} were handled.  The only difference is that the roles of $h$ and $f$ are now reversed.  For the first sum in \eqref{sum1}, when $\gamma > - \frac{n}{2}$, we use Proposition \ref{opGstarEST} and the inequality \eqref{opGstarINEQ1} to obtain
$$
\left| \LopGstar(g,h_j,f_{j+l}) \right|
\lesssim
     2^{-sl} \nsm w^\ell g\nsm_{L^2} 
~ 2^{sj} \nsm w^\ell h_j\nsm_{L^2_{\gamma}} 
~ 2^{s(j+l)}\nsm w^\ell f_{j+l}\nsm_{L^2_{\gamma}},
$$
which also used the inequality
$
1 \le 2^{2sj} = 2^{-sl}  2^{s(j+l)} 2^{sj}.
$
Again the Cauchy-Schwartz inequality on the $j$ index  and \eqref{lpsobolev00} 
yield the desired upper bound.   If $\gamma < - \frac{n}{2}$ we must include the second term in \eqref{opGstarINEQ2}. 
For this second term, we use the inequalities
\[ |h_j M^{\delta}|_{H^{s + \epsilon}} \lesssim |h_j|_{L^2_{-m}}^{1-(s+\epsilon)} ||\tilde \nabla| h_j|_{L^2_{-m}}^{s + \epsilon} \]
and likewise for $f_{j+l}$, which hold as long as $0 \leq \epsilon \leq \min\{s,1-s\}$.  Thus we have that, 
modulo the lower-order terms we have already handled,
\begin{align*}
|\LopGstar(g,h_j, f_{j+l})| \lesssim & |g|_{L^2_{-m}} |h_j|_{L^2_{-m}}^{1-(s+\epsilon)}  ||\tilde \nabla| h_j|_{L^2_{-m}}^{s + \epsilon} |f_{j+l}|_{L^2_{-m}}^{1-(s-\epsilon)} ||\tilde \nabla| f_{j+l}|_{L^2_{-m}}^{s - \epsilon}. 
\end{align*}
Now let $t_{i_1,i_2}^{j,l}  \eqdef 2^{-\epsilon l} 2^{(s-i_1)j}  ||\tilde \nabla|^{i_1} h_j|_{L^2_{-m}} 2^{(s-i_2)(j+l)} ||\tilde \nabla|^{i_2} f_{j+l}|_{L^2_{-m}}$ and $S_{i_1,i_2} \eqdef \sum_{j=0}^\infty \sum_{l=1}^\infty  t_{i_1,i_2}^{j,l}$ for indices $i_1, i_2 \in \{0,1\}$.
Algebraic manipulation gives that
\begin{align*}  
|h_j|_{L^2_{-m}}^{1-(s+\epsilon)} & ||\tilde \nabla| h_j|_{L^2_{-m}}^{s + \epsilon} |f_{j+l}|_{L^2_{-m}}^{1-(s-\epsilon)} ||\tilde \nabla| f_{j+l}|_{L^2_{-m}}^{s - \epsilon}
=  (t_{1,1}^{j,l})^{s-\epsilon} (t_{0,0}^{j,l})^{1-(s+\epsilon)} (t_{1,0}^{j,l})^{2 \epsilon} 
\end{align*}
Consequently, by H\"{o}lder, we have
\begin{align*} \sum_{j=0}^\infty \sum_{l=0}^\infty  |\LopGstar(g,h_j,f_{j+l})| \lesssim & 
|g|_{L^2_{-m}} (S_{1,1})^{s-\epsilon} (S_{0,0})^{1-(s+\epsilon)} (S_{1,0})^{2 \epsilon}, 
\end{align*}
modulo the sum we have already estimated.  We finish the consideration of the $\LopGstar$ operators by noting that each $S_{i_1,i_2}$ may be estimated just as before by an application of Cauchy-Schwartz to the sum over $j$.

Recalling the original expansion of $\ang{w^{2\ell} \Gamma(g,h),f}$ it is clear that the only terms that remain to be considered are 
\eqref{cancelsing} and  \eqref{sum2}.  These terms are both treated by the cancellation inequalities.  
The terms \eqref{cancelsing}, for example, are handled by \eqref{cancelf}.  
$$
 \left| \left( \teePLUSop  - \teeMINUSop \right)(g,h_{j+l},f_j) \right|
\lesssim 2^{(2s-i) k} \nsm w^\ell g\nsm_{L^2} \nsm w^\ell h_{j+l}\nsm_{L^2_{\gamma+2s}} 
\nl w^\ell |\tilde{\nabla}|^i f_j \nr_{L^2_{\gamma+2s}},
$$
Because $2s-i < 0$ there is decay of the norm as $k \rightarrow \infty$, we may conclude
$$
 \sum_{k=j+1}^\infty  
 \left| \left( \teePLUSop  - \teeMINUSop \right)(g,h_{j+l},f_j) \right|
 \lesssim 
 2^{(2s-i)j} \nsm w^\ell g\nsm_{L^2} \nsm w^\ell h_{j+l}\nsm_{L^2_{\gamma+2s}} 
\nl w^\ell |\tilde{\nabla}|^i f_j \nr_{L^2_{\gamma+2s}},
$$
Just as before, Cauchy-Schwartz is applied to the sum over $j$.  In this case $2^{(2s-i)j}$ is written as $2^{(s-i)j} 2^{s(j+l)} 2^{-sl}$; the first factor goes with $f$, the second with $h$, and the third remains for the sum over $l$.  Once again \eqref{lpsobolev00} is employed.  


The desired bound for the trilinear term is completed by performing summation of the terms \eqref{sum2}.  The pattern of inequalities is exactly the same as the one just described, this time using \eqref{cancelh}.  In particular, one has that
$$
 \left| \left(\teePLUSop - \teeSTARop \right)(g,h_{j},f_{j+l}) \right|
  \lesssim 
2^{(2s-i)k} \nsm w^\ell g\nsm_{L^2} \nl w^\ell |\tilde{\nabla}|^i h_j \nr_{L^2_{\gamma+2s}} \nsm w^\ell f_{j+l}\nsm_{L^2_{\gamma+2s}}
$$
with  $2s - i < 0$. This leads to the corresponding inequality for the sum over $k$:
$$
 \sum_{k=j+1}^\infty  
 \left| \left(\teePLUSop - \teeSTARop \right)(g,h_{j},f_{j+l}) \right|
 \lesssim 
 2^{(2s-i)j} \nsm w^\ell g\nsm_{L^2} \nl w^\ell |\tilde{\nabla}|^i h_j \nr_{L^2_{\gamma+2s}} \nsm w^\ell f_{j+l}\nsm_{L^2_{\gamma+2s}}.
$$
The same Cauchy-Schwartz estimate is used for the sum over $j$; there  is exponential decay allowing the sum over $l$ to be estimated.  The end result includes  Theorem \ref{TriLinEst} as a special case, and more generally establishes the upper bound 
\begin{equation}
\left| \ang{w^{2 \ell} \Gamma_\beta(g,h),f} \right| 
\lesssim | w^{\ell^+ - \ell'} g|_{L^2} | h|_{N^{s,\gamma}_{\ell + \ell'}} | f|_{N^{s,\gamma}_{\ell}}. 
\label{generalUPPER}
\end{equation}
This inequality holds under either \eqref{kernelP}, or \eqref{kernelPsing} combined with $\gamma +2s> -\frac{n}{2}$.

\subsection{Trilinear upper bounds with soft potentials: $-2s> \gamma > -n$}
\label{sec:upTRIsub}

We will now prove  non-linear estimates in the velocity norms in Lemma \ref{NonLinEstA}.  We have

\begin{lemma}
\label{NonLinEstA}
(Trilinear estimate)
Consider the non-linear term \eqref{gamma0} and \eqref{DerivEstG}.

For any multi-index $\beta$, any $\ell^+, \ell^-, \ell' \ge 0$ with $\ell = \ell^+ - \ell^-$ and $\ell' \le \ell^-$ we have
\begin{equation}
 |\ang{w^{2\ell}\Gamma_{\beta}( g,  h), f} | 
 \lesssim 
  \nsm w^{\ell^+ - \ell'} g\nsm_{L^2} 
  \nsm h\nsm_{N^{s,\gamma}_{\ell + \ell', \ksob} }  \nsm f\nsm_{\spaceELLn}     
 \label{nlineq1}
\end{equation}
Here $\ell + \ell' = \ell^+ - (\ell^- - \ell' )$.
We alternatively use the Sobolev embedding on $g$:
\begin{equation}
 |\ang{w^{2\ell}\Gamma_{\beta}( g,  h), f} | 
 \lesssim 
\nsm  g\nsm_{H^{\ksob}_{{\ell^+} - \ell'}}  
  \nsm h\nsm_{N^{s,\gamma}_{\ell + \ell'} }  \nsm f\nsm_{\spaceELLn}
\label{nlineq}
\end{equation}
These estimates hold for the soft potentials \eqref{kernelPsing} 
and the hard potentials \eqref{kernelP}.
\end{lemma}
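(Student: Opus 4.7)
The plan is to follow the same pattern as the proof of Theorem \ref{TriLinEst} given in Section \ref{sec:upTRI}, but systematically replacing the hard-potential size/support/cancellation inequalities of Section \ref{physicalDECrel} with their soft-potential analogues from Section \ref{sec2:physicalDECrel}. Specifically, using the anisotropic Littlewood-Paley projections from Section \ref{sec:aniLP}, I would write $h = \sum_{j\ge 0} h_j$ and $f = \sum_{j\ge 0} f_j$ with $h_j = Q_j h$ and $f_j = Q_j f$, and expand
\[
\ang{w^{2\ell}\Gamma_\beta(g,h),f} = \sum_{l=1}^\infty \sum_{j\ge 0} \ang{w^{2\ell}\Gamma_\beta(g,h_{j+l}),f_j} + \sum_{l=0}^\infty \sum_{j\ge 0} \ang{w^{2\ell}\Gamma_\beta(g,h_j),f_{j+l}}.
\]
For the first family I use the $\sigma$-representation $\teePLUSop - \teeMINUSop$, and for the second family I use the Carleman dual representation \eqref{dualOPdef}, which replaces cancellations on $f$ by cancellations on $h$ at the cost of the extra term $\LopGstar$. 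In both cases I split the dyadic sum over the singularity parameter $k$ into the two regions $k\le j$ (far from the singularity) and $k>j$ (near the singularity).

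In the far regime $k\le j$, the triangle inequality together with the ``trivial'' bounds of Propositions \ref{prop11}, \ref{starPROP}, and \ref{referLATERprop2} produces estimates of the form $2^{2sk}$ times either $\nsm g\nsm_{H^{\ksob}_{\ell^+-\ell'}}\nsm w^{\ell+\ell'}h_{j+l}\nsm_{L^2_{\gamma+2s}}\nsm w^\ell f_j\nsm_{L^2_{\gamma+2s}}$ (yielding \eqref{nlineq}) or the $L^2$-variant needed for \eqref{nlineq1}. Summing over $k\le j$ yields a net factor $2^{2sj}$, and writing $2^{2sj} = 2^{sj}\cdot 2^{s(j+l)}\cdot 2^{-sl}$ allows Cauchy-Schwartz in $j$ to be combined with the square-function inequality \eqref{lpsobolev00} to conclude $\sum_{j,l}\lesssim \nsm h\nsm_{\spaceELLn}\nsm f\nsm_{\spaceELLn}$ with a geometric factor $2^{-sl}$ that is summable in $l$. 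The analogous statement holds for the second family with the roles of $h$ and $f$ reversed.

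In the near regime $k>j$, the cancellation inequalities \eqref{cancelf21}--\eqref{cancelf2} and \eqref{cancelh2g1}--\eqref{cancelh2g2} from Propositions \ref{cancelFprop}--\ref{cancelHprop} give bounds of the form $2^{(2s-i)k}$ times a product in which an anisotropic derivative $|\tilde\nabla|^i$ lands on $f_j$ (resp. $h_j$). Since $i\in\{1,2\}$ with $2s-i<0$ in all cases, summation in $k>j$ produces the favorable factor $2^{(2s-i)j}$; combined with the $(n+1)$-dimensional weighted derivative estimate \eqref{lpsobolev0}, this reconstructs $\nsm f\nsm_{\spaceELLn}$ (resp. $\nsm h\nsm_{\spaceELLn}$) after Cauchy-Schwartz in $j$, again with a residual $2^{-sl}$ summable in $l$. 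The leftover no-derivative term $\LopGstar$ is handled directly by Proposition \ref{opGstarESTderiv}, whose bounds \eqref{tstarCf}--\eqref{tstarCh} already have the desired form with Sobolev embedding on $g$, so only the elementary $2^{-sl}$ gain and Cauchy-Schwartz in $j$ are needed to complete this term.

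The main obstacle, as in the hard-potential case, lies not in any single estimate but in the bookkeeping of the velocity weights. Specifically, one needs the flexibility given by Proposition \ref{referLATERprop2} to split $w^{2\ell}$ across $g$ and $h$ as $w^{2(\ell^+-\ell')}(v_*)w^{2(\ell+\ell')}(v)$, matching the weight partition appearing in \eqref{nlineq1} and \eqref{nlineq}. A secondary subtlety is that the Littlewood-Paley projections $Q_j$ commute only approximately with the weights and derivatives appearing in the norm $\spaceELLn$; the commutator analysis carried out in Section \ref{sec:aniLP}, which bounds $\partial_\beta \tilde\nabla^\alpha Q_j$ by a finite sum of projections $Q_j^k(\omega_k\partial_{\beta_1}f)$ with controllable weights $\omega_k$, is exactly what is required to ensure that the reconstruction via \eqref{lpsobolev00}--\eqref{lpsobolev0} produces the correctly weighted norms without loss. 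Once this bookkeeping is in place, all remaining steps are direct applications of Cauchy-Schwartz and geometric-series summation.
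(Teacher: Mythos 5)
Your proposal is correct and follows essentially the same route the paper takes: expand via the anisotropic Littlewood-Paley decomposition as in \eqref{mainexpand}, split the dyadic singularity sum into far ($k\le j$) and near ($k>j$) regimes, use the soft-potential "trivial" size/support bounds (Propositions \ref{prop11}, \ref{starPROP}, \ref{referLATERprop2}, \ref{opGstarESTderiv}) for the former and the cancellation bounds (Propositions \ref{cancelFprop}, \ref{cancelHprop}) for the latter, and close with Cauchy-Schwartz in $j$ together with \eqref{lpsobolev00}--\eqref{lpsobolev0}. The paper's proof of Lemma \ref{NonLinEstA} is exactly this substitution of Section \ref{sec2:physicalDECrel} estimates into the summation scheme of Theorem \ref{TriLinEst}, including the observation that \eqref{nlineq1} comes from the inequalities placing the extra Sobolev derivatives on $h$ and \eqref{nlineq} from those placing them on $g$.
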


These estimates immediately imply 
Lemma \ref{NonLinEstHIGH}, as we explain just now; the same process will deduce Lemma \ref{NonLinEstLOW} given the results of the previous subsection.

\begin{remark}\label{rem:sobolev}
First let us record the following particularly useful variation of the Sobolev embedding theorem:
suppose $g$, $h$, $f$ are functions on ${\mathbb T}^n \times \threed$.  For any velocity spaces $X,Y,Z$ (i.e., function spaces on $\threed$) such as those appearing in Section \ref{sec:FuncSp} and any nonnegative integers $k_1,k_2$ such that $k_1 + k_2 > \frac{n}{2}$, we have 
\[ \int_{{\mathbb T}^n} dx ~ |g|_{X} |h|_{Y} |f|_{Z} \leq C_{k_1,k_2,n} ||g||_{H^{k_1}_x X_v} ||h||_{H^{k_2}_x Y_v} ||f||_{L^2_x Z_v}. \]
This is a consequence of the functional Sobolev embedding theorem, stating that
\[ 
\left( \int_{{\mathbb T}^n} dx \ |g|_{X}^q \right)^{\frac{1}{q}} \leq C_{q,n} ||g||_{H^k_x X_v}, \quad \mbox{ when } \quad \frac{k}{n} + \frac{1}{q} \geq \frac{1}{2}, \quad 1 \leq q < \infty. 
\]
and
\[ \mathop{\mathrm{ess.sup}}_{x \in {\mathbb T}^n} |g|_{X_v} \leq C_n ||g||_{H^k_x X_v}, \quad \mbox{ when } \frac{k}{n} > \frac{1}{2}. \]
Combine that with H\"{o}lder's inequality on ${\mathbb T}^n$, if $\alpha_1 \leq \alpha$ and $\beta_1 \leq \beta$,  to obtain
\begin{multline}
\int_{{\mathbb T}^n} dx ~ |\partial^{\alpha - \alpha_1}_{\beta - \beta_1} g|_{X} | \partial^{\alpha_1}_{\beta_1} h|_{Y} |f|_{Z} 
\\
\lesssim ||\partial_{\beta - \beta_1} g||_{H^{K-|\beta-\beta_1|}_x X_v} ||\partial_{\beta_1} h||_{H^{K-|\beta_1|}_x Y_v} ||f||_{L^2_x Z_v},  \label{sobolev}
\end{multline}
whenever $K$ satisfies $K \geq \max\{ |\alpha - \alpha_1| + |\beta - \beta_1|, |\alpha_1| + |\beta_1| \}$ and $2K \geq |\alpha| + |\beta| + \ksob$.
\end{remark}

\begin{proof}[Proof of Lemma \ref{NonLinEstHIGH}]
Now consider derivatives of the non-linear term as in \eqref{DerivEstG} which include velocity derivatives; a typical term is
\[ \left|\ang{w^{2\ell - 2 | \beta|} \Gamma_{\beta_2} (\partial^{\alpha - \alpha_1}_{\beta - \beta_1}g,\partial^{\alpha_1}_{\beta_1}h), \partial^{\alpha}_{\beta} f} \right|. \]
Note that 
since 
$|\alpha| + |\beta|  \leq K$
we also have
$|\alpha_1| + |\beta_1| + |\alpha - \alpha_1| + |\beta - \beta_1| \leq K$.  To estimate this term, we will apply either \eqref{nlineq1} or \eqref{nlineq}; in the former case, the right-hand side applies an additional $\ksob =  \lfloor \frac{n}{2} +1 \rfloor$ velocity derivatives to $\partial^{\alpha_1}_{\beta_1} h$ (a consequence of Sobolev embedding), and in the latter, the same $\ksob$ velocity derivatives are applied instead to $ \partial^{\alpha-\alpha_1}_{\beta - \beta_1} g$.  We will make the choice of \eqref{nlineq1} versus \eqref{nlineq} so that the total number of derivatives on either $h$ or $g$ does not exceed $K$.

To that end, consider the situation when $|\alpha_1| + |\beta_1| \leq K -  \ksob$.
  Applying \eqref{nlineq1} 
with $\ell^+ = \ell \ge 0$, $\ell^- = |\beta|$  and $\ell' = |\beta - \beta_1|$, so $\ell^--\ell' = |\beta_1|$, gives the estimate
\begin{multline*}
 \left|\ang{w^{2\ell - 2 | \beta|} \Gamma_{\beta_2} (\partial^{\alpha - \alpha_1}_{\beta - \beta_1}g,\partial^{\alpha_1}_{\beta_1}h), \partial^{\alpha}_{\beta} f} \right| 
 \\
 \lesssim 
  \nsm w^{\ell - |\beta - \beta_1|} \partial^{\alpha - \alpha_1}_{\beta - \beta_1}g\nsm_{L^2} 
  \nsm \partial^{\alpha_1}_{\beta_1}h\nsm_{N^{s,\gamma}_{\ell -|\beta_1|, \ksob} }  
  \nsm \partial^{\alpha}_{\beta} f\nsm_{N^{s,\gamma}_{\ell - |\beta|}}.     
\end{multline*}
Note that the $\ell$ right here is not the same $\ell$ as the one in Lemma \ref{NonLinEstA}; instead we use Lemma \ref{NonLinEstA} with $\ell$ replaced by $\ell - |\beta|$.  Now after also integrating over $\mathbb{T}^n$ we can use \eqref{sobolev} to establish that
\begin{multline*}
 \left|\left( w^{2\ell - 2 | \beta|} \Gamma_{\beta_2} (\partial^{\alpha - \alpha_1}_{\beta - \beta_1}g,\partial^{\alpha_1}_{\beta_1}h), \partial^{\alpha}_{\beta} f \right) \right| 
 \\
 \lesssim 
  \| w^{\ell - |\beta - \beta_1|} \partial_{\beta - \beta_1}g\|_{H^{K-|\beta - \beta_1|}_x L^2_v} 
  \| \partial_{\beta_1}h\|_{H^{K - |\beta_1| - \ksob}_x N^{s,\gamma}_{\ell -|\beta_1|, \ksob} }  
  \| \partial^{\alpha}_{\beta} f\|_{ N^{s,\gamma}_{\ell - |\beta|}},     
\end{multline*}
under the constraint that $2K \geq |\alpha| + |\beta| + 2 \ksob$ (since the assumption on $|\alpha_1| + |\beta_1|$ guarantees the remaining inequalities for $K$ are true).  By symmetry, if $|\alpha - \alpha_1| + |\beta - \beta_1| \leq K -  \ksob$, we may instead apply \eqref{nlineq} and estimate the $g$ terms on the right-hand side via Sobolev embedding \eqref{sobolev} as in the previous case.  Thus, as long as $2K \geq |\alpha| + |\beta| + 2 \ksob$, one of the two alternatives will always be applicable.  Since $|\alpha| + |\beta| \leq K$, the constraint $K \geq 2 \ksob$ suffices to ensure Lemma \ref{NonLinEstHIGH} holds.
\end{proof}

\begin{proof}[Proof of Lemma \ref{NonLinEstLOW}]
The inequality in Lemma \ref{NonLinEstLOW} may be established via a similar argument using  \eqref{generalUPPER} rather than \eqref{nlineq}.  Without the use of Sobolev embedding in the velocity variables, the argument above establishes 
\begin{multline*}
 \left|\left( w^{2\ell - 2 | \beta|} \Gamma_{\beta_2} (\partial^{\alpha - \alpha_1}_{\beta - \beta_1}g,\partial^{\alpha_1}_{\beta_1}h), \partial^{\alpha}_{\beta} f \right) \right| 
 \\
 \lesssim 
  \| w^{\ell - |\beta - \beta_1|} \partial_{\beta - \beta_1}g\|_{H^{K-|\beta - \beta_1|}_x L^2_v} 
  \| \partial_{\beta_1}h\|_{H^{K - |\beta_1|}_x N^{s,\gamma}_{\ell -|\beta_1|} }  
  \| \partial^{\alpha}_{\beta} f\|_{N^{s,\gamma}_{\ell - |\beta|}}     
\end{multline*}
for any multiindices $\alpha, \alpha_1, \beta,\beta_1$ provided that $2K \geq |\alpha| + |\beta| + \ksob$.  This establishes Lemma \ref{NonLinEstLOW}  for the hard potentials whenever $K \geq \max \{ |\alpha| + |\beta|, \ksob\}$.
\end{proof}

Now we set about to prove the two non-linear estimates in Lemma \ref{NonLinEstA}.

\begin{proof}[Proof of  Lemma \ref{NonLinEstA}]
Both \eqref{nlineq1} and \eqref{nlineq} are established by the same summation procedure  used to establish Theorem \ref{TriLinEst} in the previous subsection.   Specifically, we expand exactly as in \eqref{mainexpand}.  The only difference is that we replace the estimates from Section \ref{physicalDECrel} with their analogues from Section \ref{sec2:physicalDECrel}. 
We control
\eqref{nearsing} with the estimates in Propositions \ref{prop11} and \ref{referLATERprop2}. 
Then Propositions \ref{starPROP}, \ref{referLATERprop2}, and \ref{opGstarESTderiv} are used to 
handle the terms \eqref{sum1}.
For the cancellations,
\eqref{cancelsing} is handled by  Proposition \ref{cancelFprop} 
and \eqref{sum2} is controlled using Proposition \ref{cancelHprop}. 
To establish \eqref{nlineq1}, we simply use the inequalities from these propositions in Section \ref{sec2:physicalDECrel} which apply the derivatives to $h$; for \eqref{nlineq}, the corresponding estimates with derivatives on $g$ are used.
\end{proof}

This concludes our main non-linear estimates. 

\subsection{The Compact Estimates}
Here we collect some estimates for the linearized collision operator.
The first one is the key to the estimate for $\kPiece$ in \eqref{compactupper}.

\begin{lemma}
\label{CompactEst}
(Compact Estimate)
For any $\ell \in \mathbb{R}$, we have the uniform estimate
\begin{equation}
\notag
\left| \langle  w^{2\ell}  \kPiece g, h \rangle  \right|
 \lesssim 
 \nsm w^{\ell} g\nsm_{L^2_{\gamma+2s - \delta }} \nsm w^{\ell} h\nsm_{L^2_{\gamma+2s - \delta}},
 \quad
\delta = \min\{2s, (n-1)\}.
 \end{equation}
\end{lemma}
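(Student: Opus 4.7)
The plan is to decompose
\[
\ang{w^{2\ell}\kPiece g, h} = \ang{w^{2\ell}\nu_{\kPiece} g, h} - \ang{w^{2\ell}\Gamma(g,M), h}
\]
and estimate the two pieces separately. The local term is immediate: the asymptotic $|\nu_{\kPiece}(v)| \lesssim \ang{v}^{\gamma}$ (recorded just before \eqref{normpiece}) together with Cauchy--Schwartz yields
\[
|\ang{w^{2\ell}\nu_{\kPiece}g, h}| \lesssim \nsm w^\ell g\nsm_{L^2_{\gamma}} \nsm w^\ell h\nsm_{L^2_{\gamma}} \leq \nsm w^\ell g\nsm_{L^2_{\gamma+2s-\delta}} \nsm w^\ell h\nsm_{L^2_{\gamma+2s-\delta}},
\]
since $\delta \leq 2s$. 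All the analytic content is in the nonlocal piece $\ang{w^{2\ell}\Gamma(g,M), h}$.

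For the latter I would use the dual representation derived in Section~\ref{physicalDECrel},
\[
\ang{w^{2\ell}\Gamma(g,M), h} = \opGstar(g,M,h) + \sum_{k=-\infty}^{\infty}(\teePLUSop - \teeSTARop)(g,M,h),
\]
treating the Maxwellian $M$ as the Schwartz-decaying function $\phi$ sitting in the second slot. The choice of the dual form (rather than the more direct $\sum_k (\teePLUSop - \teeMINUSop)$) is the critical setup, because Proposition~\ref{compactCANCELe} provides a cancellation inequality for precisely the combination $(\teePLUSop - \teeSTARop)(g,\phi,h)$ when $\phi$ satisfies \eqref{derivESTa}, a condition $M$ manifestly fulfills. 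The non-dyadic term $\opGstar(g,M,h)$ is controlled by Proposition~\ref{opGstarESTderiv}, inequality \eqref{tstarCh}: with $M$ in the middle slot the factor $|M|_{H^{\ksob}_{\ell,\gamma}}$ is a harmless constant, giving $|\opGstar(g,M,h)| \lesssim \nsm g\nsm_{L^2_{-m}}\nsm w^\ell h\nsm_{L^2_{\gamma}}$, which is absorbed into the desired bound.

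For the dyadic sum I would split at $k=0$. When $k \geq 0$, Proposition~\ref{compactCANCELe} directly yields
\[
|(\teePLUSop - \teeSTARop)(g,M,h)| \lesssim 2^{(2s-2)k}\nsm g\nsm_{L^2_{-m}}\nsm h\nsm_{L^2_{-m}},
\]
and the prefactor is summable because $2s-2 < 0$. When $k < 0$, no cancellation is needed: one estimates $\teePLUSop(g,M,h)$ and $\teeSTARop(g,M,h)$ separately. Proposition~\ref{prop:GradRap}, inequality \eqref{tminushRAPplus}, provides
\[
|\teePLUSop(g,M,h)| \lesssim 2^{2sk}\nsm w^\ell g\nsm_{L^2_{\gamma+2s-(n-1)}}\nsm w^\ell h\nsm_{L^2_{\gamma+2s-(n-1)}},
\]
which furnishes the crucial weight gain of $(n-1)$; Proposition~\ref{starPROP} \eqref{tstarhRAP} gives even stronger rapid-weight control on $\teeSTARop(g,M,h)$. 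The prefactor $2^{2sk}$ is summable on $k < 0$ because $s > 0$.

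The main point to notice—and essentially the only subtle one—is that the exponent $\delta = \min\{2s, n-1\}$ is exactly the maximum weight gain compatible with all four contributions simultaneously: the local part and the $\opGstar$ term cap the gain at $2s$ (yielding weight $\gamma$), while the $k \leq 0$ portion of $\teePLUSop$ caps it at $n-1$ (yielding weight $\gamma+2s-(n-1)$); the remaining dyadic pieces decay rapidly and impose no constraint. There is no serious analytic obstacle—once the dual representation is chosen so that $M$ occupies the second slot of $\teePLUSop - \teeSTARop$, the cancellation estimate of Proposition~\ref{compactCANCELe} aligns with the Schwartz decay of $M$ to deliver the result, and the proof is essentially an organized book-keeping of these preexisting ingredients.
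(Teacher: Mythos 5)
Your proof is correct, and it is a genuine alternative to the paper's. The paper does not argue directly from the decomposition $\kPiece g = \nu_{\kPiece}g - \Gamma(g,M)$ plus the dual representation; instead it routes Lemma~\ref{CompactEst} through the more general Proposition~\ref{upperBds}, specifically inequality \eqref{coerc1ineqPREP}, which is itself established by applying the anisotropic Littlewood--Paley decomposition to the Schwartz function $\phi$ in the second slot (writing $\phi = \sum_j \phi_j$, extracting the factor $2^{-2j}$ from \eqref{lpsmooth}, and then summing the resulting double series over $j$ and $k$). You keep $M$ whole, use the Carleman dual representation $\opGstar(g,M,h) + \sum_k (\teePLUSop - \teeSTARop)(g,M,h)$, and split the $k$-sum at zero. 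This is tidier in two respects: you never touch the Littlewood--Paley machinery, and by putting $M$ in the second slot of $\teePLUSop - \teeSTARop$ you have matched the function placement in Proposition~\ref{compactCANCELe} exactly as stated. By contrast, the paper's displayed expansion in the proof of \eqref{coerc1ineqPREP} is written in the $\teePLUSop - \teeMINUSop$ form with $\phi_j$ in the second slot, yet the cited cancellation estimate \ref{compactCANCELe} for that combination has the decaying function in the third slot, so an implicit representation switch or a permutation of the roles must be read into the paper's argument. What the paper's route buys in return is reusability: \eqref{coerc1ineqPREP} is invoked again (for instance, in the proof of the coercive interpolation inequality \eqref{coerc1ineq}), so proving it once as a standalone proposition amortizes well. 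Your remaining ingredients --- Pao's asymptotic $|\nu_\kPiece|\lesssim\ang{v}^\gamma$ for the local piece, \eqref{tstarCh} for $\opGstar$, \eqref{tminushRAPplus} and \eqref{tstarhRAP} for $k<0$, and \ref{compactCANCELe}(b) for $k\ge 0$ --- are all applied within their stated hypotheses, and the bookkeeping that $\gamma$, $\gamma+2s-(n-1)$, and $L^2_{-m}$ are each dominated (after multiplying by $w^\ell$ and choosing $m$ large) by $\nsm w^\ell\cdot\nsm_{L^2_{\gamma+2s-\delta}}$ with $\delta=\min\{2s,n-1\}$ is correct.
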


Since $\delta>0$ above, Lemma \ref{CompactEst} easily implies \eqref{compactupper}  when $g = h$.  To see this, first apply Cauchy's inequality with $\frac{\eta}{2}$ to the upper bound in Lemma \ref{CompactEst}:
$$
\left| \langle  w^{2\ell}  \kPiece g, g \rangle  \right|
 \le
 \frac{\eta}{2} \nsm w^{\ell} g\nsm_{L^2_{\gamma+2s - \delta }}^2 + C_\eta \nsm w^{\ell} g\nsm_{L^2_{\gamma+2s - \delta}}^2.
$$  
For the term  $C_\eta \nsm w^{\ell} g\nsm_{L^2_{\gamma+2s - \delta}}^2$ above, we split into $|v| \ge R$ and $|v|\le R$.  Choosing $R>0$ sufficiently large so that $C_\eta R^{-\delta} \le \frac{\eta}{2}$ proves \eqref{compactupper} subject only to Lemma \ref{CompactEst}.  Now Lemma \ref{CompactEst}  and other estimates will follow from \eqref{coerc1ineqPREP}  below.

\begin{proposition}
\label{upperBds}
For any function $\phi$ satisfying \eqref{rapidDECAYfcn}, we have the estimate
\begin{equation}
\label{coerc1ineqNORM}
\left| \ang{w^{2\ell}\Gamma_\beta (\phi,h),f} \right|
\lesssim
 \nsm  h \nsm_{\spaceELLn}  \nsm  f \nsm_{\spaceELLn}.
\end{equation}
For the next two estimates, we suppose that $\phi$ further satisfies \eqref{derivESTa}.  Then
\begin{equation}
\label{coerc1ineqPREP}
\left| \ang{w^{2\ell}\Gamma_\beta(g,\phi),f} \right|
 \lesssim 
 \nsm w^\ell g \nsm_{L^2_{\gamma + 2s-(n-1)}}  \nsm w^\ell f \nsm_{L^2_{\gamma + 2s-(n-1)}}.
\end{equation}
For any $m\ge 0$ we also have
\begin{equation}
\label{coerc1ineqPREP2}
\left| \ang{w^{2\ell}\Gamma_\beta (g,f),\phi} \right| \lesssim 
 \nsm  g \nsm_{L^2_{-m}}  \nsm  f \nsm_{L^2_{-m}}.
\end{equation}
Each of these estimates hold for any $\beta$, and any $\ell \in \mathbb{R}$.
\end{proposition}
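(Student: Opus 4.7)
The plan is to deduce each of the three bounds from the dyadic decomposition
\[
\ang{w^{2\ell}\Gamma_\beta(g,h),f} = \sum_{k\in\mathbb{Z}} (\teePLUSop - \teeMINUSop)(g,h,f) = \LopGstar(g,h,f) + \sum_{k\in\mathbb{Z}} (\teePLUSop - \teeSTARop)(g,h,f),
\]
combined with the compact estimates of Section \ref{sec:SSCE}, all of which exploit the rapid decay of the distinguished argument $\phi$.

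Estimate \eqref{coerc1ineqNORM} is essentially a specialization of Lemma \ref{NonLinEstA}.  I would apply \eqref{nlineq} with $\ell^+=\max(\ell,0)$, $\ell^-=\max(-\ell,0)$, and $\ell'=0$, so that $\ell^+-\ell'=\ell^+$ and $\ell+\ell'=\ell$, all the constraints hold, and the resulting bound is $\|\phi\|_{H^{\ksob}_{\ell^+}}\nsm h\nsm_{\spaceELLn}\nsm f\nsm_{\spaceELLn}$.  The first factor is controlled by $C_\phi$ by virtue of \eqref{rapidDECAYfcn}, since every polynomially weighted Sobolev norm of $\phi$ is finite.

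For \eqref{coerc1ineqPREP}, with $\phi$ now in the middle slot, I would use the Carleman-type (``dual'') decomposition and split the $k$-sum at $k=0$.  For $k\le 0$ (far from the singularity) I would estimate $\teePLUSop(g,\phi,f)$ via \eqref{tminushRAPplus} from Proposition \ref{prop:GradRap} and $\teeSTARop(g,\phi,f)$ via \eqref{tstarhRAP} from Proposition \ref{starPROP}, each contributing $C_\phi 2^{2sk}$ — a summable geometric factor for $k\le 0$.  For $k\ge 1$ (near the singularity), I would invoke Proposition \ref{compactCANCELe}, which yields the cancellation bound $C_\phi 2^{(2s-2)k}\nsm g\nsm_{L^2_{-m}}\nsm f\nsm_{L^2_{-m}}$, summable since $2s-2<0$.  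The $\LopGstar(g,\phi,f)$ term is controlled by adapting Proposition \ref{opGstarESTderiv}: the factor $h' = \phi(v')$ provides Gaussian decay in $v'$ and $\partial_\beta M_*$ provides Gaussian decay in $v_*$, and a direct pointwise estimate on the inner integral — in the same spirit as Lemma \ref{prop:Grad} with exponent $\rho = n-1$ — gives both the $L^2$ control and the extra $(n-1)$ gain in the velocity weight.  Finally, any $L^2_{-m}$ norms with $m$ sufficiently large are dominated by $\nsm w^\ell \cdot\nsm_{L^2_{\gamma+2s-(n-1)}}$ simply by comparing polynomial weights.

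Estimate \eqref{coerc1ineqPREP2}, with $\phi$ in the third slot, is the most direct.  Using the original ($\sigma$-)representation and splitting at $k=0$: for $k\le 0$, \eqref{tminushRAPplus2} controls $\teePLUSop(g,f,\phi)$ and \eqref{tminushRAP} controls $\teeMINUSop(g,f,\phi)$, each giving $C_\phi 2^{2sk}\nsm g\nsm_{L^2_{-m}}\nsm f\nsm_{L^2_{-m}}$; for $k\ge 1$, Proposition \ref{compactCANCELe} directly handles the difference $(\teePLUSop-\teeMINUSop)(g,f,\phi)$ with the decay $C_\phi 2^{(2s-2)k}$.  Both sums converge.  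The main technical obstacle will be in extracting the sharp weight $\gamma+2s-(n-1)$ from $\LopGstar(g,\phi,f)$ for part (2): the $(n-1)$-gain arises from precisely the compact-kernel computation underlying Lemma \ref{prop:Grad}, but care is needed in absorbing the singularity of $|v-v_*|^\gamma$ via the joint Gaussian decay of $\phi$ and $\partial_\beta M_*$.  Everything else is routine geometric-series bookkeeping.
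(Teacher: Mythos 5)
Your proposal is essentially correct and tracks the paper's own proof closely in spirit, though with a few instructive differences worth recording.

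For \eqref{coerc1ineqNORM}, your shortcut of citing \eqref{nlineq} directly (rather than re-running its proof, as the paper does, swapping in \eqref{posUPPer3norm} for the $T_+$ piece) is legitimate but has one soft spot: the assertion that ``every polynomially weighted Sobolev norm of $\phi$ is finite'' does not actually follow from \eqref{rapidDECAYfcn}, which only constrains $\phi$ pointwise, not its derivatives. The paper carries the same looseness (it still invokes \eqref{tminusg}, \eqref{cancelf2}, \eqref{cancelh2g2}, all of which introduce $\nsm g\nsm_{H^{\ksob}_{-m}}$, and merely remarks ``one may assume without loss of generality that $\phi \in H^{\ksob}_\ell$''); the underlying justification in both cases is that in every application $\phi$ is $M$, $M_\beta$, or a similar Schwartz function, and that the Sobolev-embedding step in the proof of \eqref{tminusg} can be replaced by the pointwise Gaussian bound. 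You should flag this rather than assert it as an automatic consequence of \eqref{rapidDECAYfcn}.

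For \eqref{coerc1ineqPREP}, your route is actually the more transparent one. Proposition \ref{compactCANCELe} furnishes cancellation for $(\teePLUSop - \teeSTARop)(g,\phi,h)$ — that is, in the Carleman representation with $\phi$ in the second slot — so one must switch to that representation near the singularity, which is exactly what you do, and which necessarily introduces the remainder $\LopGstar(g,\phi,f)$. The paper's displayed formula writes the $\sigma$-representation $T^{k,\ell}_+ - T^{k,\ell}_-$ and never shows the $\LopGstar$ piece explicitly, which as written does not line up with the cited Proposition \ref{compactCANCELe}; you have correctly tracked the remainder and its estimate (which reduces to a Schur/Cauchy--Schwartz argument once the joint Gaussian decay of $\phi(v')$ and $\partial_\beta M_*$ absorbs the polynomial weights and the integrable singularity $|v'-v_*|^\gamma$). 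The one genuinely different ingredient in the paper is its Littlewood--Paley decomposition $\phi = \sum_j \phi_j$ with the decay $C_{\phi_j}\lesssim 2^{-2j}$ from \eqref{lpsmooth}. You omit this, and that seems fine: Proposition \ref{compactCANCELe} already applies to $\phi$ itself under \eqref{derivESTa}, and the $k$-sums converge without the extra $j$-sum, so your route is shorter. Part \eqref{coerc1ineqPREP2} proceeds exactly as you describe and matches the paper.
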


Notice that these imply several other previously-stated estimates.  In particular, 
Lemma \ref{CompactEst} is an immediate consequence of \eqref{coerc1ineqPREP}  and
Pao's estimate of $\nu_{\kPiece}(v)$  in \eqref{compactpiece}.
Thus Proposition 
 \ref{upperBds} implies Lemma \ref{sharpLINEAR}, since also 
 \eqref{normupper} follows directly from \eqref{coerc1ineqNORM}.
 Other uses of Proposition 
 \ref{upperBds} will be seen below.

\begin{proof}[Proof of Proposition \ref{upperBds}]
To prove \eqref{coerc1ineqNORM}, we expand $\ang{w^{2\ell}\Gamma_\beta(\phi,h),f}$
as in \eqref{mainexpand}, and the proof follows the same lines as the proof of \eqref{nlineq}.
Following that proof,
 we estimate 
\eqref{nearsing} using the inequalities \eqref{tminusg} and \eqref{posUPPer3norm}.
Then 
\eqref{tstarg}, \eqref{posUPPer3norm}, and \eqref{tstarCf}
are 
 used to handle the terms \eqref{sum1} (note that, by the method of proof of these various inequalities, one may assume without loss of generality that $\phi \in H^{\ksob}_\ell$). 
For the cancellations,
\eqref{cancelsing} is handled by  \eqref{cancelf2}  and
\eqref{sum2} is controlled  using \eqref{cancelh2g2}.

To prove the estimate in \eqref{coerc1ineqPREP} we will use the inequality 
\eqref{lpsmooth}.  In particular 
\begin{multline*}
\left| \ang{w^{2\ell}\Gamma_\beta(g,\phi),f} \right| 
=   
\left|
\sum_{j=0}^\infty \sum_{k=-\infty}^\infty  
\left\{ T^{k,\ell}_{+}(g,\phi_j,f) - T^{k,\ell}_{-}(g,\phi_j,f) \right\}
\right| 
\\
\lesssim
 \nsm w^\ell g \nsm_{L^2_{\gamma + 2s-(n-1)}}  \nsm w^\ell f \nsm_{L^2_{\gamma + 2s-(n-1)}}
  \sum_{j=0}^\infty \sum_{k=-\infty}^\infty  \min\{2^{(2s-2)k} , 2^{2sk}  \} 2^{-2j}.
\end{multline*}
We have used
\eqref{tminushRAP} and 
\eqref{tminushRAPplus}
with Proposition \ref{compactCANCELe}. Specifically, in each of those estimates 
$\phi_j$ satisfies 
\eqref{derivESTa} with $C_\phi \lesssim 2^{-2j}$, as in \eqref{lpsmooth}.

The estimate for 
$
\ang{w^{2\ell}\Gamma_\beta(g,f),\phi}
$
in \eqref{coerc1ineqPREP2}
is proved
in 
exactly the same way
using instead
\eqref{tminushRAP},
\eqref{tminushRAPplus2},
and Proposition \ref{compactCANCELe}.
 In particular, 
Proposition \ref{upperBds} follows.  
\end{proof}

This concludes our compact estimates.

\section{The main coercive inequality}
\label{sec:mainCOER}

This section is devoted to the proof of Lemma \ref{estNORM3} when $\ell =0$.  
 Our approach involves  direct pointwise estimates of a Carleman representation in Section \ref{sec:pe}.  However this argument will not be completely sufficient, as explained below.  Thus in Section \ref{redistsec} we prove an estimate dubbed ``Fourier redistribution'' to finish the desired bound.  The essential idea is to appeal to the Fourier transform in the situation where the pointwise bound is not available.   Then in Section \ref{sec:funcN} we will establish functional analytic results on the space $\spacen$.  Finally in Section \ref{ssc:fce} we prove the remainder of the coercive estimates which were stated in Section \ref{mainESTsec}.

\subsection{Pointwise estimates}\label{sec:pe}
For any Schwartz function $f$, consider the quadratic difference expression arising in  $\ang{\nPiece f,f}$ from \eqref{normexpr} with $\ell = 0$.
By virtue of the Carleman-type change of variables, it is possible to express this semi-norm as
\eqref{semiCARLEMAN},
where the kernel can be computed with Proposition \ref{carlemanV2} in Appendix \ref{secAPP:HSr} to be
\begin{equation}
K(v,v')
\eqdef
2^{n-1}\int_{E_{v'}^{v}} d \pi_{v_*'} ~ \frac{M_* M_*'}{|v-v'| |v'-v_*'|^{n-2}} ~ B \left(2v - v' -v_*', \frac{v' - v_*'}{|v' - v_*'|} \right).  \label{kernel}
\end{equation}
The hyperplane
  $
  E^{v}_{v'} \eqdef 
  \set{ v'_*\in \threed}{    \ang{v' - v, v_*' -v} =0 }
  $
  is the integration domain and $d\pi_{v'_*} $ denotes the Lebesgue measure on $E^{v}_{v'}$.  Further $M_* = M(v'_*+ v' - v)$.

Our goal is to estimate this kernel $K$ pointwise from below and compare it to the corresponding kernel for the norm $\nsm  \cdot \nsm_{\spacen}$ from \eqref{normdef}; this, by virtue of \eqref{lpsobolev00},  allows control of our anisotropic Littlewood-Paley square function by 
$
\ang{\nPiece f,f}.
$  
We make this estimate when $|v-v'| \leq 1$ and $| |v|^2 - |v'|^2 | \leq |v-v'|$.  This constraint will require the introduction of a somewhat technical argument, but it is necessary since the required pointwise bound fails to hold uniformly outside this region.

On the hyperplane $E_{v'}^{v}$, we have 
$|2v - v' - v_*'| = |v' - v_*'|$; in particular, then
\[ \ang{\frac{2v - v' - v_*'}{|2v - v' - v_*'|}, \frac{v' - v_*'}{|v'-v_*'|}} = \frac{|v - v_*'|^2- |v - v'|^2}{|v' - v|^2 + |v - v_*'|^2}. \]
By virtue of the lower bound for $b(\cos \theta)$ in \eqref{kernelQ}, 
it follows that
\[ 
B \left(2v - v' -v_*', \frac{v' - v_*'}{|v' - v_*'|} \right) \gtrsim 
\Phi(|v'-v_*' |) ~
\frac{|v'-v_*'|^{n-1+2s}}{|v-v'|^{n-1+2s}} {\bf 1}_{|v-v_*'| > |v-v'|}. 
\]
The indicator function must be included because of the support condition in \eqref{kernelQ}.
Thus the kernel $K(v,v')$ from \eqref{kernel} is bounded below by a uniform constant times
\begin{equation}
|v-v'|^{-n-2s} \int_{E_{v'}^v} d \pi_{v_*'} ~ M_* M_*' \Phi(|v'-v_*'|)  |v'-v_*'|^{1+2s} {\bf 1}_{|v-v_*'| > |v-v'|}. 
\label{lower1}
\end{equation}
Next we consider the magnitude of the projections of $v_* = v'+v_*'-v$ and $v_*'$ in the direction of $v-v'$.  The orthogonality constraint $\ang{v-v',v-v_*'} = 0$ dictates that
\begin{align*}
\ang{v_*, \frac{v-v'}{|v-v'|}} = \ang{v', \frac{v-v'}{|v-v'|}}  & = \frac{-|v-v'|^2 + |v|^2 - |v'|^2}{2 |v-v'|},  
\\
\ang{v_*', \frac{v-v'}{|v-v'|}} = \ang{v , \frac{v-v'}{|v-v'|}}  & = \frac{ |v-v'|^2 + |v|^2 - |v'|^2}{2 |v-v'|}.
\end{align*}
With our assumptions $|v-v'| \leq 1$ and $||v|^2 - |v'|^2| \leq |v-v'|$, both right-hand sides are uniformly bounded by $1$ in magnitude, implying that $|v_*|^2 + |v_*'|^2 \leq 2 |w_*'|^2 + 1$, where 
$w_*'$ is the orthogonal projection of $v_*'$ onto the hyperplane through the origin with normal $v-v'$, e.g. 
$w_*' = v_*' - \frac{v-v'}{|v-v'|}\ang{\frac{v-v'}{|v-v'|},v_*'}$.  
This implies $M_* M_*' \gtrsim e^{-|w_*'|^2/4}$ uniformly.  Let $w'$ and $w$ be the orthogonal projections of $v'$ and $v$  respectively onto this same hyperplane through the origin with normal $v-v'$.  Trivially $|v' -v_*'| \geq | w' - w_*'|$.  Further  $|w' - v'| \leq 1$ since
\[ 
\left|\ang{v',v-v'} \right| = \frac{1}{2}\left||v|^2-|v'|^2- |v-v'|^2 \right| \leq |v-v'|.
\] 
Write $v_*' = w_*' + v - w$, then
we may parametrize the integral in \eqref{lower1} as an integral over $w_*'$ (with unit Jacobian) and thereby bound \eqref{lower1} uniformly from below as
\[ 
K(v, v') \gtrsim
|v-v'|^{-n-2s} \int_{E'} d \pi_{w_*'} ~ e^{- \frac{1}{4} |w_*'|^2}  |w' - w_*'|^{\gamma+2s+1}, 
\]
with
$
E' \eqdef \set{ w_*'}{ \ang{w_*',v-v'}=0, ~ |w' -w_*'| \geq 3} ; 
$
note that $|v' -v_*'|\approx |w' - w_*'|$ on this region because $0 \leq |v' -v_*'| - |w' -w_*'| \leq 2 \leq \frac{2}{3} |w- w_*'|$ since both $|v_*' - w_*'|$ and $|v' - w'|$ are less than one.

If $|w'| \le 4$,  
it is not hard to see that
$$
\int_{E'} d \pi_{w_*'} ~ e^{- \frac{1}{2} |w_*'|^2}  |w' - w_*'|^{\gamma+2s+1} \gtrsim 1.
$$
When 
$|w'| \ge 4$, 
we may restrict $w_*'$ to lie in the disk
$
\frac{1}{2} |w'| \ge  |w_*'| +1,
$
which implies in particular $|w' -w_*'| \approx |w'|$.  We thus have the following:
$$
\int_{E'} d \pi_{w_*'} e^{- \frac{1}{2} |w_*'|^2}  |w' - w_*'|^{\gamma+2s+1} \gtrsim
\ang{w'}^{\gamma+2s+1}\int_{0}^{\frac{1}{2} |w'| - 1}  d\rho ~ \rho e^{- \frac{1}{2} \rho^2} \gtrsim \ang{w'}^{\gamma+2s+1}.
$$
Since $|w' - v'| \leq 1$, 
the final, uniform estimate for \eqref{kernel} becomes:
\[ 
K(v, v') \gtrsim |v-v'|^{-n-2s} \ang{v'}^{\gamma+2s+1}
\ind_{|v-v'| \leq 1}  \ind_{||v|^2 - |v'|^2| \leq |v-v'|}.
\]
On this region $d(v,v') \lesssim |v-v'|$ and $\ang{v} \approx \ang{v'}$, so with \eqref{normexpr} 
we have uniformly 
\begin{equation} 
| f |_B^2
\gtrsim
\int_{\threed} dv \int_{\threed} dv' \frac{(f' - f)^2}{d(v, v')^{n+2s}} 
(\ang{v} \ang{v'})^{\frac{\gamma+2s+1}{2}} 
\ind_{d(v, v') \leq 1}  \ind_{||v|^2 - |v'|^2| \leq |v-v'|}. 
\label{coerciveptwise}
\end{equation}
 To obtain a favorable coercivity estimate from this, it would suffice to show that the expression \eqref{coerciveptwise} is bounded from below by the corresponding piece of \eqref{normdef}  (since the former expression has already been shown to be connected to our exotic Littlewood-Paley projections).    Because of the cutoff restricting 
 $
 \ind_{||v|^2 - |v'|^2| \leq |v-v'|}
 $
 a direct pointwise comparison is not possible to accomplish uniformly at all points.  This is not merely a limitation of the argument leading to \eqref{coerciveptwise}; in fact, a more involved analysis of \eqref{normexpr} and \eqref{kernel} shows that there is exponential
  decay of $K(v,v')$ in $|v-v'|$ when $v$ and $v'$ point in the same direction.  Thus there is an intrinsic obstruction to obtaining the correct coercive inequality by means of a simple, pointwise comparison of these expressions.

\subsection{Fourier redistribution}
\label{redistsec}
To get around this obstruction, we use the following trick. The key idea is already contained in the following proposition:

\begin{proposition}
Suppose $K_1$ and $K_2$ are even, nonnegative, measurable functions on $\threed$ satisfying \label{fourierdist}
\[ \int_{\threed} du ~ K_l(u) |u|^2 < \infty, \qquad l=1,2. \]
Suppose $\phi$ is any smooth, nonnegative function on $\threed$ and that there is some constant $C_\phi$ such that 
$|\nabla^2 \phi(u) | \le C_\phi $ 
for all $u$.
For $l=1,2$, consider the following quadratic forms (defined for arbitrary real-valued Schwartz functions $f$):
\[ \nsm f\nsm_{K_l}^2 \eqdef \int_{\threed} dv \int_{\threed} dv'~ \phi(v) \phi(v') K_l(v-v') (f(v) - f(v'))^2. \]
If there exists a finite, nonnegative constant $C$ such that, for all $\xi \in \threed$
\[ \int_{\threed} du~ K_1(u) |e^{2 \pi i \ang{\xi,u}} - 1|^2 \leq C + \int_{\threed} du~ K_2(u) |e^{2 \pi i \ang{\xi,u}} - 1|^2, \]
then for all Schwartz functions $f$, 
\[ \nsm f\nsm_{K_1}^2 \leq \nsm f\nsm_{K_2}^2 + C' C_{\phi} \int_{\threed} dv~ \phi(v) (f(v))^2, 
\]
where the constant $C'$ satisfies $C' \lesssim 1 + C+ \int_{\threed} du (K_1(u) + K_2(u)) |u|^2$ uniformly in $K_1, K_2$, $\phi$ and $C$.
\end{proposition}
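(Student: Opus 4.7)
The plan is to reduce to the unweighted case $\phi \equiv 1$, where the result follows directly from Plancherel's theorem. Indeed, in that case one has
\[
\int_{\threed}\int_{\threed} K_l(v-v')(f(v)-f(v'))^2 \, dvdv' = 4\int_{\threed} |\hat f(\xi)|^2 \Psi_l(\xi)\, d\xi,
\]
where $\Psi_l(\xi) \eqdef \int_{\threed} K_l(u)\sin^2(\pi\ang{\xi,u})\, du$; since the hypothesis is precisely the pointwise statement $\Psi_1(\xi) \le C/4 + \Psi_2(\xi)$, integrating against $|\hat f|^2$ immediately yields $\nsm f\nsm_{K_1}^2 \le \nsm f\nsm_{K_2}^2 + C\nsm f\nsm_{L^2}^2$.

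For general $\phi$, I would set $g \eqdef \phi f$ and rewrite $\nsm f\nsm_{K_l}^2$ using the expansion $(f-f')^2 = f^2 + f'^2 - 2ff'$ together with the symmetry of $K_l$ to obtain
\[
\nsm f\nsm_{K_l}^2 = 2\int_{\threed} \phi(v) f(v)^2 \, (\phi * K_l)(v) \, dv - 2\int_{\threed} |\hat g(\xi)|^2 \widehat{K_l}(\xi)\, d\xi.
\]
Writing $\Delta K \eqdef K_1 - K_2$ and forming the difference $\nsm f\nsm_{K_1}^2 - \nsm f\nsm_{K_2}^2$, I would analyze the two pieces separately. For the convolution term, the Taylor expansion $\phi(v-u) = \phi(v) - \nabla\phi(v)\cdot u + R(v,u)$ with $|R(v,u)| \le C_\phi |u|^2/2$, combined with the evenness of $\Delta K$ (which forces $\int u\, \Delta K(u)\, du = 0$), gives $(\phi * \Delta K)(v) = \phi(v)\int \Delta K\, du + O\!\left(C_\phi \int |\Delta K|(u)|u|^2 du\right)$. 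For the Fourier term, the identity $|e^{2\pi i \ang{\xi,u}} - 1|^2 = 2 - 2\cos(2\pi \ang{\xi,u})$ and evenness of $K_l$ give $\widehat{K_l}(\xi) = \int K_l\, du - \frac{1}{2}\int K_l(u)|e^{2\pi i\ang{\xi,u}}-1|^2 du$, so the hypothesis translates into the pointwise lower bound $\widehat{\Delta K}(\xi) \ge \int \Delta K\, du - C/2$. The two dominant contributions proportional to $\int\Delta K\, du \cdot \nsm g\nsm_{L^2}^2$ then cancel, leaving a residual bound of the shape $C \nsm g\nsm_{L^2}^2 + C_\phi \int (K_1+K_2)(u)|u|^2 du \cdot \int \phi f^2$.

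The main obstacle I anticipate is bridging the gap between the Plancherel residual $C\nsm g\nsm_{L^2}^2 = C\int \phi^2 f^2$ and the desired weight $\int \phi f^2$ in the conclusion. The natural analytic tool here is the pointwise inequality $|\nabla \phi(v)|^2 \le 2 C_\phi \phi(v)$, which holds for any nonnegative smooth $\phi$ with $|\nabla^2 \phi| \le C_\phi$ (by the standard one-dimensional Taylor argument: if $h \ge 0$ satisfies $|h''| \le C_\phi$, then $h(0) + a h'(0) + \frac{a^2}{2} C_\phi \ge 0$ for all $a$, and optimizing in $a$ yields $|h'(0)|^2 \le 2 C_\phi h(0)$). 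A likely route is to use this estimate, together with the mean-value identity $\phi(v) - \phi(v') = \int_0^1 \nabla\phi(v' + t(v-v'))\cdot(v-v')\, dt$, at an earlier stage in the decomposition---for instance by symmetrizing the expression $\nsm f\nsm_{K_l}^2 = \int\int K_l (g-g')^2 - \int\int K_l (\phi - \phi')(\phi f^2 - \phi' f'^2)$ from the algebraic identity $\phi\phi'(f-f')^2 = (g-g')^2 - (\phi-\phi')(\phi f^2 - \phi' f'^2)$---so that the troublesome $\phi^2$ weight is converted to $C_\phi \phi$ up to terms absorbed into the already-controlled error. Making this final conversion precise, with a constant $C'$ independent of $\phi$, is where I expect the technical heart of the proof to lie.
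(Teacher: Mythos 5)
Your approach is essentially the paper's: the ``workaround'' identity you state at the end, $\phi\phi'(f-f')^2 = (g-g')^2 - (\phi-\phi')(\phi f^2 - \phi' f'^2)$, is exactly the algebraic decomposition the paper starts from (multiply by $K_l$, integrate, symmetrize, Taylor-expand $\phi'-\phi$ to second order with the linear term killed by evenness, then apply Plancherel to $\int\!\int K_l(g-g')^2$ after an $|u|>\epsilon$ cutoff). However, your \emph{first} decomposition --- splitting $\nsm f\nsm_{K_l}^2$ into $2\int\phi f^2\,(\phi*K_l)$ minus $2\int|\hat g|^2\hat K_l$ --- is only formal here: the hypotheses give a finite \emph{second} moment of $K_l$ but not integrability, and in the intended application $K_l$ has a non-integrable singularity at the origin (as does $K_1-K_2$). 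Neither $\phi*K_l$ nor $\hat K_l$ exists; the cancellation that makes $\nsm f\nsm_{K_l}^2$ finite is lost as soon as the square is expanded termwise. The paper's form of the identity (and your second one) keeps the full difference $(g-g')^2$ intact, which is why it survives the singularity; and the Taylor remainder in $\phi'-\phi$ is precisely what lets the error be measured against $\int K_l(u)|u|^2\,du$ rather than $\int K_l$.

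The subtlety you flag at the end --- the appearance of $C\int\phi^2 f^2$ from Plancherel versus the stated $C' C_\phi\int\phi f^2$ --- is a real observation: the paper's own proof produces exactly the same $C\|\hat F\|_{L^2}^2 = C\int\phi^2 f^2$ term and then says ``the remainder is clear.'' Your proposed inequality $|\nabla\phi|^2\le 2C_\phi\phi$ is correct, but it controls $|\nabla\phi|^2$, not $\phi^2$, and does not by itself convert $\int\phi^2 f^2$ into $C_\phi\int\phi f^2$ (take $\phi \equiv$ large constant: $C_\phi=0$ yet $\phi^2$ is huge). The gap is actually in the \emph{statement} of the proposition rather than in the underlying argument: what is used in the paper's application (Section 5.2) is that the cutoff functions satisfy $\phi \lesssim C_\phi$ pointwise (the zeroth-order bound $\ang{v_0}^{(\gamma+2s-1)/2}$ controls both $\phi$ and $|\nabla^2\phi|$), and under that additional hypothesis $\phi^2\lesssim C_\phi\,\phi$ and everything closes. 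So the honest reading of the proposition is with this implicit upper bound on $\phi$; your instinct that ``something more about $\phi$ is needed'' is right, but the missing ingredient is the bound on $\phi$ itself, not on its gradient.
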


\begin{proof}
We begin with the following identity:
\begin{align*}
 \phi(v) \phi(v') (f(v) - f(v'))^2 = \ & (\phi(v) f(v) - \phi(v') f(v'))^2 
  \ + \phi(v) (f(v))^2 (\phi(v') - \phi(v)) \\
& \ + \phi(v') (f(v'))^2 (\phi(v) - \phi(v')). 
\end{align*}
Multiply both sides by $K_l(v-v')$ and integrate with respect to $v$ and $v'$.  Exploiting symmetry, the result is:
\begin{align*}
 \int_{\threed} dv \int_{\threed} dv' & K_l(v-v') (f(v) - f(v'))^2 \phi(v) \phi(v') \\
& = \int_{\threed} dv \int_{\threed} dv' K_l(v-v') ( \phi(v) f(v) - \phi(v') f(v'))^2 \\
 & \ \ \ \ \ \  + 2 \int_{\threed} dv \phi(v) (f(v))^2 \ {p.v.} \! \! \int_{\threed} dv' K_l(v-v') (\phi(v') - \phi(v)).
\end{align*}
Now Taylor's theorem and the hypotheses on the second derivative of $\phi$ dictate 
\begin{align*}
 | \phi(v') - \phi(v) &  - \ang{v'-v, \nabla \phi(v) } | \leq \frac{1}{2} |v'-v|^2 C_\phi.
\end{align*}
If we define $C(K_l) \eqdef \int du K_l(u) |u|^2$, it follows that the difference
\begin{align*}
 \left| \int_{\threed} dv \int_{\threed} dv' \right. & K_l(v-v') (f(v) - f(v'))^2 \phi(v) \phi(v') \\
&  \left. - \int_{\threed} dv \int_{\threed} dv' K_l(v-v') ( \phi(v) f(v) - \phi(v') f(v'))^2 \right|, 
\end{align*}
is bounded above by $C(K_l) \int_{\threed} dv \phi (v) (f(v))^2$.  If we cutoff $|u| > \epsilon$, then clearly the Plancherel formula can be applied to the second term inside the absolute values above, with $F(v) \eqdef \phi(v) f(v)$, to get 
\begin{align}
 \int_{\threed} dv \int_{\threed} du &~ K_l(u) ( F(v+u) - F(v))^2 \ind_{|u| > \epsilon} \nonumber \\
& = \int_{\threed} d \xi \int_{\threed} du ~ K_l(u) |e^{2 \pi i \ang{\xi,u}}-1|^2 |\widehat{ F}(\xi)|^2 \ind_{|u| > \epsilon}. \label{plancherel}
\end{align}
Clearly the limiting case $\epsilon \rightarrow 0$ will hold as well because $|e^{2 \pi i \ang{\xi,u}}-1|^2$ vanishes to second order in $u$ and $\widehat{F}$ may be assumed to have arbitrarily rapid decay in $|\xi|$.  From here, the remainder is clear.  The hypotheses on $K_1$ and $K_2$ give that the limit of the Plancherel term \eqref{plancherel} is bounded above by the Plancherel term for $K_2$ plus $C$ times the $L^2$-norm of $\hat F$.  This term plus the errors in comparing the Plancherel pieces \eqref{plancherel} to the norms $\nsm \cdot\nsm_{K_l}^2$ give rise to the constant $C'$.
\end{proof}

Next, fix functions $K_1,K_2$ on $\R^{\last}$ given by $K_1(u) \eqdef |u|^{-\dim-2s} \ind_{|u| \leq 1}$ and 
$K_2(u) \eqdef |u|^{-\dim-2s} \ind_{|u| \leq 1} \ind_{|u_\last| \leq \epsilon |u|}$, 
that is, $K_1$ is restricted to the unit ball and $K_2$ is further restricted to 
the set 
$(1 - \epsilon^2) u_\last^2 \leq \epsilon^2 (u_1^2 + \cdots + u_\dim^2)$ with $\epsilon \in (0,1)$.  
Note that $\epsilon = 1/\sqrt{2}$ is the particular choice relevant to the coercive lower bound \eqref{coerciveptwise}.
We define the semi-norm $N_0$ by
\begin{align*}
\nsm f\nsm_{N_0}^2 & \eqdef \int_{\threed} dv \int_{\threed} dv' K_2(\ext{v} - \ext{v'}) (f' - f)^2 (\ang{v'}\ang{v})^{\frac{\gamma+2s+1}{2}}.
\end{align*}
Note that, if $K_2$ is replaced by $K_1$, the resulting expression is the derivative part of our main norm \eqref{normdef}.
By a pointwise comparison of $K_1$ and $K_2$, it is trivially true that $\nsm f\nsm_{N_0} \lesssim \nsm f\nsm_{N^{s,\gamma}}$, but our goal is to prove an inequality in the reverse direction.  To that end, let $\{ \phi \}$ be a smooth partition of unity on $\R^{\last}$ which is locally finite and satisfies uniform bounds for each $\phi$ and their first and second (Euclidean) derivatives.  Suppose furthermore that each $\phi$ is supported on a (Euclidean) ball of radius $\frac{\epsilon}{8}$ for a small $\epsilon >0$.  

Recall the notation from Section \ref{sec:aniLP}.
We restrict these functions to the paraboloid $\ext{v}=(v, \frac{1}{2} |v|^2)$ and insert them into the norms $\nsm \cdot\nsm_{N^{s,\gamma}}$ and $\nsm \cdot\nsm_{N_0}$:
\begin{align}
\int_{\threed} dv \int_{\threed} dv' K_l(\ext{v} - \ext{v'}) (f' - f)^2 \ang{v}^{\frac{\gamma+2s+1}{2}} \ang{v'}^{\frac{\gamma+2s+1}{2}} \phi(\ext{v}) \phi(\ext{v'}), \label{cutoffterms}
\end{align}
for $l=1,2$.  Suppose that $v_0 \in \threed$ satisfies $\phi(\ext{v_0}) \neq 0$ for some fixed $\phi$.  Make the change of variables $v \mapsto v_0 + \tau_{v_0} u$ and likewise for $v'$; including the Jacobian factor $\ang{v_0}^{-1}$ for each integral, the result is an integral over $u$ and $u'$ of the integrand
\begin{align*}
 \ang{v_0}^{-2} & K_l(\ext{v_0 + \tau_{v_0} u}   -  \ext{v_0 + \tau_{v_0} u'})   (f(v_0 + \tau_{v_0} u) - f(v_0 + \tau_{v_0} u'))^2 \\
& \times \phi(\ext{v_0 + \tau_{v_0} u}) \phi(\ext{v_0 + \tau_{v_0} u'}) (\ang{v_0 + \tau_{v_0} u} \ang{v_0 + \tau_{v_0} u'})^{\frac{\gamma+2s+1}{2}}. 
\end{align*}
Now we expand.  The argument of $K_l$, for example, becomes
\begin{align*}
 \ext{v_0 + \tau_{v_0} u} - \ext{v_0 + \tau_{v_0} u'} & = \ext{\tau}_{v_0} (u-u') + \frac{1}{2} (|u|^2 - |u'|^2)e_\last \\
& = \ext{\tau}_{v_0} (u-u') + \ang{u-u',\frac{u+u'}{2}} e_\last.
\end{align*}
Now $|\ext{v_0} - \ext{v_0 + \tau_{v_0} u}| \leq |u| - \frac{1}{2} |u|^2$; therefore $|\ext{v_0} - \ext{v_0 + \tau_{v_0} u}| \geq \frac{1}{2} |u|$ so long as $|u| \leq 1$.
Since the support of $\phi$ is in a ball of radius $\frac{\epsilon}{8}$, it follows that $| \frac{u+u'}{2}  | \leq \frac{\epsilon}{4}$, hence the magnitude of the coefficient of $e_\last$ above is at most $\frac{\epsilon}{4} |u-u'| = \frac{\epsilon}{4} |\ext{\tau}_{v_0} (u-u')|$, so 
$$
|(\ext{v_0 + \tau_{v_0} u }-\ext{v_0 + \tau_{v_0} u'}) - \ext{\tau}_{v_0} (u-u')| \leq \frac{\epsilon}{4}|\ext{\tau}_{v_0} (u-u')|.
$$  
In particular, for any $\epsilon \leq 2$, it must be the case that
\[ \frac{1}{2} |u-u'| \leq |\ext{v_0 + \tau_{v_0} u }-\ext{v_0 + \tau_{v_0} u'} | \leq \frac{3}{2} |u-u'| \]
on the support of the cutoff $\phi$.  In particular, this implies
\begin{align*}
 K_1(\ext{v_0+\tau_{v_0}u } & - \ext{v_0 + \tau_{v_0} u'}) \phi(\ext{v_0+\tau_{v_0}u}) \phi(\ext{v_0 + \tau_{v_0} u'}) \\
& \lesssim |u-u'|^{-n-2s} \ind_{|u-u'| \leq 2}   \phi(\ext{v_0 + \tau_{v_0} u})  \phi(\ext{v_0 + \tau_{v_0} u'}).
\end{align*}
Likewise, notice that 
\[ \left| \ang{\ext{v_0 + \tau_{v_0} u }-\ext{v_0 + \tau_{v_0} u'}, e_\last} - \ang{v_0}^{-1} \ang{v_0, u - u'} \right| \leq \frac{\epsilon}{4} |u-u'|,\]
so the condition $|\ang{v_0,u-u'}| \leq \frac{\epsilon}{4} |u-u'|$ guarantees
that
\[  \left| \ang{\ext{v_0 + \tau_{v_0} u }-\ext{v_0 + \tau_{v_0} u'}, e_\last} \right| \leq \frac{\epsilon}{2} |u-u'| \]
which is, in turn, at most $|\ext{v_0 + \tau_{v_0} u }-\ext{v_0 + \tau_{v_0} u'}|$.
Therefore we also have that
\begin{align*}
 |u-u'|^{-n-2s} & \ind_{|u-u'| \leq \frac{1}{2}} \ind_{\ang{v_0}^{-1} |\ang{v_0,u-u'}| \leq \frac{\epsilon}{4} |u-u'|} \phi(\ext{v_0 + \tau_{v_0} u})  \phi(\ext{v_0 + \tau_{v_0} u'}) \\
& \lesssim K_2(\ext{v_0+\tau_{v_0}u }  - \ext{v_0 + \tau_{v_0} u'}) \phi(\ext{v_0+\tau_{v_0}u}) \phi(\ext{v_0 + \tau_{v_0} u'}).
\end{align*}
To apply  Proposition \ref{fourierdist}, then, it suffices to check the Fourier condition and estimate the derivatives of the cutoff functions.
Clearly zeroth-order through second-order derivatives of 
\[ \ang{v_0}^{-1} \ang{v_0 + \tau_{v_0} u}^{\frac{\gamma+2s+1}{2}} \phi (\ext{v_0 + \tau_{v_0} u}), \]
with respect to $u$ will be uniformly bounded by $\ang{v_0}^{\frac{\gamma+2s-1}{2}}$ by virtue of the corresponding estimates for $\phi$ coupled with the fact that $\tau_{v_0}$ has norm $1$ as a mapping of Euclidean vector spaces and $\ext{\tau}_{v_0}$ is an isometry. 

Modulo the verification of the Fourier condition, then, we have
\begin{align*}
\int_{\threed} dv  \int_{\threed} dv' & K_2(\ext{v} - \ext{v'}) (f' - f)^2 \ang{v}^{\frac{\gamma+2s+1}{2}} \ang{v'}^{\frac{\gamma+2s+1}{2}} \phi(\ext{v}) \phi(\ext{v'}) \\
  + \int_{\R^n} du & ~ (f(v_0 + \tau_{v_0} u ))^2 \tilde \phi(u) \\
& \gtrsim
\int_{\threed} dv \int_{\threed} dv' K_1(\ext{v} - \ext{v'}) (f' - f)^2 \ang{v}^{\frac{\gamma+2s+1}{2}} \ang{v'}^{\frac{\gamma+2s+1}{2}} \phi(\ext{v}) \phi(\ext{v'}).
\end{align*}
Here $\tilde \phi \eqdef \ang{v}^{(\gamma+2s-1)/2} \phi$ (recall that the extra factor of $\ang{v_0}^{-1}$ comes from the change-of-variables we employed).
Thus the quadratic dependence on $\tilde \phi$ gives a factor of $\ang{v_0}$ to the power $\gamma+2s-1$; however an additional factor of $\ang{v_0}$ is obtained when the change-of-variables is reversed (that is, $v_0 + \tau_{v_0} u$ reverts back to $v$).  Thus, summing over the partition will give
\begin{align*} 
\int_{\R^n} dv \int_{\R^n} dv' & \frac{(f-f')^2}{d(v,v')^{n+2s}} \ang{v}^{\gamma+2s+1} {\mathbf 1}_{d(v,v') \leq 1} \sum_\phi \phi(\ext{v}) \phi(\ext{v'}) \\
& \lesssim \nsm f\nsm_{N_0}^2 + \int_{\threed} dv (f(v))^2 \ang{v}^{\gamma+2s}. 
\end{align*}
Now for any $v$, there must be an element of the partition on which $\phi(\ext{v}) \geq \frac{1}{N}$, where $N$ is the maximal number of partition elements which are nonzero at any particular point.  Since the partition was chosen so that there are uniform bounds on the first derivatives, it must be the case then, that there is a nonzero radius $\epsilon$ such that at any point $\ext{v}$, $\phi \geq \frac{1}{2N}$ on the ball centered at $\ext{v}$ with radius $\epsilon$.  Consequently $\sum_\phi \phi(\ext{v}) \phi(\ext{v'})$ is uniformly bounded below on a neighborhood of the diagonal, and we have
\begin{align*} 
\int_{\R^n} dv \int_{\R^n} dv' & \frac{(f-f')^2}{d(v,v')^{n+2s}} \ang{v}^{\gamma+2s+1} {\mathbf 1}_{d(v,v') \leq \epsilon}  \lesssim \nsm f\nsm_{N_0}^2 + \int_{\threed} dv (f(v))^2 \ang{v}^{\gamma+2s}. 
\end{align*}
Notice also that
\begin{align*} 
\int_{\R^n} dv \int_{\R^n} dv' & \frac{(f-f')^2}{d(v,v')^{n+2s}} \ang{v}^{\gamma+2s+1} {\mathbf 1}_{\epsilon \leq d(v,v') \leq 1}  \\
 & \leq \int_{\R^n} dv \int_{\R^n} dv' \frac{2 f^2 + 2 f'^2}{d(v,v')^{n+2s}} \ang{v}^{\gamma+2s+1} {\mathbf 1}_{\epsilon \leq d(v,v') \leq 1} \\
 & \lesssim \int_{\R^n} dv ~ (f(v))^2 \ang{v}^{\gamma+2s}
\end{align*}
since 
$
\int_{\R^n} dv' \frac{{\mathbf 1}_{\epsilon \leq d(v,v') \leq 1}}{d(v,v')^{n+2s}} \ang{v}^{\gamma+2s+1}  \lesssim 
\ang{v}^{\gamma+2s}
$ 
for fixed $\epsilon$ (and likewise with the roles of $v$ and $v'$ reversed).

To complete the comparison, then, it suffices to make the following estimate:
\begin{proposition}
Fix any $\epsilon > 0$, and let $E_1$ and $E_2$ be the sets in $\threed$ given by   $E_1 \eqdef \set{ u \in \threed}{ |u| \leq 2}$ and $E_2 \eqdef \set{u \in \threed}{ |u| \leq \frac{1}{2} \mbox{ and } |u_\dim| \leq \epsilon |u|}$.  Then
\begin{equation}
 \int_{E_1} du ~ |e^{2 \pi i \ang{\xi,u}} - 1|^2 |u|^{-\dim-2s} 
 \lesssim 
 1 + \int_{E_2} du ~ |e^{2 \pi i \ang{\xi,u}} - 1|^2 |u|^{-\dim-2s}, \label{plancherelcheck}
\end{equation}
uniformly for all $\xi \in \threed$.
\end{proposition}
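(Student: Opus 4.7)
The strategy is a scaling argument that reduces the problem to a compactness statement on the unit sphere. Writing $I_j(\xi) \eqdef \int_{E_j} |e^{2\pi i \ang{\xi,u}} - 1|^2 |u|^{-\dim-2s} \, du$ for $j = 1, 2$ and substituting $u = v/|\xi|$ (for $\xi \neq 0$), one obtains
\[ I_j(\xi) = |\xi|^{2s} \int_{|\xi| E_j} |e^{2\pi i \ang{\hat\xi, v}} - 1|^2 |v|^{-\dim-2s} \, dv, \qquad \hat\xi \eqdef \xi/|\xi|, \]
where $|\xi| E_1 = \{|v| \leq 2|\xi|\}$ and $|\xi| E_2 = \{|v| \leq |\xi|/2, \ |v_\dim| \leq \epsilon |v|\}$. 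The additive $1$ on the right-hand side of \eqref{plancherelcheck} permits us to restrict attention to $|\xi|$ large, so the problem is to compare integrals over two scaled sets that stabilize as $|\xi| \to \infty$.

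For the upper bound on $I_1$, I would note that the integrand is bounded pointwise by $\min(4, 4\pi^2 |v|^2) |v|^{-\dim-2s}$, which is integrable on $\threed$ when $s \in (0,1)$, and that by rotation-invariance of the weight $|v|^{-\dim-2s}$ the quantity $\int_{\threed} |e^{2\pi i \ang{\hat\xi,v}} - 1|^2 |v|^{-\dim-2s} \, dv$ is a finite constant independent of the unit vector $\hat\xi$. This yields $I_1(\xi) \leq C |\xi|^{2s}$ for every $\xi \neq 0$. In particular for $|\xi| \leq 2$ we get $I_1(\xi) \leq C \cdot 2^{2s}$, so the inequality \eqref{plancherelcheck} holds trivially in this regime thanks to the additive $1$.

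For $|\xi| \geq 2$, I would establish a matching lower bound $I_2(\xi) \gtrsim |\xi|^{2s}$. Since $|\xi| E_2 \supset E_0 \eqdef \{v \in \threed : |v| \leq 1, \ |v_\dim| \leq \epsilon |v|\}$ for such $\xi$, it suffices to bound
\[ F(\hat\xi) \eqdef \int_{E_0} |e^{2\pi i \ang{\hat\xi, v}} - 1|^2 |v|^{-\dim-2s} \, dv \]
below uniformly for $\hat\xi \in \sph$. Continuity of $F$ on $\sph$ is immediate from dominated convergence using the integrable dominant above, while $F(\hat\xi) > 0$ at every fixed unit vector since $E_0$ has positive Lebesgue measure and the linear function $v \mapsto \ang{\hat\xi, v}$ takes integer values only on a set of measure zero. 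Compactness of $\sph$ then produces $\min_{\sph} F > 0$, which combined with the upper bound on $I_1$ closes the estimate. The main point to verify is precisely this uniform strict positivity of $F$: because $E_0$ is thin in the $v_\dim$-direction one might worry that oscillations in the direction $\hat\xi = \pm e_\dim$ are too weak to register, but $E_0$ still contains an open cone of positive solid angle, so pointwise positivity of $F$ (and therefore uniform positivity by compactness) is automatic.
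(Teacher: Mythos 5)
Your argument is correct, and it reaches the same key heuristic as the paper --- that both integrals behave like $|\xi|^{2s}$ for large $|\xi|$ --- but by a genuinely different mechanism. The paper's proof fixes a direction $\sigma\in\sph$ and explicitly estimates the radial profile $\Psi(\lambda) = \int_0^a |e^{2\pi i\lambda t}-1|^2 t^{-1-2s}\,dt$, showing $\Psi(\lambda)\lesssim\lambda^{2s}$ always and $\Psi(\lambda)\gtrsim\lambda^{2s}$ when $|\lambda|\gtrsim 1$, and then observes that a positive-measure part of the spherical cap $\tilde E_2$ satisfies $|\ang{\xi,\sigma}|\gtrsim|\xi|$. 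Your proof instead rescales $u=v/|\xi|$ to pull out $|\xi|^{2s}$ as a global prefactor, and replaces the explicit lower bound on $\Psi$ by a soft argument: the sphere-function $F(\hat\xi) = \int_{E_0}|e^{2\pi i\ang{\hat\xi,v}}-1|^2|v|^{-n-2s}\,dv$ is continuous (dominated convergence), strictly positive (the zero set $\{\ang{\hat\xi,v}\in\mathbb Z\}$ is Lebesgue-null), and so bounded below by compactness of $\sph$. What the scaling-plus-compactness route buys is cleanliness and generality --- it would adapt to any two bounded sets with nonempty interior, not just a ball and a band --- but it sacrifices constructiveness: the minimum of $F$ over $\sph$ is not explicitly computable, whereas the paper's elementary $\Psi$-asymptotics give a tracked constant, which fits the authors' stated goal of making all estimates constructive. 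One small stylistic point: in arguing $F(\hat\xi)>0$ you note that ``$E_0$ contains an open cone of positive solid angle,'' which is the right intuition but not what closes the gap; what closes it is that the zero set of the integrand has measure zero regardless of the geometry of $E_0$, provided $E_0$ has positive measure, which you also say. The remark about the open cone is unnecessary.
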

\begin{proof}
Writing both sides in polar coordinates, we see that each side may be realized as an integral over the unit sphere $\sph$ of
\[ \int_{\tilde{E}_l} d \sigma  ~ \Psi(\ang{\xi,\sigma}), \]
where $\tilde{E}_1 = \sph$, $\tilde{E}_2$ is a small band near the equator, and $\Psi(\lambda)$ is 
of the form
\[ 
\Psi(\lambda) \eqdef \int_0^a dt ~ |e^{2 \pi i \lambda t} - 1|^2 t^{-1-2s}, 
\]
for some appropriate value of $a$ (
$a=2$ or $a = \frac{1}{2}$).
From the elementary inequalities
\begin{align*}
 \int_0^{(2 \lambda)^{-1}} dt |e^{2 \pi i \lambda t} - 1|^2 t^{-1-2s}  & \approx \int_0^{(2 \lambda)^{-1}} dt \lambda^2 t^2 t^{-1-2s} \approx \lambda^{2s} \\
\int_{(2 \lambda)^{-1}}^\infty dt |e^{2 \pi i \lambda t} - 1|^2 t^{-1-2s} & \lesssim \int_{(2 \lambda)^{-1}}^\infty dt \ t^{-1-2s} \approx \lambda^{2s} ,
\end{align*}
it follows that the integrands will be comparable to $|\ang{\xi,\sigma}|^{2s}$ when this quantity is bounded below by a fixed constant and less than a constant times $|\ang{\xi,\sigma}|^{2s}$ regardless of whether or not this quantity is bounded below.  For any $\xi$ with $|\xi| \geq 1$, then, at least a positive measure region of $\sph$ will have $|\ang{\xi,\sigma}| \gtrsim |\xi|$ (whether in $\tilde{E}_1$ or $\tilde{E_2}$), so both sides of \eqref{plancherelcheck} will be comparable to $|\xi|^{2s}$, which is sufficient for the inequality \eqref{plancherelcheck} to hold.
\end{proof}

The proof of the coercive inequality is now complete, for we demonstrated that
\[ 
\int_{\threed} dv \int_{\threed} dv_* \int_{\sph} d \sigma ~ B (f'-f)^2 M_*'M_* \gtrsim \nsm f\nsm_{N_0}^2, 
\]
by direct pointwise comparison and that $\nsm f\nsm_{N_0}^2 + \nsm f\nsm_{L^2_{\gamma+2s}}^2 \gtrsim \nsm f\nsm_{N^{s,\gamma}}$ by Fourier redistribution. The combination of these inequalities gives Lemma \ref{estNORM3} when $\ell =0$.

\subsection{Regarding the functional analysis of $N^{s,\gamma}$}
\label{sec:funcN}
  An important consequence of the analysis of the previous section is that we have an alternate characterization of the space $\spacen$ in terms of the usual Sobolev spaces.  In particular, let $\{ \phi_i \}$ be a partition of unity constructed as above by restricting a smooth, locally finite partition of unity on $\R^{\last}$ (such that each $\phi_i$ has support in a ball of unit radius) to the paraboloid $(v, \frac{1}{2} |v|^2)$.  For each $\phi_i$ in the partition, let $v_i$ be some point in its support.  If we define
\[ f_i(u) \eqdef \phi_i(\ext{v_i + \tau_{v_i} u}) f(v_i + \tau_{v_i} u), \]
it follows that we have the comparison
\begin{equation} 
|f|_{\spacen}^2 \approx \sum_{i=1}^\infty \ang{v_i}^{\gamma+2s-1} |f_i|_{H^s}^2, \label{isosobolev}
\end{equation}
where $H^s$ is the usual ($n$-dimensional) $L^2(\threed)$-Sobolev space.  This result is true by virtue of the fact that
\[ 
|f|_{H^s}^2 \approx |f|_{L^2}^2 + \int_{\threed} dv \int_{\threed} dv' \frac{(f(v')- f(v))^2}{|v-v'|^{\dim+2s}} {\mathbf 1}_{|v-v'| \leq 1}, 
\]
which follows itself by an application of the Plancherel theorem as in Proposition \ref{fourierdist} together with the asymptotic estimates for the integrals \eqref{plancherelcheck}.

In particular, if $F_i \eqdef  \phi_i ( \ext{v}) f(v)$, then $|\hat f_i(\xi)| = |\ang{v_i} \hat F_i( \tau_{v_i}^{-1} \xi)|$, so by Plancherel and the change of variables $\xi \mapsto \tau_{v_i} \xi$, 
we have
\[ |f_i|_{H^s}^2 \approx \ang{v_i} \int_{\R^n} d \xi ~ (1 + | \tau_{v_i} \xi|)^{2s} |\hat F_i (\xi)|^2; \]
now $ \ang{v_i}^{-2s} (1 + |\xi|)^{2s} \lesssim (1 + | \tau_{v_i} \xi|)^{2s} \lesssim (1 + |\xi|)^{2s}$, which provides the comparison \eqref{isocomp}; simply observe that
\[ \sum_{i=1}^\infty \ang{v_i}^{\gamma} |F_i|_{H^s}^2 \lesssim |f|_{N^{s,\gamma}}^2 \lesssim \sum_{i=1}^\infty \ang{v_i}^{\gamma+2s} |F_i|_{H^s}^2. \]
Now sum the partition of unity to compare the left- and right- hand sides to $|f|_{H^s_{\gamma}}$ and $|f|_{H^s_{\gamma+2s}}$, respectively.

With the aid of \eqref{isosobolev}, a number of elementary functional analysis properties of $\spacen$ reduce to the situation of the standard Sobolev spaces.  For example, it is a simple exercise to show that Schwartz functions are dense in $N^{s,\gamma}$ by exploiting this same fact for the space $H^s$, approximating $f_i$ individually in $H^s$, and summing over the partition (note that this requires the elements of the partition $\phi_i$ themselves to be Schwartz functions, but this additional restriction is not a problem to satisfy).

\subsection{Further coercive estimates}\label{ssc:fce}
   In this sub-section we will prove the coercive interpolation inequalities in
 \eqref{coerc1ineq}
and
 \eqref{coerc2ineq} from Lemma \ref{estNORM3} and Lemma \ref{DerCoerIneq}.  Actually,  \eqref{coerc2ineq} is a trivial consequence of Lemma \ref{estNORM3} and \eqref{compactupper}
 because 
$
\langle w^{2\ell} Lf, f \rangle = \langle w^{2\ell} \nPiece f, f \rangle + \langle w^{2\ell} \kPiece f, f \rangle.
$  
Thus we will restrict attention to \eqref{coerc1ineq}  and
 Lemma \ref{estNORM3}.

\begin{proof}[Proof of Lemma \ref{estNORM3}]
Firstly,  Lemma \ref{estNORM3} for $\ell = 0$  was proven in Sections \ref{sec:pe} and \ref{redistsec}.  We focus here on estimating the norm piece when $\ell \ne 0$.  We expand
\begin{gather*}
\langle w^{2\ell} \nPiece g, g \rangle 
=  
| g |_{B_{\ell}}^2 
+
\int_{\mathbb{R}^n} dv ~ w^{2\ell}(v)  \nu(v) ~ |g(v)|^2
+ J,
\end{gather*}
where $| g |_{B_{\ell}}$ is defined in \eqref{normexpr}.
 Furthermore, we have 
  the following equivalence 
  $$ 
  | g |_{B_{\ell}}^2+ | w^\ell g |_{L^2_{\gamma + 2s}}^2 \approx | g |_{\spaceELLn}^2.
  $$
  The lower bound $\gtrsim$ of this equivalence follows directly from the proof in Section \ref{redistsec} after the introduction of the additional weight $w^{2 \ell}(v)$ (note that the arguments contained in Section \ref{redistsec} did not depend on the value of $\gamma+2s$ so this extra weight is trivial).    The upper bound, $\lesssim$, follows from the estimates for $\ang{w^{2\ell}\Gamma (M,g),g}$ in \eqref{coerc1ineqNORM}.  The error term $J$ takes the form
\begin{gather*}
  J \eqdef
  \frac{1}{2} \int_{\mathbb{R}^n} dv  \int_{\mathbb{R}^n} dv_* \int_{\sph} d \sigma ~ B~ (g'-g)g\left( w^{2\ell}(v') - w^{2\ell}(v) \right) M_*' M_* .
\end{gather*}
These expressions are derived exactly as in the computations preceding \eqref{normpiece}.

We will show that this error term $J$ is lower order via an expansion of the kernel.  
In particular we {claim} that there exists an $\epsilon>0$ such that
$$
\left| J \right| \lesssim | g |_{B_{\ell}}
|w^\ell g|_{L^2_{\gamma + 2s - \epsilon}}
\le \eta | g |_{B_{\ell}}^2 + \eta' |w^\ell g|_{L^2_{\gamma + 2s}}^2 + C |w^\ell g|_{L^2(B_C)}^2.
$$
This argument follows as in procedure which is explained below Lemma \ref{CompactEst}.  This estimate easily implies Lemma \ref{estNORM3}.   

To prove this {claim}, notice that Cauchy-Schwartz gives us
$$
\left| J \right| \lesssim 
| g |_{B_{\ell}}
\left(
\int_{\mathbb{R}^n} dv  \int_{\mathbb{R}^n} dv_* \int_{\sph} d \sigma B |g|^2
\frac{\left( w^{2\ell}(v') - w^{2\ell}(v) \right)^2}{w^{2\ell}(v)} M_*' M_*
\right)^{1/2}.
$$
We will show in particular that there exists an $\epsilon>0$ such that
\begin{equation}
 \int_{\mathbb{R}^n} dv_* \int_{\sph} d \sigma  B(v - v_*, \sigma) 
\frac{\left( w^{2\ell}(v') - w^{2\ell}(v) \right)^2}{w^{2\ell}(v)}   M_*' M_*
\lesssim 
w^{2\ell}(v)
\ang{v}^{\gamma + 2s - \epsilon},
\label{claimEST1}
\end{equation}
and this will establish the {claim}.

To obtain \eqref{claimEST1}, first a simple Taylor expansion yields
$$
w^{2\ell}(v') - w^{2\ell}(v) 
=
(v' - v)\cdot (\nabla w^{2\ell})(\TaylorP(\tau)), 
\quad
\exists \tau \in [0,1],
$$
where $\TaylorP(\tau) = v + \tau(v' - v)$.  Since $|v' - v| = |v'_* - v_*|$
we have
$$
\ang{\TaylorP(\tau)} \lesssim \ang{v} \ang{v' - v} \lesssim \ang{v} \ang{v'_* - v_*},
$$
and similarly,
$
\ang{\TaylorP(\tau)}^{-1} 
 \lesssim \ang{v}^{-1}   \ang{v'_* - v_*}. 
$
Thus generally
$$
\frac{\left( w^{2\ell}(v') - w^{2\ell}(v) \right)^2}{w^{2\ell}(v)} ~  M_*' M_*
\lesssim
|v' - v|^2
w^{2\ell}(v)\ang{v}^{-2} \sqrt{M_*' M_*}.
$$
Now we split $|v' - v|^2 = |v' - v|^{2s + \delta} |v'_* - v_*|^{2 -2s - \delta}$ for any $\delta \in (0, 2-2s)$.  We can expand
$
|v' - v|^{2s + \delta} = |v - v_*|^{2s + \delta}\left( \sin \frac{\theta}{2} \right)^{2s + \delta},
$
and then we clearly have
$$
\int_{\sph} d \sigma ~ B(v - v_*, \sigma) ~ |v' - v|^{2s + \delta}\lesssim |v - v_*|^{\gamma+2s + \delta}.
$$
This follows directly from \eqref{kernelQ} - \eqref{kernelPsing}. Furthermore, 
$
|v'_* - v_*|^{2 -2s - \delta}
(M_*' M_*)^{1/4}
\lesssim 1.
$
Putting all of this together, we see that \eqref{claimEST1} holds with $\epsilon = 2- \delta >0$.
 \end{proof}
 
With the help of our coercive estimate \eqref{coerc2ineq} with no derivatives, in the following we will prove the main coercive estimate with high derivatives.

\begin{proof}[Proof of \eqref{coerc1ineq}]
We use the formula for $Lg$ from \eqref{LinGam}.  As in \eqref{DerivEstG}, we expand
\begin{multline*}
\partial^{\alpha}_{\beta} Lg =
 L\left( \partial^{\alpha}_{\beta} g \right)
 - 
  \sum_{\beta_1 + \beta_2  = \beta, ~|\beta_1|< |\beta|}  C^{\beta}_{\beta_1, \beta_2} ~ 
\Gamma_{\beta_2} (\partial_{\beta-\beta_1} M,\partial^{\alpha}_{\beta_1} g) 
\\
 - 
  \sum_{\beta_1 + \beta_2  = \beta, ~|\beta_1|< |\beta|}  C^{\beta}_{\beta_1, \beta_2} ~ 
\Gamma_{\beta_2}(\partial^{\alpha}_{\beta_1} g, \partial_{\beta-\beta_1}M).
\end{multline*}
After multiplying by $w^{2\ell - 2|\beta|}\partial^{\alpha}_{\beta} g$, and integrating over $\mathbb{R}^n$ we can estimate the term
$
\langle w^{2\ell - 2|\beta|} L \left( \partial^{\alpha}_{\beta} g \right), \partial^{\alpha}_{\beta} g \rangle
$
 as in \eqref{coerc2ineq}.  For the error term that arises, i.e., $| \partial_\beta^\alpha  g |_{L^2(B_R)}$ for some $R>0$ we use the compact interpolation for any small $\delta >0$:
$$
| \partial_\beta^\alpha  g |_{L^2(B_R)} 
\le 
\eta  | \partial_\beta^\alpha  g |_{H^\delta(B_R)}
+
C   | \partial^\alpha g |_{L^2(B_{C})}.
$$
Here $C>0$ is some large constant, and $\eta>0$ is any small number.  Further  $| \partial_\beta^\alpha  g |_{H^\delta(B_R)} \lesssim \nsm \partial^{\alpha}_{\beta}g\nsm_{N^{s,\gamma}_{\ell - |\beta|}}$ when $\delta < s$; this holds because the non-isotropy of the norm $\spacen$ only comes into play near infinity. More precisely, if $v$ and $v'$ are confined to the Euclidean ball of radius $R$ at the origin, then $d(v,v') \approx |v-v'|$ (with constants depending on $R$), and so on this region the expression for \eqref{normdef} is comparable to the Gagliardo-type semi-norm for the space $H^s(B_R)$.
This gives the estimate for the inner product of $L\left( \partial^{\alpha}_{\beta} g \right)$.

We estimate the rest of the terms using \eqref{coerc1ineqNORM} and \eqref{coerc1ineqPREP}.  We use the extra velocity decay which is left over from these estimates to split into a large unbounded region times a small constant and a bounded region with a large constant.  On the bounded region, we use the compact interpolation as just used in the last case to put all of the velocity derivatives into slightly larger Sobolev norm multiplied by an arbitrarily small constant.  This is all that is needed to finish the estimate.
 \end{proof}

\section{Decoupled space-time estimates and global existence} 
\label{sec:deBEest}

In this last section, we show that the sharp estimates proved in the previous sections can be applied to the modern technology from the linearized cut-off Boltzmann theory to establish global existence.   This works precisely because of the specific structure of the interactions between the velocity variables and the space-time variables.  The methodology that we employ essentially de-couples the  required space-time estimates that are needed from the new fractional and anisotropic derivative estimates which are shown in the previous sections.

The method that we choose to utilize in this section goes back to Guo \cite{MR2000470}.  A key point of this approach is to derive a system of space-time ``macroscopic equations,'' see \eqref{c} through \eqref{adot} below, which have certain elliptic and hyperbolic structures. This structure can be used to prove an instantaneous coercive lower bound for the linear operator $L$, for solutions to the full non-linear equation \eqref{Boltz}, in our new anisotropic norm \eqref{normdef}.
This original method \cite{MR2000470} used high order temporal derivatives, which we could also utilize.  
But as a result of advances in  \cite{MR2095473}, \cite{Jang2009VMB,MR2420519} the need for temporal derivatives was removed.  The key point here is to use both the macroscopic equations \eqref{c} through \eqref{adot} and the conservation  laws \eqref{cl.0} through \eqref{cl.2} to remove the need to estimate time derivatives with an ``interaction functional'' that is comparable to the energy.  We point the reader's attention to the general abstract framework of  \cite{villani-2006}  also
in this direction.

We will initially discuss the coercivity of the linearized collision operator, $L$.  
With the null space \eqref{null}, and the projection \eqref{hydro}, we 
 decompose  $f(t,x,v)$ as
\begin{equation}
f={\bf P}f+\{{\bf I-P}\}f.
\notag
\end{equation}
We next prove a sharp constructive lower bound for the linearized collision operator.  

\begin{theorem}
\label{lowerN}  
There is a constructive constant $\delta_0>0$ such that
\begin{equation*}
\langle L g, g \rangle \ge \delta_0 | \{ {\bf I - P } \} g |_{N^{s,\gamma}}^2.
\end{equation*}
\end{theorem}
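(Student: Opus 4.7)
The plan is to reduce the claim to the already-established estimates by combining Lemma \ref{estNORM3}, the compact upper bound \eqref{compactupper}, and the non-sharp constructive coercive lower bound from \cite{MR2254617}. First, I would work with $g$ itself (not yet $\{\mathbf{I}-\mathbf{P}\}g$) and write $\langle Lg,g\rangle = \langle \nPiece g,g\rangle + \langle \kPiece g,g\rangle$. Applying Lemma \ref{estNORM3} with $\ell=0$ gives $\langle \nPiece g,g\rangle \gtrsim \nsm g\nsm_{N^{s,\gamma}}^2 - C\nsm g\nsm_{L^2(B_C)}^2$, while \eqref{compactupper} with $\ell=0$ yields $|\langle \kPiece g,g\rangle| \le \eta \nsm g\nsm_{L^2_{\gamma+2s}}^2 + C_\eta \nsm g\nsm_{L^2(B_{C_\eta})}^2$. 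Since $\nsm g\nsm_{L^2_{\gamma+2s}}^2 \le \nsm g\nsm_{N^{s,\gamma}}^2$, choosing $\eta$ sufficiently small allows me to absorb the first term and conclude
\[
\langle Lg,g\rangle \ge c_1\, \nsm g\nsm_{N^{s,\gamma}}^2 - C_2\, \nsm g\nsm_{L^2(B_{C_2})}^2
\]
for some explicit positive constants $c_1, C_2$.

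Next I would replace $g$ by $\{\mathbf{I}-\mathbf{P}\}g$. Since $L\mathbf{P}g = 0$ and $\mathbf{P}$ is an $L^2$-orthogonal projection, we have $\langle Lg,g\rangle = \langle L\{\mathbf{I}-\mathbf{P}\}g,\{\mathbf{I}-\mathbf{P}\}g\rangle$, so the preceding inequality becomes
\[
\langle Lg,g\rangle \ge c_1\, \nsm \{\mathbf{I}-\mathbf{P}\}g\nsm_{N^{s,\gamma}}^2 - C_2\, \nsm \{\mathbf{I}-\mathbf{P}\}g\nsm_{L^2(B_{C_2})}^2.
\]
To eliminate the bounded-region error, I would invoke the non-sharp but constructive coercive lower bound from \cite{MR2254617}, which for any $\gamma > -n$ and $s\in(0,1)$ provides a constant $c_0 > 0$ such that
\[
\langle Lg,g\rangle \ge c_0 \nsm \{\mathbf{I}-\mathbf{P}\}g\nsm_{L^2_{\gamma+2s}}^2.
\]
On $B_{C_2}$ the weight $\langle v\rangle^{\gamma+2s}$ is bounded above and below, so this controls $\nsm \{\mathbf{I}-\mathbf{P}\}g\nsm_{L^2(B_{C_2})}^2$ up to a fixed constant depending only on $C_2$, $\gamma$, and $s$.

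Finally, I would take a convex combination $\langle Lg,g\rangle = \theta \langle Lg,g\rangle + (1-\theta)\langle Lg,g\rangle$, applying the sharp bound with deficit to the first piece and the $L^2_{\gamma+2s}$ bound to the second. Choosing $\theta\in(0,1)$ small enough (explicitly, so that $\theta C_2$ is dominated by $(1-\theta)$ times the constant coming from \cite{MR2254617}) absorbs the negative error term and yields $\langle Lg,g\rangle \ge \delta_0 \nsm \{\mathbf{I}-\mathbf{P}\}g\nsm_{N^{s,\gamma}}^2$ with an explicit $\delta_0 > 0$. The main obstacle is primarily bookkeeping: one has to verify that the constants from Lemma \ref{estNORM3}, \eqref{compactupper}, and \cite{MR2254617} are genuinely constructive (in particular that the $C_\eta$ in \eqref{compactupper} and the ball radius depend only on the kernel parameters), so that the final $\delta_0$ is fully trackable; all three inputs are stated constructively, so the combination argument above should produce a constructive $\delta_0$ as claimed.
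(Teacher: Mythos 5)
Your proof is correct and follows essentially the same route as the paper's: derive the ``sharp minus compact-error'' lower bound (which the paper packages as \eqref{coerc2ineq}), restrict to $\{\mathbf{I}-\mathbf{P}\}g$ using $L\mathbf{P}g=0$ and self-adjointness, and absorb the bounded-region deficit via a convex combination with the constructive non-sharp bound of \cite{MR2254617}. One small slip: \cite{MR2254617} furnishes the constructive lower bound in the weaker weight $L^2_{\gamma}$, not $L^2_{\gamma+2s}$ (the $\gamma+2s$ weight is the sharp one from \cite{MR2322149}, and the paper deliberately invokes \cite{MR2254617} precisely because it is constructive); this does not affect the argument, since on a bounded ball any polynomial weight is comparable to a constant and hence either norm dominates $|g|_{L^2(B_{C})}$.
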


This coercive lower bound is proved with our new constructive compact estimates from Lemma \ref{sharpLINEAR} and Lemma \ref{estNORM3} 
when used in conjunction with
the non-sharp but constructive bound from Mouhot \cite{MR2254617} for the non-derivative part of the norm.

\begin{proof}[Proof of Theorem \ref{lowerN}]  
Suppose $g = \{ {\bf I - P } \} g$.     From \eqref{coerc2ineq}, for some small $\eta >0$
$$
\langle L g, g \rangle
\ge 
\eta |  g |_{N^{s,\gamma}}^2
- C |  g |_{L^{2}(B_C)}^2, 
\quad \exists C \ge 0.
$$ 
The positive constant $C$  is explicitly computable.  From \cite{MR2254617}, it is known that under our assumptions 
$$
\langle L g, g \rangle
\ge 
\delta_1  |  g |_{L^{2}_\gamma}^2.
$$ 
Here $\delta_1>0$ is an explicitly computable constant, and $\gamma$ is from \eqref{kernelP} and \eqref{kernelPsing}.  

Lastly, for any $\delta \in (0,1)$ we employ the splitting
$$
\langle L g, g \rangle
=
\delta \langle L g, g \rangle
+
(1-\delta)\langle L g, g \rangle
\ge 
\delta\eta |  g |_{N^{s,\gamma}}^2
- \delta C |  g |_{L^{2}(B_C)}^2
+
(1-\delta)\delta_1  |  g |_{L^{2}_\gamma}^2.
$$ 
Since $C$ is finite notice that  $|  g |_{L^{2}(B_C)}^2 \lesssim |  g |_{L^{2}_\gamma}^2$ for any $\gamma \in\mathbb{R}$.
Thus the lemma follows by choosing $\delta>0$ sufficiently small so that the last two terms are $\ge 0$.
\end{proof}

\subsection{Local Existence}  
Given the estimates that we have proved (in Section \ref{mainESTsec}), the 
local existence results for small data that we will prove in this section are rather standard; see e.g. \cite{MR2679369,MR2259206,MR839310,MR882376,MR2000470,MR2013332,MR1946444}.
Our local existence proof for \eqref{Boltz} is based on a uniform energy estimate for
an iterated sequence of approximate solutions.   The iteration starts at $f^0(t,x,v)\equiv 0$.  We   solve for $f^{\iter+1}(t,x,v)$ such that 
\begin{equation}
\left( \partial _t+v\cdot \nabla _x+\nPiece \right) f^{\iter+1}+\kPiece f^\iter
=
\Gamma (f^{\iter}, f^{\iter+1}), ~ f^{\iter+1}(0,x,v)=f_0(x,v).  \label{approximate}
\end{equation}
It is standard to show the linear equation \eqref{approximate} admits smooth solutions with the same regularity in $\HARDspaceh$ for \eqref{kernelP} (or $H^K_\ell$ in the case of \eqref{kernelPsing}) as a given  smooth small initial data, and also has a gain of $L^2((0,T); \hardNspace )$ for \eqref{kernelP} (or $L^2((0,T); \nspace )$ in the case of \eqref{kernelPsing}).  This does not create difficulties and can be proved with our estimates.  We explain herein how to establish the {\it a priori} estimates necessary to find a local classical solution in the limit as $\iter\to\infty$.

For notational convenience during  the proof we define the ``dissipation rate'' as
$$
\notag
\mathcal{D}(f(t))\eqdef 
\left\{
\begin{array}{cc}
\| f(t)\|_{\hardNspace}^2,
& \text{for the hard potentials: } \eqref{kernelP},
\\
\| f(t)\|_{\nspace}^2,
&  \text{for the soft potentials: }  \eqref{kernelPsing}.
\end{array}
\right.
$$
We will also use the following total norm
\begin{gather}
\mathcal{G}(f(t))
\eqdef
\|f(t)\|^2_{\spaceU}+ \int_0^t d\tau ~\mathcal{D}(f(\tau)).
\label{totalG}
\end{gather} 
Here the unified norm $\|\cdot \|_{\spaceU}$ is defined in \eqref{unifiedHspace}.  We will furthermore abuse notation by writing $\ang{\nPiece g, g}$ as $|g|_{\spacen}$, etc, below.

Our goal will be to obtain a uniform estimate for the iteration on 
a small time interval. The crucial energy estimate is as follows:

\begin{lemma}
\label{uniform}  The sequence \{$f^\iter(t,x,v)\}$ is well-defined. There exists a short time
$
T^{*} = T^{*}(\|f_0\|^2_{\spaceU})>0,
$
such that for $\|f_0\|^2_{\spaceU}$ sufficiently small, there is a uniform constant $C_0>0$ such that
\begin{equation}
\sup_{\iter\ge 0} ~
\sup_{0\le \tau \le T^{*}} ~ 
\mathcal{G}(f^{\iter}(\tau))
\le
2C_0
\|f_0\|^2_{\spaceU}.  \label{uni}
\end{equation}
\end{lemma}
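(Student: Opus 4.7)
My plan is a standard energy estimate combined with induction on $\iter$, adapted to exploit the sharp anisotropic machinery developed in the paper. The first step is to rewrite \eqref{approximate} as $(\partial_t + v \cdot \nabla_x + L) f^{\iter+1} = \kPiece(f^{\iter+1} - f^\iter) + \Gamma(f^\iter, f^{\iter+1})$, so that the full linearized operator $L$ appears on the left-hand side and I may invoke the coercive inequality \eqref{coerc1ineq} of Lemma \ref{DerCoerIneq} directly. Apply $\partial^\alpha_\beta$ to both sides (with $|\alpha|+|\beta|\le K$, or $|\alpha|\le \HARDxDER$, $|\beta|\le \HARDvDER$ in the hard case), take the $L^2(\domain\times\threed)$ inner product with $w^{2\ell-2|\beta|}\partial^\alpha_\beta f^{\iter+1}$, and sum over the relevant indices. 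The transport term integrates to zero over $\domain$, while the commutator $[\partial_\beta, v\cdot\nabla_x]$ produces only terms $\partial^{\alpha+e_i}_{\beta-e_i} f^{\iter+1}$ of weight $w^{\ell-|\beta|}$, easily handled by Cauchy--Schwartz and absorbed into the energy.

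The linear $L$ term contributes, via \eqref{coerc1ineq} after Cauchy absorption, a lower bound $\tfrac{\delta_0}{2}\mathcal{D}(f^{\iter+1})$ modulo a purely local $L^2(B_C)$ piece of $f^{\iter+1}$. The compact remainder $\kPiece(f^{\iter+1}-f^\iter)$ is estimated by the off-diagonal compact bound of Lemma \ref{CompactEst} (together with Lemma \ref{sharpLINEAR}), which after Cauchy--Young delivers a contribution bounded by $\eta\,\mathcal{D}(f^{\iter+1}) + C_\eta(\|f^{\iter+1}\|^2_{\spaceU} + \|f^\iter\|^2_{\spaceU})$; crucially these compact estimates place no derivatives on their argument, so the cross-iterate term never requires the dissipation of $f^\iter$. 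The trilinear term is controlled by Lemma \ref{NonLinEstLOW} for \eqref{kernelP} or Lemma \ref{NonLinEstHIGH} for \eqref{kernelPsing}, combined with the Sobolev-in-$x$ embedding \eqref{sobolev}, yielding
\begin{equation*}
\sum_{\alpha,\beta}\big|\big(w^{2\ell-2|\beta|}\partial^\alpha_\beta \Gamma(f^\iter, f^{\iter+1}),\,\partial^\alpha_\beta f^{\iter+1}\big)\big| \lesssim \|f^\iter\|_{\spaceU}\,\mathcal{D}(f^{\iter+1}).
\end{equation*}
After summing, choosing $\eta$ small, and integrating in $t$, one obtains
\begin{equation*}
\mathcal{G}(f^{\iter+1}(t)) \le C_1\|f_0\|^2_{\spaceU} + C_1\!\int_0^t\! \mathcal{G}(f^\iter(\tau))\,d\tau + C_1\! \sup_{0\le \tau\le t}\!\|f^\iter(\tau)\|_{\spaceU}\! \int_0^t\!\mathcal{D}(f^{\iter+1}(\tau))\,d\tau,
\end{equation*}
for a constant $C_1>0$ independent of $\iter$.

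To close the induction, set $C_0 \eqdef 2C_1$ and assume inductively that $\sup_{0\le \tau\le T^*}\mathcal{G}(f^\iter(\tau)) \le 2C_0\|f_0\|^2_{\spaceU}$, which is trivial for $f^0 \equiv 0$. Provided $\|f_0\|_{\spaceU}$ is small enough that $2C_1\sqrt{2C_0\|f_0\|^2_{\spaceU}} \le \tfrac{1}{2}$, the last term above is absorbed by the dissipation in $\mathcal{G}(f^{\iter+1})$ on the left, leaving
\begin{equation*}
\mathcal{G}(f^{\iter+1}(t)) \le C_1\|f_0\|^2_{\spaceU} + 2C_1 C_0 T^*\|f_0\|^2_{\spaceU},\qquad 0\le t \le T^*.
\end{equation*}
Choosing $T^* = T^*(\|f_0\|^2_{\spaceU})>0$ so that $2C_1 C_0 T^* \le \tfrac{C_0}{2}$ gives $\mathcal{G}(f^{\iter+1}(t)) \le 2C_0\|f_0\|^2_{\spaceU}$ and completes the inductive step. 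The principal obstacle is the cross-iterate coupling produced by $\kPiece f^\iter$: it is resolved precisely because the compact estimates in Lemmas \ref{sharpLINEAR} and \ref{CompactEst} differentiate nothing and exhibit the extra decay gain $\gamma+2s-\delta$, so the cross-term splits cleanly into a small piece absorbed by $\mathcal{D}(f^{\iter+1})$ and a $\|\cdot\|_{\spaceU}$-type piece handled by the induction hypothesis on $f^\iter$.
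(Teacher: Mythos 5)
Your proposal follows essentially the same route as the paper: an inductive energy estimate, where the heart of the matter is that the compact operator $\kPiece$ places no derivatives on its argument and gains decay, so the cross-iterate coupling never needs the dissipation of $f^\iter$. The only structural difference is cosmetic: the paper keeps $\nPiece f^{\iter+1}+\kPiece f^\iter$ on the left and uses the exact identity $\ang{\nPiece g,g}=|g|_B^2+\int\nu|g|^2$ to read off the dissipation directly, while you rearrange to $Lf^{\iter+1}-\kPiece(f^{\iter+1}-f^\iter)$ and then invoke the coercive lower bound \eqref{coerc1ineq}. After expanding $L=\nPiece+\kPiece$ these two bookkeepings are equivalent. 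You also write the general $\partial^\alpha_\beta$ case directly, whereas the paper proves the hard-potential, $\ell=\HARDvDER=0$ case explicitly and asserts the rest is ``directly analogous''; your handling of the $[\partial_\beta,v\cdot\nabla_x]$ commutator is consistent with the trick the paper uses later around \eqref{bgeBIG}, so that is fine.

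There is one small omission worth flagging. In your displayed Gronwall-type inequality
\begin{equation*}
\mathcal{G}(f^{\iter+1}(t)) \le C_1\|f_0\|^2_{\spaceU} + C_1\!\int_0^t\! \mathcal{G}(f^\iter)\,d\tau + C_1\! \sup_{0\le\tau\le t}\!\|f^\iter\|_{\spaceU}\!\int_0^t\!\mathcal{D}(f^{\iter+1})\,d\tau
\end{equation*}
you have dropped a term of the form $C_1\int_0^t\|f^{\iter+1}(\tau)\|^2_{\spaceU}\,d\tau$. It is unavoidable: the coercive bound \eqref{coerc1ineq} produces an $L^2(B_{C_\eta})$ error in $f^{\iter+1}$, and the Cauchy--Young splitting of the $\kPiece(f^{\iter+1}-f^\iter)$ term contributes a further $C_\eta\|f^{\iter+1}\|^2_{\spaceU}$; these are genuinely local $L^2$ quantities of $f^{\iter+1}$ that cannot be absorbed by the dissipation. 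The paper retains this term and converts it to $C_\eta T^*\sup_{\tau\le T^*}\mathcal{G}(f^{\iter+1}(\tau))$, which is then moved to the left-hand side by taking $T^*$ small. Your argument closes once this term is restored, because you are already taking $T^*$ small at the end anyway, so the overall conclusion survives; but as written the inequality you display is not quite the one that comes out of the estimates you invoked.
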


\begin{proof}
We write down the proof in the case of hard potentials \eqref{kernelP} so that $\spaceU=\HARDspaceh$ with $\ell =\HARDvDER=0$.  The general case when $0\le \HARDvDER \le \HARDxDER$, $\ell \ge 0$, and also the soft potential case $\spaceU = H^K_\ell$ with \eqref{kernelPsing} can be proved in the directly analogous way; as in e.g. \eqref{MAINeINEQ}.  The proof proceeds via induction over $k$. Clearly $k=0$ is true.  We
assume that (\ref{uni}) is valid for $k=\iter$.   For a given $f^{\iter}$, there exists a solution $f^{\iter+1}$ to the linear equation \eqref{approximate} with small data.  
We focus here on the proof of (\ref{uni}).

Take the spatial derivatives $\partial ^\alpha $ of (\ref{approximate}) to obtain
\begin{equation}
\left( \partial _t+v\cdot \nabla _x\right)\partial ^\alpha f^{\iter+1}
+\nPiece \left(\partial^\alpha f^{\iter+1}\right)
+\kPiece \left(\partial ^\alpha f^\iter\right)
=
\partial ^\alpha \Gamma \left(f^\iter, f^{\iter+1}\right).  \label{xderi}
\end{equation}
Therefore, applying the trilinear estimate in Lemma \ref{NonLinEstLOW} 
yields 
\begin{multline*}
\frac 12
\frac d{dt}\|\partial^\alpha f^{\iter+1}\|_{L^2_vL^2_x}^2
+\| \partial^\alpha f^{\iter+1} \|_{\spacen}^2
+(\kPiece \left(\partial ^\alpha f^\iter\right),\partial ^\alpha
f^{\iter+1})
\\
=(\partial^\alpha \Gamma \left(f^{\iter}, f^{\iter+1}\right),\partial ^\alpha f^{\iter+1})
\lesssim
\|f^\iter\|_{ \HARDspaceh }
 \|f^{\iter+1}\|_{ \hardNspace }^2.
\end{multline*}
Then integrating the above over $[0,t]$ we obtain
\begin{gather}
\frac 12\|\partial ^\alpha f^{\iter+1}(t)\|_{L^2_vL^2_x}^2
+
\int_0^t~d\tau~ \| \partial^\alpha f^{\iter+1}(\tau) \|_{\spacen }^2
+
\int_0^t ~d\tau~ (\kPiece\left(\partial ^\alpha f^\iter \right) ,\partial ^\alpha f^{\iter+1})  
\notag
\\
\label{xterm} 
\le \frac 12 \|\partial ^\alpha f_0\|_{L^2_vL^2_x}^2
+
C \int_0^t ~d\tau~
\|f^\iter\|_{\HARDspaceh }
 \|f^{\iter+1}\|_{\hardNspace }^2(\tau).
\end{gather}
We notice that from Lemma \ref{CompactEst} applied to \eqref{compactpiece}, for any $\eta>0$ small and $\eta' = 1/2$, 
\begin{gather*}
\left| \int_0^t d\tau (\kPiece (\partial ^\alpha f^\iter),\partial ^\alpha f^{\iter+1}) 
\right|
\le \int_0^t d\tau
\left( \frac 12 \|\partial ^\alpha f^{\iter+1}(\tau)\|_{L^2_{\gamma + 2s}}^2
+
C\|\partial ^\alpha f^{\iter+1}(\tau)\|_{L^2}^2 \right)
\\
+\eta \int_0^t ~ d\tau ~\|\partial ^\alpha f^\iter (\tau)\|_{L^2_{\gamma + 2s}}^2
+C_\eta \int_0^t ~ d\tau ~ \| \partial ^\alpha f^\iter(\tau)\|_{L^2}^2.
\end{gather*}
We incorporate this inequality into \eqref{xterm} and sum over $|\alpha | \le \HARDxDER$ to obtain 
\begin{multline}
\mathcal{ G}(f^{\iter+1}(t)) 
\le
C_0\| f_0 \|_{\spaceU}^2
+
\int_0^t ~ d\tau ~ \left\{ C\| f^{\iter+1}\|^2_{ \spaceU }(\tau)
+
C\eta \| f^\iter\|_{H^{\HARDxDER}_{x}L^2_{v,\gamma + 2s}}^2(\tau) \right\}
\\
+
 C_\eta \left( \int_0^t ~ d\tau ~ \| f^\iter\|^2_{ \spaceU  }(\tau) \right)
+
C\sup_{0\le \tau\le t}\mathcal{ G}(f^{\iter+1}(\tau))\sup_{0\le \tau\le t}\mathcal{ G}^{1/2}(f^\iter(\tau))  
\\
\le 
C_0\| f_0 \|_{ \spaceU }^2+C_\eta  t\left\{ \sup_{0\le \tau\le t}\mathcal{ G}(f^{\iter+1}(\tau))+
\sup_{0\le \tau\le t}\mathcal{ G}(f^\iter(\tau))\right\} 
\nonumber 
\\
+C\eta \sup_{0\le \tau\le t}\mathcal{ G}(f^\iter(\tau))  
+C\sup_{0\le \tau\le t}\mathcal{ G}(f^{\iter+1}(\tau))\sup_{0\le \tau\le t}\mathcal{ G}^{1/2}(f^\iter(\tau)).  
\nonumber
\end{multline}
We are using the total norm from \eqref{totalG}.
By the induction hypothesis \eqref{uni}
$$
\sup_{0\le \tau \le t}\mathcal{ G}(f^\iter(\tau))\le 2C_0\| f_0 \|_{\spaceU}^2.
$$ 
Then we collect terms in the previous inequality to obtain
$$
\left\{ 1-C_\eta T^{*}-C\| f_0 \|_{\spaceU}\right\}\sup_{0\le t\le T^{*}}\mathcal{ G}%
(f^{\iter+1}(t))
\\
\le
 \left\{C_0+C\eta +2C_\eta T^{*}C_0\right\}\| f_0 \|_{ \spaceU }^2. 
$$
By choosing $\eta $ small, then choosing $T^{*}=T^{*}(\| f_0 \|_{ \spaceU })$ small, we have 
\[
\sup_{0\le t\le T^{*}}\mathcal{ G}(f^{\iter+1}(t))\le 2C_0\| f_0 \|_{ \spaceU }^2.
\]
We therefore conclude Lemma \ref{uniform} if $T^{*}$ and $\| f_0 \|_{ \spaceU }^2$ are sufficiently small. 
\end{proof}

With our uniform control over the iteration from \eqref{approximate} proved in Lemma \ref{uniform}, we can now prove local existence in the following theorem.

\begin{theorem}(Local existence)
\label{local}
For any sufficiently small $M_0>0,$ there exists a time  $T^{*} = T^{*}(M_0)>0$ and $%
M_1>0,$ such that if 
\[
\| f_0 \|_{\spaceU}^2\le M_1, 
\]
then there is a unique  solution $f(t,x,v)$ to \eqref{Boltz} on  
$[0,T^{*})\times \domain\times \threed$ such that 
\[
\sup_{0\le t\le T^{*}}\mathcal{ G}(f(t))\le M_0. 
\]
Furthermore $\mathcal{ G}(f(t))$ is continuous over $[0,T^{*}).$ Lastly, we have positivity in the sense that
 if
 $F_0(x,v)=\mu +\mu
^{1/2}f_0\ge 0,$ then 
$
F(t,x,v)=\mu +\mu ^{1/2}f(t,x,v)\ge 0. 
$
\end{theorem}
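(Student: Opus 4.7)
My plan is to deduce Theorem \ref{local} from the uniform bound of Lemma \ref{uniform} by showing that the approximation sequence $\{f^{\iter}\}$ produced by the iteration scheme \eqref{approximate} is Cauchy in a suitable low-regularity norm. First I would set $g^{\iter+1} \eqdef f^{\iter+1} - f^{\iter}$, which satisfies
\begin{equation*}
(\partial_t + v\cdot \nabla_x + \nPiece) g^{\iter+1} + \kPiece g^{\iter} = \Gamma(g^{\iter}, f^{\iter+1}) + \Gamma(f^{\iter-1}, g^{\iter+1}),
\end{equation*}
with zero initial data. The natural energy identity to run on this difference equation does not require any spatial or velocity derivatives: I would differentiate $\tfrac{1}{2}\|g^{\iter+1}\|_{L^2_{x,v}}^2$ in time, use the coercivity \eqref{coerc2ineq} on $\nPiece$ to produce $\|g^{\iter+1}\|_{\spacen}^2$ (integrated in $x$), and then estimate the $\kPiece$ and $\Gamma$ terms via Lemma \ref{CompactEst} and the trilinear estimates (Lemma \ref{NonLinEstLOW} for hard and Lemma \ref{NonLinEstHIGH} for soft potentials). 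The uniform bound on $f^{\iter}$, $f^{\iter+1}$, $f^{\iter-1}$ in the high space $\spaceU$ furnished by Lemma \ref{uniform} plays the role of the parameter $g$ in each of those trilinear bounds, so no derivatives fall on $g^{\iter\pm 1}$ on the right-hand side.

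The outcome I expect is an estimate of the shape
\begin{equation*}
\sup_{0\le \tau \le t}\|g^{\iter+1}(\tau)\|^2_{L^2_{x,v}} + \int_0^t \!d\tau\, \|g^{\iter+1}\|^2_{L^2_x \spacen}
\le \tfrac{1}{2}\Big(\sup_{0\le \tau \le t}\|g^{\iter}\|^2_{L^2_{x,v}} + \int_0^t \!d\tau\, \|g^{\iter}\|^2_{L^2_x \spacen}\Big),
\end{equation*}
once $T^*$ and $M_1$ are chosen small (using $\eta$-absorption on $\kPiece$, shrinking $T^*$ to control the lower-order terms from compactness, and the smallness of $\| f_0 \|_{\spaceU}$ to dominate the $\Gamma$ contribution via $\sup \mathcal{G}(f^{\iter}) \lesssim M_0$). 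This gives a geometric contraction, so $\{f^{\iter}\}$ converges in $L^\infty_t L^2_{x,v} \cap L^2_t L^2_x \spacen$ to some $f$. By the uniform-in-$\iter$ bound $\sup_\tau \mathcal{G}(f^{\iter}(\tau)) \le 2 C_0 \|f_0\|_{\spaceU}^2$ and weak-$*$ compactness in $L^\infty_t \spaceU$ and weak compactness in $L^2_t \mathcal{D}$, the limit $f$ lies in the same spaces, inherits the bound $\sup_{0\le t\le T^*}\mathcal{G}(f(t)) \le M_0$ from lower semicontinuity, and solves \eqref{Boltz} because the trilinear estimates (applied to $\Gamma(f^{\iter},f^{\iter+1}) - \Gamma(f,f) = \Gamma(f^{\iter} - f,f^{\iter+1}) + \Gamma(f,f^{\iter+1}-f)$) allow passage to the limit in the weak formulation. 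Uniqueness follows from running the identical $L^2_{x,v}$ difference estimate on two solutions, where Gr\"onwall closes because of the smallness of the high-norm bound.

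For the continuity of $t\mapsto \mathcal{G}(f(t))$, I would exploit the differential inequality satisfied by $\|f(t)\|_{\spaceU}^2$: testing the Boltzmann equation against suitably-weighted spatial derivatives of $f$ and reusing Lemma \ref{NonLinEstLOW}/\ref{NonLinEstHIGH} and the coercive estimate \eqref{coerc1ineq} shows that the top-order energy is absolutely continuous in $t$ on $[0,T^*)$; combined with $f \in L^2_t \mathcal{D}$ (so $\int_0^t \mathcal{D}$ is continuous), this yields continuity of $\mathcal{G}$. Finally, for positivity, I would rewrite the iteration \eqref{approximate} in mild form along characteristics: for $F^{\iter+1} = \mu + \sqrt{\mu}\, f^{\iter+1}$ one checks directly from the non-negative structure of $\mathcal{Q}^+$ and the linear scheme that if $F^{\iter} \ge 0$ and $F_0 \ge 0$, then $F^{\iter+1} \ge 0$, since the damping term in \eqref{approximate} produces an exponential integrating factor and the inhomogeneous term assembles into a non-negative gain. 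Passing to the limit $\iter \to \infty$ preserves $F \ge 0$. The main obstacle I anticipate is the contraction step: one needs the compact-part contribution $(\kPiece g^{\iter}, g^{\iter+1})$ to be absorbed without producing a non-contractive term, which requires the Cauchy-type estimate to be performed in the low $L^2_{x,v}$ norm (rather than in $\spaceU$) so that one can freely use $\eta$-splittings and $T^*$-smallness without losing the full dissipation, and it is precisely here that the sharp, single-term structure of Lemmas \ref{NonLinEstLOW} and \ref{NonLinEstHIGH} is essential.
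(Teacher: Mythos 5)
Your contraction argument for existence and your uniqueness/continuity arguments are sound and essentially parallel the paper's (the paper is terse about the existence step, saying only that Lemma~\ref{uniform} gives enough compactness, but your Cauchy-sequence estimate is precisely the uniqueness computation in the paper adapted to $g^{\iter+1}=f^{\iter+1}-f^{\iter}$, and the decomposition $\Gamma(f^{\iter},f^{\iter+1})-\Gamma(f^{\iter-1},f^{\iter})=\Gamma(g^{\iter},f^{\iter+1})+\Gamma(f^{\iter-1},g^{\iter+1})$ is correct). The $\eta$-splitting of the $\kPiece$ term and the $T^*$-smallness to absorb the lower-order pieces also matches the way the paper's local analysis is structured.

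There is, however, a genuine gap in your positivity argument, and it is exactly the place where the non-cutoff structure bites. You propose to verify $F^{\iter+1}\ge 0$ directly from the iteration scheme \eqref{approximate} ``from the non-negative structure of $\mathcal{Q}^+$'' via a mild formulation with an exponential integrating factor. This fails for two reasons. First, in the non-cutoff regime \eqref{kernelQ} the gain and loss operators $\mathcal{Q}^{\pm}$ are \emph{individually divergent}: $\int_{\sph}d\sigma\, b(\ang{k,\sigma})=\infty$, so there is no well-defined non-negative gain term $\mathcal{Q}^+$ to assemble, nor a collision frequency $\nu(F^{\iter})\ge 0$ to exponentiate; only the difference $\mathcal{Q}^+-\mathcal{Q}^-$ makes sense. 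Second, even setting aside the divergence, \eqref{approximate} uses the splitting $L=\nPiece+\kPiece$ which is tailored for the energy estimates, not for a sign-preserving structure: writing the scheme in terms of $F^{\iter+1}=\mu+\sqrt{\mu}f^{\iter+1}$ produces $(\partial_t+v\cdot\nabla_x-\nu_{\kPiece})F^{\iter+1}=\mathcal{Q}(F^{\iter},F^{\iter+1})-\nu_{\kPiece}F^{\iter}$, and neither $\nu_{\kPiece}$ nor the right-hand side has a useful sign (the paper only provides $|\nu_{\kPiece}(v)|\lesssim\ang{v}^\gamma$). The paper's route is different: it truncates the angular singularity ($B_\epsilon\to B$), so that the \emph{cutoff} problem does have a genuine $\mathcal{Q}^+/\nu$ gain-loss structure and positivity of $F^\epsilon$ follows by the classical argument; it then passes $\epsilon\downarrow 0$ by compactness and, crucially, by the \emph{already-established uniqueness} of the non-cutoff solution to identify the limit. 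So positivity there is deduced \emph{after} existence and uniqueness, via an approximation external to the iteration, rather than propagated through \eqref{approximate} itself. To repair your proposal you would need to replace the positivity paragraph with a cutoff-approximation-plus-compactness argument of this type, or supply a maximum-principle argument that handles the singular fractional operator directly without invoking $\mathcal{Q}^+$.
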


\begin{proof}By taking $\iter\rightarrow \infty ,$ we have sufficient compactness from Lemma \ref{uniform} to obtain a strong solution $f(t,x,v)$ to the Boltzmann equation \eqref{Boltz} locally in time.

To prove the uniqueness, we suppose that there exists another solution $g$ with the same initial data satisfying 
$\sup_{0\le \tau \le T^{*}}\mathcal{ G}(g(\tau))\le M_0.$ The difference $f-g$
satisfies 
\begin{equation}
\{\partial _t+v\cdot \nabla _x\}\left(f-g\right)+L\left(f-g\right)=\Gamma \left(f-g,f\right)+\Gamma \left(g,f-g\right).
\label{difference}
\end{equation}
We suppose without loss of generality that we are in the case of hard potentials \eqref{kernelP}.  We apply Theorem \ref{TriLinEst} and the embedding  $H^{\ksob}(\domain) \subset L^\infty(\domain) $ to obtain
\begin{multline*}
\left| \left( \left\{\Gamma\left(f-g,f\right)+\Gamma\left(g,f-g\right)\right\},f-g\right) \right|
\lesssim
\| g\|_{L^2_v H^{\ksob}_x}
\|f-g\|_{N^{s,\gamma}} ^2
\\
+
\|f-g\|_{L^2_{v,x}} \|f\|_{H^{\ksob}_x  N^{s,\gamma}}
\|f-g\|_{N^{s,\gamma}}. 
\end{multline*}
For the soft-potentials \eqref{kernelPsing}, we would use instead Lemma \ref{NonLinEstA}.  The Cauchy-Schwartz inequality (applied in the time variable) shows us that 
\begin{multline*}
\int_0^t d\tau 
\|f\|_{H^{\ksob}_x  N^{s,\gamma}}
\|f-g\|_{N^{s,\gamma}} 
\|f-g\|_{L^2_{v,x}}(\tau)
\\
\le
\sqrt{M_0}
\left(
\sup_{0\le \tau \le t}
\|f(\tau)-g(\tau)\|_{L^2_{v,x}}^2 
\int_0^t d\tau 
~ \|f(\tau)-g(\tau)\|_{N^{s,\gamma}} ^2
\right)^{1/2}.
\end{multline*}
We have just used the following fact, which follows from the local existence, that
\[
\sup_{0\le \tau \le t}
\|f(\tau)\|_{L^2_v H^{\ksob}_x}
+
\int_0^t ~ d\tau ~ \| f(\tau)\|_{H^{\ksob}_x N^{s,\gamma}}^2
\le M_0.
\]
And similarly for $g(t)$.
We use \eqref{coerc2ineq} to obtain
\[
(L(f-g),f-g)\ge \delta \|f-g\|_{N^{s,\gamma}} ^2-C\|f-g\|_{L^2(\domain \times B_C)}^2
\]
for some small $\delta > 0$.
We  multiply \eqref{difference} with $f-g$ and integrate over 
$[0,t]\times \domain\times \threed$ to achieve 
\begin{multline*}
\frac{1}{2}\|f(t)-g(t)\|_{L^2_{v,x}} ^2+ \delta \int_0^t ~ d\tau~ \|f(\tau)-g(\tau)\|_{N^{s,\gamma}} ^2
\\
\lesssim 
\sqrt{M_0}
\left(
\sup_{0\le \tau \le t}
\|f(\tau)-g(\tau)\|_{L^2_{v,x}} ^2
+
\int_0^t ~ d\tau~ \|f(\tau)-g(\tau)\|_{N^{s,\gamma}} ^2
\right)
\\
+
\int_0^t~ d\tau~ \|f(\tau)-g(\tau)\|_{L^2(\domain \times B_C)}^2.
\end{multline*}
We deduce $f \equiv g$ and the uniqueness from the Gronwall inequality.

To show the continuity of $\mathcal{ G}(f(t))$ in time, we  sum 
\eqref{xderi}, \eqref{xterm} over $|\alpha| \le \HARDxDER$ and integrate from $t_2$ to $t_1$ (rather than over $[0,t]$).  Then with $f^\iter=f^{\iter+1}=f$ we obtain 
\begin{gather*}
\left| \mathcal{ G}(f(t_1))-\mathcal{ G}(f(t_2)) \right|
=
\left| \frac 12\| f(t_1)\|_{ \spaceU }^2
-
\frac 12\| f(t_2)\|_{ \spaceU }^2
+
\int_{t_2}^{t_1} ~ d\tau ~\mathcal{D}(f(\tau)) \right| 
\\
\lesssim \left\{1+\sup_{{t_2}\le \tau \le {t_1}}\sqrt{\mathcal{ G}(f(\tau ))}
\right\}
\int_{t_2}^{t_1}
~ d\tau ~ 
\mathcal{D}(f(\tau))
\rightarrow 0,
\end{gather*}
as ${t_1}\rightarrow {t_2}$ since 
$
\mathcal{D}(f(\tau))
$ 
is
integrable in time.

We now explain the proof of positivity.   The key idea in this section is not new, and we give a brief outline.  
Previous works which obtain the positivity of strong solutions without cut-off include \cite{MR839310,MR2679369}.  For simplicity, we  use the argument from \cite{MR2679369}, however their initial data is  effectively in $f_0\in H^{M}_{\ell}$ for $M\ge 5$, since $F_0 = \mu + \sqrt{\mu} f_0$, and they study moderate angular singularities $0<s<1/2$.  
The key point is to consider a sequence of solutions $F^\epsilon$ to the Boltzmann equation \eqref{BoltzFULL} with the collision kernels \eqref{kernelQ}, \eqref{kernelP}, and \eqref{kernelPsing} except that $B$ is replaced by $B_\epsilon$ where the angular singularities in $B_\epsilon$ are removed but however $B_\epsilon \to B$ as $\epsilon \downarrow 0$.  We can observe that $F^\epsilon$ is positive using the argument, as in for instance \cite{MR2000470,MR2013332}.
If our initial data is in $H^{M}_{\ell}$, then 
since we have proved the uniqueness, we use the compactness procedure from \cite{MR2679369} to conclude that $F^\epsilon \to F$ as $\epsilon \downarrow 0$ and therefore
$F = \mu + \sqrt{\mu} f \ge 0$ if initially
$F_0 = \mu + \sqrt{\mu} f_0 \ge 0$.
The argument is finished by using the density of $H^{M}_{\ell}$ in the larger space $\spaceU(\domain\times \threed)$, standard approximation arguments, and our uniqueness theorem.  For the high singularities, 
$1/2\le s<1$, the positivity can be established by using  high derivative estimates from this paper  and following the same compactness procedure as in the low singularity case. 
\end{proof}

\subsection{Coercivity estimates for solutions to the non-linear equation}

The next step is to prove a general statement of the linearized $H$-theorem, which manifests itself as a 
coercive inequality.  These types of coercive estimates for the linearized collision operator were originally proved by Guo \cite{MR2000470,MR2013332} in the hard-sphere and cut-off regime.   The next theorem extends this estimate to the full range of inverse power law potentials $p>2$, and more generally to \eqref{kernelQ}, \eqref{kernelP}, and \eqref{kernelPsing}.

\begin{theorem}
\label{positive}  
Given the initial data $f_0 \in \spaceU$,
which satisfies \eqref{conservation} and the assumptions of Theorem \ref{local}. 
Consider the corresponding solution, $f(t,x,v)$, to \eqref{Boltz} which continues to satisfy \eqref{conservation}.
There is a small constant $M_0 >0$ such that if
\begin{gather}
\| f(t) \|^2_{\spaceU} \le M_0,
\label{smallL}
\end{gather}
then, further, there are universal constants $\delta>0$ and $C_2>0$ such that   
$$
\sum_{|\alpha| \le K} \| \{ {\bf I - P } \} \partial^\alpha f \|_{_{N^{s,\gamma}}}^2(t)
\ge 
\delta
\sum_{|\alpha| \le K} \| { \bf  P  } \partial^\alpha f \|_{_{N^{s,\gamma}}}^2(t) - C_2\frac{d\mathcal{I}(t)}{dt},
$$
where $\mathcal{I}(t)$ is the ``interaction functional'' defined precisely in \eqref{mainINTERACTION} below.
\end{theorem}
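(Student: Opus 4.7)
The plan is to follow the macroscopic/microscopic decomposition approach of Guo \cite{MR2000470,MR2013332}, adapted to the anisotropic norm $N^{s,\gamma}$ developed in this paper. Write $f = \mathbf{P}f + \{\mathbf{I-P}\}f$ with $\mathbf{P}f = \{a^f(t,x) + b^f(t,x)\cdot v + c^f(t,x)|v|^2\}\sqrt{\mu}$ as in \eqref{hydro}, and apply $\partial^\alpha$ to \eqref{Boltz}. Since $L(\mathbf{P}f)=0$, pairing the resulting equation against the thirteen macroscopic test functions $\{\sqrt{\mu}, v_i\sqrt{\mu}, v_iv_j\sqrt{\mu}, v_i|v|^2\sqrt{\mu}\}$ and using that these lie in $N(L)^\perp$ only in certain directions, I will derive a closed system of ``local macroscopic equations'' of the schematic form
\begin{equation*}
\partial_t (a,b,c) + A\cdot\nabla_x(a,b,c) = \ell(\{\mathbf{I-P}\}f) + h(f),
\end{equation*}
where $\ell$ is a bounded linear operator in $v$ producing $L^2_x$-bounds controlled by $\|\{\mathbf{I-P}\}f\|_{N^{s,\gamma}}$, and $h(f)$ comes from $\Gamma(f,f)$ and is quadratic. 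This part is essentially algebraic and works identically to the cut-off case because $\mathbf{P}f$ is smooth and Maxwellian-weighted, so no angular-singularity issues arise here.

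Next, from the system I will extract elliptic-type relations expressing $\nabla_x(a,b,c)$ in terms of $\{\mathbf{I-P}\}f$ plus $\partial_t$ of macroscopic quantities and quadratic terms, following the method in \cite{MR2095473,Jang2009VMB,MR2420519} that avoids time derivatives by replacing the temporal piece with an ``interaction functional.'' Concretely, I will define
\begin{equation*}
\mathcal{I}(t) \eqdef \sum_{|\alpha|\le K-1} \int_{\domain}dx \ \Psi^\alpha(a,b,c)(t,x)
\end{equation*}
where each $\Psi^\alpha$ is a bilinear pairing of macroscopic coefficients of $\partial^\alpha f$ with appropriate spatial derivatives or Riesz-type inverses of other macroscopic coefficients, chosen so that $-\frac{d}{dt}\mathcal{I}$ absorbs precisely those time-derivative terms appearing when $\nabla_x(a,b,c)$ is integrated by parts back. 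The conservation laws \eqref{conservation}, i.e. $\int a = \int b_j = \int(|v|^2 c + \text{const}\cdot a) = 0$, kill the zero-frequency modes and make the Poincar\'e inequality $\|\mathbf{P}\partial^\alpha f\|_{L^2} \lesssim \|\nabla_x \mathbf{P} f\|_{H^{K-1}}$ applicable on $\domain$.

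Since $\mathbf{P}\partial^\alpha f$ is a polynomial-times-Gaussian in $v$, the $N^{s,\gamma}$ norm and the $L^2_v$ norm of $\mathbf{P}\partial^\alpha f$ are comparable (both reducing to the $L^2_x$ norms of $a,b,c$ and spatial derivatives thereof), so after the macroscopic system is closed and combined with the Poincar\'e inequality, one obtains
\begin{equation*}
\sum_{|\alpha|\le K}\|\mathbf{P}\partial^\alpha f\|_{N^{s,\gamma}}^2 \lesssim \sum_{|\alpha|\le K}\|\{\mathbf{I-P}\}\partial^\alpha f\|_{N^{s,\gamma}}^2 + \frac{d\mathcal{I}(t)}{dt} + \mathrm{quad}(f),
\end{equation*}
where $\mathrm{quad}(f)$ is a quadratic remainder coming from the $\Gamma(f,f)$ contributions to the macroscopic equations.

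The main obstacle will be controlling the quadratic remainder $\mathrm{quad}(f)$. Here I invoke the trilinear estimates Lemma \ref{NonLinEstLOW} (hard) and Lemma \ref{NonLinEstHIGH} (soft), which give $|(\partial^\alpha\Gamma(f,f),\varphi)| \lesssim \|f\|_{\spaceU}\|f\|_{\mathcal D}\|\varphi\|_{N^{s,\gamma}}$-type bounds; pairing against smooth Gaussian-weighted macroscopic test functions $\varphi$ means the $\|\varphi\|_{N^{s,\gamma}}$ factor is harmless, and the remaining $\|f\|_{\spaceU}\|f\|_{\mathcal D}$ is absorbed using the smallness hypothesis \eqref{smallL}: choosing $M_0$ small forces $\mathrm{quad}(f) \le \tfrac{1}{2}\sum_{|\alpha|\le K}\|\mathbf{P}\partial^\alpha f\|_{N^{s,\gamma}}^2 + C\sum_{|\alpha|\le K}\|\{\mathbf{I-P}\}\partial^\alpha f\|_{N^{s,\gamma}}^2$, at which point the macroscopic piece is absorbed into the left-hand side and the stated inequality follows with some constructive $\delta > 0$. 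The delicate bookkeeping is ensuring that $\mathcal{I}(t)$ is bounded by the energy (so it is a genuine ``interaction'' rather than free energy), which follows because each $\Psi^\alpha$ is at most quadratic in the macroscopic variables and therefore controlled by $\|f\|_{\spaceU}^2$.
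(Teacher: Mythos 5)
Your overall strategy matches the paper's: Guo's macroscopic/microscopic decomposition, the macroscopic equations paired with the local conservation laws \eqref{cl.0}--\eqref{cl.2}, the interaction functional $\mathcal{I}(t)$ replacing time derivatives, Poincar\'{e} via the mean-zero condition of Lemma~\ref{average}, and the observation that $|\mathbf{P}\partial^\alpha f|_{N^{s,\gamma}}\approx |\partial^\alpha a|+|\partial^\alpha b|+|\partial^\alpha c|$ for Gaussian-weighted polynomials. All of that is correct and is exactly what the paper does via Lemmas~\ref{average}, \ref{linear}, \ref{high}, and the argument for \eqref{claimH}.

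The one substantive gap is your treatment of the quadratic remainder from $\Gamma(f,f)$. You invoke the trilinear estimates Lemma~\ref{NonLinEstLOW} (hard) and Lemma~\ref{NonLinEstHIGH} (soft), which produce bounds of the form $\|g\|_{\spaceU}\,\|h\|_{\mathcal D}\,\|\varphi\|_{N^{s,\gamma}}$, where $\mathcal D$ is the full dissipation norm. In the soft case (and more generally whenever $V>0$) the dissipation $\mathcal D$ involves mixed derivatives $\partial^\alpha_\beta f$ with $|\beta|>0$ and negatively-weighted anisotropic norms $N^{s,\gamma}_{\ell-|\beta|}$. These velocity-derivative terms simply do not appear on the right-hand side of the inequality you are trying to prove --- Theorem~\ref{positive} involves only $\sum_{|\alpha|\le K}\|\{\mathbf{I-P}\}\partial^\alpha f\|^2_{N^{s,\gamma}}$ and $\sum_{|\alpha|\le K}\|\mathbf{P}\partial^\alpha f\|^2_{N^{s,\gamma}}$ with purely \emph{spatial} derivatives --- so the smallness absorption you describe (``$\mathrm{quad}(f)\le\tfrac12\sum\|\mathbf{P}\partial^\alpha f\|^2+C\sum\|\{\mathbf{I-P}\}\partial^\alpha f\|^2$'') cannot close. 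The key point you are missing is that when $\Gamma(f,f)$ is paired against a Gaussian-weighted polynomial such as $e_k$, one should not use the general trilinear estimates at all: the compactly supported / rapidly decaying test function collapses the pairing to a much weaker hypothesis on $f$. This is precisely the content of \eqref{coerc1ineqPREP2} from Proposition~\ref{upperBds}, which states
\begin{equation*}
\left| \ang{w^{2\ell}\Gamma_\beta (g,f),\phi} \right| \lesssim \nsm  g \nsm_{L^2_{-m}}  \nsm  f \nsm_{L^2_{-m}}
\end{equation*}
for any rapidly decaying $\phi$. Building on this, the paper's Lemma~\ref{high} establishes
\begin{equation*}
\sum_{\macroCOE \in \mathcal{M}}\|\Gamma_\macroCOE\|_{H^{K-1}_x}
\lesssim
\sqrt{M_0}\sum_{|\alpha |\le K} \| \partial ^\alpha f\|_{L^2_{\gamma+2s}(\domain\times\threed)},
\end{equation*}
which has only \emph{spatial} derivatives and only the weak $L^2_{\gamma+2s}$ norm on the right. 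Squaring this produces $M_0\sum_{|\alpha|\le K}\|\partial^\alpha f\|^2_{L^2_{\gamma+2s}}$, which decomposes cleanly into macroscopic and microscopic pieces and can be absorbed. You should replace your invocation of Lemma~\ref{NonLinEstLOW}/\ref{NonLinEstHIGH} with this specialized compact estimate; without it the argument does not close as stated.
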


We prove this theorem by an analysis of the  macroscopic equations and also the local conservation laws.  The system of macroscopic equations comes from first expressing the hydrodynamic part ${\bf P}f$ through the microscopic part $\{{\bf I-P}\}f,$ up to the higher order term $\Gamma (f,f)$ as
\begin{equation}
\{\partial _t+v\cdot \nabla _x\}{\bf P}f=
-\partial _t \{{\bf I-P}\}f
+l(\{{\bf I-P}\}f)+\Gamma (f,f),  \label{macro}
\end{equation}
where 
\begin{equation}
l(\{{\bf I-P}\}f) \eqdef -\{v\cdot \nabla _x+L\}\{{\bf I-P}\}f.  \label{l}
\end{equation}
Notice that we have isolated the time derivative of the microscopic part.

To derive the macroscopic equations for ${\bf P}f$'s coefficients $a^f(t,x)$, $b^f_i(t,x)$ and 
$c^f(t,x)$, we use (\ref{hydro}) to expand the entries of left hand side of (\ref{macro}) as 
\begin{multline*}
\sum_{i=1}^\dim\left\{ v_i\partial_i c|v|^2+\{\partial_t c
+
\partial_i b_i\}v_i^2+
\{\partial_t  b_i+\partial_i a\}v_i\right\} \sqrt{\mu 
}
\\
+\sum_{i=1}^\dim\sum_{j>i}\{\partial_i b_j+\partial_j b_i\}v_iv_j ~ \sqrt{\mu}
+
\partial_t  a
~\sqrt{\mu}, 
\end{multline*}
where  $\partial_i=\partial _{x_i}$ above. 
For fixed ($t,x),$ this is an expansion of the left hand side of 
\eqref{macro} with respect to the following basis, $\{ e_k \}_{k=1}^{3n+1+n(n-1)/2}$, which consists of
\begin{gather}
\left( v_i|v|^2\sqrt{\mu } \right)_{1\le i\le n}, ~ 
\left(v_i^2\sqrt{\mu } \right)_{1\le i\le n},  ~
\left(v_iv_j\sqrt{\mu } \right)_{1\le i<j\le n}, ~
\left(v_i\sqrt{\mu } \right)_{1\le i\le n}, ~
\sqrt{\mu }.  
\label{base}
\end{gather}
From here one obtains the so-called
macroscopic equations 
\begin{eqnarray}
\nabla _xc&=& -\partial_t r_c+ l_c+\Gamma_c  \label{c} \\
\partial_t c+\partial_i b_i &=& -\partial_t r_i+ l_i+\Gamma_i  \label{bi} \\
\partial_i b_j+\partial_j b_i &=& -\partial_t r_{ij}+ l_{ij}+\Gamma_{ij} \quad (i\neq j)  \label{bij} \\
\partial_t b_i+\partial _i a &=& -\partial_t r_{bi}+ l_{bi}+\Gamma_{bi}
\label{ai} \\
\partial_t a &=& -\partial_t r_a+ l_a+\Gamma_a.  \label{adot}
\end{eqnarray}
For notational convenience we define the index set to be
$$
\mathcal{M} \eqdef \left\{c, ~i,~ \left( ij \right)_{i \ne j}, ~bi,~ a\left| ~ i, j = 1,\ldots,\dim\right. \right\}.
$$
This set $\mathcal{M}$ is just the collection of all indices in the macroscopic equations.  
Then for $\macroCOE \in  \mathcal{M}$ we have that each  $l_\macroCOE (t,x)$ are the coefficients of $l(\{{\bf I-P}\}f)$ with respect to the elements of \eqref{base}; similarly each $\Gamma_\macroCOE(t,x)$ and
$r_\macroCOE(t,x)$
are the coefficients of $\Gamma(f,f)$ and $\{{\bf I-P}\}f$ respectively.  Precisely, each element $r_\macroCOE$ can be expressed as 
$$
r_\macroCOE = \sum_{k} C_k^\macroCOE \langle \{{\bf I-P}\}f, e_k \rangle.
$$
All of the constants $C_k^\macroCOE$ above can be computed explicitly
although we do not give their precise form herein.  Each of the terms $l_\macroCOE$ and $\Gamma_\macroCOE$ can be computed similarly.  

The second set of equations we consider are the local conservation laws satisfied by $(a^f,b^f,c^f)$.  To derive these we 
 multiply \eqref{Boltz} by the collision invariants $\nullSpace$ in \eqref{null}
and integrate only in the velocity variables to obtain 
\begin{eqnarray*}
  \partial_t (a^f+\dim c^f)+\nabla_x\cdot b^f &=& 0,
  \\
  \partial_t b^f+\nabla_x (a^f+(\dim+2) c^f) &=& - \nabla_x\cdot \langle
  v\otimes v\sqrt{\mu},\{{\bf I - P}\}f\rangle,
  \\
   \partial_t(\dim a^f+n(n+2) ~ c^f)+(\dim +2)\nabla_x\cdot b^f 
   &=& 
   - \nabla_x\cdot  \langle |v|^2v\sqrt{\mu},\{{\bf I - P}\}f\rangle.
\end{eqnarray*}
Above we have used the moment values of the normalized global Maxwellian $\mu$: 
\begin{eqnarray*}
&&\langle 1, \mu\rangle=1,
\quad
\langle |v_j|^2, \mu\rangle=1,\ \ \langle |v|^2, \mu\rangle=\dim, ~
\langle |v_j|^2|v_i|^2, \mu\rangle=1, \ \ j\neq i,\\
&&\langle |v_j|^4, \mu\rangle=3, \ \ \langle |v|^2|v_j|^2,
\mu\rangle=\dim+2,\ \ \langle |v|^4, \mu\rangle=n(n+2).
\end{eqnarray*}
Comparing the first and third local conservation law results in
\begin{eqnarray}
  \partial_t a^f&=& 
    \frac{1}{2}\nabla_x \cdot \langle
    |v|^2v\sqrt{\mu},\{{\bf I - P}\}f\rangle,
  \label{cl.0}
  \\
  \partial_t b^f+\nabla_x (a^f+(\dim+2)c^f) &=& - \nabla_x\cdot \langle
  v\otimes v\sqrt{\mu},\{{\bf I - P}\}f\rangle,
  \label{cl.1}
  \\
    \partial_t c^f+\frac{1}{\dim}\nabla_x\cdot b^f  &=& -
    \frac{1}{2\dim}\nabla_x \cdot \langle
    |v|^2v\sqrt{\mu},\{{\bf I - P}\}f\rangle.
    \label{cl.2}
\end{eqnarray}
These are the local conservation laws that we will study below.
For the rest of this section, we concentrate on a solution $f$ to the Boltzmann equation \eqref{Boltz}.

\begin{lemma}
\label{average}Let $f(t,x,v)$ be the local solution to the Boltzmann equation \eqref{Boltz}
shown to exist in
Theorem \ref{local}
which satisfies 
\eqref{conservation}. Then we have 
\begin{equation}
 \int_{\domain}~ dx~ a^f(t,x) = \int_{\domain}~ dx~ b^f(t,x) = \int_{\domain} ~ dx~ c^f(t,x)  
 =
 0,
 \notag
\end{equation}
where $a^f$, $b^f=[b_1,b_2,b_3],$ $c^f$
are defined in (\ref{hydro}).
\end{lemma}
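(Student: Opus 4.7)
The plan is to extract the averages of $a^f$, $b^f$, $c^f$ by pairing $f(t,x,\cdot)$ against the collision invariants $\sqrt{\mu}$, $v_i\sqrt{\mu}$, $|v|^2\sqrt{\mu}$ which span the null space $\nullSpace$, then integrate over $x\in \domain$ and invoke the conservation laws \eqref{conservation}.

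First I would observe that by the definition of ${\bf P}$ as the orthogonal projection onto $\nullSpace$, the microscopic part $\{{\bf I-P}\}f$ is $L^2_v$-orthogonal to each of the basis elements of $\nullSpace$. Consequently, for any $\phi \in \nullSpace$ we have
\[
\ip{f(t,x,\cdot)}{\phi}_{L^2_v} = \ip{{\bf P}f(t,x,\cdot)}{\phi}_{L^2_v}.
\]
Applying this to $\phi = \sqrt{\mu}$, $v_i\sqrt{\mu}$, and $|v|^2\sqrt{\mu}$ and using the Gaussian moment identities
$\ip{1,\mu} = 1$, $\ip{v_iv_j,\mu} = \delta_{ij}$, $\ip{|v|^2,\mu} = n$, $\ip{|v|^4,\mu} = n(n+2)$, together with the expansion \eqref{hydro}, yields pointwise in $(t,x)$ the three identities
\begin{align*}
\ip{f,\sqrt{\mu}}_{L^2_v} &= a^f(t,x) + n\, c^f(t,x), \\
\ip{f, v_i\sqrt{\mu}}_{L^2_v} &= b_i^f(t,x), \\
\ip{f, |v|^2\sqrt{\mu}}_{L^2_v} &= n\, a^f(t,x) + n(n+2)\, c^f(t,x).
\end{align*}

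Next I integrate each of these identities in $x$ over $\domain$ and use \eqref{conservation}, which guarantees that each of the three left-hand sides has zero spatial average for every $t \ge 0$. The second identity immediately gives $\int_{\domain} b_i^f\, dx = 0$ for each $i$. The first and third identities reduce to the linear system
\[
\int_{\domain} a^f\, dx + n\int_{\domain} c^f\, dx = 0, \qquad n\int_{\domain} a^f\, dx + n(n+2)\int_{\domain} c^f\, dx = 0,
\]
whose determinant equals $2n^2 \ne 0$, so $\int_{\domain} a^f\, dx = \int_{\domain} c^f\, dx = 0$.

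There is no serious obstacle here: the result is essentially the algebraic content of the orthogonal decomposition combined with \eqref{conservation}. The only point requiring mild care is to ensure the pairings $\ip{f(t,x,\cdot)}{v^\alpha\sqrt{\mu}}_{L^2_v}$ make sense and that Fubini applies when integrating in $x$; this is automatic because $f(t)\in \spaceU$ provides ample decay in $v$ (the polynomial weights $v^\alpha\sqrt{\mu}$ lie in any weighted $L^2_v$ space) and continuity in $x$ through the Sobolev embedding from Remark \ref{rem:sobolev}.
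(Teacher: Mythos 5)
Your proof is correct and follows essentially the paper's route: the paper's terse remark about ``the cancellation that we just used in deriving the conservation laws \eqref{cl.0}--\eqref{cl.2}'' refers to exactly the moment computations against the collision invariants that you spelled out explicitly. One small slip in arithmetic: the determinant of your $2\times 2$ system is $n(n+2)-n^{2}=2n$, not $2n^{2}$, though since $2n\ne 0$ for $n\ge 2$ the conclusion is unaffected.
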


The proof of this lemma follows directly from the conservation of mass, momentum and energy \eqref{conservation}, using the cancellation that we just used in deriving the 
 conservation laws \eqref{cl.0}, \eqref{cl.1}, and \eqref{cl.2}.  
 In the following two lemmas, we establish the required estimates on the linear microscopic piece and then we estimate the non-linear higher order term.  With these lemmas, the macroscopic equations and the local conservation laws, we will prove Theorem \ref{positive}.

\begin{lemma}
\label{linear}
For any of the microscopic terms, $l_\macroCOE$, from the macroscopic equations
\[
\sum_{\macroCOE \in \mathcal{M}} \|l_\macroCOE\|_{H^{K-1}_x}
\lesssim
\sum_{|\alpha| \le K}\|\{{\bf I-P}\} \partial^\alpha f\|_{L^2_{\gamma+2s}(\domain\times\threed)}.
\]
\end{lemma}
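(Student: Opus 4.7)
The plan is to bound each coefficient $l_\macroCOE(t,x)$ pointwise in $x$ by a velocity $L^2_{\gamma+2s}$-norm of $\{\mathbf{I-P}\}f$ and its one spatial derivative, and then to take up to $K-1$ spatial derivatives. The key observations are that the basis functions in (\ref{base}) are polynomials multiplied by $\sqrt{\mu}$, hence rapidly decaying, and that the operators $v\cdot\nabla_x$ and $L$ both commute with spatial derivatives $\partial^\alpha$ and with the projection $\{\mathbf{I-P}\}$.

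First I would write, for each $\macroCOE \in \mathcal{M}$, a representation of the form $l_\macroCOE(t,x) = \sum_k C_k^\macroCOE \langle e_k, l(\{\mathbf{I-P}\}f)(t,x,\cdot)\rangle$, parallel to the formula for $r_\macroCOE$ given in the text. Using the definition (\ref{l}) and the self-adjointness of $L$ on $L^2(\threed)$ (and a transport-style integration by parts in $v$ which is just multiplication by $v$), this becomes
\[
l_\macroCOE(t,x) = -\sum_k C_k^\macroCOE \Bigl\{\, \ang{v\, e_k,\nabla_x \{\mathbf{I-P}\}f}_{L^2_v} + \ang{L e_k, \{\mathbf{I-P}\}f}_{L^2_v}\,\Bigr\}.
\]
Since each $e_k$ is a polynomial times $\sqrt{\mu}$, the function $v\, e_k$ enjoys Gaussian decay, and in particular lies in $L^2_{-\gamma-2s}(\threed)$; similarly, by the linear upper bound (\ref{normupper}) in Lemma \ref{sharpLINEAR} applied in dual form, $L e_k \in L^2_{-\gamma-2s}(\threed)$ with a finite, explicit bound. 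Cauchy--Schwartz in $v$ then yields the pointwise estimate
\[
|l_\macroCOE(t,x)| \lesssim |\nabla_x \{\mathbf{I-P}\}f(t,x,\cdot)|_{L^2_{\gamma+2s}} + |\{\mathbf{I-P}\}f(t,x,\cdot)|_{L^2_{\gamma+2s}}.
\]

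Next I would differentiate in $x$. Since $\partial^\alpha$ commutes with both $v\cdot\nabla_x$ and $L$, and since $\partial^\alpha$ commutes with $\{\mathbf{I-P}\}$ (the latter acts only in velocity), we obtain for any multi-index $\alpha$ with $|\alpha|\le K-1$
\[
\partial^\alpha l_\macroCOE(t,x) = -\sum_k C_k^\macroCOE \Bigl\{\,\ang{v\, e_k,\nabla_x \partial^\alpha\{\mathbf{I-P}\}f}_{L^2_v} + \ang{L e_k, \partial^\alpha\{\mathbf{I-P}\}f}_{L^2_v}\,\Bigr\}.
\]
The same Cauchy--Schwartz argument gives the pointwise bound
\[
|\partial^\alpha l_\macroCOE(t,x)| \lesssim \sum_{|\alpha'| \le |\alpha|+1} |\{\mathbf{I-P}\}\partial^{\alpha'} f(t,x,\cdot)|_{L^2_{\gamma+2s}}.
\]
Squaring, integrating over $x\in\domain$, and applying Fubini converts the right-hand side into $\sum_{|\alpha'|\le K}\|\{\mathbf{I-P}\}\partial^{\alpha'}f\|_{L^2_{\gamma+2s}(\domain\times\threed)}^2$. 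Summing over $|\alpha|\le K-1$ and over $\macroCOE \in\mathcal{M}$ (a finite set) produces the desired bound.

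The only potential obstacle is verifying that $|Le_k|_{L^2_{-\gamma-2s}(\threed)} < \infty$ with an explicit constant; this is a purely velocity-level statement about a fixed, rapidly-decaying Schwartz function, and follows either by direct computation from the splitting $L = \nPiece + \kPiece$ in (\ref{normpiece})--(\ref{compactpiece}) (noting that each piece acting on a Schwartz function produces a function of at most polynomial growth times $\sqrt{\mu}$-type decay), or abstractly from the upper bound (\ref{normupper}) together with the continuous embedding $\spacen \hookrightarrow L^2_{\gamma+2s}$, which by duality gives $L e_k \in L^2_{-\gamma-2s}$. No part of this argument is sensitive to the distinction between the hard and soft potential ranges, so the same proof handles both (\ref{kernelP}) and (\ref{kernelPsing}).
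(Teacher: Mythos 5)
Your treatment of the transport term $\langle v\cdot\nabla_x(\{\mathbf{I-P}\}\partial^\alpha f),e_k\rangle$ coincides with the paper's: commute $\partial^\alpha$ through, Cauchy--Schwarz in $v$, and absorb the extra $|v|^2$ weight into the Gaussian decay of $e_k$. For the $L$-term, however, you take a genuinely different and ultimately problematic route. You move $L$ onto $e_k$ by self-adjointness and then need $L e_k \in L^2_{-(\gamma+2s)}(\threed)$. The paper instead keeps $L$ on the unknown, expands $L = -\Gamma(M,\cdot) - \Gamma(\cdot,M)$ via \eqref{LinGam}, and bounds $\langle\Gamma(M,g),e_k\rangle$ and $\langle\Gamma(g,M),e_k\rangle$ directly with \eqref{coerc1ineqPREP2}, which gives $|\langle\Gamma_\beta(g,f),\phi\rangle| \lesssim |g|_{L^2_{-m}}|f|_{L^2_{-m}}$ whenever $\phi$ has rapid decay (with derivatives). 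That estimate was engineered precisely to exploit the test function's decay without ever applying the singular operator to it.

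The gap is in your justification of $L e_k \in L^2_{-(\gamma+2s)}$. Your ``abstract'' argument runs the duality in the wrong direction: the continuous embedding $\spacen \hookrightarrow L^2_{\gamma+2s}$ dualizes to $L^2_{-(\gamma+2s)} \hookrightarrow (\spacen)^*$, so knowing from \eqref{normupper} that $L e_k$ is a bounded functional on $\spacen$ only places $L e_k$ in the \emph{larger} space $(\spacen)^*$ and does not put it in $L^2_{-(\gamma+2s)}$. Your ``direct computation'' alternative would succeed, but it is not actually trivial for the non-cutoff kernel: with $B$ non-integrable in $\sigma$, one must invoke the cancellation of $e'_k - e_k$ as $\theta\to 0$, Pao's splitting $\tilde\nu = \nu + \nu_{\kPiece}$, and control of the gain integral $\int B\,(e'_k-e_k)M'_*M_*\,dv_*\,d\sigma$; in the paper this work is packaged inside Proposition \ref{upperBds} (specifically \eqref{coerc1ineqPREP2}), and your proof should invoke it rather than assert the decay of $L e_k$. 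Concretely, replacing the self-adjointness step with a direct application of \eqref{coerc1ineqPREP2} (as the paper does) closes the gap, and then the remainder of your argument --- squaring, integrating in $x$, summing over $|\alpha|\le K-1$ and the finite index set $\mathcal{M}$ --- goes through.
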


\begin{proof} 
Recall 
$\{ e_k \}$, the basis in (\ref
{base}).
For fixed $(t,x)$,  it suffices to estimate  the $H^{K-1}_x$ norm of
$
\langle l(\{{\bf I-P}\}f), e_k \rangle.
$
We  use (\ref{l}) to expand out
$$
\langle \partial^\alpha l(\{{\bf I-P}\}f), e_k \rangle
=
-\langle v\cdot \nabla _x (\{{\bf I-P}\}\partial^\alpha f), e_k \rangle
-\langle L (\{{\bf I-P}\}\partial^\alpha f), e_k \rangle.
$$
Now for any $|\alpha |\le K-1$
\begin{multline*}
\| \langle v\cdot \nabla _x (\{{\bf I-P}\}\partial^\alpha f), e_k \rangle\|_{L^2_x}^2 
\lesssim 
\int_{\domain\times \threed} ~ dx dv ~
|e_k(v)| ~ 
|v|^2 ~ |\{{\bf I-P}\}\nabla_x\partial ^\alpha f|^2
\\
\lesssim 
\|\{{\bf I-P}\}\nabla _x\partial^\alpha f\|^2_{L^2_{\gamma+2s}(\domain\times\threed)}.
\end{multline*}
Here we have used the exponential decay of $e_k(v)$.

It remains to estimate the linear operator $L$.  
With the expression from \eqref{LinGam} and \eqref{coerc1ineqPREP2}, we have the following
\begin{multline*}
\| \langle L (\{{\bf I-P}\}\partial^\alpha f), e_k \rangle\|_{L^2_x}^2 
\lesssim 
\left\| ~\nsm \{{\bf I-P}\}\partial^\alpha f \nsm_{L^2_{\gamma+2s}}~  \nsm M \nsm_{L^2_{\gamma+2s}} \right\|^2_{L^2_{x}}
\\
\lesssim 
\left\| \{{\bf I-P}\}\partial^\alpha f   \right\|^2_{L^2_{\gamma+2s}(\domain\times\threed)}.
\end{multline*}
This completes the proof of our estimates for the $l_\macroCOE$.
\end{proof}

We now estimate the coefficients of the higher order term $\Gamma(f,f)$:

\begin{lemma}
\label{high}Let (\ref{smallL}) be valid for some $M_0>0.$ Then  for $\NgE$ we have
\[
\sum_{\macroCOE \in \mathcal{M}}
\|\Gamma_\macroCOE\|_{H^{K-1}_x}
\lesssim
\sqrt{M_0}\sum_{|\alpha |\le K} \| \partial ^\alpha f\|_{L^2_{\gamma+2s}(\domain\times\threed)}.
\]
\end{lemma}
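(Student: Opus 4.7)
My plan is to expand the derivatives using the Leibniz rule, treat each resulting bilinear piece $\Gamma(\partial^{\alpha-\alpha_1}f,\partial^{\alpha_1}f)$ by pairing it with the fixed Schwartz function $e_k$ in the velocity variable, and then handle the spatial direction by Sobolev embedding, where the smallness of $\|f\|_\spaceU$ will provide the gain of $\sqrt{M_0}$.

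First, for any $|\alpha|\le K-1$ the Leibniz formula gives
\[
\partial^\alpha \Gamma(f,f) \;=\; \sum_{\alpha_1\le\alpha}\binom{\alpha}{\alpha_1}\,\Gamma(\partial^{\alpha-\alpha_1}f,\partial^{\alpha_1}f),
\]
so that $\partial^\alpha \Gamma_\macroCOE$ is a finite linear combination (over $\macroCOE\in\mathcal{M}$ and over $\alpha_1$) of scalar-valued functions of $x$ of the form $\langle \Gamma(\partial^{\alpha-\alpha_1}f,\partial^{\alpha_1}f), e_k\rangle_v$. Each $e_k$ from the basis \eqref{base} is a polynomial times $\sqrt{\mu}$, hence has Gaussian decay and admits an extension to $\mathbb{R}^{n+1}$ satisfying \eqref{derivESTa}.

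The key velocity estimate is then obtained directly from Proposition \ref{upperBds}, specifically \eqref{coerc1ineqPREP2} applied with $\ell=0$, $\beta=0$, and $\phi=e_k$, namely $|\langle \Gamma(g,h), e_k\rangle_v|\lesssim \nsm g\nsm_{L^2_{-m}}\nsm h\nsm_{L^2_{-m}}$ for any $m\ge 0$. Choosing $m$ large enough that $\ang{v}^{-m}\le\min\{1,\ang{v}^{\gamma+2s}\}$ and combining the two ways of distributing the weight yields
\[
|\langle \Gamma(g,h), e_k\rangle_v| \;\lesssim\; \nsm g\nsm_{L^2_v}\,\nsm h\nsm_{L^2_{v,\gamma+2s}} \quad\text{and symmetrically}\quad \nsm g\nsm_{L^2_{v,\gamma+2s}}\,\nsm h\nsm_{L^2_v}.
\]
This is the central sharp estimate; it is strictly stronger than what Theorem \ref{TriLinEst} would give (which would leave $|h|_{N^{s,\gamma}}$ on the right), and it is made possible precisely by the Schwartz character of $e_k$.

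For the spatial part I take $L^2_x$ of the pairing, apply H\"older, and invoke the Sobolev embedding of Remark \ref{rem:sobolev} in the form
\[
\|\langle\Gamma(\partial^{\alpha-\alpha_1}f,\partial^{\alpha_1}f),e_k\rangle_v\|_{L^2_x}
\;\lesssim\;
\|\partial^{\alpha-\alpha_1}f\|_{H^{k_1}_x L^2_v}\,\|\partial^{\alpha_1}f\|_{H^{k_2}_x L^2_{v,\gamma+2s}},
\]
where $k_1,k_2\ge 0$ with $k_1+k_2>n/2$. I choose the split to ensure $|\alpha-\alpha_1|+k_1\le K$ and $|\alpha_1|+k_2\le K$; this is possible because the combined condition $|\alpha|+(k_1+k_2)\le 2K$ together with $|\alpha|\le K-1$ reduces to $K>n/2-1$, which holds under $K\ge \ksob>n/2$. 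Using the symmetric version of the velocity bound whenever the derivative count forces it, the first factor is absorbed by $\|f\|_\spaceU\le\sqrt{M_0}$ (thanks to \eqref{smallL}), and the second factor is bounded by $\sum_{|\beta|\le K}\|\partial^\beta f\|_{L^2_{\gamma+2s}(\domain\times\threed)}$. Summing over $\alpha_1\le\alpha$, over $|\alpha|\le K-1$, and over the finite index set $\macroCOE\in\mathcal{M}$ produces the stated inequality.

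The main obstacle is not the spatial bookkeeping but the sharp velocity estimate avoiding $N^{s,\gamma}$ on the right-hand side; the extraction of this estimate from \eqref{coerc1ineqPREP2}, which exploits the rapid decay of $e_k$ (and $|\tilde\nabla|^2 e_k$), is the decisive point that makes the argument compatible with the linear estimates in Lemma \ref{linear} and hence with the macroscopic equations.
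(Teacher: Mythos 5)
Your proposal is correct and follows the same approach as the paper: Leibniz expansion of $\partial^\alpha\Gamma(f,f)$, pairing each piece with the Gaussian basis elements $e_k$, applying \eqref{coerc1ineqPREP2} from Proposition \ref{upperBds} to land in weighted $L^2_v$ (thereby avoiding $N^{s,\gamma}$ on the right), and concluding with H\"older plus Sobolev embedding on $\mathbb{T}^n$ as in Remark \ref{rem:sobolev}. The only minor imprecision is that your constraint check verifies the combined inequality $|\alpha|+(k_1+k_2)\le 2K$ rather than directly exhibiting integers $k_1,k_2$ with $|\alpha-\alpha_1|+k_1\le K$, $|\alpha_1|+k_2\le K$, and $k_1+k_2\ge\ksob$; this follows since the total budget $(K-|\alpha-\alpha_1|)+(K-|\alpha_1|)=2K-|\alpha|\ge K+1\ge\ksob+1$ exceeds $\ksob$, but it is worth making explicit.
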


\begin{proof}
As in the proof of Lemma \ref{linear}, to prove the estimate for $\Gamma_\macroCOE$,
it will suffice to estimate the $H^{K-1}_x$ norm of $\langle \Gamma(f,f), e_k\rangle$.  We apply 
\eqref{coerc1ineqPREP2}
from
Proposition \ref{upperBds}
 to see that for any $m\ge 0$
\begin{gather*}
\left\| \langle \Gamma(f,f), e_k\rangle\right\|_{H^{K-1}_x}
\lesssim
\sum_{|\alpha |\le K-1}\sum_{\alpha_1 \le \alpha}
\left\|
 \nsm \partial^{\alpha - \alpha_1} f \nsm_{L^2_{-m}} \nsm \partial^{\alpha_1} f \nsm_{L^2_{-m}} 
\right\|_{L^2_x}.
\end{gather*}
We obtain for $K \ge \ksob$ that
\begin{gather*}
\lesssim
 \| f\|_{L_{-m}^2H^{K}_x}
\sum_{|\alpha |\le K} \| \partial ^\alpha f\|_{L^2_{\gamma+2s}}
\lesssim
\sqrt{M_0}\sum_{|\alpha |\le K} \| \partial ^\alpha f\|_{L^2_{\gamma+2s}}.
\end{gather*}
The last inequalities follow from the embedding as in the remark of \eqref{sobolev}.
\end{proof}

We now prove the crucial positivity of $L$ for small solution $f(t,x,v)$ to the Boltzmann equation \eqref{Boltz}. The
conservation laws \eqref{conservation} will play an important
role. \\

\noindent {\it Proof of Theorem \ref{positive}}. 
We first of all notice from \eqref{hydro} that
$$
 \| { \bf  P  } \partial^\alpha f (t) \|_{N^{s,\gamma}}^2
\lesssim
  \|\partial^\alpha a(t)\|^2_{L^2_x}
  +
  \|\partial^\alpha b (t)\|^2_{L^2_x}
  +
\|  \partial^\alpha c(t)\|^2_{L^2_x}.
$$
Thus it will be sufficient to bound each of the terms on the right side above by 
$ \| \{ {\bf I - P } \} \partial^\alpha f (t)\|_{N^{s,\gamma}}^2$ 
plus the time derivative of the interaction functional, which is defined in \eqref{mainINTERACTION}.  Indeed, our  proof is devoted to establishing the following
\begin{gather}
\notag
\|a (t)\|_{H^K_x}^2
+
\|b(t)\|_{H^K_x}^2
+
\|c(t) \|_{H^K_x}^2
\lesssim
\sum_{|\alpha |\le K}
\|\{{\bf I-P}\} \partial ^\alpha  f(t)\|_{L^2_{\gamma + 2s}}^2
\\
+
M_0\sum_{|\alpha |\le K}\|\partial ^\alpha f(t)\|_{L^2_{\gamma + 2s}}^2
+
\frac{d\mathcal{I}(t)}{dt}.
\label{claimH}
\end{gather}
Clearly the second term on the right above can be neglected 
because of
\begin{gather*}
\sum_{|\alpha |\le K}\|\partial^\alpha f(t)\|_{L^2_{\gamma + 2s}}^2
\lesssim
\sum_{|\alpha |\le K}
\|{\bf P}\partial ^\alpha f(t)\|_{L^2_{\gamma + 2s}}^2
+
\sum_{|\alpha |\le K}\|\{{\bf I-P}\}\partial
^\alpha f(t)\|_{L^2_{\gamma + 2s}}^2
\\
\lesssim
\left\{\|a(t) \|_{H^K_x}
+
\|b(t)\|_{H^K_x}
+
\|c(t) \|_{H^K_x}\right\}^2
+
\sum_{|\alpha |\le K}
\|\{{\bf I-P}\}\partial ^\alpha f(t)\|_{L^2_{\gamma + 2s}}^2.
\end{gather*}
Thus \eqref{claimH} will imply Theorem \ref{positive} when $M_0$ is sufficiently
small.

To prove (\ref{claimH}), we estimate each of $a$, $b$, and $c$ individually with spatial derivatives of order $0<|\alpha |\le K$.  Then at the end of the proof we estimate the pure $L^2_x$ norm of $a$, $b$, and $c$ in a uniform way.
  We first  estimate $a(t,x)$. 
Consider any $|\alpha |\le K-1$.
By taking $\partial_i \partial^\alpha$ of (\ref{ai}) and summing over $i$, we get 
\begin{equation}
-\Delta \partial ^\alpha a
=
\frac{d}{dt} \left( \nabla \cdot \partial ^\alpha b \right)
+
\sum_{i=1}^\dim \left(\partial_t \partial_i\partial ^\alpha r_{bi} -\partial_i\partial ^\alpha \{l_{bi}+\Gamma_{bi}\} \right).  \label{div}
\end{equation}
Multiply with $\partial ^\alpha a$ to (\ref{div}) and integrate over $dx$ to obtain
\begin{gather*}
\|\nabla \partial ^\alpha a\|^2_{L^2_x}
\le 
\frac{d}{dt} \int_{\domain}  ~ dx ~ \left( \nabla \cdot \partial ^\alpha b \right)\partial^\alpha a(t,x)
+
\frac{d}{dt} \int_{\domain}  ~ dx ~ \partial_i\partial ^\alpha r_{bi} ~
\partial^\alpha a(t,x)
\\
- \int_{\domain}  ~ dx ~ \left( \nabla \cdot \partial ^\alpha b \right)
\partial_t \partial^\alpha a(t,x)
-
 \int_{\domain}  ~ dx ~ \partial_i\partial ^\alpha r_{bi} ~
\partial_t \partial^\alpha a(t,x)
\\
+
\|\partial ^\alpha \{l_{bi}+\Gamma_{bi}\}\|_{L^2_x}
\|\nabla \partial ^\alpha a\|_{L^2_x}.
\end{gather*}
Above we implicitly sum over $i=1,\ldots,\dim$. We define the interaction functional
$$
\mathcal{I}_a^\alpha(t) \eqdef 
\int_{\domain}  ~ dx ~ \left( \nabla \cdot \partial ^\alpha b \right)\partial^\alpha a(t,x)
+
\sum_{i=1}^\dim
\int_{\domain}  ~ dx ~ \partial_i\partial ^\alpha r_{bi} ~
\partial^\alpha a(t,x).
$$
We also use the local conservation law \eqref{cl.0}, to see that for any $\eta >0$, we have
\begin{gather*}
 \int_{\domain}  ~ dx ~ \left\{ \left| \left( \nabla \cdot \partial ^\alpha b \right)
\partial_t \partial^\alpha a(t,x) \right|
+ \left| \partial_i\partial ^\alpha r_{bi} ~
\partial_t \partial^\alpha a(t,x) \right| \right\}
\\
\le \eta \| \nabla \cdot \partial ^\alpha b \|^2_{L^2_x} + C_\eta  \| \{ {\bf I - P} \} \nabla \partial^\alpha f \|^2_{L^2_{\gamma + 2s}}.
\end{gather*}
We combine these last few estimates with Lemma \ref{linear} and \ref{high} to see that  
\begin{gather}
\notag
\|\nabla \partial ^\alpha a\|^2_{L^2_x}
-
\eta \| \nabla \cdot \partial ^\alpha b \|^2_{L^2_x} 
\lesssim
C_\eta
\sum_{|\alpha| \le K}\|\{{\bf I-P}\} \partial^\alpha f\|^2_{L^2_{\gamma+2s}}
+
\frac{d \mathcal{I}_a^\alpha  }{dt}
\\
+
M_0\sum_{|\alpha |\le K} \| \partial ^\alpha f\|_{L^2_{\gamma+2s}}^2.
\label{mainAest}
\end{gather}
This will be our main estimate for $a(t,x)$ with derivatives.

Next we estimate $c(t,x)$ from  (\ref{c}), with $|\alpha |\le K-1$.   We notice that
\begin{gather*}
\|\nabla \partial^\alpha c\|^2_{L^2_x}
\le 
C\left\{\|\partial ^\alpha l_c\|^2_{L^2_x}+\|\partial^\alpha \Gamma_c\|^2_{L^2_x} \right\} 
- 
\frac{d}{dt}\int_{\domain} ~ dx ~ \partial ^\alpha r_c(t,x) ~\cdot \nabla_x \partial ^\alpha c(t,x)
\\
-
\int_{\domain} ~ dx ~ \nabla_x \cdot\partial ^\alpha r_c(t,x) ~  \partial ^\alpha \partial_t c(t,x).
\end{gather*}
We now define another interaction functional as
$$
\mathcal{I}_c^\alpha(t) \eqdef 
- \int_{\domain} ~ dx ~ \partial ^\alpha r_c(t,x) ~ \cdot \nabla_x \partial ^\alpha c(t,x).
$$
Next we use the conservation law \eqref{cl.2} to obtain the following estimate
\begin{gather*}
\int_{\domain} ~ dx ~ \left| \nabla_x \cdot \partial ^\alpha r_c(t,x) ~  \partial ^\alpha \partial_t c(t,x) \right|
\le \eta \| \nabla \cdot \partial ^\alpha b \|^2_{L^2_x} + C_\eta  \| \{ {\bf I - P} \} \nabla \partial^\alpha f \|^2_{L^2_{\gamma + 2s}},
\end{gather*}
which holds for any $\eta >0$.
Combining these with Lemmas \ref{linear} and \ref
{high}, we see that
\begin{gather}
\notag
\|\nabla \partial ^\alpha c\|^2_{L^2_x}
-
\eta \| \nabla \cdot \partial ^\alpha b \|^2_{L^2_x} 
\lesssim
C_\eta
\sum_{|\alpha| \le K}\|\{{\bf I-P}\} \partial^\alpha f\|^2_{L^2_{\gamma+2s}}
+
\frac{d \mathcal{I}_c^\alpha  }{dt}
\\
+
M_0\sum_{|\alpha |\le K} \| \partial ^\alpha f\|_{L^2_{\gamma+2s}}^2.
\label{mainCest}
\end{gather}
This will be our main estimate for $c(t,x)$ with derivatives.

The last term to estimate with derivatives is $\nabla \partial ^\alpha b$.  
Suppose $|\alpha |\le K-1$, take $\partial_j$ of (\ref{bi}) and (\ref{bij}) and  sum on $j$.  It was shown in a nontrivial calculation from \cite{MR2000470}, using the elliptic structure of these equations and several symmetries, that
\begin{multline*}
\Delta \partial ^\alpha b_i
=
-\partial_i\partial_i\partial ^\alpha b_i
+
2\partial_i\partial ^\alpha l_i
+
2\partial_i\partial ^\alpha
\Gamma_i 
\\
+
\left\{
\sum_{j\neq i}
-\partial_i \partial^\alpha l_j-\partial_i\partial ^\alpha \Gamma_j
+
 \partial_j\partial ^\alpha l_{ij}
 +
 \partial_j\partial^\alpha \Gamma_{ij}
 -\partial_t \partial_{j} \partial^\alpha r_{ij}
\right\}.
\end{multline*}
We then multiply the whole expression by 
$\partial^\alpha b_i$ and integrate by parts to yield 
\begin{gather}
\label{sb}
\|\nabla \partial ^\alpha b_i\|^2
\le C\left\{\sum_{\macroCOE \in \mathcal{M}} 
\|\partial ^\alpha l_\macroCOE\|^2
+
\|\partial ^\alpha \Gamma_\macroCOE\|^2
\right\} 
+
\sum_{j\neq i} \int_{\domain} ~ dx ~ \partial_{j} \partial^\alpha r_{ij}
\partial_t \partial^\alpha b_i
\\
- 
\frac{d}{dt} \sum_{j\neq i} \int_{\domain} ~ dx ~ \partial_{j} \partial^\alpha r_{ij}\partial^\alpha b_i.
\notag
\end{gather}
We  define the last component of the interaction functional as
$$
\mathcal{I}_b^\alpha(t) \eqdef 
- 
 \sum_{j\neq i} \int_{\domain} ~ dx ~ \partial_{j} \partial^\alpha r_{ij}\partial^\alpha b_i.
$$
Using the conservation law \eqref{cl.1}, we estimate the term with a time derivative as
\begin{gather*}
\sum_{j\neq i} \int_{\domain} ~ dx ~  \left|  \partial_{j} \partial^\alpha r_{ij} 
\partial_t \partial^\alpha b_i(t,x) \right| 
\le \eta \left\{ \| \nabla  \partial ^\alpha a \|^2_{L^2_x} + \| \nabla  \partial ^\alpha c \|^2_{L^2_x}  \right\}
\\
+ C_\eta  \| \{ {\bf I - P} \} \nabla \partial^\alpha f \|^2_{L^2_{\gamma + 2s}},
\end{gather*}
which once again holds for any $\eta >0$.
Combining these  last few estimates with Lemmas \ref{linear} and \ref
{high}, we obtain
\begin{gather}
\notag
\|\nabla \partial ^\alpha b\|^2_{L^2_x}
-
 \eta \left\{ \| \nabla  \partial ^\alpha a \|^2_{L^2_x} + \| \nabla  \partial ^\alpha c \|^2_{L^2_x}  \right\}
\lesssim
C_\eta
\sum_{|\alpha| \le K}\|\{{\bf I-P}\} \partial^\alpha f\|^2_{L^2_{\gamma+2s}}
+
\frac{d \mathcal{I}_b^\alpha  }{dt}
\\
+
M_0\sum_{|\alpha |\le K} \| \partial ^\alpha f\|_{L^2_{\gamma+2s}}^2.
\label{mainBest}
\end{gather}
This is our main estimate for $b(t,x)$ with derivatives.

Now, with $\mathcal{I}_a^\alpha(t)$, $\mathcal{I}_b^\alpha(t)$ and $\mathcal{I}_c^\alpha(t)$ defined just above, we define the total interaction functional as
\begin{gather}
\mathcal{I}(t) 
\eqdef 
\sum_{|\alpha|\le K-1}
\left\{
\mathcal{I}_a^\alpha(t)
+
\mathcal{I}_b^\alpha(t)
+
\mathcal{I}_c^\alpha(t)\right\}.
\label{mainINTERACTION}
\end{gather}
Choosing say $\eta = 1/8$ and collecting 
\eqref{mainAest}, \eqref{mainCest}, \eqref{mainBest}, we have established 
\begin{multline}
\notag
 \| \nabla   a \|^2_{H^{K-1}_x}
 +
\|\nabla  b\|^2_{H^{K-1}_x}
+ \| \nabla   c \|^2_{H^{K-1}_x} 
\lesssim
\sum_{|\alpha| \le K}\|\{{\bf I-P}\} \partial^\alpha f\|^2_{L^2_{\gamma+2s}}
+
\frac{d \mathcal{I}  }{dt}
\\
+
M_0\sum_{|\alpha |\le K} \| \partial ^\alpha f\|_{L^2_{\gamma+2s}}^2.
\end{multline}
To finish \eqref{claimH}, it remains to estimate the terms without derivatives.

With the Poincar\'{e} inequality and Lemma \ref{average}, 
$a$ itself is bounded by 
\begin{gather*}
\|a\|
\lesssim 
\|\nabla a\|+\left|\int_{\domain}  ~ dx ~ a \right| 
=
\|\nabla a\|.
\end{gather*}
This is also bounded by the right side of  \eqref{claimH} by the last estimate above.
The estimates for $b_i(t,x)$ and $c(t,x)$ without derivatives are exactly the same.  This completes the main estimate \eqref{claimH} and the proof.
\qed \\

We are now ready to prove that global in time solutions to  \eqref{Boltz} exist. 

\subsection{Global existence and rapid decay}  With the coercivity estimate for non-linear local solutions from Theorem \ref{positive}, we show that these solutions must be global with the standard continuity argument.   Then we will prove rapid time decay.

A crucial step in this analysis is to prove the following energy inequalities:
\begin{equation}
\frac{d}{dt}\mathcal{E}_{\ell,m}(t)+\mathcal{D}_{\ell,m}(t) \leq C_{\ell, m} \sqrt{\mathcal{E}_{\ell}(t)}\mathcal{D}_{\ell}(t).
\label{MAINeINEQ}
\end{equation}
These  hold
for any $\ell\ge 0$ and $m = 0, 1, \ldots,K$.   We define the ``dissipation rate''   as
\begin{gather}
\notag
\mathcal{D}_{\ell,m} (t)\eqdef 
\sum_{|\beta|\le m}
\sum_{|\alpha | \le K-|\beta|}
\|\partial_\beta^\alpha f(t)\|_{N_{\ell - |\beta|}^{s,\gamma}}^2.
\end{gather}
In the case of Theorem \ref{SG1mainGLOBAL} with the hard potentials \eqref{kernelP}, we have $K = \HARDxDER$  and
 $\mathcal{D}_{\ell}\eqdef \mathcal{D}_{\ell,\HARDvDER}$.  Alternatively, in the case of Theorem \ref{mainGLOBAL} with the soft potentials \eqref{kernelPsing}, we write
  $\mathcal{D}_{\ell}\eqdef \mathcal{D}_{\ell,K} =   \| f(t) \|^2_{\nspace}$.  This unified notation will be useful in the following developments.  
  Furthermore the ``instant energy functional'' 
$
\mathcal{E}_{\ell,m}(t)
$
for a solution is a high-order norm which satisfies
$$
\mathcal{E}_{\ell,m}(t)
\approx
\sum_{|\beta|\le m}
\sum_{|\alpha |   \le K - |\beta|}
\|w^{\ell - |\beta|}\partial^{\alpha}_{\beta} f(t)\|^2_{L^2 (\mathbb{T}^n \times \mathbb{R}^n)}.
$$
Similarly in Theorem \ref{SG1mainGLOBAL} with the hard potentials \eqref{kernelP}, we have $K = \HARDxDER$ 
and
 $\mathcal{E}_{\ell}\eqdef \mathcal{E}_{\ell,\HARDvDER}$.  Alternatively, for Theorem \ref{mainGLOBAL} with the soft potentials \eqref{kernelPsing}, we write
  $\mathcal{E}_{\ell}\eqdef \mathcal{E}_{\ell,K}$.
We prove this energy inequality \eqref{MAINeINEQ} for a local solution via a simultaneous induction on both the order of the weights $\ell$ and on the number of velocity derivatives $m$.

The first inductive step is to prove \eqref{MAINeINEQ} for arbitrary spatial derivatives with $\ell=0$ and $|\beta| =0$. 
 We first fix $M_0\le 1$ such that both
Theorems \ref{local}  and \ref{positive} are valid.    
We now  take the spatial derivatives of $\partial^{\alpha}$ 
of \eqref{Boltz} to obtain
\begin{gather}
\label{initial}
\frac{1}{2}\frac{d}{dt}\|f(t)\|^2_{L^2_vH^K_x} 
+
\sum_{|\alpha |\le K}\left( L\partial^{\alpha}f,\partial^{\alpha}f\right) 
=
\sum_{|\alpha |\le K}
\left( \partial^{\alpha} \Gamma (f,f), \partial^{\alpha} f \right) . 
\end{gather}
With Lemmas \ref{NonLinEstLOW} and \ref{NonLinEstHIGH} we have
$$
\sum_{|\alpha |\le K}
\left( \partial^{\alpha} \Gamma (f,f), \partial^{\alpha} f \right) 
\lesssim
\sqrt{\mathcal{E}_{0}(t)} ~ 
\mathcal{D}_{0}(t).
$$
Now from Theorem \ref{lowerN} and then Theorem \ref{positive} we have
\begin{multline*}
\sum_{|\alpha |\le K}\left( L\partial^{\alpha}f,\partial^{\alpha}f\right) 
\ge \delta_0 \sum_{|\alpha |\le K} \| \{ {\bf I - P } \}\partial^{\alpha} f \|_{N^{s,\gamma}}^2
\\
\ge
\frac{\delta_0}{2} \sum_{|\alpha |\le K} \| \{ {\bf I - P } \}\partial^{\alpha} f \|_{N^{s,\gamma}}^2
+
\frac{\delta_0 \delta}{2} \sum_{|\alpha |\le K} \| {\bf P } \partial^{\alpha} f \|_{N^{s,\gamma}}^2
-\frac{\delta_0C_2 }{2} \frac{d\mathcal{I}(t)}{dt}. 
\end{multline*}
With $\tilde{\delta} \eqdef \min\left\{ \frac{\delta_0}{2} , \frac{\delta_0 \delta}{2} \right\}>0$
and
$C' \eqdef \delta_0C_2 >0$,
we  conclude that
\begin{gather}
\frac{1}{2}\frac{d}{dt}\left\{\|f(t)\|^2_{L^2_vH^K_x} - C'\mathcal{I}(t)\right\} +\tilde{\delta}  \mathcal{D}_{0,0}(t)  
\lesssim
\sqrt{\mathcal{E}_0(t)} \mathcal{D}_0(t).
\nonumber
\end{gather}
Now, by \eqref{mainINTERACTION}, 
for any $C'>0$ we can choose a large constant $C_1>0$  such that 
$$
\|f(t)\|^2_{L^2_vH^K_x} 
\le
\left( C_1 + 1 \right)\|f(t)\|^2_{L^2_vH^K_x} - C'\mathcal{I}(t)
\lesssim
 \|f(t)\|^2_{L^2_vH^K_x}. 
$$
Notice $C_1$  only depends upon the structure of the interaction functional and $C'$, but not on $f(t,x,v)$.
We then define the equivalent instant energy functional by
$$
\mathcal{E}_{0,0}(t)  \eqdef \left( C_1 + 1 \right)\|f(t)\|^2_{L^2_vH^K_x} - C' \mathcal{I}(t).
$$
We multiply 
\eqref{initial} 
by $C_1$ and add it to the previous differential inequality to conclude
\begin{gather}
\frac{d\mathcal{E}_{0,0}(t)}{dt} +\tilde{\delta}   \mathcal{D}_{0,0}(t)  
\lesssim
\sqrt{\mathcal{E}_0(t)} \mathcal{D}_0(t).
\nonumber
\end{gather}
In the last step we have used the positivity of $L \ge 0$.   We have thus established 
\eqref{MAINeINEQ} 
when $\ell = |\beta| =0$.

We turn to the case when $\ell >0$, but still $|\beta| =0$.  We only have pure spatial derivatives. 
With \eqref{coerc2ineq} in  Lemma \ref{DerCoerIneq}, we deduce 
that for a
$C>0$ and $\ell\geq0$
\begin{equation}
\left( w^{2\ell} L\partial^{\alpha}f, \partial^{\alpha}f\right)  
\gtrsim
\frac{1}{2}
\| \partial^{\alpha} f \|_{\spaceELLn}^{2}
-
C\| \partial^{\alpha}f\|_{L^2(B_C)}^{2}.
\label{newlower}
\end{equation}
Take the $\partial^\alpha$ derivative of
\eqref{Boltz}, then take the inner product of both sides with $w^{2\ell}   \partial^{\alpha}f$ and integrate to obtain the following: 
\[
\sum_{|\alpha|\leq K}\left(  \frac{1}{2}\frac{d}{dt}
\|w^\ell \partial^{\alpha} f(t)\|_{L^2}^{2}+\left(  w^{2\ell}L\partial^{\alpha}f,\partial^{\alpha
}f\right)  \right)  
\lesssim
\sqrt{{\mathcal{E}}_{\ell}(t)} {\mathcal{D}}_{\ell}(t).
\]
We have used Lemmas \ref{NonLinEstLOW} and \ref{NonLinEstHIGH} to estimate the non-linear term.   We apply the coercive lower bound \eqref{newlower}.  Then we add \eqref{MAINeINEQ} for the case $\ell = |\beta| =0$ multiplied by a suitably large constant $C_2$ to the result.  This yields
\[
\frac{d}{dt}{\mathcal{E}}_{\ell,0}(t) 
+
{\mathcal{D}}_{\ell,0}(t)
\lesssim
\sqrt{{\mathcal{E}}_{\ell}(t)} {\mathcal{D}}_{\ell}(t),
\]
where
$
{\mathcal{E}}_{\ell,0}(t) 
\eqdef
\frac{1}{2}
\sum_{|\alpha|\leq K}  
\|w^\ell \partial^{\alpha} f(t)\|_{L^2}^{2}
+
C_2{\mathcal{E}}_{0,0}(t).
$
Since this is indeed an instant energy functional, we have
\eqref{MAINeINEQ} 
when $\ell >0$ and  $|\beta| =0$.

The final step is the case when $\ell \ge 0$, but also $|\beta| =m+1>0$. We suppose that 
\eqref{MAINeINEQ} 
holds for any
 $\ell \ge 0$ and any  $|\beta| \le m$.
We take $\partial_{\beta}^{\alpha}$ of \eqref{Boltz} to obtain
\begin{equation}
\left(\partial_{t}+v\cdot\nabla_{x}\right)\partial_{\beta}^{\alpha}f+\partial
_{\beta}L\partial^{\alpha}f=-
\sum_{|\beta_{1}|=1} C^{\beta}_{\beta_{1}} ~ 
\partial_{\beta_{1}}v\cdot\nabla_{x}\partial_{\beta-\beta_{1}}^{\alpha
}f+\partial_{\beta}^{\alpha}\Gamma(f,f). 
\label{bgeBIG}
\end{equation}
We use Cauchy's inequality for $\eta>0$, since $|\beta_{1}|=1$ we have
\begin{multline*}
\left\vert \left(  w^{2\ell - 2|\beta|} \partial_{\beta_{1}}v\cdot\nabla
_{x}\partial_{\beta-\beta_{1}}^{\alpha}f,\partial_{\beta}^{\alpha}f\right)
\right\vert 
\\
\leq
\| w^{\ell - |\beta| - 1/2}  \partial_{\beta}^{\alpha}f(t)\|_{L^2}
\|w^{\ell - |\beta| + 1/2} \nabla_{x}\partial_{\beta-\beta_{1}}^{\alpha}f(t)\|_{L^2}
\\
\le
\eta
||\partial_{\beta}^{\alpha}f(t)||_{N^{s,\gamma}_{\ell - |\beta|}}^{2}
+
C_{\eta}||\nabla_{x}\partial_{\beta-\beta_{1}}^{\alpha}f||_{N^{s,\gamma}_{\ell - |\beta-\beta_1|}}^{2}.
\end{multline*}
Estimates using this particular trick were already seen in \cite{MR2013332}.

Now we multiply \eqref{bgeBIG} with $w^{2\ell -2|\beta|}\partial^{\alpha}f$ and integrate.  We estimate the non-linear term of the result with Lemmas \ref{NonLinEstLOW} and \ref{NonLinEstHIGH}.  
 With \eqref{coerc1ineq} we estimate from below the linear term $\left(  w^{2\ell - 2|\beta|} \partial_{\beta}\{L\partial^{\alpha}f\}, \partial_{\beta}^{\alpha}f\right)$.  With these inequalities above, we
conclude \eqref{MAINeINEQ} for $|\beta| = m+1$, but only after adding  to the inequality a suitably large constant times  \eqref{MAINeINEQ} for $|\beta| = m$ similar to the previous cases.  This establishes \eqref{MAINeINEQ} in general by induction.
From here we can conclude global existence  using the standard continuity argument.  
It remains to establish the time decay rates.  For the soft potentials we
use the  argument from \cite{MR2209761}.
Exponential time decay for the soft potentials, as in \cite{MR2366140}, may also be feasible.

If $\|f_0\|_{\spaceU}^{2}$ is sufficiently small, from 
\eqref{MAINeINEQ} for $m = K$ and $\ell \ge 0$,  we have 
\begin{equation}
\frac{d}{dt}\mathcal{E}_{\ell}(t)+\delta\mathcal{D}_{\ell}(t) \leq 0,
\quad
\exists \delta >0.
\notag
\end{equation}
For the hard potentials, $\gamma + 2s \ge 0$, exponential time decay follows directly from 
$\mathcal{D}_{\ell}(t)\gtrsim \mathcal{E}_{\ell}(t)$. But for the soft potentials $\gamma + 2s <0$,
the problem is that for fixed $\ell,$ the non-derivative part of the dissipation rate $\mathcal{D}_{\ell}(t)$ is clearly
weaker than the instant energy $\mathcal{E}_{\ell}(t)$.  In particular, we only have $\mathcal{D}_{\ell}(t)\gtrsim \mathcal{E}_{\ell-1}(t)$.

We interpolate with stronger
norms  to overcome this difficulty. Fix $\ell\ge 0$ and $m>0$.  Interpolation  between the weight functions $w^{2\ell-2}(v)$ and $w^{2\ell+2m}(v)$ yields
\[
\mathcal{E}_{\ell}(t)
\lesssim
 \mathcal{E}_{\ell-1}^{m/(m+1)}(t)\mathcal{E}_{\ell+m}^{1/(m+1)}(t)
\lesssim
\mathcal{D}_{\ell}^{m/(m+1)}(t)\mathcal{E}_{\ell+m}^{1/(m+1)}(0).
\]
The last inequality follows from 
$
\mathcal{E}_{\ell+m}(t)
\lesssim
\mathcal{E}_{\ell+m}(0).
$
Then for some $C_{\ell,m}>0,$
\[
\frac{d}{dt}\mathcal{E}_{\ell}(t)+C_{\ell,m}{\mathcal{E}}_{\ell+m}^{-1/m}(0)~ \mathcal{E}_{\ell}^{(m+1)/m}(t)\leq0.
\]
It follows that 
$
-m~ d(\mathcal{E}_{\ell}(t))^{-1/m}/dt
\leq
-C_{\ell,m}\left(  {\mathcal{E}}_{\ell+m}(0)\right)  ^{-1/m}.
$ 
Integrate over $[0,t]$:
\[
m\left( \mathcal{E}_{\ell}(0)\right)^{-1/m}-m\left( \mathcal{E}_{\ell}(t)\right)^{-1/m}\leq-\left(  {\mathcal{E}}%
_{\ell+m}(0)\right)  ^{-1/m} ~ C_{\ell,m} ~ t.
\]
Hence
\[
\left( \mathcal{E}_{\ell}(t)\right)^{-1/m}\geq t\frac{C_{\ell,m}}{m}\left(  {\mathcal{E}}_{\ell+m}(0)\right)  ^{-1/m}+\{\mathcal{E}_{\ell}(0)\}^{-1/m}.
\]
Since we can assume $\mathcal{E}_{\ell}(0)\lesssim {\mathcal{E}}_{\ell+m}(0)$, the rapid decay
thus follows. \hfill {\bf Q.E.D.}

\appendix
\section{Carleman's representation and the dual formulation}
\label{secAPP:HSr}

In this Appendix \ref{secAPP:HSr} we develop two Carleman \cite{MR1555365} type representations which are used crucially in our main text.  
We consider the general expression
$$
\tilde{\mathcal{C}}(v_*) = \int_{\mathbb{R}^n}dv ~\Phi(|v-v_*|) 
\int_{\mathbb{S}^{n-1}} d\sigma ~
b\left(\ip{k}{\sigma}\right) ~
H(v,v_*, v^\prime, v^\prime_*),
$$
with $k = \frac{v - v_*}{|v - v_*|}$ and the usual post-collisional velocities  $(v^\prime, v^\prime_*)$ are given by \eqref{sigma}.  The functions $b$ and $\Phi$ are generally given by \eqref{kernelQ}, \eqref{kernelP} and \eqref{kernelPsing}.  For the purposes of deriving the expression in Proposition \ref{carlemanA} it suffices to suppose that both of these functions are smooth.  The general expressions can then be deduced from these formulas by the usual approximation procedures.
We have the following representation formula:

\begin{proposition} 
\label{carlemanA}
Let $H: \mathbb{R}^n \times \mathbb{R}^n \times \mathbb{R}^n \times \mathbb{R}^n \to \mathbb{R}$ be a smooth, rapidly decaying function at infinity.  Then we have
 \begin{gather*}
\tilde{\mathcal{C}}(v_*) 
= 
2^{n-1}
\int_{\mathbb{R}^n}dv '~
\int_{E^{v'}_{v_*}}d\pi_{v} ~
\frac{\Phi(|v-v_*|)  }{|v' - v_*|^{}}
\frac{b\left(\ang{\frac{v-v_*}{|v-v_*|}, \frac{2v' - v - v_*}{|2v' - v - v_*|} } \right)}{|v - v_*|^{n-2}}
H.
  \end{gather*}
  Above 
  $
  H = H(v, v_*, v' , v+v_*- v')
$
and
  $
  E^{v'}_{v_*}
  $
  is the hyperplane
  $$
  E^{v'}_{v_*} \eqdef \left\{ v\in \mathbb{R}^n : \ang{v_*-v', v - v'} =0 \right\}.
  $$
Then $d\pi_{v} $ denotes the Lebesgue measure on this hyperplane.
\end{proposition}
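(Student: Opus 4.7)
The plan is to change variables in the integration from $(v,\sigma) \in \mathbb{R}^n \times \mathbb{S}^{n-1}$ (with $v_*$ held fixed) to $(v,v') \in \mathbb{R}^n \times \mathbb{R}^n$, subject to the automatic constraint $v \in E^{v'}_{v_*}$. The geometric reason this constraint is automatic is a direct calculation: writing $k = (v-v_*)/|v-v_*|$, the collisional formulas give $v - v' = \tfrac{|v-v_*|}{2}(k - \sigma)$ and $v_* - v' = -\tfrac{|v-v_*|}{2}(k + \sigma)$, whose inner product equals $-\tfrac{|v-v_*|^2}{4}(|k|^2 - |\sigma|^2) = 0$.

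The execution proceeds in two steps. First, with $v$ held fixed, the map $\sigma \mapsto v' = \tfrac{v+v_*}{2} + \tfrac{|v-v_*|}{2}\sigma$ is a dilation sending $\mathbb{S}^{n-1}$ diffeomorphically onto the sphere $S_v$ of radius $|v-v_*|/2$ centered at $(v+v_*)/2$, yielding
\[
d\sigma = \frac{2^{n-1}}{|v-v_*|^{n-1}}\, d\Sigma_v(v'),
\]
where $d\Sigma_v$ is the induced surface measure on $S_v$. Second, the image surface $S_v$ is the level set $\{g(v,v') = 0\}$ of the smooth function $g(v,v') \eqdef |v'|^2 - \ang{v', v+v_*} + \ang{v,v_*}$, so the delta-function identity $d\Sigma_v(v')\, dv = |v-v_*|\, \delta(g(v,v'))\, dv\, dv'$ (obtained from $|\nabla_{v'} g| = |2v' - v - v_*| = |v-v_*|$ on the zero set) lets me swap the order of integration. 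Regarding $g$ now as a function of $v$ with $v'$ fixed, I compute $|\nabla_v g| = |v' - v_*|$, and the coarea/disintegration formula yields
\[
\delta(g(v,v'))\, dv = \frac{d\pi_v}{|v' - v_*|},
\]
where $d\pi_v$ is Lebesgue measure on the hyperplane $\{v : g(v,v') = 0\} = E^{v'}_{v_*}$.

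Combining the two Jacobians gives the overall substitution $dv\, d\sigma = \tfrac{2^{n-1}}{|v'-v_*|\,|v-v_*|^{n-2}}\, dv'\, d\pi_v$, which, upon substitution into $\tilde{\mathcal{C}}(v_*)$, produces the claimed formula. The remaining issue is to re-express $\sigma$ and $v'_*$ in the new variables. The conservation law $v + v_* = v' + v'_*$ gives $v'_* = v + v_* - v'$, accounting for the arguments of $H$. For $\sigma$, the original definition reads $\sigma = (2v' - v - v_*)/|v-v_*|$, and since $|2v' - v - v_*|^2 = |v-v'|^2 + |v'-v_*|^2 = |v-v_*|^2$ on $E^{v'}_{v_*}$ by the orthogonality, we may equivalently write $\sigma = (2v' - v - v_*)/|2v' - v - v_*|$, matching the argument of $b$ in the statement.

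The only delicate step is the double use of the coarea formula for $g$ — once with $v'$ as the active variable to reinterpret $d\sigma$ as a delta on a hypersurface in $(v,v')$-space, and once with $v$ as the active variable to disintegrate this hypersurface over $v'$. Everything else is routine: the norm identity $|2v'-v-v_*| = |v-v_*|$ on the constraint surface, the identification of $v'_*$ from the momentum conservation law, and bookkeeping of the $|v-v_*|$ powers from the sphere-dilation Jacobian. The regularity assumptions on $H$ (rapid decay and smoothness) ensure all integrals converge and the delta-function manipulation is rigorously justified via a standard approximation argument.
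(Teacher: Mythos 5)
Your proof is correct and follows what is the standard derivation of the Carleman-type representation. The paper itself does not supply a proof — it refers the reader to Gamba--Panferov--Villani \cite{09-GPV} and notes that the formula is ``derived in the same way'' as the usual Carleman representation — so you have filled in a derivation that the authors left implicit, and it matches the expected one.

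Your three moves are all sound: (i) the dilation $\sigma \mapsto v'$ sends $\mathbb{S}^{n-1}$ to the sphere $S_v$ of radius $|v-v_*|/2$ centered at $(v+v_*)/2$, costing $(2/|v-v_*|)^{n-1}$ in surface measure; (ii) the coarea identity $d\Sigma_v(v') = |\nabla_{v'} g|\,\delta(g)\,dv'$ with $|\nabla_{v'} g| = |2v'-v-v_*| = |v-v_*|$ on the constraint surface; and (iii) the reverse disintegration $\delta(g)\,dv = d\pi_v/|\nabla_v g|$ with $|\nabla_v g| = |v'-v_*|$. Your identification $g(v,v') = \ang{v_*-v', v-v'}$, which shows that the level set $\{g=0\}$ viewed in $v$ is exactly the hyperplane $E^{v'}_{v_*}$, is the key bookkeeping step and is correct. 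The composite Jacobian $\frac{2^{n-1}}{|v-v_*|^{n-1}} \cdot |v-v_*| \cdot \frac{1}{|v'-v_*|} = \frac{2^{n-1}}{|v-v_*|^{n-2}\,|v'-v_*|}$ matches the stated formula, and the rewriting of $\sigma$ and $v'_*$ in the new variables is likewise correct.
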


We also illustrate a Carleman-type representation for 
$$
\mathcal{C}(v) = \int_{\mathbb{R}^n}dv_* ~\Phi(|v-v_*|) 
\int_{\mathbb{S}^{n-1}} d\sigma ~
b\left(\ip{k}{ \sigma}\right) ~
H(v, v_*, v^\prime, v^\prime_*),
$$
with the same notation and the same comments as in the last case.


\begin{proposition} 
\label{carlemanV2}
  Let $H: \mathbb{R}^n \times \mathbb{R}^n \times \mathbb{R}^n \times \mathbb{R}^n \to \mathbb{R}$ be a smooth, rapidly decaying function at infinity.  Then we have
 \begin{gather*}
\mathcal{C}(v) 
= 
2^{n-1}
\int_{\mathbb{R}^n}dv' ~
\int_{E^{v}_{v'}}d\pi_{v'_*} ~
\frac{\Phi(|2v - v' - v'_*|) }{|v - v'|}
\frac{b\left(\ang{\frac{2v - v' - v'_*}{|2v - v' - v'_*|}, \frac{v' - v'_*}{|v' - v'_*|} }\right) }{|v' - v'_*|^{n-2}} ~
H.
  \end{gather*}
  Above 
  $
  H = H(v, v'_*+ v' - v, v' , v_*' ),
  $
and
  $
  E^{v}_{v'}
  $
  is the hyperplane
  $$
  E^{v}_{v'} \eqdef \left\{ v'_*\in \mathbb{R}^n : \ang{v' - v, v_*' -v} =0 \right\}.
  $$
Then $d\pi_{v_*'} $ denotes the Lebesgue measure on this hyperplane.
\end{proposition}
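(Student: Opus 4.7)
The plan is to execute a direct change of variables from $(v_*, \sigma) \in \mathbb{R}^n \times \mathbb{S}^{n-1}$ to $(v', v'_*) \in \mathbb{R}^n \times E^v_{v'}$ with $v$ held fixed, in complete parallel to the proof of Proposition \ref{carlemanA} but integrating in the ``other'' pre-collisional variable. Translating so that $v=0$ (the integrand depends only on differences $v-v_*$, $v_*-v'$, etc.), the collisional formulas read $v' = \tfrac{1}{2}(v_* + |v_*|\sigma)$ and $v'_* = \tfrac{1}{2}(v_* - |v_*|\sigma)$, with inverse $v_* = v' + v'_*$ and $\sigma = (v'-v'_*)/|v'-v'_*|$.

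Three geometric identities make the change of variables transparent and immediately identify all the pieces of the stated right-hand side: (i) $\langle v', v'_*\rangle = 0$, which is energy conservation and places $v'_*$ on the hyperplane $E^0_{v'}$; (ii) $|v_*| = |v'-v'_*| = \sqrt{|v'|^2 + |v'_*|^2}$, so $|2v-v'-v'_*|$ (after undoing the translation) equals $|v-v_*|$ and $\Phi$ matches; and (iii) the rewrite
$$
\langle k, \sigma\rangle = \left\langle \frac{-(v'+v'_*)}{|v'+v'_*|}, \frac{v'-v'_*}{|v'-v'_*|}\right\rangle = \frac{|v'_*|^2 - |v'|^2}{|v'|^2 + |v'_*|^2},
$$
which is exactly the argument of $b$ in Proposition \ref{carlemanV2}. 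Thus only the Jacobian factor remains to be identified.

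To compute the Jacobian I would introduce polar coordinates $v_* = \rho e$ with $\rho \in (0,\infty)$, $e \in \mathbb{S}^{n-1}$, and spherical coordinates for $\sigma$ adapted to the axis $-e$, writing $\sigma = -\cos\theta\, e + \sin\theta\, \omega$ with $\omega \in \mathbb{S}^{n-2}(e^\perp)$ and $\theta \in [0, \pi/2]$ (which covers the support of $b$ after symmetrization). Elementary trigonometry yields $|v'| = \rho\sin(\theta/2)$ and $|v'_*| = \rho\cos(\theta/2)$, and shows that the orthonormal pair $(v'/|v'|,\ v'_*/|v'_*|)$ is obtained from $(-e,\omega)$ by rotation of angle $\theta/2$ in the two-plane they span. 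Writing the target measure in the corresponding polar form $dv'\,d\pi_{v'_*} = |v'|^{n-1} d|v'|\, d(v'/|v'|)\cdot |v'_*|^{n-2} d|v'_*|\, d(v'_*/|v'_*|)$ and the source measure as $dv_*\,d\sigma = \rho^{n-1} d\rho\, de\, \sin^{n-2}\theta\, d\theta\, d\omega$, one reads off the Jacobian; simplifying via $\sin\theta = 2\sin(\theta/2)\cos(\theta/2)$ and $|v'-v'_*| = \rho$ produces exactly the factor $2^{n-1}/(|v'|\cdot|v'-v'_*|^{n-2})$, which is $2^{n-1}/(|v-v'|\cdot|v'-v'_*|^{n-2})$ once the initial translation is undone.

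The main technical obstacle is the mild bookkeeping subtlety that the angular pair $(v'/|v'|,\ v'_*/|v'_*|)$ depends on $\theta$ as well as on the original angular coordinates, so the Jacobian matrix of $(e,\omega,\theta) \mapsto (v'/|v'|,\ v'_*/|v'_*|,\theta)$ is not block diagonal. This resolves cleanly: for each fixed $\theta$ the angular map is a rotation of orthonormal two-frames in $\mathbb{R}^n$, and the induced measure on such frames is rotation-invariant, so the angular block contributes $1$. Collecting the radial and angular contributions and substituting back yields the stated formula; the assembly of the pieces $\Phi$, $b$, and the denominator then matches Proposition \ref{carlemanV2} term by term.
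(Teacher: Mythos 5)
Your proof is correct. Note that the paper does not actually give a proof of Proposition~\ref{carlemanV2}: after stating it and Proposition~\ref{carlemanA}, the appendix only remarks that they are ``derived in the same way'' as the usual Carleman representation and defers to \cite{09-GPV}, so there is no in-paper argument to compare against. Your derivation --- polar coordinates in $v_*$ and $\sigma$, the identities $|v'|=\rho\sin(\theta/2)$, $|v'_*|=\rho\cos(\theta/2)$, a measure-preservation argument for the angular block, and the radial $2\times 2$ Jacobian $\rho/2$ combined with $\sin^{n-2}\theta=2^{n-2}\sin^{n-2}(\theta/2)\cos^{n-2}(\theta/2)$ --- assembles exactly the factor $2^{n-1}/\bigl(|v-v'|\,|v'-v'_*|^{n-2}\bigr)$, and is the natural direct route.

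One small slip, harmless to the conclusion: you say $(v'/|v'|,\ v'_*/|v'_*|)$ is obtained from $(-e,\omega)$ by a rotation of angle $\theta/2$. In fact $v'/|v'|=\sin(\theta/2)\,e+\cos(\theta/2)\,\omega$ and $v'_*/|v'_*|=\cos(\theta/2)\,e-\sin(\theta/2)\,\omega$, so the $2\times2$ matrix taking $(e,\omega)$ to this pair has determinant $-1$ (a reflection), and the angle relative to $(-e,\omega)$ is not $\theta/2$ either. What your Jacobian computation actually needs is only the weaker, correct statement that for each fixed $\theta$ the induced map on orthonormal $2$-frames is an $O(n)$-equivariant diffeomorphism of the Stiefel manifold $V_2(\mathbb{R}^n)$; equivariance plus uniqueness of the $O(n)$-invariant measure (which is exactly $de\,d\omega = d\hat{v}'\,d\hat{v}'_*$) forces the angular factor in the Jacobian to be $1$. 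State it that way and the argument is airtight.
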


Our expressions above may be at some degree of variance from the usual Carleman representation, however they are of the same form and derived in the same way; a clear proof can be found in \cite{09-GPV}.  
With these expressions we will derive a Dual Representation for the non-linear operator \eqref{gamma0}.

\subsection*{Dual Representation}
We initially suppose that $\int_{{\mathbb S}^{n-1}} d \sigma ~ |b( \ang{k, \sigma} )| < \infty$ and that the kernel $b$ has mean zero, i.e., $\int_{{\mathbb S}^{n-1}} d \sigma ~ b(\ang{k,\sigma}) = 0$. 
Then after the pre-post change of variables we can express \eqref{gamma0} as
\begin{gather*}
 \ang{\Gamma(g,h),f}= \int_{\threed} \! \! dv  \int_{\threed} \! \! dv_* \int_{{\mathbb S}^{n-1}} \! \! d \sigma
 ~ \Phi(|v-v_*|) b \left( \ang{k, \sigma} \right) g_* h \left(M_*' f' - M_* f \right) \nonumber 
  \\
  = \int_{\threed} dv ~ \int_{\threed} dv_* ~ \int_{\sph}  d \sigma ~ \Phi(|v-v_*|) b \left( \ang{k, \sigma} \right) g_* h M_*' f'. \nonumber 
 \end{gather*}
This follows from the vanishing of $\int_{{\mathbb S}^{n-1}} b( \ang{k, \sigma} ) d \sigma$.  With Proposition \ref{carlemanA}, this is
 \begin{gather*}
 = 
 2^{n-1} \int_{\threed} \!  \!dv_* \! \int_{\threed} \! \! dv' \!   \int_{E^{v'}_{v_*}}d\pi_{v} ~
\Phi(|v-v_*|)  
\frac{b\left(\ang{\frac{v-v_*}{|v-v_*|}, \frac{2v' - v - v_*}{|2v' - v - v_*|} } \right)}{|v' - v_*|~|v - v_*|^{n-2}} g_* h M_*' f'. 
 \nonumber
\end{gather*}
In the above formulas, we take $M'_* = M(v+v_* - v')$.  From the identity (on $E_{v_*}^{v'}$)
\[ \ang{\frac{v-v_*}{|v-v_*|}, \frac{2v' - v - v_*}{|2v' - v - v_*|} } = \frac{|v' - v_*|^2 - |v - v'|^2}{|v - v'|^2 + |v' - v_*|^2}, \]
we observe that
\begin{align*}
\int_{E_{v_*}^{v'}} d\pi_{v} ~ &  b \left( \ang{\frac{v-v_*}{|v-v_*|}, \frac{2v' - v - v_*}{|2v' - v - v_*|} } \right) \frac{|v'-v_*|^{n-1}}{|v-v_*|^{2n-2}} \\
& = 
\int_{{\mathbb S}^{n-2}} d \sigma 
\int_0^\infty r^{n-2} \ dr \ b \left( \frac{|v'-v_*|^2 - r^2}{|v' - v_*|^2 + r^2} \right) \frac{|v' - v_*|^{n-1}}{(r^2 + |v'-v_*|^2)^{n-1}} = 0,
\end{align*}
by a change of variables since $\int_{-1}^1 dt \ b(t) (1-t^2)^{\frac{n-3}{2}} = 0$ 
(following from the cancellation condition on $\mathbb{S}^{n-1}$) and 
\[ \frac{d}{dr} \left[ \frac{|v'-v_*|^2 - r^2}{|v' - v_*|^2 + r^2} \right] = 
\frac{-4 r |v' - v_*|^2}{(r^2 + |v' - v_*|^2)^2} \]
\[ \left( 1 - \left(\frac{|v'-v_*|^2 - r^2}{|v' - v_*|^2 + r^2}\right)^2 \right)^\frac{n-3}{2} = \frac{(2 r |v'-v_*|)^{n-3}}{(r^2 + |v' - v_*|^2)^{n-3}}. \]  
In particular, with $\tilde{B}$ defined in \eqref{kernelTILDE}, this implies
\[ 
\int_{E_{v_*}^{v'}}  d \pi_v~ \tilde{B} ~
\frac{ \Phi(|v'-v_*|)}{\Phi(|v-v_*|)}  \frac{|v'-v_*|^n}{|v-v_*|^n} 
~ g_* h'  f' ~ M_* = 0. 
\]
We subtract this expression from the Carleman representation just written for $\ang{\Gamma(g,h),f}$, to see that $\ang{\Gamma(g,h),f}$ must also equal \eqref{dualOPdef} with kernel \eqref{kernelTILDE} and \eqref{opGlabel}.  This will be called the ``dual representation.''

The claim is now that this representation holds even when the mean value of the singular kernel $b(\ang{k, \sigma})$ from \eqref{kernelQ}  is not zero.  To see this claim, suppose $b$ integrable but without mean zero.  Then define
\[ 
b_\epsilon(t) = b(t) - \ind_{[1-\epsilon,1]}(t) \int_{-1}^1 dt ~ b(t) ~ (1-t^2)^{\frac{n-3}{2}} \left( \int_{1-\epsilon}^1 dt ~ (1-t^2)^{\frac{n-3}{2}}\right)^{-1}. \
\]
As a function on ${\mathbb S}^{n-1}$, $b_\epsilon$ will clearly have a vanishing integral.  However, given arbitrary $f$, $g$ and $h$ which are Schwartz functions, it is not hard to see that
\[ 
\left| \ang{\Gamma(g,h),f} - \ang{\Gamma_\epsilon(g,h),f} \right| \rightarrow 0,
\quad
\epsilon \rightarrow 0. 
\]
Above $\Gamma_\epsilon$ is the non-linear term \eqref{gamma0} formed with $b_\epsilon(t)$ in place of $b(t)$.  This convergence holds 
because  cancellation guarantees that the integrand vanishes on the set defined by $\ang{k,\sigma} = 1$.  Moreover, an additional cutoff argument shows that the equality also holds provided that $b(t)$ satisfies \eqref{kernelQ}; 
the higher-order cancellation is preserved because $\frac{v'-v_*}{|v-v_*|}$ possesses radial symmetry in $v-v'$.

The ``dual representation'' deserves its name because if one defines
\begin{align*}
T_g f (v) & \eqdef \int_{\threed} dv_* \int_{{\mathbb S}^{n-1}} d \sigma ~ B ~ g_* ~  \left(M_*' f' - M_* f \right), 
\\
T_g^* h (v') & \eqdef  \int_{\threed} dv_* \int_{E_{v_*}^{v'}} d \pi_{v} ~ \tilde{B} ~  g_*
~ \left(  M_*' h   -    \frac{\Phi(v'-v_*) |v' - v_*|^n}{\Phi(v-v_*) |v-v_*|^n} M_* h' \right),
\end{align*} 
then 
\begin{equation}
\label{3dualZ}
\ang{\Gamma(g,h),f}
=
\ang{ T_g f ,h} 
= 
\ang{ f, T_g^* h }. 
\end{equation}
Note that the last inner product above represents an integration over $dv'$ whereas the first two inner products above represent integrations over $dv$.

The advantage of this representation is that $T_g f$ and $T_g^* h$ both depend on $g$ in a fairly elementary way.  
This allows, for example, the trilinear form $\ang{\Gamma(g,h),f}$ to be understood as a superposition of bilinear forms in $h$ and $f$.

\subsection*{Acknowledgments} 
We would like to thank the referee for useful comments which helped us to improve the presentation.

\begin{bibdiv}
\begin{biblist}

\bib{MR1696193}{article}{
   author={Alexandre, Radjesvarane},
   title={Une d\'efinition des solutions renormalis\'ees pour l'\'equation
   de Boltzmann sans troncature angulaire},
   journal={C. R. Acad. Sci. Paris S\'er. I Math.},
   volume={328},
   date={1999},
   number={11},
   pages={987--991},
   issn={0764-4442},
}

\bib{MR1763526}{article}{
   author={Alexandre, Radjesvarane},
   title={Around 3D Boltzmann non linear operator without angular cutoff, a
   new formulation},
   journal={M2AN Math. Model. Numer. Anal.},
   volume={34},
   date={2000},
   number={3},
   pages={575--590},
   issn={0764-583X},
}

\bib{MR1851391}{article}{
   author={Alexandre, Radjesvarane},
   title={Some solutions of the Boltzmann equation without angular cutoff},
   journal={J. Statist. Phys.},
   volume={104},
   date={2001},
   number={1-2},
   pages={327--358},
   issn={0022-4715},
}

\bib{MR2149928}{article}{
   author={Alexandre, Radjesvarane},
   author={El Safadi, Mouhamad},
   title={Littlewood-Paley theory and regularity issues in Boltzmann
   homogeneous equations. I. Non-cutoff case and Maxwellian molecules},
   journal={Math. Models Methods Appl. Sci.},
   volume={15},
   date={2005},
   number={6},
   pages={907--920},
   issn={0218-2025},
}

\bib{MR2284553}{article}{
   author={Alexandre, Radjesvarane},
   title={Integral estimates for a linear singular operator linked with the
   Boltzmann operator. I. Small singularities $0<\nu<1$},
   journal={Indiana Univ. Math. J.},
   volume={55},
   date={2006},
   number={6},
   pages={1975--2021},
   issn={0022-2518},
}

\bib{krmReview2009}{article}{
   author={Alexandre, Radjesvarane},
     TITLE = {A Review of Boltzmann Equation with Singular Kernels},
   JOURNAL = {Kinet. Relat. Models},
  FJOURNAL = {Kinetic and Related Models},
    VOLUME = {2},
      YEAR = {December 2009},
    NUMBER = {4},
     PAGES = {551--646},
}

\bib{MR1765272}{article}{
   author={Alexandre, R.},
   author={Desvillettes, L.},
   author={Villani, C.},
   author={Wennberg, B.},
   title={Entropy dissipation and long-range interactions},
   journal={Arch. Ration. Mech. Anal.},
   volume={152},
   date={2000},
   number={4},
   pages={327--355},
   issn={0003-9527},
}

\bib{MR1857879}{article}{
   author={Alexandre, R.},
   author={Villani, C.},
   title={On the Boltzmann equation for long-range interactions},
   journal={Comm. Pure Appl. Math.},
   volume={55},
   date={2002},
   number={1},
   pages={30--70},
   issn={0010-3640},
}


\bib{MR2462585}{article}{
   author={Alexandre, R.},
   author={Morimoto, Y.},
   author={Ukai, S.},
   author={Xu, C.-J.},
   author={Yang, T.},
   title={Uncertainty principle and kinetic equations},
   journal={J. Funct. Anal.},
   volume={255},
   date={2008},
   number={8},
   pages={2013--2066},
   issn={0022-1236},
}

\bib{MR2679369}{article}{
   author={Alexandre, Radjesvarane},
   author={Morimoto, Yoshinori},
   author={Ukai, Seiji},
   author={Xu, Chao-Jiang},
   author={Yang, Tong},
   title={Regularizing effect and local existence for the non-cutoff Boltzmann equation},
   journal={Arch. Ration. Mech. Anal.},
   volume={198},
   date={2010},
   number={1},
   pages={39--123},
   issn={0003-9527},
   review={\MR{2679369}},
   doi={10.1007/s00205-010-0290-1},
}

\bib{newNonCutAMUXY}{article}{
    author = {Alexandre, Radjesvarane},
        author = {Morimoto, Y.},
            author = {Ukai, Seiji},
        author = {Xu, Chao-Jiang},
        author = {Yang, Tong},
	title = {Global existence and full regularity of the Boltzmann equation without angular cutoff},
    eprint = {arXiv:0912.1426v2},
   date={Dec. 27, 2009},
    journal={preprint},
}

\bib{arXiv:1005.0447v2}{article}{
    author = {Alexandre, Radjesvarane},
        author = {Morimoto, Y.},
            author = {Ukai, Seiji},
        author = {Xu, Chao-Jiang},
        author = {Yang, Tong},
	title = {The Boltzmann equation without angular cutoff in the whole space: I, an essential coercivity estimate},
    eprint = {arXiv:1005.0447v2},
   date={May 28, 2010},
    journal={preprint},
}

\bib{arXiv:1007.0304v1}{article}{
    author = {Alexandre, Radjesvarane},
        author = {Morimoto, Y.},
            author = {Ukai, Seiji},
        author = {Xu, Chao-Jiang},
        author = {Yang, Tong},
	title = {Boltzmann equation without angular cutoff in the whole space: II. Global existence for soft potential},
    eprint = {arXiv:1007.0304v1},
   date={July 2, 2010},
    journal={preprint},
}

\bib{MR2570766}{article}{
   author={Alonso, Ricardo J.},
   author={Gamba, Irene M.},
   title={Distributional and classical solutions to the Cauchy Boltzmann
   problem for soft potentials with integrable angular cross section},
   journal={J. Stat. Phys.},
   volume={137},
   date={2009},
   number={5-6},
   pages={1147--1165},
   issn={0022-4715},
   review={\MR{2570766}},
   doi={10.1007/s10955-009-9873-3},
}

\bib{09conv}{article}{
   author={Alonso, Ricardo J.},
   author={Carneiro, Emanuel},
   author={Gamba, Irene M.},
   title={Convolution inequalities for the Boltzmann collision operator},
   journal={Comm. Math. Phys.},
   volume={in press},
   date={2010},
   pages={26pp.},
       eprint = {arXiv:0902.0507v2},
   }

\bib{MR630119}{article}{
   author={Arkeryd, Leif},
   title={Intermolecular forces of infinite range and the Boltzmann
   equation},
   journal={Arch. Rational Mech. Anal.},
   volume={77},
   date={1981},
   number={1},
   pages={11--21},
   issn={0003-9527},
}

\bib{MR679196}{article}{
   author={Arkeryd, Leif},
   title={Asymptotic behaviour of the Boltzmann equation with infinite range
   forces},
   journal={Comm. Math. Phys.},
   volume={86},
   date={1982},
   number={4},
   pages={475--484},
   issn={0010-3616},
}

\bib{MR2476678}{article}{
   author={Bernis, Laurent},
   author={Desvillettes, Laurent},
   title={Propagation of singularities for classical solutions of the
   Vlasov-Poisson-Boltzmann equation},
   journal={Discrete Contin. Dyn. Syst.},
   volume={24},
   date={2009},
   number={1},
   pages={13--33},
   issn={1078-0947},
}

\bib{MR1128328}{article}{
   author={Bobyl{\"e}v, A. V.},
   title={The theory of the nonlinear spatially uniform Boltzmann equation
   for Maxwell molecules},
   conference={
      title={Mathematical physics reviews, Vol.\ 7},
   },
   book={
      series={Soviet Sci. Rev. Sect. C Math. Phys. Rev.},
      volume={7},
      publisher={Harwood Academic Publ.},
      place={Chur},
   },
   date={1988},
   pages={111--233},
}

\bib{MR0158708}{book}{
   author={Boltzmann, Ludwig},
   title={Lectures on gas theory},
   series={Translated by Stephen G. Brush},
   publisher={University of California Press},
   place={Berkeley},
   date={1964},
   pages={ix+490},
   note={Reprint of the 1896-1898 Edition},
}

\bib{MR1798557}{article}{
   author={Boudin, Laurent},
   author={Desvillettes, Laurent},
   title={On the singularities of the global small solutions of the full
   Boltzmann equation},
   journal={Monatsh. Math.},
   volume={131},
   date={2000},
   number={2},
   pages={91--108},
   issn={0026-9255},
}

\bib{MR1555365}{article}{
   author={Carleman, Torsten},
   title={Sur la th\'eorie de l'\'equation int\'egrodiff\'erentielle de
   Boltzmann},
   language={French},
   journal={Acta Math.},
   volume={60},
   date={1933},
   number={1},
   pages={91--146},
   issn={0001-5962},
}

\bib{MR1313028}{book}{
   author={Cercignani, Carlo},
   title={The Boltzmann equation and its applications},
   series={Applied Mathematical Sciences},
   volume={67},
   publisher={Springer-Verlag},
   place={New York},
   date={1988},
   pages={xii+455},
   isbn={0-387-96637-4},
}

\bib{MR1307620}{book}{
   author={Cercignani, Carlo},
   author={Illner, Reinhard},
   author={Pulvirenti, Mario},
   title={The mathematical theory of dilute gases},
   series={Applied Mathematical Sciences},
   volume={106},
   publisher={Springer-Verlag},
   place={New York},
   date={1994},
   pages={viii+347},
   isbn={0-387-94294-7},
}

\bib{MR2506070}{article}{
   author={Chen, Yemin},
   author={Desvillettes, Laurent},
   author={He, Lingbing},
   title={Smoothing effects for classical solutions of the full Landau
   equation},
   journal={Arch. Ration. Mech. Anal.},
   volume={193},
   date={2009},
   number={1},
   pages={21--55},
   issn={0003-9527},
}

\bib{ChenHeSmoothing}{article}{
   author={Chen, Yemin},
   author={He, Lingbing},
   title={Smoothing effect for Boltzmann equation with full-range interactions},
   journal={arXiv preprint},
   date={Jul 22, 2010},
   eprint={arXiv:1007.3892}
}

\bib{MR1324404}{article}{
   author={Desvillettes, Laurent},
   title={About the regularizing properties of the non-cut-off Kac equation},
   journal={Comm. Math. Phys.},
   volume={168},
   date={1995},
   number={2},
   pages={417--440},
   issn={0010-3616},
}

\bib{MR1407542}{article}{
   author={Desvillettes, Laurent},
   title={Regularization for the non-cutoff $2$D radially symmetric
   Boltzmann equation with a velocity dependent cross section},
   booktitle={Proceedings of the Second International Workshop on Nonlinear
   Kinetic Theories and Mathematical Aspects of Hyperbolic Systems (Sanremo,
   1994)},
   journal={Transport Theory Statist. Phys.},
   volume={25},
   date={1996},
   number={3-5},
   pages={383--394},
   issn={0041-1450},
}

\bib{MR1475459}{article}{
   author={Desvillettes, Laurent},
   title={Regularization properties of the $2$-dimensional non-radially
   symmetric non-cutoff spatially homogeneous Boltzmann equation for
   Maxwellian molecules},
   journal={Transport Theory Statist. Phys.},
   volume={26},
   date={1997},
   number={3},
   pages={341--357},
   issn={0041-1450},
}

\bib{MR2052786}{article}{
   author={Desvillettes, Laurent},
   title={About the use of the Fourier transform for the Boltzmann equation},
   note={In: Summer School on ``Methods and Models of Kinetic Theory'' (M\&MKT 2002)},
   journal={Riv. Mat. Univ. Parma (7)},
   volume={2*},
   date={2003},
   pages={1--99},
}

\bib{MR1750040}{article}{
   author={Desvillettes, L.},
   author={Golse, F.},
   title={On a model Boltzmann equation without angular cutoff},
   journal={Differential Integral Equations},
   volume={13},
   date={2000},
   number={4-6},
   pages={567--594},
   issn={0893-4983},
}

\bib{MR2525118}{article}{
   author={Desvillettes, Laurent},
   author={Mouhot, Cl{\'e}ment},
   title={Stability and uniqueness for the spatially homogeneous Boltzmann
   equation with long-range interactions},
   journal={Arch. Ration. Mech. Anal.},
   volume={193},
   date={2009},
   number={2},
   pages={227--253},
   issn={0003-9527},
}

\bib{MR2038147}{article}{
   author={Desvillettes, Laurent},
   author={Wennberg, Bernt},
   title={Smoothness of the solution of the spatially homogeneous Boltzmann
   equation without cutoff},
   journal={Comm. Partial Differential Equations},
   volume={29},
   date={2004},
   number={1-2},
   pages={133--155},
   issn={0360-5302},
}

\bib{MR1737547}{article}{
   author={Desvillettes, Laurent},
   author={Villani, C{\'e}dric},
   title={On the spatially homogeneous Landau equation for hard potentials.
   I. Existence, uniqueness and smoothness},
   journal={Comm. Partial Differential Equations},
   volume={25},
   date={2000},
   number={1-2},
   pages={179--259},
   issn={0360-5302},
}

\bib{MR2116276}{article}{
   author={Desvillettes, L.},
   author={Villani, C.},
   title={On the trend to global equilibrium for spatially inhomogeneous
   kinetic systems: the Boltzmann equation},
   journal={Invent. Math.},
   volume={159},
   date={2005},
   number={2},
   pages={245--316},
   issn={0020-9910},
}

\bib{MR1014927}{article}{
   author={DiPerna, R. J.},
   author={Lions, P.-L.},
   title={On the Cauchy problem for Boltzmann equations: global existence
   and weak stability},
   journal={Ann. of Math. (2)},
   volume={130},
   date={1989},
   number={2},
   pages={321--366},
   issn={0003-486X},
}

\bib{MR2420519}{article}{
   author={Duan, Renjun},
   title={On the Cauchy problem for the Boltzmann equation in the whole
   space: global existence and uniform stability in $L\sp 2\sb {\xi}(H\sp
   N\sb x)$},
   journal={J. Differential Equations},
   volume={244},
   date={2008},
   number={12},
   pages={3204--3234},
   issn={0022-0396},
}

\bib{MR2435186}{article}{
   author={Duan, Renjun},
   author={Li, Meng-Rong},
   author={Yang, Tong},
   title={Propagation of singularities in the solutions to the Boltzmann
   equation near equilibrium},
   journal={Math. Models Methods Appl. Sci.},
   volume={18},
   date={2008},
   number={7},
   pages={1093--1114},
   issn={0218-2025},
}

\bib{arXiv:0912.1742}{article}{
   author={Duan, Renjun},
   author = {{Strain}, Robert~M.},
    title = {Optimal Time Decay of the Vlasov-Poisson-Boltzmann System in ${\mathbb{R}}^3$},
       date={2010},
       journal={Arch. Rational Mech. Anal.},
       volume={in press}
   eprint = {arXiv:0912.1742},
   doi={10.1007/s00205-010-0318-6},
}

\bib{2010arXiv1006.3605D}{article}{
   author={Duan, Renjun},
   author = {{Strain}, Robert~M.},
    title = {Optimal Large-Time Behavior of the Vlasov-Maxwell-Boltzmann System in the Whole Space},
       date={2010},
       journal={preprint},
   eprint = {arXiv:1006.3605v1},
}

\bib{MR2398952}{article}{
   author={Fournier, Nicolas},
   author={Gu{\'e}rin, H{\'e}l{\`e}ne},
   title={On the uniqueness for the spatially homogeneous Boltzmann equation
   with a strong angular singularity},
   journal={J. Stat. Phys.},
   volume={131},
   date={2008},
   number={4},
   pages={749--781},
   issn={0022-4715},
}

\bib{MR707957}{article}{
   author={Fefferman, Charles L.},
   title={The uncertainty principle},
   journal={Bull. Amer. Math. Soc. (N.S.)},
   volume={9},
   date={1983},
   number={2},
   pages={129--206},
   issn={0273-0979},
}

\bib{09-GPV}{article}{
   author={Gamba, I. M.},
   author={Panferov, V.},
   author={Villani, C.},
   title={Upper Maxwellian Bounds for the Spatially
Homogeneous Boltzmann Equation},
   journal={Arch. Ration. Mech. Anal.},
   volume={194},
   date={2009},
   pages={253--282},
   }

\bib{MR1379589}{book}{
   author={Glassey, Robert T.},
   title={The Cauchy problem in kinetic theory},
   publisher={Society for Industrial and Applied Mathematics (SIAM)},
   place={Philadelphia, PA},
   date={1996},
   pages={xii+241},
   isbn={0-89871-367-6},
}

\bib{MR1484062}{article}{
   author={Goudon, T.},
   title={On Boltzmann equations and Fokker-Planck asymptotics: influence of
   grazing collisions},
   journal={J. Statist. Phys.},
   volume={89},
   date={1997},
   number={3-4},
   pages={751--776},
   issn={0022-4715},
}

\bib{MR0156656}{article}{
   author={Grad, Harold},
   title={Asymptotic theory of the Boltzmann equation. II},
   conference={
      title={Rarefied Gas Dynamics (Proc. 3rd Internat. Sympos., Palais de
      l'UNESCO, Paris, 1962), Vol. I},
   },
   book={
      publisher={Academic Press},
      place={New York},
   },
   date={1963},
   pages={26--59},
}

\bib{gsNonCut1}{article}{
   author={Gressman, Philip T.},
      author={Strain, Robert M.},
   title={Global Strong Solutions of the Boltzmann Equation without  Angular Cut-off},
    eprint = {arXiv:0912.0888v1},
   date={Dec. 4 2009},
   pages={55pp.},
}

\bib{gsNonCutA}{article}{
   author={Gressman, Philip T.},
      author={Strain, Robert M.},
   title={Global Classical solutions of the Boltzmann equation with Long-Range interactions},
   date={March 30, 2010},
    journal={Proc. Nat. Acad. Sci. U. S. A.},
       volume={107},
   number={13},
   pages={5744-5749},
	eprint={doi: 10.1073/pnas.1001185107}
}

\bib{gsNonCut2}{article}{
   author={Gressman, Philip T.},
      author={Strain, Robert M.},
   title={Global Classical Solutions of the Boltzmann Equation with Long-Range Interactions and Soft-Potentials},
    eprint = {arXiv:1002.3639v1},
   date={Feb. 15 2010},
   pages={51pp.},
}

\bib{gsNonCutEst}{article}{
   author={Gressman, Philip T.},
      author={Strain, Robert M.},
   title={Sharp anisotropic estimates for the Boltzmann collision operator and its entropy production},
    eprint = {arXiv:1007.1276v1},
   date={July. 8 2010},
    journal={submitted},
   pages={29pp.},
}

\bib{MR1946444}{article}{
   author={Guo, Yan},
   title={The Landau equation in a periodic box},
   journal={Comm. Math. Phys.},
   volume={231},
   date={2002},
   number={3},
   pages={391--434},
   issn={0010-3616},
}
		
\bib{MR2013332}{article}{
   author={Guo, Yan},
   title={Classical solutions to the Boltzmann equation for molecules with
   an angular cutoff},
   journal={Arch. Ration. Mech. Anal.},
   volume={169},
   date={2003},
   number={4},
   pages={305--353},
   issn={0003-9527},
}

\bib{MR2000470}{article}{
   author={Guo, Yan},
   title={The Vlasov-Maxwell-Boltzmann system near Maxwellians},
   journal={Invent. Math.},
   volume={153},
   date={2003},
   number={3},
   pages={593--630},
   issn={0020-9910},
}

\bib{MR2095473}{article}{
   author={Guo, Yan},
   title={The Boltzmann equation in the whole space},
   journal={Indiana Univ. Math. J.},
   volume={53},
   date={2004},
   number={4},
   pages={1081--1094},
   issn={0022-2518},
}

\bib{MR760333}{article}{
   author={Illner, Reinhard},
   author={Shinbrot, Marvin},
   title={The Boltzmann equation: global existence for a rare gas in an
   infinite vacuum},
   journal={Comm. Math. Phys.},
   volume={95},
   date={1984},
   number={2},
   pages={217--226},
   issn={0010-3616},
}

\bib{MR0475532}{article}{
   author={Kaniel, Shmuel},
   author={Shinbrot, Marvin},
   title={The Boltzmann equation. I. Uniqueness and local existence},
   journal={Comm. Math. Phys.},
   volume={58},
   date={1978},
   number={1},
   pages={65--84},
}

\bib{MR1057534}{article}{
   author={Kawashima, Shuichi},
   title={The Boltzmann equation and thirteen moments},
   journal={Japan J. Appl. Math.},
   volume={7},
   date={1990},
   number={2},
   pages={301--320},
   issn={0910-2043},
}

\bib{Jang2009VMB}{article}{
   author={Jang, Juhi},
   title={Vlasov-Maxwell-Boltzmann diffusive limit},
   journal={Arch. Ration. Mech. Anal.},
   volume={184},
   date={2009},
   number={2},
   pages={531--584},
}

\bib{MR2221254}{article}{
   author={Klainerman, S.},
   author={Rodnianski, I.},
   title={A geometric approach to the Littlewood-Paley theory},
   journal={Geom. Funct. Anal.},
   volume={16},
   date={2006},
   number={1},
   pages={126--163},
   issn={1016-443X},
}

\bib{MR1278244}{article}{
   author={Lions, P.-L.},
   title={On Boltzmann and Landau equations},
   journal={Philos. Trans. Roy. Soc. London Ser. A},
   volume={346},
   date={1994},
   number={1679},
   pages={191--204},
   issn={0962-8428},
}

\bib{MR1284432}{article}{
   author={Lions, P.-L.},
   title={Compactness in Boltzmann's equation via Fourier integral operators
   and applications. I, II, III},
   journal={J. Math. Kyoto Univ.},
   volume={34},
   date={1994},
   number={2 \& 3},
   pages={391--427, 429--461, 539--584},
}

\bib{MR1649477}{article}{
   author={Lions, Pierre-Louis},
   title={R\'egularit\'e et compacit\'e pour des noyaux de collision de
   Boltzmann sans troncature angulaire},
   journal={C. R. Acad. Sci. Paris S\'er. I Math.},
   volume={326},
   date={1998},
   number={1},
   pages={37--41},
   issn={0764-4442},
}

\bib{MR2043729}{article}{
   author={Liu, Tai-Ping},
   author={Yang, Tong},
   author={Yu, Shih-Hsien},
   title={Energy method for Boltzmann equation},
   journal={Phys. D},
   volume={188},
   date={2004},
   number={3-4},
   pages={178--192},
   issn={0167-2789},
}

\bib{Maxwell1867}{article}{
     title = {On the Dynamical Theory of Gases},
     author = {Maxwell, J. Clerk},
     journal = {Philosophical Transactions of the Royal Society of London},
     volume = {157},
     date = {1867},
     pages = {49--88},
     url = {http://www.jstor.org/stable/108968},
     ISSN = {02610523},
     year = {1867},
     publisher = {The Royal Society},    
     copyright = {Copyright å© 1867 The Royal Society},
    }

\bib{MR0063956}{article}{
   author={Morgenstern, Dietrich},
   title={General existence and uniqueness proof for spatially homogeneous
   solutions of the Maxwell-Boltzmann equation in the case of Maxwellian
   molecules},
   journal={Proc. Nat. Acad. Sci. U. S. A.},
   volume={40},
   date={1954},
   pages={719--721},
}

\bib{MR2476686}{article}{
   author={Morimoto, Yoshinori},
   author={Ukai, Seiji},
   author={Xu, Chao-Jiang},
   author={Yang, Tong},
   title={Regularity of solutions to the spatially homogeneous Boltzmann
   equation without angular cutoff},
   journal={Discrete Contin. Dyn. Syst.},
   volume={24},
   date={2009},
   number={1},
   pages={187--212},
   issn={1078-0947},
}

\bib{MR2254617}{article}{
   author={Mouhot, Cl{\'e}ment},
   title={Explicit coercivity estimates for the linearized Boltzmann and
   Landau operators},
   journal={Comm. Partial Differential Equations},
   volume={31},
   date={2006},
   number={7-9},
   pages={1321--1348},
   issn={0360-5302},
}

\bib{MR2322149}{article}{
   author={Mouhot, Cl{\'e}ment},
   author={Strain, Robert M.},
   title={Spectral gap and coercivity estimates for linearized Boltzmann
   collision operators without angular cutoff},
   journal={J. Math. Pures Appl. (9)},
   volume={87},
   date={2007},
   number={5},
   pages={515--535},
   issn={0021-7824},
       eprint = {arXiv:math.AP/0607495},
}

\bib{MR2320408}{article}{
    AUTHOR = {Muscalu, Camil} author={Pipher, Jill}, author={Tao, Terence}, author={Thiele,
              Christoph},
     TITLE = {Multi-parameter paraproducts},
   JOURNAL = {Rev. Mat. Iberoam.},
  FJOURNAL = {Revista Mathem\'atica Iberoamericana},
    VOLUME = {22},
      YEAR = {2006},
    NUMBER = {3},
     PAGES = {963--976},
      ISSN = {0213-2230},
   MRCLASS = {42B25 (26D15 47G10)},
}

\bib{MR0636407}{article}{
   author={Pao, Young Ping},
   title={Boltzmann collision operator with inverse-power intermolecular
   potentials. I, II},
   journal={Comm. Pure Appl. Math.},
   volume={27},
   date={1974},
   pages={407--428; ibid. 27 (1974), 559--581},
   issn={0010-3640},
}

\bib{MR0290095}{book}{
   author={Stein, Elias M.},
   title={Singular integrals and differentiability properties of functions},
   series={Princeton Mathematical Series, No. 30},
   publisher={Princeton University Press},
   place={Princeton, N.J.},
   date={1970},
   pages={xiv+290},
}
		
\bib{MR0252961}{book}{
    author = {Stein, Elias M.},
     title = {Topics in harmonic analysis related to the
              {L}ittlewood-{P}aley theory. },
    series = {Annals of Mathematics Studies, No. 63},
  publisher = {Princeton University Press},
   address = {Princeton, N.J.},
      year = {1970},
     pages = {viii+146},
}


\bib{MR2259206}{article}{
   author={Strain, Robert M.},
   title={The Vlasov-Maxwell-Boltzmann system in the whole space},
   journal={Comm. Math. Phys.},
   volume={268},
   date={2006},
   number={2},
   pages={543--567},
   issn={0010-3616},
}

\bib{sNonCutOp}{article}{
   author = {{Strain}, Robert~M.},
    title = {Optimal time decay of the Boltzmann equation in the whole space},
       date={2010},
       journal={preprint},
}

\bib{MR2209761}{article}{
   author={Strain, Robert M.},
   author={Guo, Yan},
   title={Almost exponential decay near Maxwellian},
   journal={Comm. Partial Differential Equations},
   volume={31},
   date={2006},
   number={1-3},
   pages={417--429},
   issn={0360-5302},
}

\bib{MR2366140}{article}{
   author={Strain, Robert M.},
   author={Guo, Yan},
   title={Exponential decay for soft potentials near Maxwellian},
   journal={Arch. Ration. Mech. Anal.},
   volume={187},
   date={2008},
   number={2},
   pages={287--339},
   issn={0003-9527},
}

\bib{MR0363332}{article}{
   author={Ukai, Seiji},
   title={On the existence of global solutions of mixed problem for
   non-linear Boltzmann equation},
   journal={Proc. Japan Acad.},
   volume={50},
   date={1974},
   pages={179--184},
   issn={0021-4280},
}

\bib{MR839310}{article}{
   author={Ukai, Seiji},
   title={Local solutions in Gevrey classes to the nonlinear Boltzmann
   equation without cutoff},
   journal={Japan J. Appl. Math.},
   volume={1},
   date={1984},
   number={1},
   pages={141--156},
   issn={0910-2043},
}
		
\bib{MR882376}{article}{
   author={Ukai, Seiji},
   title={Solutions of the Boltzmann equation},
   conference={
      title={Patterns and waves},
   },
   book={
      series={Stud. Math. Appl.},
      volume={18},
      publisher={North-Holland},
      place={Amsterdam},
   },
   date={1986},
   pages={37--96},
   review={\MR{882376 (88g:35187)}},
}

\bib{MR1650006}{article}{
   author={Villani, C{\'e}dric},
   title={On a new class of weak solutions to the spatially homogeneous
   Boltzmann and Landau equations},
   journal={Arch. Rational Mech. Anal.},
   volume={143},
   date={1998},
   number={3},
   pages={273--307},
   issn={0003-9527},
}

\bib{MR1715411}{article}{
   author={Villani, C{\'e}dric},
   title={Regularity estimates via the entropy dissipation for the spatially
   homogeneous Boltzmann equation without cut-off},
   journal={Rev. Mat. Iberoamericana},
   volume={15},
   date={1999},
   number={2},
   pages={335--352},
   issn={0213-2230},
}

\bib{MR1942465}{article}{
    author={Villani, C{\'e}dric},
     title={A review of mathematical topics in collisional kinetic theory},
 booktitle={Handbook of mathematical fluid dynamics, Vol. I},
     pages={71\ndash 305},
          book={
 publisher={North-Holland},
     place={Amsterdam},
        },
      date={2002},
}

\bib{villani-2006}{article}{
    author={Villani, C{\'e}dric},
  title = {Hypocoercivity},
       pages={iv+141},
          journal={Mem. Amer. Math. Soc.},
                  number={202},
      date={2009},
         eprint = {arXiv:math/0609050v1},
}

\bib{WCUh52}{article}{
   author={Wang Chang, C. S.},
   author={Uhlenbeck, G. E.},
   author={de Boer, J.},
   title={On the Propagation of Sound in Monatomic Gases},
   book={
      publisher={Univ. of Michigan Press, },
      place={Ann Arbor, Michigan},
   },
   date={1952},
   pages={1--56},
   eprint = {http://deepblue.lib.umich.edu/},
}

\end{biblist}
\end{bibdiv}

\end{document}